\newcommand{\showdate}{false}
\newcommand{\ignore}[1]{}
\renewcommand{\Re}{\operatorname{Re}}
\renewcommand{\Im}{\operatorname{Im}}
\DeclareMathOperator{\ind}{ind}
\DeclareMathOperator{\tr}{tr}
\DeclareMathOperator\arc{arc}
\DeclareMathOperator{\str}{str}
\DeclareMathOperator{\Log}{Log}
\DeclareMathOperator{\im}{im}
\DeclareMathOperator{\rk}{rk}
\DeclareMathOperator{\Div}{div}
\newcommand{\ie}{\emph{i.e.} }
\newcommand{\eg}{\emph{e.g.} }
\newcommand{\cf}{\emph{cf.} }
\DeclareMathOperator{\sign}{sign}
\newcommand{\inveps}{\tilde \eps_-}
\newcommand{\hk}{hyper-Kähler\xspace}
\DeclareMathAlphabet{\matheur}{U}{eur}{m}{n}
\newcommand{\hkr}{\matheur{r}}
\newcommand{\tormat}{\matheur{t}}
\newcommand{\kclass}{\matheur{k}}
\DeclareMathOperator{\Pic}{Pic}
\newcommand{\PP}{\mathbb{P}}
\newcommand{\ZZ}{\mathbb{Z}}
\newcommand{\C}{\mathbb C}
\newcommand{\Hh}{\mathcal H}
\newcommand{\logeta}{\mathrm{H}}
\newcommand{\DS}{s}
\newcommand\gmatrix[4]{\bigl(\begin{smallmatrix}#1&#2\\#3&#4\end{smallmatrix}\bigr)}
\newcommand{\mmod}{\!\!\mod}
\newcommand{\Pmod}{\!\!\!\!\!\pmod}
\newcommand{\half}{{\textstyle\frac{1}{2}}}
\newcommand{\third}{{\textstyle\frac{1}{3}}}
\newcommand{\Z}{\mathbb{Z}}
\newcommand{\Q}{\mathbb{Q}}
\newcommand{\R}{\mathbb{R}}
\newcommand{\Rlat}{(\mathbb{R})}
\newcommand{\Aa}{\mathbb{A}}
\newcommand{\bbc}{\mathbb{C}}
\newcommand{\bbp}{\mathbb{P}}
\newcommand{\drn}{\mathbf{u}}
\newcommand{\drx}{\mathbf{v}}
\newcommand{\into}{\hookrightarrow}
\newcommand{\del}{\partial}
\newcommand{\nuinvt}{nu-invariant\xspace}
\newcommand{\nuinvts}{nu-invariants\xspace}
\newcommand{\xnuinvt}{extended nu-invariant\xspace}
\newcommand{\xnuinvts}{extended nu-invariants\xspace}
\newcommand{\etainvt}{eta-invariant\xspace}
\newcommand{\etainvts}{eta-invariants\xspace}
\newcommand{\etafunc}{eta function\xspace}
\newcommand{\etafuncs}{eta functions\xspace}
\newcommand{\etaform}{eta form\xspace}
\newcommand{\etaforms}{eta forms\xspace}
\newcommand{\gtstr}{$G_{2}$\nobreakdash-\hspace{0pt}structure}
\newcommand{\gtmfd}{$G_{2}$\nobreakdash-\hspace{0pt}manifold}
\newcommand{\norm}[1]{\left\Vert #1 \right\Vert}
\newcommand{\gll}{r_{\!+}}
\newcommand{\sglr}{-\glr}
\newcommand{\glb}{q}
\newcommand{\glr}{r_{\!-}}
\let\TorMat\Phi
\newcommand\tll{n_+}
\newcommand\tlr{n_-}
\newcommand\tlb{m}
\newcommand\stlr{-\tlr}
\newcommand{\anglen}{u}
\newcommand{\anglex}{v}
\newcommand\lnn{\zeta}
\newcommand\lnx{\xi}
\newcommand\ar{\kappa}
\newcommand{\wt}[1]{\widetilde #1}
\newcommand{\gen}[1]{\langle#1\rangle}
\newcommand{\hdg}{h}
\newcommand\abs[1]{\left|#1\right|}
\newcommand\id{\mathrm{id}}
\newcommand\End{\operatorname{End}}
\newcommand\Span{\operatorname{span}}
\newcommand\Sign{\operatorname{sign}}
\newcommand\punkt{\mathord{\,\cdot\,}}
\DeclareMathOperator{\ch}{ch}
\DeclareMathOperator\Pf{Pf}
\newcommand\psmatrix[1]{\left(\begin{smallmatrix}#1\end{smallmatrix}\right)}
\newcommand\dotcup{\mathbin{\dot\cup}}
\let\eps\varepsilon
\let\thet\vartheta
\let\phy\varphi
\newcommand{\reps}{\eps}
\newcommand\isogrp{K}
\newcommand{\fbb}{\mathcal{Z}}
\DeclareMathOperator{\Amp}{Amp}
\newcommand\K{\Sigma} %
\newcommand\of{W} %
\newcommand\ofh{W} %
\newtheorem{Thm}{Theorem}
\newtheorem{Obs}[Thm]{Observation}
\newtheorem{thm}{Theorem}%
\newtheorem{prop}[thm]{Proposition}
\newtheorem{lem}[thm]{Lemma}
\theoremstyle{definition}
\newtheorem{dfn}[thm]{Definition}
\newtheorem{qstn}[thm]{Question}
\theoremstyle{remark}
\newtheorem{rmk}[thm]{Remark}
\newtheorem{ex}[thm]{Example}
\newcommand\ft{E}
\numberwithin{equation}{section}
\numberwithin{thm}{section}
\setlist{leftmargin=*}
\let\@wraptoccontribs\wraptoccontribs\makeatother
\definecolor{darkgreen}{rgb}{0,0.5,0}
\definecolor{darkred}{rgb}{0.7,0,0}
\definecolor{darkblue}{rgb}{0,.2,.7}
\newcommand\arXiv[1]{\href{https://arxiv.org/abs/#1}{\tt arXiv:\penalty-100 #1\/}}
\newcommand\doi[1]{\href{https://doi.org/#1}{DOI: #1}}
\newcommand\doititle[2]{\href{https://doi.org/#1}{#2}}
\begin{document}

\title{Nu-Invariants of Extra-Twisted Connected Sums}

\author{Sebastian Goette}
\address{Math. Inst. Univ. Freiburg\\
  Ernst-Zermelo-Str.~1\\
  79104 Freiburg\\Germany}
\email{sebastian.goette@math.uni-freiburg.de}

\author{Johannes Nordstr\"om}
\address{Department of Mathematical Sciences\\
University of Bath\\
Bath BA2 7AY\\
UK}\email{j.nordstrom@bath.ac.uk}

\author{Don Zagier}
\address{\vspace{-0.7\baselineskip}Max Planck Institute for Mathematics, 53111 Bonn, Germany}
\address{International Centre for Theoretical Physics, Trieste, Italy}
\email{dbz@mpim-bonn.mpg.de}

\begin{abstract}
\vspace{3mm plus 10mm}
  We analyse the possible ways of gluing twisted products of circles
  with asymptotically cylindrical Calabi-Yau manifolds to produce
  manifolds with holonomy~$G_2$,
  thus generalising
  the twisted connected sum construction of Kovalev and Corti,
  Haskins, Nordstr\"om, Pacini.
  We then express the extended \nuinvt
  of Crowley, Goette, and Nordstr\"om in terms of fixpoint and gluing
  contributions, which include different types
  of (generalised) Dedekind sums.
  Surprisingly, the calculations
  involve some non-trivial number-theoretical arguments connected with
  special values of the Dedekind \etafunc and the theory of complex 
  multiplication.
  One consequence of our computations is
  that there exist compact $G_2$-manifolds that are not $G_2$-nullbordant.
\end{abstract}
\subjclass{Primary 57R20, Secondary 53C29, 58J28, 11F20}

\maketitle

\vspace{0mm plus 10mm}

\phantomsection
\addcontentsline{toc}{section}{Introduction}
\sectionmark{Introduction}

Though compact Riemannian manifolds with holonomy~$G_2$---or equivalently 
compact mani\-folds with a torsion-free $G_2$-structure---were first
constructed more than~25 years ago, many of their features remain mysterious.
On the one hand, only a few obstructions against $G_2$-metrics
on a given compact $7$-manifold are known (see Joyce~\cite[\S10.2]{Joyce}).
On the other hand, our current supply of examples is much smaller
than allowed by these obstructions.
It therefore remains interesting to explore new invariants of $G_2$-manifolds
in the hope to discover new obstructions,
and to find new examples on which these invariants can be tested.

The extended \nuinvt was introduced in~\cite{CGN}
to exhibit 2-connected 7-manifolds with a disconnected moduli space of
$G_2$-metrics.
In the present paper, we apply it to a larger %
class of examples to exhibit a wider range of values of this invariant
(and in particular the first examples of $G_2$-manifolds whose
$G_2$-bordism class can be shown to be nontrivial).
We determine the extended \nuinvt by computing
the \etainvts that appear in its definition~\eqref{eq:nubar}.
The details may be of interest to index theorists because in contrast
to~\cite{CGN}, we cannot rely on spectral symmetry here.

The process for computing the relevant \etainvts %
is well understood in principle, using work of Bismut, Bunke, Cheeger, Dai, Freed, Kirk, Lesch, Liu, Ma and the first
author~\cite{BCh4,BuGlu,BuMa,Ch,DF,Gorbi,KiLe,Liu} amongst others,
but the practical application in this problem
presents some challenges (and is substantially more complicated than the cases
considered in~\cite{CGN}).
An extra twisted connected sum can be decomposed into two pieces along a
hypersurface.
Each half is a twisted product of an asymptotically cylindrical
Calabi-Yau manifold and a circle.
However, the boundary conditions needed to apply a gluing formula
for \etainvts are not compatible with the local product structure.
To overcome this difficulty,
we perform an adiabatic limit by shrinking the circle factors to size~$0$.
The variation of the \etainvts during this limit process involves
integrals of \etaforms over intervals.
For this, we need to know these \etaforms themselves,
not just their classes modulo exact forms.
Computing the variational contribution explicitly
leads to the expression for the extended nu invariant in
Theorem \ref{Thm:A}.
Alternatively, the variational contribution can be represented by a contour
integral in the upper half plane and evaluated using elementary hyperbolic
geometry.
Since the contour contains cusp points representing additional adiabatic
limits, we also need a fibration formula for \etaforms
without additional unknown exact terms.
This reasoning leads to a more tractable formula for the extended nu invariant
in Theorem \ref{Thm:B}, where each term is patently rational.

We would like to emphasise that the study of these new
examples involves some quite sophisticated number theory.
In Theorem~\ref{Thm:A}, the variational contributions are described in terms
of a double integral of a theta series (equation \eqref{2v.1}), which is then
expressed in terms of the principal branch~$\logeta$ of the logarithm
of the Dedekind \etafunc $\eta$.
The resulting formula can be evaluated explicitly in any of our examples by
exploiting the special properties of~$\logeta$ at arguments in certain
imaginary quadratic number fields.
In Theorem~\ref{Thm:B}, we use continued fractions to rewrite the area of a
region in the hyperbolic plane in terms of Dedekind sums. 
We also explain how Theorem~\ref{Thm:B} can be deduced from Theorem \ref{Thm:A}
thanks to the modularity properties of~$\logeta$.
The appearance of arguments from modular form and CM theory in a
problem of pure differential geometry is surprising at a first glance.

\subsection*{Extra-twisted connected sums}

There are currently two major sources of compact $G_2$-manifolds, that is,
compact Riemannian manifolds whose holonomy group is isomorphic to~$G_2$.
The first is Joyce's Kummer construction~\cite{Joyce},
based on resolution of singularities in flat orbifolds.
It has recently been generalised by Joyce and Karigiannis to more
complicated spaces~\cite{JoKa}.
The second is the twisted connected sum construction pioneered by
Kovalev~\cite{Kovalev} and systematically studied by Corti, Haskins,
Nordstr\"om and Pacini~\cite{CHNP, CHNP2}.

For the latter construction, one starts with two asymptotically
cylindrical Calabi-Yau manifolds~$V_\pm$.
The cross-sections of their ends approach the products
of a K3 surface~$\Sigma_\pm$ and a circle~$S^1_{\zeta_\pm}$
that we wish to call the \emph{interior} circle.
In the classical set up, one takes the products of~$V_\pm$
and an \emph{exterior} circle~$S^1_{\xi_\pm}$ of length~$\xi_\pm=\zeta_\mp$.
Then one glues truncated copies~$M_\pm$ of~$V_\pm\times S^1_{\xi_\pm}$ along
their ends, in a way that swaps the roles of interior and exterior circles.

In this paper, we assume in addition that some finite cyclic
groups~$\Gamma_\pm\cong\Z/k_\pm$
act on~$V_\pm$, preserving the Calabi-Yau structure.
We also assume that the induced actions on
the K3 factors~$\K_\pm$ are trivial, and that the induced actions on the
interior circles~$S^1_{\zeta_\pm}$ are free.
We consider twisted products~$M_\pm\cong(V_\pm\times S^1_{\xi_\pm})/\Gamma_\pm$,
where~$\Gamma_\pm$ acts diagonally, and freely on the exterior circle factors.
The manifolds~$M_+$ and~$M_-$ have ends whose asymptotic cross-sections are
isometric to products $\K_\pm \times T^2_\pm$, where $T^2_\pm$ is the quotient
of $S^1_{\lnn_\pm} \times S^1_{\lnx_\pm}$ by an equivalence relation
$(\bar \anglen_\pm, \bar \anglex_\pm) \sim
(\bar \anglen_\pm + \frac{\eps_\pm}{k_\pm},
\bar \anglex_\pm + \frac{1}{k_\pm})$ for some $\eps_\pm \in \Z$ coprime
to $k_\pm$ (and we use coordinates
$\bar \anglen_\pm, \bar \anglex_\pm \in \R/\Z$).
The 2-tori $T^2_\pm$ need now not be isometric to products of two circles,
but often we can arrange that there is an isometry
$\tormat : T^2_+ \to T^2_-$ of the form
\begin{equation}
\label{eq1:tormat_coords}
\begin{pmatrix} \bar \anglen_+ \\ \bar \anglex_+ \end{pmatrix} =
\frac{1}{k_+} \begin{pmatrix} \gll & p \\ \glb  & \sglr  \end{pmatrix} 
\begin{pmatrix} \bar \anglen_- \\ \bar \anglex_- \end{pmatrix}
\end{equation}
for integers $\gll, \glr, p$ and $\glb$, and use this ``extra twist''
together with a suitable
\hk rotation $\hkr : \K_+ \to \K_-$ to glue $M_+$ to $M_-$.
The cases where~$k_\pm\le 2$ have already been considered in~\cite{CGN, xtcs}.

We think of the residue classes of $\eps_\pm$ in $\Z/k_\pm$ and
$\psmatrix{\gll&p\\\glb&\sglr}$ together with a further square integer
matrix that captures topological information
about the \hk rotation of the K3 surfaces (see \eqref{eq:config_matrix})
as giving a combinatorial description of the possible ways to form
extra-twisted connected sums from given (families of) pairs $(V_+, \Gamma_+)$
and $(V_-, \Gamma_-)$. In particular, this combinatorial description of the
matching (together with information about $(V_\pm, \Gamma_\pm)$ themselves)
proves sufficient for computing the cohomology of the resulting extra-twisted
connected sum, as well as the extended nu-invariant.

\pagebreak[2]
The known supply of asymptotically cylindrical Calabi-Yau
manifolds with nontrivial symmetries that fix the K3 surface~$\Sigma$
is very limited.
In the present paper, we use examples constructed from
Fano threefolds of higher index (that is, closed complex 3-folds $Y$ whose
anticanonical bundle $-K_Y$ is ample and divisible by an integer $>1$),
and from hypersurfaces in weighted projective spaces.
We only consider examples whose Picard rank $b_2(Y)$ is~$1$
for simplicity.
The possible groups obtained this way are~$\Gamma\cong\Z/k$
with~$k\le 6$; see Table~\ref{table:blocks}.

But even though we use only a few asymptotically cylindrical Calabi-Yau
manifolds with nontrivial symmetry group,
there are typically several different ways
to glue two given twisted products~$M_\pm$ by changing
the size of the exterior circles and the isometry between the torus factors
in the asymptotic cross sections. 
Some of the $G_2$-manifolds we construct this way will not be simply
connected---we obtain cyclic fundamental groups of order up to~$21$; see
Proposition~\ref{prop:pi1} and example~250 of Table~\ref{table:matchings}---%
but any connected cover of these examples will again be an
extra-twisted connected sum; see Proposition~\ref{prop:cover}.

The combinatorial description of extra-twisted connected sums
allows us to
find all possible combinations (among the asymptotically
cylindrical Calabi-Yau 3-folds used in this paper) by a small computer
program.
Table~\ref{table:matchings} lists~255 examples of extra-twisted
connected sums, 192 of which are not contained in~\cite{xtcs}.
Of all the examples, 125 are simply connected,
representing at least~106 different $G_2$-deformation types,
that is, classes of $G_2$-manifolds related by diffeomorphisms
and deformations through torsion-free $G_2$-structures.

In our count, we have accounted for the fact that
there are typically several ways to obtain %
the same extra-twisted connected
sum up to isometries and orientation reversal; see Proposition~\ref{prop:isom}.
In addition,
passing to dual tori often leads to non-isometric extra-twisted connected sums
built from the same blocks with the same K3-matching and the same gluing angle.
We thus have a kind of partial $t$-duality for extra-twisted connected sums;
see Proposition~\ref{prop:dual} and Example~\ref{ex:run-symm}.

\enlargethispage{0.4\baselineskip}

\subsection*{The \nuinvt %
and its analytic refinement}

The \nuinvt of \gtstr s on closed 7-manifolds was introduced
in~\cite{CrN}.
It takes values in $\Z/48$, and its parity is determined by the
Betti numbers of the underlying manifold.
The examples in this paper show that all odd elements of $\Z/48$ appear as
\nuinvts of torsion-free \gtstr s on extra-twisted connected sums; see
Table~\ref{table:matchings}.
To obtain more examples with even $\nu$ from our construction we would need
to use blocks with Picard rank higher than 1.
As we explain below, such blocks exist, and we expect that 
all even elements of~$\Z/48$ can be realised as well.
The total number of extra-twisted connected sums we can currently
construct is much less than the number of twisted connected sums
constructed in~\cite{CHNP2}.
Nevertheless, the present method is more efficient at constructing
different $G_2$-deformation types
that we can distinguish. %

\newcommand
  \begin{figure}
    \vspace{0mm plus 10mm}
    \begin{tikzpicture}[x=0.105cm,y=0.105cm]
      \fill[color=black!10,even odd rule]
        (28,10) rectangle (160,0)
        (28,30) rectangle (160,20)
        (28,50) rectangle (160,40)
        (28,70) rectangle (160,60)
        (28,90) rectangle (160,80)
        (28,110) rectangle (160,100)
	(28,130) rectangle (160,120)
	(28,150) rectangle (160,140)
        (30,0) rectangle (40,152)
        (50,0) rectangle (60,152)
        (70,0) rectangle (80,152)
        (90,0) rectangle (100,152)
        (110,0) rectangle (120,152)
        (130,0) rectangle (140,152)
        (150,0) rectangle (160,152) ;
      \draw[->] (28,0) -- (28,152) node[above] {$\abs{\bar\nu}$} ;
      \draw[->] (28,0) -- (160,0) node[right] {$b_3$} ;
      \draw (28,0) -- (27,0) node[left] {$0$} ;
      \draw (28,50) -- (27,50) node[left] {$50$} ;
      \draw (28,100) -- (27,100) node[left] {$100$} ;
      \draw (28,150) -- (27,150) node[left] {$150$} ;
      \draw (50,0) -- (50,-1) node[below] {$50$} ;
      \draw (100,0) -- (100,-1) node[below] {$100$} ;
      \draw (150,0) -- (150,-1) node[below] {$150$} ;
      \input landscape2
      \input landscape1
    \end{tikzpicture}
    \vspace{-2mm plus 15mm}
    \caption{The landscape of examples in Table~\ref{table:matchings}}
    \vspace{-2mm plus 15mm}
    \label{table:landscape}
  \end{figure}
{
  \begin{figure}
    \vspace{0mm plus 10mm}
    \begin{tikzpicture}[x=0.105cm,y=0.105cm]
      \fill[color=black!10,even odd rule]
        (28,10) rectangle (160,0)
        (28,30) rectangle (160,20)
        (28,50) rectangle (160,40)
        (28,70) rectangle (160,60)
        (28,90) rectangle (160,80)
        (28,110) rectangle (160,100)
	(28,130) rectangle (160,120)
	(28,150) rectangle (160,140)
        (30,0) rectangle (40,152)
        (50,0) rectangle (60,152)
        (70,0) rectangle (80,152)
        (90,0) rectangle (100,152)
        (110,0) rectangle (120,152)
        (130,0) rectangle (140,152)
        (150,0) rectangle (160,152) ;
      \draw[->] (28,0) -- (28,152) node[above] {$\abs{\bar\nu}$} ;
      \draw[->] (28,0) -- (160,0) node[right] {$b_3$} ;
      \draw (28,0) -- (27,0) node[left] {$0$} ;
      \draw (28,50) -- (27,50) node[left] {$50$} ;
      \draw (28,100) -- (27,100) node[left] {$100$} ;
      \draw (28,150) -- (27,150) node[left] {$150$} ;
      \draw (50,0) -- (50,-1) node[below] {$50$} ;
      \draw (100,0) -- (100,-1) node[below] {$100$} ;
      \draw (150,0) -- (150,-1) node[below] {$150$} ;
      \input landscape2
      \input landscape1
    \end{tikzpicture}
    \vspace{-2mm plus 15mm}
    \caption{The landscape of examples in Table~\ref{table:matchings}}
    \vspace{-2mm plus 15mm}
    \label{table:landscape}
  \end{figure}
}

  \begin{figure}
    \vspace{0mm plus 10mm}
    \begin{tikzpicture}[x=0.105cm,y=0.105cm]
      \fill[color=black!10,even odd rule]
        (28,10) rectangle (160,0)
        (28,30) rectangle (160,20)
        (28,50) rectangle (160,40)
        (28,70) rectangle (160,60)
        (28,90) rectangle (160,80)
        (28,110) rectangle (160,100)
	(28,130) rectangle (160,120)
	(28,150) rectangle (160,140)
        (30,0) rectangle (40,152)
        (50,0) rectangle (60,152)
        (70,0) rectangle (80,152)
        (90,0) rectangle (100,152)
        (110,0) rectangle (120,152)
        (130,0) rectangle (140,152)
        (150,0) rectangle (160,152) ;
      \draw[->] (28,0) -- (28,152) node[above] {$\abs{\bar\nu}$} ;
      \draw[->] (28,0) -- (160,0) node[right] {$b_3$} ;
      \draw (28,0) -- (27,0) node[left] {$0$} ;
      \draw (28,50) -- (27,50) node[left] {$50$} ;
      \draw (28,100) -- (27,100) node[left] {$100$} ;
      \draw (28,150) -- (27,150) node[left] {$150$} ;
      \draw (50,0) -- (50,-1) node[below] {$50$} ;
      \draw (100,0) -- (100,-1) node[below] {$100$} ;
      \draw (150,0) -- (150,-1) node[below] {$150$} ;
      \input landscape2
      \input landscape1
    \end{tikzpicture}
    \vspace{-2mm plus 15mm}
    \caption{The landscape of examples in Table~\ref{table:matchings}}
    \vspace{-2mm plus 15mm}
    \label{table:landscape}
  \end{figure}

To date, only very few obstructions against the existence of a metric
with full holonomy~$G_2$ on a seven-manifold~$M$ are known.
It is clear that~$M$ must be spin and have a finite fundamental group,
and the de Rham class of the three-form~$\phy$ that defines the torsion-free
\gtstr{} must satisfy certain cohomological inequalities (see Joyce \cite[\S10.2]{Joyce}).
While computations in~\cite{CGN} showed that $G_2$-manifolds
can have non-zero \nuinvts, all examples considered there (and
also in~\cite{xtcs})
have a \mbox{\nuinvt} that is divisible by~$3$.
In particular, those examples are all $G_2$-nullbordant (in the sense of being
a boundary of a compact 8-manifold whose tangent bundle has a \gtstr{}
restricting to the torsion-free one on the boundary), as a
consequence of the result of Schelling~\cite{schelling} that~$3\mid\nu(M)$
if and only if~$M$ is $G_2$-nullbordant.
That left open the possibility of some mechanism by which
the $G_2$-bordism class presents an obstruction to deforming a given \gtstr{} to a torsion-free one (defining a $G_2$ holonomy metric), but
the examples in the present paper rule that out. 

\begin{Obs}\label{Cor:C}
  There exist compact $G_2$-manifolds that are not $G_2$-nullbordant.
\end{Obs}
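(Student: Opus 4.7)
The plan is to reduce the observation to the main computation of the paper and to Schelling's theorem. Concretely, Schelling's result asserts that a closed $G_2$-manifold $M$ is $G_2$-nullbordant if and only if $3\mid\nu(M)$ in $\Z/48$. Therefore it suffices to exhibit a single extra-twisted connected sum $M$ carrying a torsion-free \gtstr\ whose $\nu$-invariant is not divisible by $3$; for instance, any example with odd $\nu$ will do, since the introduction already asserts that all odd classes in $\Z/48$ are realised by our construction.

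To produce such an example, the plan is first to select admissible combinatorial data $(V_\pm,\Gamma_\pm)$ together with the two gluing matrices and the small integers encoding the $\Gamma_\pm$-actions, drawn from the list of blocks tabulated in Table~\ref{table:blocks}. The resulting twisted product manifolds $M_\pm\cong(V_\pm\times S^1_{\xi_\pm})/\Gamma_\pm$ can be glued across their cylindrical ends to produce a closed $7$-manifold~$M$ carrying a torsion-free \gtstr, in the sense of the extra-twisted connected sum construction generalising Kovalev and CHNP.

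Next, the $\nu$-invariant of such an $M$ is to be computed via the formula expressing the extended $\nu$-invariant as a sum of fixed-point contributions of the $\Gamma_\pm$-actions and gluing contributions across the neck; this is the main analytic result announced in the introduction. The fixed-point terms are local and given by standard equivariant eta contributions, while the gluing terms involve (generalised) Dedekind sums depending on the gluing matrices and angles. Reducing the outcome modulo $3$ requires evaluating these Dedekind-type sums, and this is the point where the non-trivial number-theoretic input from the appendix (special values of the Dedekind eta function and complex multiplication) is essential.

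The main obstacle will therefore be the identification and explicit evaluation of the Dedekind-sum contributions, rather than the conclusion itself: once a single entry of Table~\ref{table:matchings} is shown to have $\nu\not\equiv 0\pmod 3$, Schelling's theorem immediately gives the observation. A safe strategy is to target an example with $k_\pm$ maximal among the available blocks, so that the modular arithmetic of the Dedekind sums is least likely to collapse to a multiple of $3$, and then read off the required non-divisibility directly from the computation.
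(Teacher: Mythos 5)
Your overall strategy is the same as the paper's: invoke Schelling's theorem that a closed $G_2$-manifold $M$ is $G_2$-nullbordant if and only if $3\mid\nu(M)$, and then exhibit a concrete extra-twisted connected sum whose $\nu$-invariant is not divisible by $3$ (the paper uses the running example with $k_+=3$, $k_-=5$ from Example~\ref{ex:run-gluing}, whose $\bar\nu(M)=-11$ gives $\nu(M)\equiv 13 \bmod 48$).

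However, there is a genuine logical gap in your second paragraph: you write that ``any example with odd $\nu$ will do.'' That inference is false. Oddness and non-divisibility by $3$ are independent conditions in $\Z/48$: the classes $3,9,15,21,27,33,39,45$ are odd but divisible by $3$, so an odd $\nu$-invariant does not by itself imply $3\nmid\nu(M)$. Since the previously known examples (twisted connected sums, and the cases $k_\pm\leq 2$) all have $3\mid\nu$, what you must actually verify is non-divisibility by $3$, and for that you need to carry out the explicit evaluation of the fixed-point and Dedekind-sum contributions for a specific set of gluing data. Your heuristic of taking $k_\pm$ large is reasonable motivation (indeed $k_\pm\geq 3$ is necessary, because involutions on Calabi--Yau $3$-folds cannot have isolated fixed points, so the fixed-point terms in Theorem~\ref{Thm:B} vanish when $k_\pm\leq 2$), but it is not a proof; the paper supplies the missing step by computing $\bar\nu(M)=-11$ in Example~\ref{ex:run-zagier}, so that $\nu(M)\equiv 13\equiv 1 \pmod 3$.
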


To compute the \nuinvt,
we use the method described in~\cite{CGN}.
That is, we consider an integer-valued invariant
\begin{equation}\label{eq:nubar}
  \bar\nu(M)=3\eta(B)-24\eta(D)\quad\text{such that}\quad
  \nu(M)\equiv\bar\nu(M)-24(1+b_1(M))\mod 48\;.
\end{equation}
Here, $B$ is the signature operator and~$D$ the spin Dirac operator on~$M$;
see Section~\ref{sec:nu}.
In Example~\ref{ex:run-zagier}, we exhibit an extra-twisted
connected sum~$M$ with~$\bar\nu(M)=-11$;
see also Example~\ref{ex:run-hyp}.
Hence, $\nu(M)\equiv 13$ mod~$48$.
It follows from%
~\cite{schelling} that~$M$ is not $G_2$-nullbordant,
and we have an example for Observation~\ref{Cor:C}.

Our results are summarized in Table~\ref{table:matchings}
and plotted in Figure~\ref{table:landscape},
where crosses stand for simply-connected (in fact, two-connected)
examples and circles for examples with nontrivial fundamental group.
Because orientation reversal changes the sign of~$\bar\nu$,
we used the absolute value of~$\bar\nu$ here.
To get all values of~$\bar\nu$,
the whole picture should thus be extended by reflexion
along the horizontal axis.
In comparison with~\cite{CGN}, we get some examples where
the absolute value of~$\bar\nu$ is much larger,
for example~$\bar\nu=-151$ in line~254 of Table~\ref{table:matchings}
with fundamental group~$\Z/3$,
or example~240 with~$\bar\nu=-111$, which is simply connected.
All this seems to indicate that there could still be a wealth of unknown
examples of $G_2$-manifolds.

\enlargethispage{0.3\baselineskip}

\subsection*{Approaching the computation of
  \texorpdfstring{$\bar\nu$}{the extended \nuinvt}}
The initial steps of the computation of~$\bar\nu$ follow the path
that we laid out in~\cite{CGN}.
That is, we first write~$M=M_+\cup M_-$ and use the gluing formula
of Bunke~\cite{BuGlu} and Kirk, Lesch~\cite{KiLe} to write~$\bar\nu(M,g)$
as a sum of the contributions~$\bar\nu(M_\pm,g)$ from the two pieces
and a gluing term; see Theorem~\ref{Thm2a.1}.

The gluing term consists of two pieces. %
The first is an integer contribution~$3m_\rho(L;N_+,N_-)$
that depends on the relative positions of the images of~$H^2(V_\pm)$
in~$H^2(\K)$, %
where~$\K$ is the K3 surface that appears as a factor of the cross-section
at infinity of~$M_\pm$.
If~$\thet\in(0,\pi)$ denotes the oriented angle between the exterior circles,
which we call the \emph{gluing angle},
then the second contribution is~$-72\frac\rho\pi$, where~$\rho=\pi-2\thet$.
If the gluing angle is an irrational multiple of~$\pi$,
then this term will be irrational, too.

In~\cite{CGN}, restricting attention to the case when both
$\# \Gamma_\pm \leq 2$ ensured that both pieces have spectral symmetry,
so that $\bar\nu(M_\pm,g) = 0$. Moreover, the gluing angle was forced to be an
integer multiple of $\frac\pi{12}$, see also Remark~\ref{rem:SmallK}.
However, if~$M_\pm$ is a twisted product~$(V_\pm\times S^1)/\Gamma_\pm$
and~$\#\Gamma_\pm\ge 3$,
then the spectra of the relevant Dirac operators on~$M_\pm$
with respect to the appropriate boundary conditions are no longer
necessarily symmetric.

To compute the contribution~$\bar\nu(M_\pm,g)$,
we consider an adiabatic limit by scaling the exterior circle factor
to~$0$.
In Theorem~\ref{Thm5.1},
we state an adiabatic limit formula generalising both Dai's theorem
for manifolds with boundary~\cite{Dai} and the formula for adiabatic
limits of Seifert fibrations~\cite{Gorbi}.
We see that the isolated fixpoints of the $\Gamma_\pm$-action on~$V_\pm$
contribute by generalised Dedekind sums~$D_\gamma(V)\in\Q$
introduced in Definition~\ref{Def2x.1}
(note that involutions on Calabi-Yau 3-folds cannot have isolated fixed points,
consistent with the claim that the \etainvts vanish when $\#\Gamma_\pm \leq 2$).

When passing to the adiabatic limit, we also modify the metric on
the boundary of~$M_\pm$.
Hence, we also have to consider the variational formula
for \etainvts on manifolds with boundary due to Cheeger~\cite{Ch},
Bismut and Cheeger~\cite{BCh4}, and Dai and Freed~\cite{DF}.
The relevant boundary contribution can be computed as a line integral
of a universal \etaform~$\tilde\eta(\Aa)$
for families of two-dimensional tori described in Proposition~\ref{Prop2c.2}.
The relevant \etaform integrals can be expressed in terms of the
logarithm~$\logeta$ of the Dedekind \etafunc (the principal branch
defined in \eqref{eq:explicit_principal}) using results from Section~\ref{A2v}.

The action of the group~$\Gamma_\pm$ on~$T^2$ is described
by an element~$\reps_\pm\in\Z$ that is relatively prime to~$k_\pm$. %
In the following and throughout the paper,
we represent its inverse modulo~$k_\pm$ by~$\eps_\pm^*\in\Z$.
The precise choice of~$\eps_\pm^*$ does not matter in the following result.
Also, the ratio of the lengths of the exterior and the interior circle
is denoted~$\ar_\pm=\frac{\xi_\pm}{\zeta_\pm}$,
which is typically the square root of a positive rational number,
see Proposition~\ref{Prop2w.1}~\ref{2w.1c}.

\begin{Thm}\label{Thm:A}
  For all extra-twisted connected sums~$M$,
  the extended \nuinvt is given by
  \begin{subequations}\label{eq:thmA}
    \begin{align}
      \bar\nu(M)
      &=\bar\nu(M_+,g)+\bar\nu(M_-,g)-72\frac\rho\pi+3m_\rho(L;N_+,N_-)\;,
      \label{eq:thmA.1}\\
      \text{where}\qquad
      \bar\nu(M_\pm,g)
      &=D_{\gamma_\pm}(V_\pm)-\frac{288}\pi\,
      \Im\logeta\biggl(\frac{\ar_\pm^{-1}i-\eps^*_\pm}{k_\pm}\biggr)
      -24\,\frac{\eps_\pm^*}{k_\pm}\;.
      \label{eq:thmA.2}
    \end{align}
  \end{subequations}
\end{Thm}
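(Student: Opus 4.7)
The plan is to establish \eqref{eq:thmA.1} first by a cut-and-paste of the $\eta$-invariants entering $\bar\nu$, and then to compute each summand $\bar\nu(M_\pm,g)$ separately by an adiabatic-limit argument that reduces the problem to a fixpoint count on $V_\pm$ plus a universal boundary correction on a family of flat $2$-tori.

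For \eqref{eq:thmA.1}, I would write $M=M_+\cup M_-$ across a neck modelled on $\K\times T^2$, fix suitable Atiyah--Patodi--Singer boundary conditions on each piece, and apply the Bunke/Kirk--Lesch gluing formula (Theorem~\ref{Thm2a.1}) separately to the signature operator $B$ and the spin Dirac operator $D$. Combining $3\eta(B)-24\eta(D)$ on each side gives $\bar\nu(M_+,g)+\bar\nu(M_-,g)$ plus a gluing defect determined entirely by the Lagrangian subspaces of $H^*(\del M_\pm)$ picked out by $H^*(M_\pm)$. The K3-matching piece of the defect is topological and identifies with the integer $3m_\rho(L;N_+,N_-)$, while the $T^2$-matching piece is a Maslov/spectral-flow term that depends only on the rotation angle $\rho=\pi-2\thet$ between the two exterior circles; an explicit computation on the flat torus cross-section shows it equals $-72\rho/\pi$, giving \eqref{eq:thmA.1}.

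For \eqref{eq:thmA.2} I would deform the metric on $M_\pm=(V_\pm\times S^1_{\xi_\pm})/\Gamma_\pm$ by shrinking the exterior circle factor, $\xi_\pm\to 0$, and track $\bar\nu(M_\pm,g)$ along the family. The $\eta$-invariant changes by two contributions: the adiabatic limit of $\bar\nu(M_\pm,g_{\mathrm{ad}})$ itself, and a variational term for the varying metric on the boundary. The adiabatic limit, obtained from the generalisation of Dai's boundary formula~\cite{Dai} combined with the Seifert-fibration formula~\cite{Gorbi}, picks up a fixed-point contribution at each isolated fixed point of $\gamma_\pm\in\Gamma_\pm$ on $V_\pm$; these assemble into the generalised Dedekind sum $D_{\gamma_\pm}(V_\pm)$ of Definition~\ref{Def2x.1}. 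The variational term, by Cheeger/Bismut--Cheeger/Dai--Freed, is an integral of the universal $\eta$-form $\tilde\eta(\Aa)$ along the path of flat metrics on the cross-sectional torus (Proposition~\ref{Prop2c.2}), and its evaluation in closed form is precisely the content of Zagier's appendix, which identifies it with $-\tfrac{288}\pi\,\Im\mathcal L\!\left(\tfrac{s_\pm^{-1}i-\eps_\pm^*}{k_\pm}\right)$. A bookkeeping constant $-24\,\eps_\pm^*/k_\pm$, coming from the rotation number of $\gamma_\pm$ on the exterior circle (equivalently, from the flat connection on the $S^1$-bundle $M_\pm\to V_\pm/\Gamma_\pm$ at the fixed-point strata), collects the residual contributions.

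The main obstacle, and the reason for the appendix, is the explicit evaluation of the $\eta$-form integral over the $1$-parameter family of flat $T^2$-metrics carrying the spectral boundary data of $D$ and $B$: the eigenvalues form a lattice sum whose $\eta$-invariant is a transcendental function of $(s_\pm,\eps_\pm^*,k_\pm)$. Reading it as a modular-type integral and matching it to $\Im\mathcal L$ requires the number-theoretic identities for special values of the Dedekind $\eta$-function and the relation to complex multiplication that are developed in the appendix. A secondary difficulty is checking that the boundary contributions from $B$ and from $D$ combine with the correct coefficients $3$ and $-24$ so that the intrinsically $\mathbb{R}$-valued pieces $\bar\nu(M_\pm,g)$ make sense independently of the gluing, which amounts to verifying that the spectral-flow contributions along the adiabatic family cancel between the two operators up to the integer correction already absorbed in $m_\rho$.
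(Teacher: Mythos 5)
Your proposal follows essentially the same route as the paper: the gluing formula of Theorem~\ref{Thm2a.1} for~\eqref{eq:thmA.1}, then for~\eqref{eq:thmA.2} an adiabatic shrinking of the exterior circle giving the generalised Dedekind sum (Theorem~\ref{Thm2x.1}) plus a variational $\eta$-form contribution (Propositions~\ref{Prop2c.2}, \ref{Prop2v.2}) that is evaluated in closed form in Zagier's appendix (Proposition~\ref{Prop1}). The only minor inaccuracy is your attribution of the summand $-24\eps_\pm^*/k_\pm$ to a separate "rotation number" contribution; in the paper the full $\eta$-form integral yields $-\frac{144}\pi F_{k_\pm,\eps_\pm}(s_\pm)$, and both the $\Im\mathcal L$ term and the $-24\eps_\pm^*/k_\pm$ term emerge together from the appendix's rewriting of $F_{k,\eps}$.
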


This is proved in Section~\ref{2h} (assuming Proposition~\ref{Prop1}).
The occurrence of the Dedekind \etafunc can also be motivated
by regarding~$\tilde\eta(\Aa)$ as a connection form of the Chern connection
on a holomorphic determinant line bundle; see Remark~\ref{rmk:atiyah}.
From the theory of complex multiplication one knows that
the values of~$\Im\logeta$ in~\eqref{eq:thmA.2} can be expressed
in terms of logarithms of algebraic numbers;
for the values used in this paper, this is done explicitly in
section~\ref{app:2}.
The linear combinations %
that appear
in~\eqref{eq:thmA.1} can be worked out from the functional
equation~\eqref{Ltransf} of~$\logeta$ (see Proposition~\ref{Prop3}),
giving one approach to Theorem~\ref{Thm:B} below.

\subsection*{Evaluating \texorpdfstring{$\bar\nu$}{the extended \nuinvt}
via elementary hyperbolic geometry}

In Section~\ref{sec:hyp}, we follow a different path to rewrite the
right hand side of~\eqref{eq:thmA.1} in terms of Dedekind sums.
The universal \mbox{\etaform} $\tilde\eta(\Aa)$
for bundles of flat tori can be understood as a 1-form on the upper half plane,
whose exterior derivative turns out to be a constant multiple
of the hyperbolic area form.
The relevant path of integration
consists of two sides of some ideal hyperbolic polygon~$P$,
depending on the gluing data.
The remaining sides can be chosen such that~$\tilde\eta(\Aa)$ vanishes
along them.
To apply Stokes' Theorem,
it remains to determine the area and the contribution from the cusps.
For the latter, we need a strict adiabatic limit
formula for \etaforms, see Proposition~\ref{Prop2y.2}.
Such a formula was proved by Bunke, Ma~\cite{BuMa} and Liu~\cite{Liu}
modulo exact forms,
but that is not enough for our purposes.

We determine the remaining  corners of the polygon~$P$ using continued fractions.
Here, we also need the entries of the gluing matrix~$\psmatrix{\gll&p\\\glb&\sglr}$
that encodes the matching of the tori as described in \eqref{eq1:tormat_coords}.
At this point, one can already finish the computation of~$\bar\nu(M)$
for any particular example by hand.
However, one can simplify these computations by observing that
the contributions from the cusps and the hyperbolic area formula add up to a
classical Dedekind sum~$\DS(d,q)$ given for integers~$\glb>0$ and~$d$ by
\begin{equation}\label{eq:DedekindSum}
  \DS(d,\glb)=\sum_{j=1}^{\glb-1}\biggl(\!\!\biggl(\frac j\glb\biggr)\!\!\biggr)
  \,\biggl(\!\!\biggl(\frac{jd}\glb\biggr)\!\!\biggr)\in\frac 1{6\glb}\,\Z\;,
  \quad\text{where }
  (\!(x)\!)=
  \begin{cases}
    0	&\text{for~$x\in\Z$}\\
    x-\lfloor x\rfloor-\frac12	&\text{for~$x\not\in\Z$}
  \end{cases}
\end{equation}
(a sawtooth function). Clearly $s(d,\glb)$ depends only on the residue 
$d \mmod \glb$.
For more background on the Dedekind \etafunc, Dedekind sums,
and their appearance in topology we refer the reader to~\cite{hz}.
We may use Proposition~\ref{prop:isom} to make sure that~$\gll\ge 0$ and~$\glb>0$.

\begin{Thm}\label{Thm:B}
  Assume that~$\glb>0$.
  Then~$A=\frac{\gll-\reps_+^*\glb}{k_+}$ is an integer (whose residue mod
  $\glb$ does not depend on the choice of the mod $k_+$ inverse $\reps^*_+$
  of $\eps_+$) and
  \begin{equation}\label{eq:ThmB}
      \bar\nu(M)
      =D_{\gamma_+}(V_+)+D_{\gamma_-}(V_-)+3\,m_\rho(L;N_+,N_-)
      +24\,\Bigl(12\,\DS(A,\glb)-\frac\glr{k_-\glb}-\frac\gll{k_+\glb}\Bigr)\;.
  \end{equation}
\end{Thm}

The argument %
involving hyperbolic polygons can be found in Section~\ref{2.ey}.
We see that all possibly irrational terms %
in Theorem~\ref{Thm:A} %
have been subsumed into the last parenthesis, which is always rational.
This makes it easier to check that~$\bar\nu(M)$ is an integer;
see also Remark~\ref{rmk:nu24}.

All in all, our way to a tractable formula for the \nuinvt
consists of many small steps, %
but in each of our examples, the sum of the various contributions is an integer.
This---together with the fact that
the two %
different approaches described above give the same expression---%
could be seen as a sanity check for our results presented here.

\enlargethispage{1.4\baselineskip}

\subsection*{Scope}
  To keep this article within a reasonable size,
  we left out the following aspects.
  \begin{enumerate}
  \item We only consider examples built from blocks of Picard rank~$1$.
    These examples automatically have matchings of pure angle in the sense
    of Remark~\ref{rmk:pure}.
    On the other hand, by~\eqref{eq:b3M} most examples obtained this
    way have even~$b_3(M)$, and hence odd~$\bar\nu(M)$.

    There exist building blocks of Picard rank~$2$ with automorphism
    group~$\Z/k$ for each~\mbox{$k=1$, \dots, $6$}.
    Thus we expect that there are examples of extra-twisted connected
    sums that realise all~$24$ even values of~$\nu(M)\in\Z/48$ as well.
  \item All examples constructed in Section~\ref{sec:ex} are either
    two-connected or have two-connected universal cover.
    We distinguish them only by their extended \nuinvts and
    their third Betti number.
    See Figure~\ref{table:landscape} for a plot of all possible pairs
    of these invariants.
    We do not attempt to compute the torsion part of their fourth cohomology
    or the divisibility of their first Pontryagin classes.
    That would be needed in order to apply diffeomorphism classification
    results~\cite{7class} to exhibit examples of 7-manifolds where
    the moduli space of $G_2$-metrics is disconnected, but such examples
    have already been seen in~\cite{CGN}.
  \item %
    We might consider asymptotically cylindrical Calabi-Yau manifolds with
    arbitrary automorphisms that act on the asymptotic cylinder
    $\K \times T^2 \times \R$ as a product but not necessarily fixing
    the $\K$ factor.
    But first of all, it looks more difficult to construct matchings
    in this situation.
    And worse, these examples would never be simply connected.
    Instead, their universal covers would again be extra-twisted
    connected sums of the type considered here.
    This has been explained in~\cite[Remark~1.12]{CGN2}.
  \item
    It is not clear how to define a \nuinvt for non-compact
    or singular $G_2$-spaces.
    Theorem~\ref{Thm2a.1} contains a possible definition for
    $G_2$-manifolds with an asymptotically cylindrical end.
    However, it is also not clear to us how to interpret the resulting numbers
    in~\eqref{eq:thmA.2}.
    From Dai and Freed's point of view in~\cite{DF},
    the invariant~$e^{2\pi i\,\frac{\nu(M_\pm,g)}6}$ should take values
    in a certain determinant line associated to the cross-section at infinity.
    The invariant~$\bar\nu(M_\pm)$ would therefore take values in
    a ``logarithm'' of this determinant line.
  \end{enumerate}

\subsection*{Organisation}

In Section~\ref{sec:xtcs},
we recall the extra-twisted connected sum construction.
We discuss the matching problem for tori in Subsection~\ref{2.w},
and for K3 surfaces in Subsection~\ref{3.c}.

Theorem~\ref{Thm:A} is proved
in Section~\ref{sec:nu}.
The fixpoint contributions are computed in Subsection~\ref{2.b},
the variational formula is discussed in~\ref{2.c},
and a direct computation of the \etaform integrals
can be found in~\ref{2h}.

We discuss the combinatorics of torus matchings
in Section~\ref{sec:torus}. We also discuss the topology of K3 fibrations
(in particular, how some information about the topology of the link of singular
fibres can be read off from the combinatorial gluing data) that are candidates
to be perturbed to coassociative ``Kovalev-Lefschetz fibrations'' by results of
Englebert \cite{Englebert}.

Theorem~\ref{Thm:B} is proved by elementary hyperbolic geometry
in Section~\ref{sec:hyp}.
Adiabatic deformations of tori are identified with
hyperbolic geodesics in Subsection~\ref{2.v},
and the contribution from the cusps is explained in~\ref{2.y}.
In Subsection~\ref{2.ex}, %
we use continued fractions to construct ideal polygons,
and in~\ref{2.ey}, we rewrite the sum of the cusp contributions
as a Dedekind sum.

Section~\ref{sec:ex} contains more details about the construction
of examples, in particular we describe some building blocks with group actions
in Subsection~\ref{sec:blocks}, and possible K3 matchings
in Subsection~\ref{sec:match-ex}.

Section~\ref{sec:proofs} provides the proofs of some technical results
used in Sections~\ref{sec:nu} and~\ref{sec:hyp}.
Section~\ref{A2v} contains the evaluation of \etaform
integrals in terms of the Dedekind \etafunc,
as well as explicit formulas for the values that we use in this paper.

\subsubsection*{Acknowledgements}
We thank Jean-Michel Bismut, Jeff Cheeger, Xianzhe Dai and Matthias Lesch
for inspiring discussions on
adiabatic limits and variational formulas for \etainvts
on manifolds with boundary.
We thank Katrin Wendland, Emanuel Scheidegger, Jesus Martinez Garcia
and Stefan Kebekus for conversations about K3 surfaces and Fano threefolds.
We would like to thank the Simons foundation for its support of their
research under the Simons Collaboration on ``Special Holonomy in Geometry,
Analysis and Physics'' (grants \#488617, Sebastian Goette, and \#488631, Johannes
Nordstr\"om).
We gratefully acknowledge support from the Simons Center for Geometry and Physics, Stony Brook University at which some of the research for this paper was carried out.

\vspace{0mm plus 2mm}

\section{Extra-twisted connected sums}\label{sec:xtcs}

We generalise the twisted connected sum construction of~\cite{Kovalev,CHNP2}
by allowing twisted products of asymptotically cylindrical Calabi-Yau
manifolds with circles.
This approach was already employed in~\cite{CGN},
where we considered products twisted by an involution (on one or both sides).
Here, we allow twists by more general finite cyclic groups.

\subsection{The gluing construction}\label{subsec:glue}
\label{2.a}

Let~$V_\pm$
be asymptotically cylindrical Calabi-Yau manifolds
of complex dimension~$3$, and assume that their ends are asymptotic
to~$\Sigma_\pm\times S^1_{\lnn_\pm}\times(0,\infty)$,
where~$\Sigma_\pm$ are K3 surfaces,
and~$S^1_{\lnn_\pm} = \R/\lnn_\pm \Z$.
The Calabi-Yau structure on $V_\pm$ can be described in terms of a pair
$(\Omega_\pm, \omega_\pm)$, where $\Omega_\pm$ is a complex 3-form (holomorphic with respect to the complex structure) and $\omega_\pm$ is a Kähler form,
normalised so that 
$8\omega_\pm^3 = 6\Omega_\pm \wedge \overline \Omega_\pm$.
Along the cylindrical end $\Sigma_\pm\times S^1_{\zeta_\pm}\times(0,\infty)$,
the asymptotic limits of $\Omega_\pm$ and $\omega_\pm$ are of the form
\[ \Omega_\pm:= (d\anglen_\pm - i dt_\pm) \wedge (\omega^J_\pm + i \omega^K_\pm), \qquad
\omega:= dt_\pm \wedge d\anglen_\pm + \omega^I_\pm \]
respectively, where $t_\pm$ is the coordinate on the $(0,\infty)$ factor,
$\anglen_\pm$ is the coordinate on $S^1_{\lnn_\pm} = \R/\lnn_\pm \Z$, and
the triple $(\omega^I_\pm, \omega^J_\pm, \omega^K_\pm)$
defines a \hk structure on $\K_\pm$.
Such a Calabi-Yau structure induces a metric $g_{V_\pm}$ of holonomy $SU(3)$
whose asymptotic limit is of the form $dt_\pm^2 + d\anglen_\pm^2 + g_{\K_\pm}$,
where $g_{\K_\pm}$ is a metric of holonomy $SU(2)$ induced by the
\hk structure. Note in particular that the circumference of the circle factor
in the asymptotic cylinder is $\lnn_\pm$.

\begin{rmk}
\label{rem:pi1}
The condition on the cylindrical ends forces $V_\pm$ to be simply-connected by
\mbox{\cite[Theorem A]{hhn}}.
\end{rmk}

Letting $S^1_{\lnx_\pm} = \R/\lnx_\pm \Z$ and denoting its coordinate by
$\anglex_\pm$ , we can define a product torsion-free
\gtstr{} on $V_\pm \times S^1_{\lnx_\pm}$ by
\[ \varphi_\pm = \Re \Omega_\pm + d\anglex_\pm \wedge \omega_\pm . \]
Then $\varphi_\pm$ defines the metric $d\anglex_\pm^2 + g_{V_\pm}$, with
holonomy contained in $G_2$. Note that the exterior circle factor has
circumference $\lnx_\pm$. The asymptotic limit of $\varphi_\pm$ has the form
\begin{equation}
\label{eq:g2limits} 
d\anglex_\pm\wedge dt_\pm \wedge d\anglen_\pm + d\anglex_\pm\wedge\omega_\pm^I +
d\anglen_\pm\wedge\omega_\pm^J + dt_\pm \wedge \omega_\pm^K .
\end{equation}

Now assume further that two groups~$\Gamma_\pm\cong\Z/k_\pm$ act %
on~$V_\pm$, preserving the Calabi-Yau structures (and in particular the
metrics), such that the actions on the
end are products of the trivial actions on~${\Sigma_\pm\times(0,\infty)}$
and free actions on~$S^1_{\lnn_\pm}$.
We extend the~$\Gamma_\pm$-action diagonally
to~${\widetilde M_\pm=V_\pm \times S^1_{\xi_\pm}}$,
such that~$\Gamma_\pm$ acts isometrically and freely on~$S^1_{\xi_\pm}$.
Then the torsion-free \gtstr{} $\varphi_\pm$ descends to the quotient
$M_\pm=\widetilde M_\pm/\Gamma_\pm$, which we can thus regard as an
asymptotically cylindrical $G_2$-manifold.
The cross-section of the asymptotic cylinder is the product~$X_\pm$ of $\K_\pm$
with a torus $(S^1_{\zeta_\pm}\times S^1_{\xi_\pm})/\Gamma_\pm$.

We now suppose that we have a suitable isometry between the cross-sections.
This isometry will necessarily be a product of isometries
\[ \tormat : (S^1_{\zeta_+}\times S^1_{\xi_+})/\Gamma_+ \to (S^1_{\zeta_-}\times S^1_{\xi_-})/\Gamma_- \]
and
\[ \hkr : \K_+ \to \K_- . \]
We require that the isometry
\begin{equation}
\label{eq:cylisom}
\begin{aligned}
\Sigma_+ \times (S^1_{\xi_+} \times S^1_{\zeta_+})/\Gamma_+ \times\R
   \; &\to \;
  \Sigma_- \times (S^1_{\xi_-} \times S^1_{\zeta_-})/\Gamma_- \times\R \\
(x, z, t) &\mapsto (\hkr(x), \tormat(z), -t)
\end{aligned}
\end{equation}
identifies the asymptotic limits~\eqref{eq:g2limits}.
In particular, exactly one of $\tormat$ and $\hkr$ is orientation-preserving.
Our convention is to require $\tormat$ to be orientation-reversing.
We will refer to $\tormat$ as a \emph{torus matching} and to $\hkr$
as a \emph{\hk rotation}.

A key feature of the construction is how the torus matching aligns the
external circle directions. As above, we denote by
\[ u_\pm\in\R/\zeta_\pm\Z, \qquad v_\pm\in\R/\xi_\pm\Z \]
the coordinates in
the direction of the interior and exterior circles respectively.
In~\cite[\eqref{eta-2.5}]{CGN}, the \emph{gluing angle}~$\thet$ was introduced
as the directed angle between the exterior circles under $\tormat$, so
\begin{align}
  \begin{split}\label{2.a.0}
    \del_{v_-}&=\cos\thet\,\del_{v_+}+\sin\thet\,\del_{u_+}\;,\\
    \del_{u_-}&=\sin\thet\,\del_{v_+}-\cos\thet\,\del_{u_+}\;.
  \end{split}
\end{align}
\enlargethispage{0.8\baselineskip}
The condition that~\eqref{eq:cylisom} preserves the asymptotic limits of the
\gtstr s is now equivalent to the following condition;
see~\cite[\S1.2]{xtcs}.

\begin{dfn}
Let $\K_\pm$ be K3 surfaces with \hk structures
$(\omega^I_\pm, \omega^J_\pm, \omega^K_\pm)$. Call a diffeomorphism
$\hkr : \K_+ \to \K_-$ a \emph{\hk rotation} with angle $\thet$ if
\begin{equation}
\label{eq:hkr}
\begin{aligned}
\hkr^*\omega^K_- &= - \omega^K_+ \\
\hkr^*(\omega^I_- + i \omega^J_-) &= e^{i\vartheta} (\omega^I_+ - i\omega^J_+) .
\end{aligned}
\end{equation}
\end{dfn}

Extend the cylindrical coordinate $t_\pm$ to a smooth function on all $V_\pm$
(taking negative values away from the cylindrical end), and let~$V_{\pm,\ell}$
be the truncation $\{\,x \in V_\pm\mid t_\pm(x) \leq 2\ell\,\}$.
Let~$\widetilde M_{\pm,\ell}=V_{\pm,\ell}\times S^1_{\xi_\pm}$ and
$M_{\pm,\ell}=\widetilde M_{\pm,\ell}/\Gamma_\pm$ %
and let~$\tilde X_\pm\cong\K_\pm\times S^1_{\zeta_\pm}\times S^1_{\xi_\pm}$
and~$X_\pm=\tilde X_\pm/\Gamma_\pm$ denote their boundaries.
For sufficiently large~$\ell$,
it is possible to obtain a new closed $G_2$-manifold~$M_\ell$ (with an
approximately cylindrical neck region of length $4\ell$)
by gluing~$M_{+.\ell}$ and~$M_{-,\ell}$ along their boundaries using
a diffeomorphism~$X_+\cong X_-$.
This procedure is described in detail in~\cite{xtcs} following
the ideas in~\cite{Kovalev, CHNP2}.
Let us summarise.

\begin{thm}
\label{thm:glue}
Given a pair of ACyl Calabi-Yau 3-folds $V_\pm$
with asymptotic cross-sections $\K_\pm \times S^1_{\zeta_\pm}$
and automorphisms $\Gamma_\pm$,
a torus matching
$\tormat : (S^1_{\zeta_+}\times S^1_{\xi_+})/\Gamma_+ \to
 (S^1_{\zeta_-}\times S^1_{\xi_-})/\Gamma_-$ and a \hk rotation
$\hkr : \K_+ \to \K_-$ with angle $\thet$ equal to the gluing
angle of $\tormat$, the manifold $M_\ell$ above admits torsion-free \gtstr s.
Moreover, the holonomy $G_2$ metrics in $M_\ell$ can be taken to be close
to the metrics on the two halves in the sense explained in \S \ref{ssec:gell}.
\end{thm}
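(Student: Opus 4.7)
The plan is to follow the scheme of Kovalev~\cite{Kovalev} and Corti--Haskins--Nordstr\"om--Pacini~\cite{CHNP2}, adapted to the $\Gamma_\pm$-quotient setting. First I would construct a closed $G_2$-structure $\varphi_\ell$ on $M_\ell$ with small torsion, and then appeal to Joyce's analytic perturbation theorem to deform it to a torsion-free \gtstr. The role of the matching data $(\tormat,\hkr)$ with common gluing angle $\thet$ is precisely to align the asymptotic limits of $\varphi_+$ and $\varphi_-$: substituting~\eqref{2.a.0} and~\eqref{eq:hkr} into~\eqref{eq:g2limits} shows that the pullback of $\varphi_-$ under~\eqref{eq:cylisom} has the same leading-order expression $\varphi_\infty$ as $\varphi_+$. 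Using this diffeomorphism to identify collar neighbourhoods of $\partial M_{+,\ell}$ and $\partial M_{-,\ell}$ then promotes $M_\ell$ to a smooth closed 7-manifold.

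Next, since $V_\pm$ are asymptotically cylindrical, exponential decay of $\omega_\pm$ and $\Omega_\pm$ to their cylindrical models produces primitives $\eta_\pm$ on the end with $\varphi_\pm = \varphi_\infty + d\eta_\pm$ and with $\eta_\pm$ and all its derivatives decaying like $e^{-\delta|t|}$ for some $\delta>0$; these descend to $M_\pm$ because $\Gamma_\pm$ preserves the Calabi--Yau structure and acts freely on the exterior circle factor. Choosing cutoff functions with $\chi_\pm\equiv 1$ outside the neck and $\chi_+ + \chi_- = 1$ on the overlap region $\{\ell < |t| < \ell+1\}$, I set
\[ \varphi_\ell := \varphi_\infty + d(\chi_+\eta_+ + \chi_-\eta_-), \]
which is closed, coincides with $\varphi_\pm$ outside the neck, and is $C^0$-close to $\varphi_\infty$ inside. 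For $\ell$ large enough $\varphi_\ell$ is pointwise non-degenerate, hence a \gtstr{} on all of $M_\ell$. Its torsion (the failure of $d{\ast_\ell}\varphi_\ell$ to vanish) is supported in the neck and is $O(e^{-\delta\ell})$ in any fixed weighted Sobolev norm.

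The final step is to invoke Joyce's theorem deforming a closed $G_2$-structure of sufficiently small torsion to a torsion-free one, in the long-neck version used in~\cite{Kovalev,CHNP2}: for $\ell$ large enough there exists a 2-form $\sigma_\ell$ such that $\varphi_\ell + d\sigma_\ell$ is torsion-free. The main obstacle is establishing the uniform analytic estimates on the linearised operator as $\ell\to\infty$; this was carried out in the untwisted twisted-connected-sum setting in~\cite{Kovalev,CHNP2}, and the adaptation to the present quotient setting is routine because $\Gamma_\pm$ acts freely on the exterior circles (so $M_\pm$ are genuine smooth manifolds) and isometrically on $V_\pm$ preserving the Calabi--Yau data (so all constants in the estimates descend from the covers without loss).
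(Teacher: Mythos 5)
Your proposal is correct and follows essentially the same route that the paper uses: the paper itself does not give a self-contained proof but defers to \cite{xtcs}, which in turn adapts the Kovalev/CHNP2 gluing-plus-Joyce-perturbation argument to the $\Gamma_\pm$-quotient setting, exactly as you outline. Your key observation — that freeness of $\Gamma_\pm$ on the exterior circles makes $M_\pm$ smooth and the $\Gamma_\pm$-invariance of the ACyl Calabi--Yau data lets the exponential-decay estimates and the uniform bounds on the linearised operator descend from the covers — is precisely what makes the standard machinery apply without change.
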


Note that after gluing, $M_+$ and~$M_-$ induce opposite orientations
on the 2-torus.
Note that~$\thet$ is invariant under swapping the roles of~$M_+$ and~$M_-$.
The angle between the interior circles is~$\pi-\thet$;
see Figure~\ref{Fig2g.1}.

For further discussion of how extra-twisted connected sums using different
data can be essentially the same see Proposition~\ref{prop:isom}.

\begin{rmk}\label{rmk:spinstr} %
  The $G_2$-structure on~$M$ defines a unique spin structure on~$M$
  that we need for the analytic description of the \nuinvt.
  Its restriction to~$M_\pm$ is the spin structure induced
  by the $SU(3)$-structure,
  and hence by the Calabi-Yau structure on~$V_\pm$.

  Because~$\Gamma_\pm$ preserves the Calabi-Yau structure on~$V_\pm$,
  it acts canonically on the associated
  complex spinor bundle~$SV_\pm\cong\Lambda^{0,\bullet}T^*V_\pm$.
  The spinor bundle on~$M_\pm$ that is induced by the $G_2$-structure
  then satisfies~$SM_\pm\cong p^*SV_\pm/\Gamma_\pm$.
  On the cylinder~$\Sigma_\pm\times(0,\infty)
  \times(S^1_{\zeta_\pm}\times S^1_{\xi_\pm})/\Gamma_\pm$,
  it is isomorphic to the pullback of the direct sum
  of two copies of~$S\Sigma_\pm$.
\end{rmk}

\subsection{Setting up the matching problem}

Understanding the possible torus matchings
${\tormat\colon (S^1_{\zeta_+}\times S^1_{\xi_+})/\Gamma_+ \to
(S^1_{\zeta_-}\times S^1_{\xi_-})/\Gamma_-}$ for given values of
$k_\pm = \#\Gamma_\pm$---and in particular the possible gluing
angles $\thet$---is essentially a combinatorial problem, which will be
discussed in the next subsection and in Subsection~\ref{subsec:combinatorics}.
Given a torus matching, Theorem~\ref{thm:glue} raises the question of how to
find pairs of ACyl Calabi-Yau 3-folds with automorphisms and a \hk rotation of
the correct angle $\thet$ between the K3 surfaces in the asymptotic
cross-section. We now explain how this question can be reduced to complex
algebraic geometry, as in~\cite[\S6]{CHNP2} and~\cite[\S6]{xtcs}.

\begin{dfn}
\label{def:block}
Let $Z$ be a non-singular algebraic 3-fold and $\Sigma \subset Z$ a
non-singular K3 surface. Let $N$ be the image of $H^2(Z) \to H^2(\K)$.
We call $(Z,\K)$ a \emph{building block} if 
\begin{enumerate}[leftmargin=*]
\item the class in $H^2(Z)$ of the anticanonical line bundle $-K_Z$
is indivisible,
\item
$\K\in |{-}K_Z|$ (\ie $\Sigma$ is an anticanonical divisor),
and there is a  
projective morphism $f\colon Z\to \PP^1$ with $\K=f^\star (\infty)$,
\item The inclusion $N\hookrightarrow H^2(\K)$ is primitive, that is,
  $H^2(\K)/N$ is torsion-free.
\label{N:primitive}
\item The group $H^3(Z)$---and thus also $H^4(Z)$---is torsion-free.
\end{enumerate}
We call $N$, equipped with the restriction of the intersection form on
$H^2(\K)$, the \emph{polarising lattice} of the block.
(Because $H^{2,0}(Z)$
is automatically trivial, $N \subseteq H^{1,1}(\K)$ \cite[Lemma 3.6]{CHNP2},
so that $\K$ is `$N$-polarised'.)

If $\Gamma$ is a group acting faithfully on $Z$ by biholomorphisms that fix
$\K$ pointwise then we call $(Z, \K, \Gamma)$ a \emph{building block with
automorphisms}. ($\Gamma$ is then necessarily cyclic.)
\end{dfn}

Given such a $(Z, \K)$, \cite[Theorem D]{hhn} gives the existence of ACyl
Calabi-Yau structures on $V := Z \setminus \K$, and it is easy to see that
$\Gamma$ restricts to isomorphisms of these structures.

Rather than to look for a \hk rotation for a given pair of ACyl Calabi-Yau
structures, it is easier to first choose the diffeomorphism
$\hkr : \K_+ \to \K_-$ satisfying obvious necessary conditions in terms of
cohomology classes and then find Calabi-Yau structures that make $\hkr$ a \hk
rotation. Recall that the \emph{period} of a complex K3 surface $\K$ is the
positive-definite 2-plane $\Pi \subset H^2(\K; \R)$ spanned by the
real and imaginary parts of elements of $H^{2,0}(\K;\C)$.

\begin{dfn}
Let $(Z_\pm, \K_\pm)$ be a pair of building blocks, and let 
$\Pi_\pm \subset H^2(\K_\pm; \R)$ be the periods. Call a diffeo\-morphism
$\hkr : \K_+ \to \K_-$ a \emph{K3 matching} with angle $\thet$ if
there are Kähler classes $\kclass_\pm \in H^2(Z_\pm; \R)$
such that (with respect to the intersection form)
the angle between $\hkr^*(\kclass_-)$ and $\Pi_+$ is $\thet$,
the angle between $(\hkr^{-1})^*(\kclass_+)$ and $\Pi_-$ is $\thet$
and $\Pi_+ \cap \hkr^*\Pi_-$ is non-trivial.
\end{dfn}

It is easy to see that the ACyl Calabi-Yau structures of~\cite[Theorem D]{hhn}
can be chosen to ensure that a given K3 matching is a \hk rotation;
see~\cite[Theorem 1.1 and Lemma~6.2]{xtcs}.
 
\begin{thm}
\label{thm:match_to_hkr}
Given $\zeta_\pm > 0$, blocks $(Z_\pm, \K_\pm)$ and a K3 matching
$\hkr : \K_+ \to \K_-$ with angle~$\thet$, there exist ACyl Calabi-Yau
structures $(\Omega_\pm, \omega_\pm)$ on
$V_\pm := Z_\pm \setminus \K_\pm$ with asymptotic limit
\[ \left((du_\pm - i dt_\pm) \wedge (\omega^J_\pm + i \omega^K_\pm),
dt_\pm \wedge du_\pm + \omega^I_\pm \right) \]
on $\K_\pm \times S^1_{\zeta_\pm}$ (in particular, the circumference of the
$S^1$ factor with respect to the induced metric is $\zeta_\pm$),
such that $\hkr$ is an angle $\thet$ \hk rotation
of the \hk structures $(\omega^I_\pm, \omega^J_\pm, \omega^K_\pm)$.
\end{thm}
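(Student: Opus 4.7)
The plan is to invoke the existence theorem for ACyl Calabi--Yau structures on building blocks and show that the cohomological data packaged in a K3 matching already guarantees the differential-geometric conditions of a hyper-Kähler rotation. First I would recall from~\cite[Theorem D]{hhn} that for each block $(Z_\pm,\K_\pm)$ and any choice of Kähler class in the Kähler cone of $Z_\pm$ there is an ACyl Calabi--Yau structure $(\Omega_\pm,\omega_\pm)$ on $V_\pm=Z_\pm\setminus\K_\pm$ whose asymptotic limit has the stated form, and whose induced \hk triple $(\omega^I_\pm,\omega^J_\pm,\omega^K_\pm)$ on $\K_\pm$ has $[\omega^I_\pm]$ equal to the restriction of the chosen Kähler class and has period plane $\langle[\omega^J_\pm],[\omega^K_\pm]\rangle$ equal to the period $\Pi_\pm$ of $\K_\pm$ as the anticanonical divisor of $Z_\pm$.

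Second, I would make the specific choice of Kähler classes $\kclass_\pm$ supplied by the K3 matching hypothesis. The conditions of Definition of a K3 matching translate directly into statements about the positive definite 3-plane in $H^2(\K_\pm;\R)$ spanned by the triple $(\omega^I_\pm,\omega^J_\pm,\omega^K_\pm)$: the non-triviality of $\Pi_+\cap\hkr^*\Pi_-$ produces a common direction which, after a normalisation, can be taken as $[\omega^K_+]=-\hkr^*[\omega^K_-]$; and the two angle conditions force $\hkr^*[\omega^I_-]$ (respectively $(\hkr^{-1})^*[\omega^I_+]$) to lie in the $\thet$-rotated position within the 3-plane relative to $[\omega^I_+]$ (respectively $[\omega^I_-]$). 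A short linear-algebra check on this 3-plane shows these cohomological identities are precisely the real and imaginary parts of the \hk rotation equations~\eqref{eq:hkr}, at the level of cohomology classes.

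Third, I would upgrade the cohomological identities to identities of differential forms by exploiting the remaining freedom in choosing the ACyl Calabi--Yau structures: for a fixed complex structure on $\K_\pm$ and a fixed Kähler class, the \hk structure is rigid up to rescaling, and the holomorphic 2-form is determined up to a multiplicative complex constant. Adjusting these normalisations --- this is exactly the content of~\cite[Lemma 6.2]{xtcs} --- we can arrange that $\hkr^*\omega^K_-=-\omega^K_+$ on the nose and that the phase relation $\hkr^*(\omega^I_-+i\omega^J_-)=e^{i\thet}(\omega^I_++i\omega^J_+)$ holds as an identity of forms, not merely of classes. Finally, the circumferences $\zeta_\pm$ of the asymptotic circles are free rescalings and can be set to any prescribed positive value without affecting the other data.

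The main obstacle is bookkeeping rather than new analysis: one has to verify that the angle conventions in the definition of a K3 matching (angle between a vector and a 2-plane in a 3-dimensional positive definite space) agree with those in the definition of a \hk rotation (rotation angle in the $(\omega^I,\omega^J)$-plane), and that the two orientation choices built into both definitions are mutually consistent. Once this is done the construction is essentially a direct translation, and the existence claim follows from~\cite[Theorem 1.1]{xtcs}.
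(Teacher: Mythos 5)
Your proposal takes the same route as the paper, which likewise constructs the ACyl Calabi--Yau structures from~\cite[Theorem~D]{hhn} and defers the translation from K3 matching to hyper-K\"ahler rotation to~\cite[Theorem~1.1, Lemma~6.2]{xtcs}. One transcription slip worth noting: the hyper-K\"ahler rotation identity~\eqref{eq:hkr} reads $\hkr^*(\omega^I_- + i\omega^J_-) = e^{i\thet}(\omega^I_+ - i\omega^J_+)$, with a \emph{minus} sign on $i\omega^J_+$, and this sign (together with $\hkr^*\omega^K_-=-\omega^K_+$) records the orientation reversal on the positive-definite $3$-plane that the linear-algebra check in your second step must account for.
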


Thus given a K3 matching of two building blocks with automorphism
and a torus matching with the corresponding $k_+$, $k_-$ and gluing angle
$\thet$, we can find ACyl Calabi-Yau structures so that Theorem~\ref{thm:glue}
can be applied to build a \gtmfd.

Given a pair of blocks, there is no reason to expect to be able to find
any K3 matchings at all. However, if we consider the sets of complex
deformations of a pair of blocks, one can in many cases guarantee that there
exist some elements of each of the two sets that admit a K3 matching, and
moreover control the topology of the resulting \gtmfd. This will be discussed
further in \S\ref{3.c}.

\subsection{Isometries of quotients of rectangular tori}\label{2.w}

In this section, we analyse how to find torus
matchings~$\tormat$ in the sense of~\S\ref{2.a}.
We will describe the tori~$T_\pm$ and~$\tormat\colon T_+\to T_-$
by two units~$\eps_\pm\in\Z/k_\pm$
and a \emph{gluing matrix\/}~$G=\psmatrix{\gll&p\\\glb&\sglr}$.
These data satisfy some additional properties that make it easy
to enumerate all possibilities for~$(G,\eps_+,\eps_-)$,
given numbers~$k_+$, $k_-\ge 1$, see Proposition~\ref{Prop2w.1} below.

\newcommand\matchingfigure{
\begin{figure}
    \begin{tikzpicture}
      \fill[color=lightgray] (0,0) -- (3,0) -- (4,1.414) -- (1,1.414) -- cycle ;
      \begin{scope}[color=lightgray]
        \draw (-1.3,0) -- (4.3,0) ;
        \draw (0,1.414) node[left,color=black] {$\gll\!$} -- (4,1.414)
        -- (4,0) node[below,color=black] {$\glb$} ;
        \draw (0,2.828) node[right,color=black] {$p$} -- (-1,2.828)
        -- (-1,0) node[below,color=black] {$\sglr$} ;
      \end{scope}
      \draw (0,0) rectangle (3,4.242) ;
      \draw (0,0) -- (4,1.414) -- (3,4.242) -- (-1,2.828)  -- cycle ;
      \begin{scope}[color=red,line width=1.5pt,->]
        \draw (0,0) -- (0,1) node[right,color=black] {$\del_{v_+}$} ;
        \draw (0,0) -- (0.943,0.333)
	node[above,color=black] {$\del_{v_-}$} ;
      \end{scope}
      \begin{scope}[color=blue,line width=1.5pt,->]
        \draw (0,0) -- (1,0) node[below,color=black] {$\del_{u_+}$} ;
        \draw (0,0) -- (-0.333,0.943)
	node[below left,color=black] {$\del_{u_-}\!\!\!$} ;
      \end{scope}
      \draw[line width=0.8pt,->] (0,0.7) arc (90:19.4:0.7) ;
      \draw (0,0) ++(54:0.4) node {$\thet$} ;
      \fill (-1,2.828) circle(1.5pt) node[left] {$\lambda_-$}
      ++(1,1.414)  circle(1.5pt) node[left] {$\mu_+$} ;
      \fill (0,0)  circle(1.5pt)
      ++(1,1.414)  circle(1.5pt)
      node[above] {$\nu_+$} %
      ++(1,1.414)  circle(1.5pt)
      ++(1,1.414)  circle(1.5pt) ;
      \fill (3,0) circle(1.5pt) node[below] {$\lambda_+$}
      ++(1,1.414)  circle(1.5pt) node[right] {$\mu_-$} ;
    \end{tikzpicture}
  \caption{Fundamental domains of $T$ and~$\widetilde T_\pm$.}\label{Fig2g.1}
\end{figure}
}

We consider~$M_+=\widetilde M_+/\Gamma_+$
with covering space~$\widetilde M_+=V_+\times S^1_{\xi_+}$
and~$\Gamma_+=\Z/k_+$.
The asymptotic cross-section of the covering space is isometric to a product
\begin{equation*}
  \del\widetilde M_+\cong\Sigma_+\times\widetilde T_+\qquad\text{with}\qquad
  \widetilde T_+\cong S^1_{\zeta_+}\times S^1_{\xi_+}\;,
\end{equation*}
where~$\Sigma_+$ is a K3 surface and~$\zeta_+$, $\xi_+$ are the lengths
of the interior and exterior circle, respectively.

By a \emph{torus matching} we refer to the following data:
numbers~$k_\pm\ge 1$,
actions of $\Gamma_\pm = \Z/k_\pm$ on
$\widetilde T_\pm = S^1_{\zeta_\pm}\times S^1_{\xi_\pm}$ that are free on both
factors, and an orientation-reversing
isomorphism~$\tormat\colon\widetilde T_+/\Gamma_+ \to \widetilde T_-/\Gamma_-$
of flat tori,
such that there exist lengths~$\zeta_+$, $\xi_+$, $\zeta_-$, $\xi_- > 0$
for which~$\tormat$ becomes an isometry.
We consider two torus matchings to be equivalent if there exist (linear)
isomorphisms of the respective tori that map exterior circles to exterior
circles, interior circles to interior circles,
and that intertwine the actions of~$\Gamma_\pm$ and~$\tormat$
(we consider other symmetries in Proposition~\ref{prop:isom}).
It is clear that using torus matchings that are equivalent in this sense
in Theorem~\ref{thm:glue} yields $G_2$ metrics related by deformation.

Equip~$\R^2\cong\C$ with the standard Euclidean metric.
We choose~$\zeta_+$, $\xi_+$, $\zeta_-$, $\xi_- > 0$ as above
and represent the torus~$\widetilde T_+$ isometrically as~$\C/\tilde\Lambda_+$,
where~$\tilde\Lambda_+\subset\C$ is the lattice
with orthogonal basis~$(\mu_+,\lambda_+)=(i\xi_+,\zeta_+)$.

We assume that~$\Gamma_+\cong\Z/k_+$ acts on~$\widetilde T_+=\C/\tilde\Lambda_+$
such that the action on both circles is free.
(If the action on the exterior circle was not free,
then the quotient~$M_+$ would be an orbifold. If the action on the interior
circle had a kernel~$\Gamma_{+0}$, then
we could reduce the exterior circle to the quotient~$S^1_{\xi_+}/\Gamma_{+0}$
without changing~$M_+$,
so we do not have to consider this situation.)
We fix a generator that rotates the exterior circle
by the angle~$\frac{2\pi}{k_+}$.
If~$k_+\ge 2$, its action on the interior
circle is given by~$\frac{2\pi\eps_+}{k_+}$ for some $\eps_+ \in \Z$.
For the moment we only care about the residue $\eps_+ \in \Z/k_+$,
which is uniquely defined.
The requirement that the action on the interior circle is
free means $\eps_+$ is coprime to $k_+$, in other words~$\gcd(\eps_+,k_+)=1$.
We represent~$T=\widetilde T_+/\Gamma_+$ by the lattice
\[ \Lambda = \biggl\{ \frac{m \lambda_+ + n \mu_+}{k_+}\biggm|
m \equiv \eps_+ n \!\!\pmod{k_+} \biggr \} . \]
This is sketched in Figure~\ref{Fig2g.1} for~$k_+=3$ and~$\eps_+=1$.
A fundamental domain for~$\Lambda$ is shaded.

\matchingfigure

Represent $T_-=\widetilde T_-/\Gamma_-$ similarly, and define $\eps_- \in\Z/k_-$
with~$\gcd(\eps_-,k_-)=1$ analogously.
\begin{comment}
Sometimes it is also convenient to consider the multiplicative
inverse $\inveps \in (\Z/k_-)^\times$ (if~$k_-=1$, we put~$\inveps=0$);
then there is a generator of~$\Gamma$ that rotates the internal circle
by $\frac{2\pi}{k_-}$,
and the external circle by $\frac{2\inveps\pi}{k_-}$.
\end{comment}
The isometry $\tormat\colon\widetilde T_+/\Gamma_+ \to\widetilde T_-/\Gamma_-$
determines a sublattice~$\tilde\Lambda_-\subset\Lambda$ such
that~$\widetilde T_-\cong\C/\tilde\Lambda_-$.
Let~$(\mu_-,\lambda_-)$ denote a basis of~$\tilde\Lambda_-$,
where~$\lambda_-$ and~$\mu_-$ correspond
to the interior and exterior circle as above.
We represent this basis as
\begin{equation}\label{2.w.6}
  (\mu_-,\lambda_-)=\frac1{k_+}\cdot(\mu_+,\lambda_+)\cdot
  \begin{pmatrix}\gll&p\\\glb&\sglr\end{pmatrix} .
\end{equation}
Then $\psmatrix{\gll&p\\\glb&\sglr}\in M_2(\Z)$ because
$\tilde \Lambda_- \subseteq \frac{1}{k_+} \tilde \Lambda_+$, 
and we call~$G=\psmatrix{\gll &p\\\glb&\sglr}$ the \emph{gluing matrix.}
This is equivalent to the coordinate description of $\tormat : T^2_+ \to T^2_-$ in~\eqref{eq1:tormat_coords} (note that the coordinates used there are
related to those in~\S\ref{2.a} by $\anglen_\pm = \lnn_\pm \bar \anglen_\pm$
and $\anglex_\pm = \lnx_\pm \bar \anglex_\pm$).
In Figure~\ref{Fig2g.1},
we have~${k_-=3}$, $\eps_-=1$, and the gluing matrix
is~$\bigl(\begin{smallmatrix}1&2\\4&-1\end{smallmatrix}\bigr)$;
see entry~209 in Table~\ref{table:matchings}.

In summary, we can associate to a torus matching the following combinatorial
data that is clearly invariant under our notion of equivalence
\begin{itemize}
\item $k_+$ and $k_-$
\item the gluing matrix~$G$
\item the residue classes of~$\eps_+$ in~$\Z/k_+$
  and of~$\eps_-$ in~$\Z/k_-$,
  where~$\gcd(\eps_+,k_+)=\gcd(\eps_-,k_-)=1$.
\end{itemize}
For the construction, we also need the more geometric data
\begin{itemize}
\item the angle $\thet$ between the exterior circle directions
\item the ratios $\frac{\xi_+}{\xi_-}$ and $\ar_\pm=\frac{\xi_\pm}{\zeta_\pm}$
\end{itemize}
\noindent
which are not obviously invariant.
However, among the (selection of) compatibility conditions that we now show,
we see that the angle~$\thet$ is in fact also determined by the equivalence
class of the torus matching, and that if $\thet\notin\frac\pi2\,\Z$,
then the aspect ratios~$\frac{\xi_\pm}{\zeta_\pm}$ and~$\frac{\xi_+}{\xi_-}$
are as well.

\begin{prop}\label{Prop2w.1}
  \begin{enumerate}
  \item\label{it:necc}
    The data of a torus matching satisfies the following relations.
    \begin{equation}
      \label{eq:det}
      \det \begin{pmatrix}\gll&p\\\glb&\sglr\end{pmatrix}= -k_-k_+,
    \end{equation}
    \begin{subequations}
      \label{eq:epses}
      \begin{align}
        \label{eq:epses+}
        \eps_+\gll -\glb &\equiv \eps_+p +\glr \equiv 0 \mod k_+ \\
        \label{eq:epses-}
        \eps_- \glr - \glb &\equiv \eps_- p + \gll \equiv 0 \mod k_-
      \end{align}
    \end{subequations}
    \begin{subequations}\label{eq:gcd}
      \begin{align}
        \label{eq:gcd+}
        \gcd\Bigl(\frac{\glb-\eps_+\gll}{k_+},\gll\Bigr)
        &=\gcd\Bigl(\frac{\glr+\eps_+p}{k_+},p\Bigr) =\gcd(\eps_+,k_+)= 1,\\
        \label{eq:gcd-}
        \gcd\Bigl(\frac{\glb-\eps_-\glr}{k_-},\glr\Bigr) &=
        \gcd\Bigl(\frac{\gll+\eps_-p}{k_-},p\Bigr)
        = \gcd(\eps_-,k_-)=1,
      \end{align}
    \end{subequations}
  \item\label{2w.1c}
    Either~$\gll=\glr=0$,
    or~$p=\glb=0$, or~$\frac{\glb\glr}{p\gll}>0$
    and~$\ar_+=\frac{\xi_+}{\zeta_+}=\sqrt{\frac{\glb\glr}{p\gll}}$.
    In the latter case,
    we also have~$\zeta_-=\sqrt{\frac{\glr k_-}{\gll k_+}}\,\zeta_+$,
    $\xi_-=\sqrt{\frac{\glb k_-}{pk_+}}\,\zeta_+$,
    and~$\ar_-=\frac{\xi_-}{\zeta_-}=\sqrt{\frac{\glb\gll}{p\glr }}$.
  \item\label{2w.1d} 
    The gluing angle~$\thet$ is given as
    \begin{equation*}
      \thet=\arg\bigl(\gll\ar_+ + i\glb\bigr)\in(-\pi,\pi]\;.
    \end{equation*}
    In particular, $\thet\in(0,\pi)$ if and only if~$\glb>0$,
    and~$\cos\thet=\sign(\gll )\sqrt{\frac{\gll \glr }{k_+k_-}}$.
\end{enumerate} 
\end{prop}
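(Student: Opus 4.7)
My plan is to exploit that $\tilde\Lambda_+$ and $\tilde\Lambda_-$ sit as finite-index sublattices of the lattice $\Lambda$ of the common quotient torus $T=\tilde T_+/\Gamma_+\cong\tilde T_-/\Gamma_-$, of indices $k_+$ and $k_-$ respectively. All the assertions of~(i) are encodings of this lattice geometry in the chosen bases, while~(ii) and~(iii) reduce to direct computations from~\eqref{2.w.6}.

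For~(i), I would combine~\eqref{2.w.5} and~\eqref{2.w.6} to write $(\mu_-,\lambda_-)=(\nu_+,\lambda_+)A$ in the basis $(\nu_+,\lambda_+)$ of $\Lambda$, where
\[ A=\psmatrix{k_+ & 0\\-\eps_+ & 1}\cdot\tfrac1{k_+}\psmatrix{m & p\\n & q}=\psmatrix{m & p\\(n-\eps_+m)/k_+ & (q-\eps_+p)/k_+}. \]
The inclusion $\tilde\Lambda_-\subset\Lambda$ is equivalent to integrality of $A$, yielding~\eqref{eq:epses+}, while $|\det A|=[\Lambda:\tilde\Lambda_-]=k_-$ combined with the orientation-reversal of $\tormat$ forces $\det A=-k_-$, hence~\eqref{eq:det}. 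Running the analogous argument from the opposite side, using the inverse relation $(\mu_+,\lambda_+)=\tfrac1{k_-}(\mu_-,\lambda_-)\psmatrix{-q & p\\n & -m}$, yields~\eqref{eq:epses-}. The gcd conditions~\eqref{eq:gcd} amount to primitivity of $\mu_-,\lambda_-$ (respectively $\mu_+,\lambda_+$) as elements of~$\Lambda$, which I would prove geometrically: since $\Gamma_-$ acts freely on each circle factor of $\tilde T_-$ and $\gcd(\eps_-,k_-)=1$, an elementary calculation shows that both circle factors project injectively to $T_-\cong T$, so they represent simple closed geodesics in $T$ and hence correspond to primitive lattice vectors in $\Lambda$.

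For~(ii) and~(iii), writing out~\eqref{2.w.6} in $\C$ gives $\mu_-=\tfrac{\zeta_+}{k_+}(n+im s_+)$ and $\lambda_-=\tfrac{\zeta_+}{k_+}(q+ip s_+)$. Orthogonality $\Re(\bar\mu_-\lambda_-)=0$ becomes $mp\,\xi_+^2+nq\,\zeta_+^2=0$, and case analysis combined with $\det A\neq 0$ produces the trichotomy of~(ii); in the generic case, solving for $s_+^2$ and then using $|\mu_-|=\xi_-$, $|\lambda_-|=\zeta_-$ together with~\eqref{eq:det} delivers the stated formulas for $\zeta_-,\xi_-,s_-$. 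For~(iii), $\partial_{v_-}=\mu_-/|\mu_-|$ has argument $\arg(n+im s_+)$, whereas~\eqref{2.a.0} expresses $\partial_{v_-}$ as $\sin\thet+i\cos\thet$ of argument $\tfrac\pi2-\thet$; equating arguments and using that multiplication by~$i$ rotates by $\tfrac\pi2$ gives the equivalent form $\thet=\arg(ms_++in)$. The characterisation $\thet\in(0,\pi)\Leftrightarrow n>0$ and the formula $\cos\thet=\sign(m)\sqrt{-qm/(k_+k_-)}$ then follow directly from the length formulas of~(ii).

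The main subtlety I anticipate is tracking orientations carefully enough in the determinant step to obtain $\det A=-k_-$ rather than $+k_-$: this requires being precise about how the orientations of $\tilde T_\pm$ are inherited from the complex orientation of $\C$ and how the resulting conventions are consistent with~\eqref{2.a.0}. The remaining arguments are essentially linear algebra of lattices together with one geometric observation about the embedding of circle factors into the quotient torus.
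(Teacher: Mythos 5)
Your proposal is correct and follows essentially the same route as the paper's proof: writing $(\mu_-,\lambda_-)$ in the basis $(\nu_+,\lambda_+)$ of $\Lambda$ to read off integrality (giving~\eqref{eq:epses+}) and primitivity (giving~\eqref{eq:gcd+}), inverting the gluing matrix for~\eqref{eq:epses-} and~\eqref{eq:gcd-}, orthogonality for the $s_+$ trichotomy, and a direct argument computation for the angle. The only cosmetic differences are that the paper derives~\eqref{eq:det} from a fibre-volume ratio rather than the index $[\Lambda:\tilde\Lambda_-]$, and phrases primitivity via freeness of the $\Gamma_-$-rotation action rather than your equivalent ``simple closed geodesic'' observation.
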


It will become apparent in the proof that
the conditions in~\ref{it:necc} are consequences of asking
that~\eqref{eq1:tormat_coords} really defines a
diffeomorphism~$\tormat : T^2_+ \to T^2_-$
of the quotients~$T_\pm=\widetilde T_\pm/\Lambda_\pm$,
while the condition
$p\glb \gll \glr \geq 0$ in~\ref{2w.1c} comes from demanding
that there exist choices~$\lnn_\pm, \lnx_\pm$ that make~$\tormat$ an isometry
(\cf Example \ref{ex:TorusMatch}(iii)).

In the proof of the proposition as well as in many of the later arguments in
the paper it is convenient to fix integers~$\eps_\pm$ in their residue classes.
Given $\eps_+ \in \Z$, we consider the basis
\begin{equation}\label{2.w.5}
  (\nu_+,\lambda_+)=(\mu_+,\lambda_+)
  \cdot\begin{pmatrix}\frac1{k_+}&0\\\frac{\eps_+}{k_+}&1\end{pmatrix}
  =\biggl(\frac{\eps_+\zeta_++i\xi_+}{k_+},\zeta_+\biggr)\;.
\end{equation}
of $\Lambda$. (We have included $\nu_+$ in Figure~\ref{Fig2g.1}, for the
choice $\eps_+ = 1 \in \Z$.)

Given $\eps_-\in\Z$, define $\nu_-$ similarly.
The bases~$(\nu_-,\lambda_-)$ and~$(\nu_+,\lambda_+)$ of~$T=T_+\cong T_-$
induce opposite orientations,
so they are related by a matrix~$\TorMat\in GL(2,\Z)\setminus SL(2,\Z)$
such that~$(\nu_-,\lambda_-)=(\nu_+,\lambda_+)\cdot\TorMat$.
This matrix describes~$\tormat$ in the given bases.
Combining~\eqref{2.w.5} and its analogue for~$T_-$ with~\eqref{2.w.6},
we see that
\begin{equation}\label{2.w.7}
  \TorMat=
  \begin{pmatrix}
    \frac{\gll+p\eps_-}{k_-}&p\\
    \frac{q-\eps_+\gll-\eps_-\glr-\eps_+\eps_-p}{k_-k_+}&
    -\frac{\glr+p\eps_+}{k_+}
  \end{pmatrix}\;.
\end{equation}
We note that the coefficients of~$\TorMat$ depend on our choice
of~$\eps_\pm\in\Z$ (since $\nu_\pm$ do).

\begin{proof}[Proof of Proposition \ref{Prop2w.1}]
  We deduce from~\eqref{2.w.7} that~$\det G=k_-k_+\,\det\TorMat$
  and obtain~\eqref{eq:det} because~$\TorMat$ reverses orientations.
  We represent~$\lambda_-$ and~$\mu_-$
  in the basis~\eqref{2.w.5} of~$\Lambda$.
  By~\eqref{2.w.6}, we get
  \begin{align}
    \begin{split}\label{eq:bases}
    (\mu_-,\lambda_-)
    &=\frac1{k_+}\cdot(\mu_+,\lambda_+)
    \cdot\begin{pmatrix}\frac1{k_+}&0\\\frac{\eps_+}{k_+}&1\end{pmatrix}
    \cdot\begin{pmatrix}k_+&0\\-\eps_+&1\end{pmatrix}
    \cdot\begin{pmatrix}\gll&p\\\glb&\sglr\end{pmatrix}\\
    &=(\nu_+,\lambda_+)
    \cdot\begin{pmatrix}\gll &p\\
    \frac{\glb-\eps_+\gll }{k_+}&\frac{\sglr -\eps_+p}{k_+}\end{pmatrix}\;.
    \end{split}
  \end{align}
  Because~$\tilde\Lambda_-\subset\Lambda$,
  the coefficients are integers, and~\eqref{eq:epses+} follows.

  The group~$\Gamma_-\cong\Lambda/\tilde\Lambda_-$
  again acts freely by rotations
  on both the interior and the exterior circle of~$\widetilde T_-$.
  Equivalently, the elements~$\lambda_-$ and~$\mu_-$
  corresponding to the factors~$S^1_{\zeta_-}$ and~$S^1_{\xi_-}$ are primitive
  in~$\Lambda$, which gives the first two gcd conditions in~\eqref{eq:gcd+}.
  The last condition in~\eqref{eq:gcd+} holds if and only if~$\Gamma_+$
  acts freely on~$\widetilde T_+$.

  If we swap the roles of~$T_+$ and~$T_-$,
  we similarly obtain~\eqref{eq:epses-} and~\eqref{eq:gcd-}.

  The vectors~$\lambda_-$, $\mu_-$ in~\eqref{2.w.6} are perpendicular
  with respect to the standard metric on~$\C\cong\R^2$ if and only if
  \begin{equation*}
     0=\frac{\gll p\xi_+^2-\glb\glr \zeta_+^2}{k_+^2}
    =\bigl(\gll p\ar_+^2-\glb\glr \bigr)\cdot\frac{\zeta_+^2}{k_+^2}\;,
  \end{equation*}
  and the conditions on~$\psmatrix{\gll &p\\\glb&\sglr }$ and~$\ar_+$ follow.
  The remaining claims in~\ref{2w.1c} follow because
  \begin{align*}
    \zeta_-&=\abs{\lambda_-}=\frac{\abs{\sglr +ip\ar_+}\zeta_+}{k_+}
    =\frac{\sqrt{\glr^2+p^2\ar_+^2}}{k_+}\,\zeta_+
    =\sqrt{\frac{\glr(\gll \glr+\glb p)}\gll}\,\frac{\zeta_+}{k_+}
    =\sqrt{\frac{\glr k_-}{\gll k_+}}\,\zeta_+\;,\\
    \xi_-&=\abs{\mu_-}=\frac{\sqrt{\glb^2+\gll^2\ar_+^2}}{k_+}\,\zeta_+
    =\sqrt{\frac{\glb(\glb p+\gll\glr)}p}\,\frac{\zeta_+}{k_+}
    =\sqrt{\frac{\glb k_-}{pk_+}}\,\zeta_+\;.
  \end{align*}

  In~\cite{CGN}, the gluing angle~$\thet\in(-\pi,\pi]$
  has been defined as the directed angle between~$\mu_-$ and~$\mu_+$,
  see also~\eqref{2.a.0}.
  We have~$\thet\in(0,\pi)$ if and only if the scalar
  product~$\<\mu_-,\lambda_+\>=\frac \glb{k_+}\,\abs{\zeta_+}^2$
  is positive.
  Hence, we get~\ref{2w.1d} by
  \begin{equation*}
    \thet=\arg\frac{\mu_+}{\mu_-}
    =\arg\frac{ik_+\xi_+}{\glb\zeta_++i\gll\xi_+}
    =\arg\frac{k_+(\gll\xi_+^2+i\glb\xi_+\zeta_+)}{\glb^2\zeta_+^2+\gll^2\xi_+^2}
    =\arg\bigl(\gll\ar_++i\glb\bigr)\;.\qedhere
  \end{equation*}
\end{proof}

\newcommand{\thefigures}{
\protect \begin{figure}
\protect \begin{minipage}{0.48\textwidth}
\centering
\begin{tikzpicture}[x=0.8cm,y=0.8cm]
  \fill[color=lightgray] (0,0) -- (0,2) -- (1,1) -- (1,-1) -- cycle;
  \multido{\ix=-2+2}{3}{
    \multido{\iy=-2+2}{3}{
      \fill (\ix,\iy) circle (1.5pt) ;
    }
  }
  \multido{\ix=-1+2}{2}{
    \multido{\iy=-1+2}{2}{
      \fill (\ix,\iy) circle (1.5pt) ;
    }
  }
  \draw (0,0) -- (0,2) ;
  \draw (0,0) -- (1,-1) ;
  \draw (0,0) -- (1,1) ;
  \draw (0,0) -- (2,0) ;
  \begin{scope}[->, line width=1pt]
    \draw[color=blue] (0,0) -- (90:1.131cm)
	node[above left, color=black] {$\partial_{u_+}$} ;
    \draw[color=blue] (0,0) -- (-45:1.131cm)
	node[below right, color=black] {$\partial_{u_-}$} ;
    \draw[color=red] (0,0) -- (45:1.131cm)
	node[above right, color=black] {$\partial_{v_-}$} ;
    \draw[color=red] (0,0) -- (0:1.131cm)
	node[below right, color=black] {$\partial_{v_+}$} ;
  \end{scope}
  \fill (0,0) circle (3pt) ;
  \draw[->] (0,0) ++(0:0.6cm) arc (0:45:0.6cm) ;
  \node at (22.5:0.85cm) {$\vartheta$} ;
\end{tikzpicture}
\caption{\texorpdfstring{$G=\protect\gmatrix111{-1},\vartheta = \tfrac{\pi}{4}$}{angle pi/4}}
\label{fig:1/4}
\protect \end{minipage}\hfill
\protect \begin{minipage}{0.48\textwidth}
\centering
\begin{tikzpicture}[x=1cm,y=0.577cm]
  \fill[color=lightgray] (0,0) -- (0,2) -- (1,-1) -- (1,-3) -- cycle;
  \multido{\ix=-2+2}{3}{
    \multido{\iy=-2+2}{3}{
      \fill (\ix,\iy) circle (1.5pt) ;
    }
  }
  \multido{\ix=-1+2}{2}{
    \multido{\iy=-3+2}{4}{
      \fill (\ix,\iy) circle (1.5pt) ;
    }
  }
  \draw (0,0) -- (0,2) ;
  \draw (0,0) -- (1,-3) ;
  \draw (0,0) -- (1,1) ;
  \draw (0,0) -- (2,0) ;
  \begin{scope}[->, line width=1pt]
    \draw[color=blue] (0,0) -- (90:1.154cm)
	node[above, color=black] {$\partial_{u_+}$} ;
    \draw[color=blue] (0,0) -- (-60:1.154cm)
	node[right, color=black] {$\partial_{u_-}$} ;
    \draw[color=red] (0,0) -- (30:1.154cm)
	node[above right, color=black] {$\partial_{v_-}$} ;
    \draw[color=red] (0,0) -- (0:1.154cm)
	node[below right, color=black] {$\partial_{v_+}$} ;
  \end{scope}
  \fill (0,0) circle (3pt) ;
  \draw[->] (0,0) ++(0:0.75cm) arc (0:30:0.75cm) ;
  \node at (15:1cm) {$\vartheta$} ;
\end{tikzpicture}
\caption{\texorpdfstring{$G=\protect\gmatrix111{-3},\vartheta = \tfrac{\pi}{6}$}{angle pi/6}}
\label{fig:1/6}
\protect \end{minipage}
\protect \end{figure}
}

\newcommand{\morefigures}{
\protect \begin{figure}
\protect \begin{minipage}{0.48\textwidth}
\centering
\begin{tikzpicture}[x=1cm,y=0.577cm]
  \fill[color=lightgray] (0,0) -- (0,2) -- (1,1) -- (1,-1) -- cycle;
  \multido{\ix=-2+2}{3}{
    \multido{\iy=-2+2}{3}{
      \fill (\ix,\iy) circle (1.5pt) ;
    }
  }
  \multido{\ix=-1+2}{2}{
    \multido{\iy=-3+2}{4}{
      \fill (\ix,\iy) circle (1.5pt) ;
    }
  }
  \draw (0,0) -- (0,2) ;
  \draw (0,0) -- (1,-1) ;
  \draw (0,0) -- (1,3) ;
  \draw (0,0) -- (2,0) ;
  \begin{scope}[->, line width=1pt]
    \draw[color=blue] (0,0) -- (90:1.154cm)
	node[above, color=black] {$\partial_{u_+}$} ;
    \draw[color=blue] (0,0) -- (-30:1.154cm)
	node[below right, color=black] {$\partial_{u_-}$} ;
    \draw[color=red] (0,0) -- (60:1.154cm)
	node[right, color=black] {$\partial_{v_-}$} ;
    \draw[color=red] (0,0) -- (0:1.154cm)
	node[below right, color=black] {$\partial_{v_+}$} ;
  \end{scope}
  \fill (0,0) circle (3pt) ;
  \draw[->] (0,0) ++(0:0.6cm) arc (0:60:0.6cm) ;
  \node at (30:0.85cm) {$\vartheta$} ;
\end{tikzpicture}
\caption{\texorpdfstring{$G=\protect\gmatrix113{-1},\vartheta = \tfrac{\pi}{3}$}{angle pi/3}}
\label{fig:1/3}
\protect \end{minipage}\hfill
\protect \begin{minipage}{0.48\textwidth}
\centering
\begin{tikzpicture}[x=0.7cm,y=0.577cm]
  \multido{\ix=-3+3}{3}{
    \multido{\iy=-3+3}{3}{
      \fill (\ix,\iy) circle (1.5pt) ;
    }
  }
  \fill (-1.5,-1.5) circle (1.5pt)
  (-1.5,1.5) circle (1.5pt)
  (1.5,-1.5) circle (1.5pt)
  (1.5,1.5) circle (1.5pt) ;
  \draw (-1.5,-1.5) -- (-1.5,1.5) ;
  \draw (-1.5,-1.5) -- (1.5,-1.5) ;
  \begin{scope}[->, line width=1pt]
    \draw[color=blue] (-1.5,-1.5) -- ++(90:1cm)
	node[above left, color=black] {$\partial_{u_+}=\partial_{v_-}$} ;
    \draw[color=blue] (-1.5,-1.5) -- ++(0:1cm)
	node[below right, color=black] {$\partial_{u_-}=\partial_{v_+}$} ;
  \end{scope}
  \fill (-1.5,-1.5) circle (3pt) ;
  \draw[->] (-1.5,-1.5) ++(0:0.6cm) arc (0:90:0.6cm) ;
  \node at (-0.5,-0.5) {$\vartheta$} ;
\end{tikzpicture}
\caption{\texorpdfstring{$G=\protect\gmatrix0220,\vartheta = \frac\pi2$}{angle pi/2}}
\label{fig:trivmatch}
\protect \end{minipage}
\protect \end{figure}
}

\thefigures
\morefigures

\newcommand\evenmorefigures{
  \protect \begin{figure}
\protect \begin{minipage}{0.48\textwidth}
\centering
\begin{tikzpicture}[x=1cm,y=0.707cm]
  \fill[color=lightgray] (0,0) -- (0,3) -- (1,2) -- (1,-1) -- cycle;
  \multido{\ix=0+3}{2}{
    \multido{\iy=0+3}{2}{
      \fill (\ix,\iy) circle (1.5pt) ;
    }
  }
    \multido{\iy=-1+3}{2}{
      \fill (1,\iy) circle (1.5pt) ;
    }
  \multido{\ix=-1+3}{2}{
    \multido{\iy=-2+3}{2}{
      \fill (\ix,\iy) circle (1.5pt) ;
    }
  }
  \draw (0,0) -- (0,3) ;
  \draw (0,0) -- (1,-1) ;
  \draw (0,0) -- (1,2) ;
  \draw (0,0) -- (3,0) ;
  \begin{scope}[->, line width=1pt]
    \draw[color=blue] (0,0) -- (90:1cm)
	node[above left, color=black] {$\partial_{u_+}$} ;
    \draw[color=blue] (0,0) -- (-35.3:1cm)
	node[below, color=black] {$\partial_{u_-}$} ;
    \draw[color=red] (0,0) -- (54.7:1cm)
	node[right, color=black] {$\partial_{v_-}$} ;
    \draw[color=red] (0,0) -- (0:1cm)
	node[below right, color=black] {$\partial_{v_+}$} ;
  \end{scope}
  \fill (0,0) circle (3pt) ;
  \draw[->] (0,0) ++(0:0.6cm) arc (0:54.7:0.6cm) ;
  \node at (27.3:0.85cm) {$\vartheta$} ;
\end{tikzpicture}
\caption{\texorpdfstring{$G=\protect\gmatrix112{-1},\vartheta = \arc\cos \tfrac1{\sqrt 3}$}{angle pi/6}}
\label{fig:A4}
\protect \end{minipage}
\protect \begin{minipage}{0.48\textwidth}
\centering
\begin{tikzpicture}[x=1.591cm,y=0.225cm]
  \fill[color=lightgray] (0,0) -- (0,3) -- (1,-2) -- (1,-5) -- cycle;
  \multido{\ix=-1+3}{2}{
    \multido{\iy=-4+3}{5}{
      \fill (\ix,\iy) circle (1.5pt) ;
    }
  }
  \multido{\ix=0+3}{2}{
    \multido{\iy=-3+3}{5}{
      \fill (\ix,\iy) circle (1.5pt) ;
    }
  }
  \multido{\iy=-5+3}{6}{
    \fill (1,\iy) circle (1.5pt) ;
  }
  \draw (3,0) -- (0,0) -- (0,3) ;
  \draw (1,10) -- (0,0) -- (1,-5) ;
  \begin{scope}[->, line width=1pt]
    \draw[color=blue] (0,0) -- (90:0.67cm)
	node[above left, color=black] {$\partial_{u_+}$} ;
    \draw[color=blue] (0,0) -- (-35.3:0.67cm)
	node[below, color=black] {$\partial_{u_-}$} ;
    \draw[color=red] (0,0) -- (54.7:0.67cm)
	node[right, color=black] {$\partial_{v_-}$} ;
    \draw[color=red] (0,0) -- (0:0.67cm)
	node[below right, color=black] {$\partial_{v_+}$} ;
  \end{scope}
  \fill (0,0) circle (3pt) ;
  \draw[->] (0,0) ++(0:0.35cm) arc (0:54.7:0.35cm) ;
  \node at (27.3:0.55cm) {$\vartheta$} ;
  \end{tikzpicture}
  \caption{\texorpdfstring{$G=\protect\gmatrix11{10}{-5},
      \vartheta = \arc\cos \tfrac1{\sqrt 3}$}{running example}}
  \label{fig:running}
  \protect\end{minipage}
\protect\end{figure}
}

\begin{rmk}\label{rem:SmallK}
When $k_\pm \leq 2$, the only possibilities (up to swapping~$M_+$ and~$M_-$)
with \mbox{$r_\pm$, $\glb \ge 0$}
and~$p=1$ are the ones already studied in~\cite{xtcs,CGN},
illustrated in Figures~\ref{fig:1/4}--\ref{fig:1/3}.
If we allow~$p\ge 1$,
there are two more with gluing matrices~$\gmatrix131{-1}$
and~$\gmatrix0220$; the latter is depicted in Figure~\ref{fig:trivmatch}.
Notice that~$\thet=\frac\pi 2$ in this example, so the radii~$\xi_+=\zeta_-$
and~$\xi_-=\zeta_+$ can be chosen independently.

Once we allow $k_+$ or $k_-$ to be larger than 2, there are many more
possibilities. Figure~\ref{fig:A4} illustrates a torus matching with
$k_+ = 1$ and $k_- = 3$, where $\ar_+ = \sqrt{2}$ and $\ar_- = \frac{1}{\sqrt{2}}$
(so the tori have proportions of A4 paper).
We consider this further in Section~\ref{sec:torus}.
Let us for now give a single more complicated example that we will refer
to in the course of our calculations.
\end{rmk}
\evenmorefigures

\begin{ex}\label{ex:run-gluing}
For $k_+ = 3$ and $k_- = 5$, one valid gluing matrix is
\[ \begin{pmatrix} 1 & 1 \\ 10 & -5 \end{pmatrix} \]
with $\eps_+ = 1$ and $\eps_- = -1$. The torus matching is illustrated
in Figure~\ref{fig:running}.
The aspect ratios are~$\ar_+=5\sqrt 2$ and~$\ar_-=\sqrt 2$,
and the gluing angle is~$\thet=\arg(1+\sqrt 2\,i)=\arccos\frac1{\sqrt 3}$.
One example with this gluing matrix may be found
in Table~\ref{table:matchings}, no.~228.
\end{ex}

Generalising the computations from~\cite[\S 1.3]{xtcs}, the gluing matrix
also determines the fundamental group of the extra-twisted connected sum.

\begin{prop}\label{prop:pi1}
  An extra-twisted connected sum $M$ with gluing matrix~$G=\psmatrix{\gll&p\\\glb&\sglr}$
  has fundamental group isomorphic to~$\Z/p$.
\end{prop}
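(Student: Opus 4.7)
The plan is to apply Seifert–van Kampen to the decomposition $M=M_+\cup_X M_-$ along the common cross-section $X=X_+\cong X_-$, after identifying all the groups entering the pushout.

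Since $V_\pm$ is simply connected by Remark~\ref{rem:pi1} and $\Gamma_\pm$ acts freely on $S^1_{\xi_\pm}$, the second projection $V_\pm\times S^1_{\xi_\pm}\to S^1_{\xi_\pm}$ descends to a locally trivial fibration $V_\pm\hookrightarrow M_\pm\to S^1_{\xi_\pm/k_\pm}$, whose homotopy exact sequence yields $\pi_1(M_\pm)\cong\Z$. Let $\gamma_\pm$ be a generator, represented by a loop whose lift in $\tilde M_\pm$ runs from $(x,0)$ to $(g\cdot x,\xi_\pm/k_\pm)$ for a chosen generator $g$ of $\Gamma_\pm$. On the cross-section side, $X_\pm=\Sigma_\pm\times(\tilde T_\pm/\Gamma_\pm)$ with $\Sigma_\pm$ (K3) simply connected, so $\pi_1(X_\pm)\cong\Z^2$, freely generated by classes $\tilde\nu_\pm,\tilde\lambda_\pm$ corresponding to the basis $(\nu_\pm,\lambda_\pm)$ of the quotient lattice $\Lambda$ from~\eqref{2.w.5}. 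For the inclusion $\iota_\pm\colon\pi_1(X_\pm)\to\pi_1(M_\pm)$, the interior circle $S^1_{\zeta_\pm}$ is null-homotopic inside the simply connected $V_\pm$, hence $\iota_\pm(\tilde\lambda_\pm)=0$; and the class $\tilde\nu_\pm$ is by construction represented by the loop whose lift in $\tilde M_\pm$ realises the generator $\gamma_\pm$ above, so $\iota_\pm(\tilde\nu_\pm)=\gamma_\pm$.

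The computational crux is to translate the gluing matrix into the change of basis between $(\tilde\nu_+,\tilde\lambda_+)$ and $(\tilde\nu_-,\tilde\lambda_-)$ inside $\pi_1(X)$. Combining~\eqref{eq:bases} with the analogue of~\eqref{2.w.5} on the minus side gives $(\nu_-,\lambda_-)=(\nu_+,\lambda_+)\cdot N$, where
\begin{equation*}
N=\begin{pmatrix}\frac{m+p\eps_-}{k_-}&p\\[2pt]\frac{A+B\eps_-}{k_-}&B\end{pmatrix},\qquad A=\frac{n-\eps_+ m}{k_+},\quad B=\frac{q-\eps_+ p}{k_+}.
\end{equation*}
Integrality of the entries of $N$ is precisely~\eqref{eq:epses}, and $\det N=-1$ follows from~\eqref{eq:det}. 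Inverting $N$ and reading off the $(\tilde\nu_-,\tilde\lambda_-)$-expansions of $\tilde\nu_+,\tilde\lambda_+$ then produces $\iota_-(\tilde\nu_+)=\gamma_-^{-B}$ and $\iota_-(\tilde\lambda_+)=\gamma_-^{p}$.

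Seifert–van Kampen thus gives the presentation
\begin{equation*}
\pi_1(M)=\langle\gamma_+,\gamma_-\mid\gamma_+=\gamma_-^{-B},\ \gamma_-^{p}=1\rangle,
\end{equation*}
and eliminating $\gamma_+$ via the first relation collapses this to $\Z/p$. The main obstacle is the basis-change bookkeeping in the third step: the gluing matrix $\psmatrix{m&p\\n&q}$ is stated with respect to the covering lattice bases $(\mu_\pm,\lambda_\pm)$ of $\tilde\Lambda_\pm$, whereas the inclusion homomorphisms live on the quotient basis $(\nu_\pm,\lambda_\pm)$ of $\Lambda$; converting both sides via~\eqref{2.w.5} is exactly what turns the combinatorial data into the clean relations above.
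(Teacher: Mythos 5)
Your argument is correct and follows the same strategy as the paper: Seifert--van Kampen for $M = M_+\cup_X M_-$, noting that $\pi_1(M_\pm)\cong\Z$ with $\iota_{\pm*}$ surjective and $\ker\iota_{\pm*}$ generated by the interior circle class $\lambda_\pm$. The only (harmless) difference is in the bookkeeping: you compute the full change-of-basis matrix $N$ between $(\nu_+,\lambda_+)$ and $(\nu_-,\lambda_-)$, verify $\det N=-1$, invert it, and eliminate a generator from a two-generator presentation, whereas the paper takes the shortcut of observing that the amalgamation gives directly $\pi_1(M)\cong\Lambda/(\ker\iota_{+*}+\ker\iota_{-*})$ and simply reads off $\lambda_- = p\nu_+ + \tfrac{q-\eps_+p}{k_+}\lambda_+$ from the second column of \eqref{eq:bases}, so that modulo $\lambda_+$ the second kernel is generated by $p\nu_+$.
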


\begin{proof}
  Let~$\iota_\pm\colon T^2\to M_\pm$ denote the inclusion map
  and note that~$\pi_1(T^2)\cong\pi_1(X)\cong\Z^2$.
  Since $\pi_1 V_\pm = 1$ by Remark~\ref{rem:pi1},
  we also have $\pi_1(M_\pm) \cong \Z$, and
  the interior circle~$S^1_{\zeta_\pm}$ is null-homotopic in~$M_\pm$, and
  we have a short exact sequence
  \begin{equation*}
    0\longrightarrow\pi_1(S^1_{\zeta_\pm})\longrightarrow
    \pi_1(T^2)\stackrel{\iota_{\pm*}}\longrightarrow\pi_1(M_\pm)
      \longrightarrow 0\;.
  \end{equation*}
  Because~$\iota_{\pm*}$ is surjective,
  it follows from the Seifert-van Kampen theorem %
  that
  \begin{equation*}
    \pi_1(M)\cong\pi_1(T^2)/\bigl(\ker(\iota_{+*})+\ker(\iota_{-*})\bigr)\;.
  \end{equation*}
  
  As basis of~$\Lambda=\pi_1(T^2)$,
  we choose the vectors~$\nu_+=\frac{\mu_++\eps_+\lambda_+}{k_+}$
  and~$\lambda_+$ as in~\eqref{2.w.5}.
  Dividing out~$\pi_1(S^1_{\zeta_+})=\ker(\iota_+)$,
  we are left with a cyclic group generated
  by~$\nu_+$.
  Modulo~$\pi_1(S^1_{\zeta_+})$, the group~$\pi_1(S^1_{\zeta_-})=\ker\iota_-$
  is generated by~$p\nu_+$,
  so~$\pi_1(M)\cong\Z/p$.
\end{proof}

We will discuss covering spaces in Proposition~\ref{prop:cover}.
Some examples of non-simply connected extra-twisted connected sums
will be given in Examples~\ref{ex:TorusMatch}~\ref{exnonex.1}
and~\ref{ex:run-symm}.

\begin{rmk}\label{rmk:eps=r}
  Consider a torus matching that leads to simply connected extra-twisted sums,
  so~$p=1$. %
  Then~$\eps_+=-r_-$ is uniquely determined modulo~$k_+$ by
  the second congruence in~\eqref{eq:epses+}.
  Similarly, $\eps_-=-r_+$ modulo~$k_-$ by~\eqref{eq:epses-}.
  If we choose the integers $\reps_+ = -r_-$ and $\reps_- = -r_+$ to represent
  these residue classes, then
  the matrix~$\TorMat$ becomes~$\psmatrix{0&1\\1&0}$; in other words
  $\nu_+ = \lambda_-$ and $\nu_- = \lambda_+$.
  In particular, the lattice~$\Lambda$ in~\eqref{2.w.7}
  is generated by the two vectors~$\lambda_+$ and~$\lambda_-$
  that generate the interior circles of~$T_\pm$.
  We have shaded the corresponding fundamental domains in
  Figures~\ref{fig:1/4}, \ref{fig:1/6}, \ref{fig:1/3}, \ref{fig:A4}
  and~\ref{fig:running}, all of which have~$p=1$.
\end{rmk}

\subsection{Matchings and polarising lattices}\label{3.c}
\label{subsec:match}

In Theorem~\ref{thm:glue} we set up our gluing construction, using
a torus matching $\tormat$ and a \hk rotation $\hkr$. We studied the torus
matchings in~\S\ref{2.w}, while Theorem~\ref{thm:match_to_hkr} reduced the
problem of finding \hk rotations to the less metric problem of finding K3
matchings. For the final piece of the machine, we review from~\cite[\S 6]{xtcs}
how to find K3 matchings between building blocks.

The properties of the \gtmfd s produced by Theorem~\ref{thm:glue} can clearly
depend not just on the choices of building blocks and torus matching but
also on the choice of \hk rotation. However, the topological properties that we care about depend on the \hk rotation only via what we term its associated
``configuration'' of the polarising lattices of the building blocks.

Recall from Definition~\ref{def:block} that the polarising lattice of
a block $(Z,\K)$ refers to the image $N$ of $H^2(Z)$ in $H^2(\K)$, equipped
with the intersection form. We use $L$ to denote a fixed even unimodular
lattice with signature $(3,19)$, so that $H^2(\K)$ is isometric to $L$
for any K3 surface~$\K$.

\begin{dfn}[{\cite[Definition 6.3]{xtcs}}]
\label{def:config}
A \emph{configuration} of polarising lattices $N_+$, $N_-$ is a pair of
primitive embeddings $N_\pm \into L$. Two configurations are equivalent
if they are related by the action of the isometry group~$O(L)$.
\end{dfn}

Clearly we can associated a configuration to any \hk rotation $\hkr$.
Given the claim that the topology depends mainly on the blocks and
the configuration, it is natural to phrase the matching problem as follows.

\begin{qstn}
\label{qn:config}
Given $\thet \in \R$ and a pair
of sets of building blocks $(Z_\pm, \K_\pm)$ (each family with fixed
topology, and in particular with fixed polarising lattice $N_\pm$),
which configurations of $N_+$ and $N_-$ are realised by a
$\thet$-\hk rotation of some elements of the families?
\end{qstn}

For any lattice $\Lambda$, let $\Lambda\Rlat := \Lambda \otimes_\Z \R$.
Given a configuration, let $\pi_\pm : L\Rlat \to N_\pm\Rlat$ denote the
orthogonal projection, 
and let $N_\pm\Rlat^\mu$ denote the $(\cos \mu)^2$-eigenspace
of the self-adjoint endomorphism $\pi_\pm \pi_\mp : N_\pm\Rlat \to N_\pm\Rlat$,
and let $N_\pm\Rlat^{\not=\mu}$ denote the orthogonal complement to
$N_\pm\Rlat^\mu$ in~$N_\pm\Rlat$.
\begin{rmk}
\label{rmk:cos2rational}
By Proposition~\ref{Prop2w.1}\ref{2w.1d},
the gluing angle $\thet$ of any torus matching has $(\cos \thet)^2$ rational.
Therefore 
$N_\pm\Rlat^\thet$ and $N_\pm\Rlat^{\not=\thet}$ are both spanned by their
respective subsets $N_\pm^\thet$ and~$N_\pm^{\not=\thet}$ of integral points.
\end{rmk}

The condition~\eqref{eq:hkr} implies that if there
exists a $\thet$-\hk rotation compatible with a given configuration
then there are positive classes $[\omega^I_\pm], [\omega^J_\pm] \in N_\pm\Rlat$
and $[\omega^K_\pm] \in N_\pm^\perp\Rlat$ such that
\begin{align*}
[\omega^K_-] &= - [\omega^K_+] \\
([\omega^I_-] + i [\omega^J_-]) &= e^{i\vartheta} ([\omega^I_+] - i[\omega^J_+]) .
\end{align*}
From this we can deduce the following necessary conditions for realising
a given configuration (see~\cite[\S 6.3]{xtcs} for explanation)
by a $\thet$-\hk rotation of some $(Z_+, \K_-)$ and $(Z_-, \K_-)$.
\begin{enumerate}
\item $N_+ + N_-$ is non-degenerate of signature $(2, \rk - 2)$.
\item\label{it:kaehler} $N_\pm^\thet$ contains the restriction of some Kähler class from $Z_\pm$;
in particular $N_\pm^\thet$ is non-trivial.
\item \label{it:lambda}
The Picard lattice $\Pic \K_\pm := H^2(\K_\pm;\Z) \cap H^{1,1}(\K_\pm; \C)$
contains both $N_\pm$ and $N_\mp^{\not=\thet}$. 
\end{enumerate}
Let $\Lambda_\pm$ be the set of integral points in 
$N_\pm\Rlat + N_\mp\Rlat^{\not=\thet}$.
Then $\Lambda_\pm$ is primitive in $L$, in the sense
that $L/\Lambda_\pm$ is torsion-free, and could also be described as the
``primitive hull'' of the sublattice $N_\pm + N_\mp^{\not=\thet} \subset L$
\ie its smallest overlattice that is primitive.
It is a non-degenerate lattice of signature $(1, \rk \Lambda_\pm {-} 1)$,
and (iii) means that $\Pic \K_\pm$ contains $\Lambda_\pm$,
so that~$\K_\pm$ is~\mbox{\emph{$\Lambda_\pm$-polarised}}.

On the other hand, it turns out to be possible to express a sufficient
condition for being able to match some elements from a pair of families
in terms of those families containing suitably generic $\Lambda_\pm$-polarised
K3 surfaces.
For completeness, let us describe the notion of genericity that we need,
even though we will not use any of the technical details.
Recall that marked K3 surfaces whose Picard lattice contains a fixed primitive
lattice $\Lambda \subset L$ of signature $(1, \rk \Lambda - 1)$
can be parametrised by their periods, which belong
to the Griffiths domain
 \[  D_\Lambda =
\{\textrm{oriented positive-definite planes }\Pi \subset \Lambda^\perp (\R) \}
\cong \{ \Pi \in \bbp(\Lambda^\perp(\bbc)) : \Pi^2 = 0,\; \Pi \, \overline\Pi > 0\} , \]
where the second description gives rise to a complex analytic structure.

\begin{dfn}[{\cite[Definition 2.27]{xtcs}}]
\label{def:generic}
Let $N \subset L$ be a primitive sublattice, $\Lambda \subset L$ a primitive
overlattice of $N$, and $\Amp_\fbb$ an open subcone
of the positive cone in $N\Rlat$. We say that a set of building blocks
$\fbb$ with polarising lattice $N$ is \emph{$(\Lambda, \Amp_\fbb)$-generic} if
there is a subset $U_\fbb$ of the Griffiths domain $D_\Lambda$
with complement a countable union of complex analytic submanifolds of
positive codimension with the property that:
for any $\Pi \in U_\fbb$ and $\kclass \in \Amp_\fbb$ there is a building block
$(Z,\K) \in \fbb$ and a marking $\hdg : L \to H^2(\K; \ZZ)$ such that
$\hdg(\Pi) = H^{2,0}(\K)$, and $\hdg(\kclass)$ is the image of the restriction
to $\K$ of a Kähler class on $Z$.
\end{dfn}

All that matters for the purposes of this paper is that the conditions in the
definition make the following proposition work.

\begin{prop}[{\cite[Theorem 6.10]{xtcs}}]
\label{prop:matching}
Let $\fbb_\pm$ be a pair of sets of building blocks with polarising
lattices $N_\pm$, and $\thet \in \R$ such that $(\cos \thet)^2$ is rational
\footnote{The hypothesis on $\cos \thet$ is missing from
the statement \cite[Theorem 6.10]{xtcs}, but the proof there implicitly assumes
that $N_\pm\Rlat + N_\mp\Rlat^{\not=\thet}$ is spanned by $\Lambda_\pm$.
As per Remark \ref{rmk:cos2rational},
this always holds in the context where we apply the result.}  
.
Let $N_\pm \into L$ be a configuration of the polarising lattices, and let
$\Lambda_\pm \subset L$ be the lattice of integral points in
$N_\pm\Rlat + N_\mp\Rlat^{\not=\thet}$. 
Suppose that the set $\fbb_\pm$ is $(\Lambda_\pm, \Amp_{\fbb_\pm})$-generic.
If
\begin{equation}
\label{eq:amps}
\cos \thet \not= 0 \textrm{ and } (\sign \cos \thet) \pi_-(N_+\Rlat^\thet \cap \Amp_{\fbb_+}) \cap \Amp_{\fbb_-}
\neq \emptyset .
\end{equation}
or
\begin{equation}
\label{eq:amps2}
\cos \thet = 0
 \textrm{ and } N_+\Rlat^\frac{\pi}{2} \cap \Amp_{\fbb_+} \neq \emptyset 
 \textrm{ and } N_-\Rlat^\frac{\pi}{2} \cap \Amp_{\fbb_-} \neq \emptyset 
\end{equation}
then there exist
$(Z_\pm, \K_\pm) \in \fbb_\pm$ with an angle $\thet$ K3 matching
$\hkr : \K_+ \to \K_-$ with the prescribed configuration. 
\end{prop}

In~\cite{CGN} we found that the following invariants of a configuration
play a key role in the calculation of $\nu$ (see Theorem~\ref{Thm2a.1}).

\begin{dfn}\label{def:angles}
Given a configuration $N_+, N_- \subset L$, let $A_\pm : L\Rlat \to L\Rlat$
denote the reflection of $L\Rlat := L \otimes \R$ in $N_\pm$ (with respect to the
intersection form of $L\Rlat$; this is well-defined since $N_\pm$ is non-degenerate).
Suppose $A_+ \circ A_-$ preserves some decomposition $L\Rlat = L^+ \oplus L^-$
as a sum of positive and negative-definite subspaces.
Then the \emph{configuration angles} are the arguments
$\alpha^+_1, \alpha^+_2, \alpha^+_3$ and
$\alpha^-_1, \ldots, \alpha^-_{19}$ of the eigenvalues of the restrictions
$A_+ \circ A_- : L^+ \to L^+$ and $A_+ \circ A_- : L^- \to L^-$ respectively.
\end{dfn}

\begin{rmk}
\label{rmk:pure}
  Since $\Lambda_\pm$ is always at least as big as $N_\pm$, the
  genericity results required to apply Proposition~\ref{prop:matching}
  are the weakest possible when $\Lambda_\pm = N_\pm$.
  This happens in particular if~$N^\thet_\pm=N_\pm$, that is,
  if~$\pi_\pm\circ\pi_\mp|_{N_\pm}=(\cos \thet)^2\,\id_{N_\pm}$.
  In that case we will say that~$N_+$ and~$N_-$ meet
  \emph{at pure angle}~$\thet$.

  Unless~$\thet=\pm\frac\pi2$, meeting at pure angle $\thet$ implies
  that~$\rk N_+=\rk N_- = $ multiplicity of $\pm 2\thet$ as configuration
  angles, while the remaining configuration angles are all $0$.
\end{rmk}

Even with Proposition~\ref{prop:matching} in hand, it is still hard in general
to completely answer Question~\ref{qn:config} concerning which configurations
can be realised by matching; in some situations it is hard to prove any
genericity result of the type required (roughly, this becomes harder the
larger $\Lambda$ is), and even if one does, it may be hard to
know whether one has found the ``best possible'' choice of $\Amp$ (or whether
some blocks in the family have a bigger Kähler cone than the generic members).

However, all examples we know of building blocks do in fact have the property
that they come in families that are $(N, \Amp)$ generic, for $N$ the polarising
lattice, and $\Amp$ some open cone in~$N\Rlat$ (in particular,
Proposition~\ref{prop:generic} asserts this for the examples in this paper).
Finding all matchings of a pair of blocks where the configurations
are at pure angle $\thet$ and the Kähler classes are required to be in a
particular cone $\Amp$ is only a lattice-arithmetic problem.
That can certainly be solved by a brute force algorithm, 
though not very easily by hand if the ranks of the polarising lattices are
greater than 1.

In this paper, we will restrict attention to blocks where the polarising
lattices have rank~1, which makes it possible to answer Question
\ref{qn:config} decisively.
Condition~\ref{it:kaehler} then automatically requires
the configurations to be of pure angle, and there is no ambiguity in the
choice of~$\Amp$. If the generators of the polarising
lattices have square-norms $n_+$ and $n_-$, then the bilinear form
on $N_+ + N_-$ imposed by the configuration will be defined by a matrix
\begin{equation}
\label{eq:config_matrix}
\begin{pmatrix} n_+ & h \\ h & n_- \end{pmatrix} ,
\end{equation}
and the gluing angle is determined by
\begin{equation}\label{3.c.3}
(\cos \thet)^2 = \frac{h^2}{n_+ n_-} .
\end{equation} 
Thus there exists a matching of the blocks with gluing angle $\thet$ %
only if $\cos \thet \sqrt{n_+n_-}$ is an integer.
By Nikulin \cite[Theorem 1.12.4]{nikulin}, this is also sufficient.

We give here one example that we will refer to while developing the
calculations in Sections~\ref{sec:nu} and~\ref{sec:hyp}; see Subsection
\ref{sec:match-ex} for further examples of matchings. 

\begin{ex}\label{ex:run-matching}
  Consider two building blocks~$Z_+$, $Z_-$ of rank~$1$ with
  polarising lattices~$(6)$ and~$(2)$, respectively.
  We consider the configuration such that the restriction of the intersection
form to $N_+ + N_-$ is defined by
  \begin{equation*}
    \begin{pmatrix}
      6&2\\2&2
    \end{pmatrix},
  \end{equation*}
  which has pure angle~$\thet=\arccos\frac1{\sqrt 3}$.
  We will combine this configuration with the gluing data
  of Example~\ref{ex:run-gluing},
  using a $\Z/3$-block from Example~\ref{ex:quadric} as~$Z_+$
  and a $\Z/5$-block from Example~\ref{ex:five} as~$Z_-$,
  see Table~\ref{table:matchings}, no.~228.
  The configuration angles are
  \begin{equation*}
    \alpha^+_1 = -\alpha^+_2 = 2\arccos\frac1{\sqrt 3}
    \qquad\text{and}\qquad\alpha^+_3=\alpha^-_1=\cdots=\alpha^-_{19}=0\;.
  \end{equation*}
\end{ex}

\section{Computing the extended \nuinvt} %
\label{sec:nu}

We prove Theorem~\ref{Thm:A}, following the path outlined
in the introduction.
We recall the adapted Dirac operator (Section~\ref{sec:moddirac})
and the gluing formula (Section~\ref{2.g}) from~\cite{CGN}.
The contributions from both halves consist of an adiabatic limit
(Section~\ref{2.b}) and a variational term (Section~\ref{2.c}).
To complete the computation,
we rewrite the variational term in Section~\ref{2h},
using Proposition~\ref{Prop1}. %
In Section~\ref{sec:hyp}, we will present an alternative approach to
the computation of the variational terms that leads to Theorem~\ref{Thm:B}.

\subsection{A modification of the spin Dirac operator}\label{sec:moddirac}
The extended \nuinvt of a $G_2$-manifold is defined in~\eqref{eq:nubar}
using the \etainvt of the signature operator~$B$
and the spin Dirac operator~$D$.
For computations, it is much more comfortable to work with
a Riemannian metric that is of product type in the gluing region
and sufficiently close to some $G_2$-metric.
However, the \etainvt of the spin Dirac operator
of such a gluing metric typically differs from the one in the $G_2$-case
both by a small local contribution and by a $\Z$-valued spectral flow.
To avoid the latter, we modified the spin Dirac operator
in~\cite{CGN}.
Because all our following considerations rely on the gluing metric
and the modified Dirac operator,
we take the time to introduce them now.

\newcommand{\thegluingfig}{
\begin{figure}
  \begin{tikzpicture}
    \draw[line width=1pt] (7,0.5)  .. controls (7,1.3) and (6.6,1.62) ..
    (6,1.5) .. controls (5,1.3) ..
    (4,1.2) .. controls (3.5,1.15) and (3.5,1) ..
    (3,1) --
    (-3,1) .. controls (-3.5,1) and (-3.5,1.15) ..
    (-4,1.2) .. controls (-5,1.3) ..
    (-6,1.5) .. controls (-6.6,1.62) and (-7,1.3) ..
    (-7,0.5) .. controls (-7,-0.3) and (-6.6,-0.62) ..
    (-6,-0.5) .. controls (-5,-0.3) ..
    (-4,-0.2) .. controls (-3.5,-0.15) and (-3.5,0) ..
    (-3,0) -- node[below] {$(M_\ell,g_\ell)$}
    (3,0)  .. controls (3.5,0) and (3.5,-0.15) ..
    (4,-0.2) .. controls (5,-0.3) ..
    (6,-0.5) .. controls (6.6,-0.62) and (7,-0.3) ..
    (7,0.5) ;
    \draw (6,1.5) arc (90:270:0.1 and 1) ;
    \draw[dashed] (6,1.5) arc (90:-90:0.1 and 1) ;
    \draw (4,1.2) arc (90:270:0.07 and 0.7) ;
    \draw[dashed] (4,1.2) arc (90:-90:0.07 and 0.7) ;
    \draw (3,1) arc (90:270:0.05 and 0.5) ;
    \draw[dashed] (3,1) arc (90:-90:0.05 and 0.5) ;
    \draw (0,1) arc (90:270:0.05 and 0.5) ;
    \draw[dashed] (0,1) arc (90:-90:0.05 and 0.5) ;
    \draw (-3,1) arc (90:270:0.05 and 0.5) ;
    \draw[dashed] (-3,1) arc (90:-90:0.05 and 0.5) ;
    \draw (-4,1.2) arc (90:270:0.07 and 0.7) ;
    \draw[dashed] (-4,1.2) arc (90:-90:0.07 and 0.7) ;
    \draw (-6,1.5) arc (90:270:0.1 and 1) ;
    \draw[dashed] (-6,1.5) arc (90:-90:0.1 and 1) ;
    \begin{scope}[line width=0.1pt]
      \draw[->] (-6.3,0.5) -- (6.5,0.5) node[right] {$\scriptstyle t$} ;
      \draw (-6,0.4) -- (-6,0.6) ;
      \node[right] at (-6,0.3) {$\scriptstyle-2\ell$} ;
      \draw (-4,0.4) -- (-4,0.6) ;
      \node[right] at (-4,0.3) {$\scriptstyle-\ell-1$} ;
      \draw (-3,0.4) -- (-3,0.6) ;
      \node[right] at (-3,0.3) {$\scriptstyle-\ell$} ;
      \draw (0,0.4) -- (0,0.6) ;
      \node[right] at (0,0.3) {$\scriptstyle 0$} ;
      \draw (6,0.4) -- (6,0.6) ;
      \node[right] at (6,0.3) {$\scriptstyle2\ell$} ;
      \draw (4,0.4) -- (4,0.6) ;
      \node[right] at (4,0.3) {$\scriptstyle\ell+1$} ;
      \draw (3,0.4) -- (3,0.6) ;
      \node[right] at (3,0.3) {$\scriptstyle\ell$} ;
      \draw[->] (-6.3,-2.5) -- (3.5,-2.5) node[right] {$\scriptstyle t_-$} ;
      \draw (-6,-2.4) -- (-6,-2.6) ;
      \node[right] at (-6,-2.7) {$\scriptstyle 0$} ;
      \draw (-4,-2.4) -- (-4,-2.6) ;
      \node[right] at (-4,-2.7) {$\scriptstyle\ell-1$} ;
      \draw (0,-2.4) -- (0,-2.6) ;
      \node[right] at (0,-2.7) {$\scriptstyle2\ell$} ;
      \draw (3,-2.4) -- (3,-2.6) ;
      \node[right] at (3,-2.7) {$\scriptstyle3\ell$} ;
      \draw[->] (-3.5,-4.5) -- (3.5,-4.5) node[right] {$\scriptstyle t$} ;
      \draw (-3,-4.4) -- (-3,-4.6) ;
      \node[right] at (-3,-4.7) {$\scriptstyle-\ell$} ;
      \draw (0,-4.4) -- (0,-4.6) ;
      \node[right] at (0,-4.7) {$\scriptstyle0$} ;
      \draw (3,-4.4) -- (3,-4.6) ;
      \node[right] at (3,-4.7) {$\scriptstyle\ell$} ;
      \draw[->] (6.3,-6.5) -- (-3.5,-6.5) node[left] {$\scriptstyle t_+$} ;
      \draw (6,-6.4) -- (6,-6.6) ;
      \node[right] at (6,-6.7) {$\scriptstyle 0$} ;
      \draw (4,-6.4) -- (4,-6.6) ;
      \node[right] at (4,-6.7) {$\scriptstyle\ell-1$} ;
      \draw (0,-6.4) -- (0,-6.6) ;
      \node[right] at (0,-6.7) {$\scriptstyle2\ell$} ;
      \draw (-3,-6.4) -- (-3,-6.6) ;
      \node[right] at (-3,-6.7) {$\scriptstyle3\ell$} ;
    \end{scope}
    \draw[line width=1pt]
    (-4,-1.8) .. controls (-5,-1.7) ..
    (-6,-1.5) .. controls (-6.6,-1.38) and (-7,-1.7) ..
    (-7,-2.5) .. controls (-7,-3.3) and (-6.6,-3.62) ..
    (-6,-3.5) .. controls (-5,-3.3) ..
    (-4,-3.2) ;
    \draw[line width=1pt,dashed]
    (-4,-1.8) .. controls (-2,-1.93) ..
    (0,-1.95) .. controls (2,-1.96) ..
    (3.5,-1.97)
    (-4,-3.2) .. controls (-2,-3.07) ..
    (0,-3.05) .. controls (2,-3.04) ..
    (3.5,-3.03) ;
    \draw (-6,-1.5) arc (90:270:0.1 and 1) ;
    \draw[dashed] (-6,-1.5) arc (90:-90:0.1 and 1) ;
    \draw (-4,-1.8) arc (90:270:0.07 and 0.7) ;
    \draw[dashed] (-4,-1.8) arc (90:-90:0.07 and 0.7) ;
    \draw[dashed] (-6,-2) arc (90:270:0.05 and 0.5) ;
    \draw[dashed] (-6,-2) arc (90:-90:0.05 and 0.5) ;
    \draw[dashed] (-6,-2) -- (3.5,-2) (-6,-3) -- (3.5,-3) ;
    \node[right] at (4.3,-2.5) {$(V_-\times S^1_{\lnx_-})/\Gamma_-$} ;
    \draw[line width=1pt] (-3,-4) -- (3,-4) (3,-5) -- (-3,-5) ;
    \draw[line width=1pt,dashed] (3,-4) -- (3.5,-4) (3,-5) -- (3.5,-5)
    (-3,-4) -- (-3.5,-4) (-3,-5) -- (-3.5,-5) ;
    \draw (-3,-4) arc (90:270:0.05 and 0.5) ;
    \draw[dashed] (-3,-4) arc (90:-90:0.05 and 0.5) ;
    \draw (0,-4) arc (90:270:0.05 and 0.5) ;
    \draw[dashed] (0,-4) arc (90:-90:0.05 and 0.5) ;
    \draw (3,-4) arc (90:270:0.05 and 0.5) ;
    \draw[dashed] (3,-4) arc (90:-90:0.05 and 0.5) ;
    \node[right] at (4.3,-4.5) {$X\times\R$} ;
    \draw[line width=1pt]
    (4,-5.8) .. controls (5,-5.7) ..
    (6,-5.5) .. controls (6.6,-5.38) and (7,-5.7) ..
    (7,-6.5) .. controls (7,-7.3) and (6.6,-7.62) ..
    (6,-7.5) .. controls (5,-7.3) ..
    (4,-7.2) ;
    \draw[line width=1pt,dashed] (4,-5.8) .. controls (2,-5.93) ..
    (0,-5.95) .. controls (-2,-5.96) ..
    (-3.5,-5.97)
    (4,-7.2) .. controls (2,-7.07) ..
    (0,-7.05) .. controls (-2,-7.04) ..
    (-3.5,-7.03) ;
    \draw (6,-5.5) arc (90:270:0.1 and 1) ;
    \draw[dashed] (6,-5.5) arc (90:-90:0.1 and 1) ;
    \draw (4,-5.8) arc (90:270:0.07 and 0.7) ;
    \draw[dashed] (4,-5.8) arc (90:-90:0.07 and 0.7) ;
    \draw[dashed] (6,-6) arc (90:270:0.05 and 0.5) ;
    \draw[dashed] (6,-6) arc (90:-90:0.05 and 0.5) ;
    \draw[dashed] (6,-6) -- (-3.5,-6) (6,-7) -- (-3.5,-7) ;
    \node[left] at (-4.3,-6.5) {$(V_+\times S^1_{\lnx_+})/\Gamma_+$} ;
  \end{tikzpicture}
  \caption{The gluing metric \texorpdfstring{$g_\ell$ on~$M_\ell$}{gl on Ml}}
  \label{fig:gluingmetric}
\end{figure}
}

\subsubsection{}\label{ssec:gell}%
Let~$(M_\ell,\bar g_\ell)$ denote the $G_2$-manifold produced by
Theorem \ref{thm:glue}.
For~$\ell\gg 1$, it is close to a Riemannian manifold~$(M_\ell,g_\ell)$ produced
by naive gluing, in a sense we want to make precise.

Recall from Section~\ref{subsec:glue} that~$(V_\pm,g^{V_\pm})$
are Calabi-Yau manifolds with one end each that
is asymptotic to a cylinder~$\Sigma_\pm\times S^1_{\lnn_\pm}\times(0,\infty)$.
We extend~$t_\pm$ to smooth functions on~$X_\pm$ that are nonpositive
outside the cylindrical region.
Then we first choose new metrics~$g^{V_\pm}_\ell$ that agree with
the original Calabi-Yau metrics~$g^{V_\pm}$
on~$\bigl\{\,x\in V_\pm\bigm|t_\pm(x)\le\ell-1\,\bigr\}$
and with the cylindrical metrics
on~$\bigl\{\,x\in V_\pm\bigm|t_\pm(x)\ge\ell\,\bigr\}$.
This can be done
such that~$\bigl\|g^{V_\pm}_\ell-g^{V_\pm}\bigr\|_{C^k}=O(e^{-c\ell})$
for a fixed~$c>0$ and all~$k$.

We consider the twisted
products~$M_\pm=(V_\pm\times S^1_{\lnx_\pm})/\Gamma_\pm$
of~$(V_\pm,g^{V_\pm}_\ell)$ and a circle~$S^1_{\lnx_\pm}$ of length~$\lnx_\pm$.
Let us regard~$t_\pm$ as functions on~$M_\pm$.
The cross sections~$t_\pm^{-1}(s)$ for~$s\ge\ell$
are isometric to~$X=\Sigma\times T^2$
with~$T^2\cong(S^1_{\lnn_\pm}\times S^1_{\lnx_\pm})/\Gamma_\pm$
by the construction in Section~\ref{sec:xtcs},
but with different orientations.
Hence,
there is an orientation preserving isometry between the cylindrical
regions~$\bigl\{\,x\in M_\pm\bigm|t_\pm(x)\in[\ell,3\ell]\,\bigr\}$
that identifies~$t_+$ with~$4\ell-t_-$.
This allows us to glue~$M_+$ to~$M_-$ after chopping off the
ends~$\bigl\{\,x\in M_\pm\bigm|t_\pm(x)>3\ell\,\bigr\}$,
see Figure~\ref{fig:gluingmetric}.
The resulting manifold will be denoted~$(M_\ell,g_\ell)$,
and we refer to~$g_\ell$ as the \emph{gluing metric.\/}
Let~$t$ be the function on~$M$ that agrees with~$t_--2\ell$
and~$2\ell-t_+$ wherever those are defined.

By~\cite[Rem~\ref{eta-Rem2.2}]{CGN},
the resulting metric~$g_\ell$ has the following properties.

\begin{enumerate}
\item\label{2.g.1}
  The restriction of~$g_\ell$ to $\{\,x \in M_\ell \mid \pm t(x) \geq - \ell\,\}$
  is isometric to the region $t_\pm \leq 3\ell$ in the twisted product
  $(V_\pm \times S^1_{\xi_\pm})/\Gamma_\pm$.
  of~$(V_\pm ,g^{V_\pm}_\ell)$ and a
  circle~$S^1_{\xi_\pm}$ of length~$\xi_\pm$.
\item\label{2.g.2}
  The restriction of the metric~$g^{V_\pm}_\ell$ %
  to~$\{\, x \in V_\pm \mid t_\pm(x) \leq \ell - 1\,\}$
  agrees with the original asymptotically cylindrical Calabi-Yau
  metric~$g^{V_\pm}$.
\item\label{2.g.3}
  The manifold~$t^{-1}((-\ell,\ell)) \subset M_\ell$ is the Riemannian product
  of the K3 surface~$\Sigma$, the torus~$T^2$ and the interval~$(-\ell,\ell)$
  of length~$2\ell$.
\item\label{2.g.4}
  We
  have~$\norm{g_\ell|_{X\times(\pm[\ell,\ell+1])}-g^X\oplus dt^2}_{C^k}=O(e^{-c\ell})$
  for all~$k$.
\item\label{2.g.5}
  There exists~$c>0$ such that for all~$k$,
  we have %
  \begin{equation*}
    \norm{g_\ell-\bar g_\ell}_{C^k}=O\bigl(e^{-c\ell}\bigr)\;.
  \end{equation*}
\end{enumerate}
It follows from~\ref{2.g.2} and~\ref{2.g.3} that the metric~$g_\ell$
has local holonomy contained in~$G_2$
except over the set~$X\times([-\ell-1,-\ell]\cup[\ell,\ell+1])$,
where it is controlled by~\ref{2.g.1} and~\ref{2.g.4}.
The gluing region contains
pieces~$X\times\pm[\ell+1,2\ell]$ that have the geometry of a product
of a circle and an asymptotically cylindrical Calabi-Yau manifold.
If they are long enough, the $G_2$-structure on~$(M_\ell,g_\ell)$
has sufficiently small torsion.
The piece~$X\times[-\ell,\ell]$ is a straight cylinder.
If it is long enough,
we can control the kernel of the Dirac operator~$D_{M,\ell}$,
see section~\ref{ssec:dirac} below.
In~\cite[Section~5]{CGN}, we have seen that the lengths of
both pieces can be chosen on the same scale.

\thegluingfig

We now consider the two halves %
separately.
Let~$V_{\pm,\ell}=V_\pm\setminus((2\ell,\infty)\times S^1_{\lnn_\pm}\times\K)$
as before.
For~$a>0$, put
\begin{equation}\label{2.b.0}
  \widetilde M_{\pm,a}=V_{\pm,\ell}\times S^1_{a\zeta_\pm}
  \qquad\text{and}\qquad
  M_{\pm,a}=\widetilde M_{\pm,a}/\Gamma_\pm\;,
\end{equation}
where~$S^1_{a\zeta_\pm}$ denotes an exterior circle of length~$\xi_\pm=a\zeta_\pm$
and~$V_\pm$ carries the metric~$g^{V_\pm}_\ell$
introduced in~\ref{2.g.1} above.
Then the new metric~$g_{\pm,a}$ on~$M_{\pm,a}$ satisfies properties
analogous to~\ref{2.g.1}--\ref{2.g.4} above.
For~$a=\ar_\pm$ %
as in Proposition~\ref{Prop2w.1}~\ref{2w.1c},
we recover the restriction of the metric~$g_\ell$.
We consider the odd signature operator~$B_{M_{\pm,a}}$ for the new metric.

\subsubsection{}\label{ssec:spinor}
By Remark~\ref{rmk:spinstr} and property~\ref{2.g.1}
of the metrics~$g_{\pm,a}$,
we may describe the spinor bundle on~$M_{\pm,a}$ with Hermitian metric
and Clifford connection~$\nabla^{SM}=\nabla^{SM_{\pm,a}}$ as
\begin{equation}\label{eq:spinbdl}
  SM_{\pm,a}=S\widetilde M_{\pm,a}/\Gamma_\pm
  \qquad\text{with}\qquad
  S\widetilde M_{\pm,a}=p^*SV_{\pm,\ell}\;,
\end{equation}
where~$p\colon V_{\pm,\ell}\times S^1_{a\zeta_\pm}\to V_{\pm,\ell}$ is the
projection.
Let~$\del_{v_\pm}$ denote the unit tangent vector
to the exterior circle factor in the twisted Riemannian product~$M_{\pm,a}$
for all~$a>0$.
As in \mbox{\cite[Section~\ref{eta-Abs3.2}]{CGN}},
we construct a unit spinor~$s$ on~$M_{\pm,a}$
($s_{\ell,1}$ in the notation of~\cite{CGN}) such that
\begin{enumerate}
\item the spinor~$s$
  is pulled back from a $\Gamma_\pm$-invariant unit spinor on~$V_{\pm,\ell}$
  independent of~$a$,
\item its derivative~$\nabla^{SM}s$ is supported
  on~$X\times(\pm[\ell,\ell+1])$,
\item there exists~$c>0$
  such that~$\norm{\nabla^{SM}s}=O(e^{-c\ell})$,
\item we have~$\nabla^{SM}_{\del_{\scriptstyle v_\pm}}s=0$.
\end{enumerate}
In~\cite{CGN}, we also identify~$SM_{\pm,\ell}$ with the %
the spinor bundle for the metric~$\bar g_\ell|_{M_{\pm,\ell}}$
in such a way that~$s$ corresponds to the restriction of the parallel spinor
on the $G_2$-manifold~$(M,\bar g_\ell)$.

\subsubsection{}\label{ssec:dirac}
Let~$D'_{M_{\pm,a}}$ denote the geometric spin Dirac
operator of~$M_{\pm,a}$,
and let~$c_{v_\pm}$ denote Clifford multiplication by~$\del_{v_\pm}$.
Decomposing~$D'_{M_{\pm,a}}s$ using~\eqref{eq:spinbdl}
and the properties of~$g_{\pm,a}$ and~$s$ above,
we find functions~$f_\pm$, $h_\pm$ on~$V_\pm$
and a spinor~$r_\pm\in\Gamma(SM)$ that is pulled back from a
$\Gamma_\pm$-invariant spinor on~$V_{\pm,\ell}$,
all independent of~$a$, such that
\begin{equation}\label{Seq}
  D'_{M_{\pm,a}}s=f_\pm\cdot s+h_\pm\cdot c_{v_\pm}s+r_\pm\;,
\end{equation}
and such that~$r_\pm$
is perpendicular to~$s$ and~$c_{v_\pm}s$ everywhere.
As in~\cite[\eqref{eta-ModifiedDirac1} \& \eqref{eta-ModifiedDirac2}]{CGN}, put
\begin{equation}\label{DmodEq}
\begin{aligned}
  D_{M_{\pm,a}}
  =D'_{M_{\pm,a}}
  &-\<\punkt,s\>\bigl(f_\pm s+h_\pm c_{v_\pm}s+r_\pm\bigr)
  -\<\punkt,r_\pm\>\,s\\
  &-\<\punkt,c_{v_\pm}s\>\bigl(h_\pm s-f_\pm c_{v_\pm}s-c_{v_\pm}r_\pm\bigr)
  +\<\punkt,c_{v_\pm}r_\pm\>\,c_{v_\pm}s\;.
\end{aligned}
\end{equation}
Then~$\ker D_{M_{\pm,a}}$ contains the parallel unit spinors~$s$ and~$c_{v_\pm}s$
for all~$\ell$ and all~$a>0$.

In the special case~$a=\ar_\pm$,
the operators above combine to an operator~$D_{M,\ell}$ on~$M_\ell$.
\begin{enumerate}%
\item\label{2.g.9}
  On~$M_\ell\setminus\bigl(X\times([-\ell-1,-\ell]\cup[\ell,\ell+1])\bigr)$,
  the operator~$D_{M,\ell}$ agrees with the geometric spin Dirac operator
  of the gluing metric~$g_\ell$ described above.
\item\label{2.g.7}
  By~\cite[Prop~\ref{eta-prop:smallev}]{CGN},
  the kernel of~$D_{M,\ell}$ is spanned by the nowhere vanishing section~$s$.
\item\label{2.g.10}
  We have~$D_{M,\ell}|_{M_{\pm}}(c_{\anglex_\pm}s)=0$ by~\eqref{Seq}
  and~\eqref{DmodEq}, see~\cite[\eqref{eta-3.22}]{CGN}.
\item\label{2.g.8}
  By~\cite[Prop~\ref{eta-prop3.13}]{CGN}, there is a constant~$c>0$
  such that for~$\ell\gg 1$, we have
  \begin{equation*}
    \eta(D_{(M,\bar g_\ell)})=\eta(D_{M,\ell})+O\bigl(e^{-c\ell}\bigr)\;,
  \end{equation*}
  where~$D_{(M,\bar g_\ell)}$ is the geometric Dirac operator
  of the $G_2$-manifold~$(M,\bar g_\ell)$.
\end{enumerate}
In particular,
there is no spectral flow if we deform~$D_{(M,\bar g_\ell)}$ into~$D_{M,\ell}$.
From~\ref{ssec:gell}~\ref{2.g.5} and~\ref{ssec:dirac}~\ref{2.g.8},
we conclude that
\begin{equation}\label{2g.1}
  \bar\nu(M)
  =\lim_{\ell\to\infty}\bigl(3\eta(B_{M,\ell})-24\eta(D_{M,\ell})\bigr)\;.
\end{equation}
Note that the linear combination of \etainvts on the right hand
in principle differs from the extended \nuinvt of~$(M,\bar g_\ell)$
by a Mathai-Quillen
term and two differences of \etainvts.
These terms tend to~$0$ as~$\ell\to\infty$ by our construction
(In fact, it turns out that
we already get~$\bar\nu(M)=3\eta(B_{M,\ell})-24\eta(D_{M,\ell})$
for any sufficiently large~$\ell$, \cf Remark \ref{rmk:ell_dep}).

\subsection{The gluing formula}\label{2.g}
In the following, we cut~$(M,g_\ell)$ into two halves~$M_{\pm,\ar_\pm}$
with common boundary~$\Sigma\times T^2\times\{0\}$.
We identify~$\Lambda^\bullet T^*(\Sigma\times T^2)$
with the restriction of~$\Lambda^{\mathrm{ev}}T^*M$,
let the boundary operator~$A$ of the odd signature operator~$B$
act on~$\Omega^\bullet(X)$ as in~\cite[\eqref{eta-2.10}]{CGN}, and put
\begin{equation}\label{LBeq}
  L_{B_\pm}=\im\bigl(H^\bullet(M_\pm)\to H^\bullet(X)\bigr)
  \subset H^\bullet(X)\;,
\end{equation}
then~$L_{B_\pm}$ are Lagrangian subspaces of~$H^\bullet(X)\cong\ker A$.
As in~\cite[sec~\ref{eta-Sect4.1a}]{CGN},
let~$\eta(B_{M_{\pm,a}};L_{B_\pm})$ denote the \etainvt
of~$B_{M_{\pm,a}}$
with respect to APS boundary conditions modified by~$L_{B_\pm}$;
in particular, the forms in the domain of~$B_{M_{\pm,a}}$ orthogonally project to~$0$
on the Lagrangian in~$\Omega^\bullet(X)$ given as the direct sum
of~$L_{B_\pm}$ with the sum of all eigenspaces of~$A$
of eigenvalues of sign~$\pm$.

For the operator~$D_{M_{\pm,a}}$, we define similar boundary conditions
as in~\cite[\eqref{eta-3.23}]{CGN}.
We identify the spinor bundle of~$\Sigma\times T^2$ with the restriction
of the spinor bundle of~$M$ and write
\begin{equation*}
  D_{M_{\pm,a}}|_{X\times(-\ell,\ell)}
  =c_t\,\biggl(\frac\del{\del t}+A_{\pm,a}\biggr)\;,
\end{equation*}
where~$c_t$ denotes Clifford multiplication with~$\frac\del{\del t}$
and~$A_{\pm,a}$ now denotes the boundary operator of the modified
spin Dirac operator~$D_{M_{\pm,a}}$.
Then~$\ker A_{\pm,a}\cong H^{0,\bullet}(X)\cong\C^4$ is independent of~$a$.

Together with the $L^2$-metric on spinors, $c_t$ introduces a
symplectic structure on~$\ker A_{\pm,a}$.
Let~$s$ span~$\ker(D_{M,\ell})$ as in~\ref{ssec:dirac}~\ref{2.g.7} above,
then by~\ref{ssec:dirac}~\ref{2.g.10},
\begin{equation}\label{LDeq}
    L_{D_-}=\Span\{s,c_{v_-}s\} %
    \qquad\text{and}\qquad
    L_{D_+}=\Span\{s,c_{v_+}s\} %
\end{equation}
are subspaces of~$\ker A_{\pm,a}$.
As explained in~\cite[Section~\ref{eta-SpecSymm}]{CGN},
they are exactly the Lagrangian subspaces of $A_{\pm,a}$-harmonic spinors on~$X$
that extend to $D_{M_{\pm,a}}$-harmonic spinors on~$M_{\pm,a}$
for each~$a$.
Define~$\eta(D_{M_{\pm,a}};L_{D_\pm})$ as above.
In particular, the spinors in the domain of~$D_{M_{\pm,a}}$ project
orthogonally to~$0$ on the Lagrangian in~$\Gamma(SX)$ given as the direct sum
of~$L_{D_\pm}$ with the sum of all eigenspaces of~$A_{\pm,a}$ of eigenvalues
of sign~$\pm$.

We define
\begin{equation}
\label{eq:nu_half}
    \bar\nu(M_{\pm,a})=\lim_{\ell\to\infty}\bigl(3\eta(B_{M_{\pm,a}};L_{B_\pm})
    -24\eta(D_{M_{\pm,a}};L_{D_\pm})\bigr) .
\end{equation}

\begin{rmk}
\label{rmk:ell_dep}
While $\eta(B_{M_{\pm,a}};L_{B_\pm})$ and $\eta(D_{M_{\pm,a}};L_{D_\pm})$ can
depend on $\ell$, it turns out in Theorems \ref{Thm2x.1} and \ref{Thm2h.0} that
$3\eta(B_{M_{\pm,a}};L_{B_\pm}) -24\eta(D_{M_{\pm,a}};L_{D_\pm})$ does not.
\end{rmk}

We recall the gluing angle~$\thet$ from~\eqref{2.a.0}.
In Definition~\ref{def:angles}, we have introduced the
configuration angles~$\alpha_1^+$, $\alpha_2^+$, $\alpha_3^+$,
$\alpha_1^-$, \dots, $\alpha_{19}^-\in(-\pi,\pi]$.
For $\rho \in \R$, define
\begin{equation}
\label{eq:maslov}
\begin{aligned}
m_\rho(L;N_+,N_-)&=
    \Sign\rho\,\bigl(\#\bigl\{\,j
    \bigm|\alpha_j^-\in\{\pi-\abs\rho,\pi\}\,\bigr\}-1\bigr) \\
    &\qquad
    +2\Sign\rho\,\#\bigl\{\,j
    \bigm|\alpha_j^-\in(\pi-\abs\rho,\pi)\,\bigr\}\qquad\in\Z\;,
\end{aligned}
\end{equation}

\begin{thm}[{\cite[Theorem~\ref{eta-GluingThm}]{CGN}}]\label{Thm2a.1}
  Let $M$ be an extra-twisted connected sum with~$\thet\notin\pi\Z$, and
  put~$\rho=\pi-2\thet$.
  Then the extended \nuinvt of~$M$
  is given by
  \begin{equation}\label{2.a.2}
    \bar\nu(M,g)
    =\bar\nu(M_{+,\ar_+})+\bar\nu(M_{-,\ar_-})
    -72\frac\rho\pi+3m_\rho(L;N_+,N_-)\;.%
  \end{equation}
\end{thm}

Note that in the examples in~\cite{CGN},
we had~$\Gamma_\pm=\Z/k_\pm$ with~$k_\pm\in\{1,2\}$.
In these cases, one could find orientation reversing isometries of~$M_{\pm}$
that anticommute with~$B_{M_{\pm}}$,
$D_{M_{\pm}}$ and preserve the boundary conditions,
leading to~$\eta(B_{M_{\pm}};L_{B_\pm})=\eta(D_{M_{\pm}};L_{D_\pm})=0$.
Here, we want to deal with examples where this is no longer the case.
We have examples where~$\thet\notin\Q\pi$, so that~$\frac\rho\pi\notin\Q$.
In these cases at least one of the invariants~$\bar\nu(M_{\pm,\ar_\pm})$
above must be irrational.
In particular, they can no longer both vanish.

\begin{ex}\label{ex:run-angles}
  With the configuration angles of Example~\ref{ex:run-matching},
  we get~$\rho=\pi-2\arccos\frac1{\sqrt 3}>0$ and hence
  \begin{equation*}
    -72\frac\rho\pi+3m_\rho(L;N_+,N_-)
    =-72+\frac{144}{\pi}\,\arccos\frac1{\sqrt 3}-3\;.
  \end{equation*}
\end{ex}

\subsection{The adiabatic limit of \etainvts}
\label{2.b}
To compute~$\bar\nu(M_{\pm,\ar_\pm})$ in Theorem~\ref{Thm2a.1} above,
we consider the limits~$\bar\nu(M_{\pm,a})$ for~$a\to 0$ in
this subsection.
These limits differ from~$\bar\nu(M_{\pm,\ar_\pm})$,
and the difference is described as an integral over a variational term
in the next subsection.
A direct computation of this contribution is given in Section~\ref{2h},
completing the proof of Theorem~\ref{Thm:A}.
Two technical intermediate results have been postponed
to Section~\ref{sec:proofs} for better readability.

We still work on the manifolds~$M_{\pm,a}$,
which are twisted Riemannian products by property~\ref{ssec:gell}~\ref{2.g.1}
above.
We also still consider the modification~$D_{M_{\pm,a}}$
of the spin Dirac operator considered in~\eqref{DmodEq}.

We write
\begin{equation}\label{2.b.1}
  \bar\nu\bigl(M_\pm\bigr)
  =\bar\nu\bigl(M_{\pm,\ar_\pm}\bigr)
  =\lim_{a\to 0}\bar\nu\bigl(M_{\pm,a}\bigr)
  +\int_0^{\ar_\pm}\frac d{da}\,\bar\nu\bigl(M_{\pm,a}\bigr)\,da\;.
\end{equation}
We consider~$\of_\pm=V_{\pm,\ell}/\Gamma_\pm$ as an orbifold with boundary,
where the boundary itself is a manifold by assumption.
Let~$\Lambda\of_\pm$ denote its inertia orbifold.
The orbifold $\hat A$-form on~$\Lambda\of_\pm$
is defined in~\cite[(1.6)]{Gorbi}.
We will also need the orbifold $\hat L$-form; see~\cite[Cor~1.10]{Gorbi}.
Let~$\Aa$ denote the Bismut superconnection of the fibrewise spin Dirac
operator for the map~$p\colon M_{\pm,a}\to W_\pm$
with respect to the fibrewise trivial spin structure;
see Remark~\ref{rmk:spinstr}.
Let~$\eta_{\Lambda\of_\pm}(\Aa)\in\Omega^\bullet(\Lambda\of_\pm)$
denote the orbifold \etaform as in~\cite[Def~1.7]{Gorbi}.

To compute the adiabatic limit, we need to combine Dai's result
for manifolds with boundary in~\cite{Dai} with the version for Seifert
fibrations without boundary in~\cite{Gorbi}.
This in done in Theorem~\ref{Thm5.1},
which we have postponed
because its proof is technical and independent from the problem
at hand.
It implies that
\begin{equation}\label{2.b.2}
  \lim_{a\to 0}\bar\nu\bigl(M_{\pm,a}\bigr)
  =\int_{\Lambda\of_\pm\setminus\of_\pm}
  \Bigl(3\hat L_{\Lambda\of_\pm}\bigl(T\ofh_\pm,\nabla^{T\ofh_\pm}\bigr)
  -24\hat A_{\Lambda\of_\pm}\bigl(T\ofh_\pm,\nabla^{T\ofh_\pm}\bigr)\Bigr)
  \,2\eta_{\Lambda\of_\pm}(\Aa)\;.
\end{equation}
Because~$\of_\pm$ is even-dimensional,
there is no contribution from \etainvts on~$\of_\pm$.
Moreover, there are no very small eigenvalues in our situation.
We remark that the circle orbibundle~$M_{\pm,a}\to\of_\pm$
is flat by construction,
so the integral above localises at the orbifold singularities of~$\of_\pm$,
and there is no contribution from the principal stratum.
We are in a local product situation, so the orbifold \etaforms all
reduce to equivariant \etainvts.

The action of~$\Gamma=\Z/k\Z$ on~$V$ is faithful
because it is free on~$\del V$.
At each fixpoint~$p\in V^\gamma$ of~$\gamma\in\Gamma$,
the tangent space~$T_p V$ splits as a sum of complex eigenspaces
of the differential of~$\gamma$ with eigenvalues~$e^{i\alpha_\ell}$,
with~$\alpha_1$, $\alpha_2$, $\alpha_3\in\frac{2\pi}k\,\Z$.
Because~$\gamma$ preserves the holomorphic volume form,
the angles~$\alpha_\ell$ add up to a multiple of~$2\pi$.
Hence, the complex codimension of the fixpoint set has to be at least~$2$.
If the fixpoints are not isolated, then~$ V^\gamma\subset V$
is totally geodesic, and the eigenspaces locally form
bundles over~$V^\gamma$.
The tangent bundle~$T V^\gamma$ corresponds to~$\alpha_\ell=0$.
Let~$\nu_\gamma\to V^\gamma$ denote the normal bundle.

We assume that the coordinate~$v\in\R/\xi\Z$ on the exterior circle has
been chosen such that inserting~$\del_v$ into the $G_2$-form~$\phy$
gives the K\"ahler form on~$ V$; see~\cite[\eqref{eta-2.3}]{CGN}.
Then let~${\gamma\in\Gamma}$ be the generator that acts on the exterior circle
by sending~$v$ to~$v+\frac{\xi}k$.
We start by defining a  {\em generalised Dedekind sum\/}
as in~\cite{Gorbi}.
Note that it depends on the particular choice of generator~$\gamma$.

\begin{dfn}\label{Def2x.1}
  Let~$\gamma\in\Gamma\cong\Z/k\Z$ be a generator.
  For~$0<j<k$,
  let~$ V^{0,j}$ denote the set of isolated fixpoints
  of~$\gamma^j$,
  and for each~$p\in V^{0,j}$, let~$\alpha_{j,1}(p)$, $\alpha_{j,2}(p)$,
  $\alpha_{j,3}(p)$ denote the angles of the action of~$\gamma^j$
  on the 3-dimensional complex vector space~$T_p V$,
  chosen such that~$\alpha_{j,1}(p)+\alpha_{j,2}(p)+\alpha_{j,3}(p)\in4\pi\Z$.
  Then define
  \begin{equation*}
    D_\gamma( V)=
    \frac3{k}\sum_{j=1}^{k-1}\cot\frac{\pi j}{k}
    \sum_{p\in V^{0,j}}
    \frac{\cos\frac{\alpha_{j,1}(p)}2\cos\frac{\alpha_{j,2}(p)}2\cos\frac{\alpha_{j,3}(p)}2-1}
       {\sin\frac{\alpha_{j,1}(p)}2\sin\frac{\alpha_{j,2}(p)}2\sin\frac{\alpha_{j,3}(p)}2}\;,
  \end{equation*}
\end{dfn}

\begin{thm}\label{Thm2x.1}
  Let~$\gamma_\pm\in\Gamma_\pm\cong\Z/k_\pm\Z$ be the generator that
  acts on the exterior circle~$\R/\xi_\pm\Z$
  by sending~$\anglex_\pm$ to~$\anglex_\pm+\frac{\xi_\pm}k$.
  Define~$D_{\gamma_\pm}( V_\pm)$ as above, then
  \begin{equation*}
    \lim_{a\to0}\bar\nu(M_{\pm,a})=D_{\gamma_\pm}( V_\pm)\;.
  \end{equation*}
\end{thm}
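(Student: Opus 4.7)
Start from equation~\eqref{2.b.2}, which is Theorem~\ref{Thm5.1} applied to the collapse $a \to 0$ of the flat circle fibration $p : M_{\pm,\ell,a} \to W_\pm$. This already reduces $\lim_{a\to 0}\bar\nu(M_{\pm,a})$ to an orbifold integral of $3\hat L - 24\hat A$ paired against the orbifold $\eta$-form over $\Lambda W_\pm \setminus W_\pm$. Because $\Gamma_\pm$ acts freely on $\partial V_\pm$, the orbifold $W_\pm$ is a smooth manifold near its boundary, so Dai's boundary piece contributes only via the already-handled cross-section; no fixed-point stratum of $\Gamma_\pm$ meets $\partial W_\pm$.

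Next, exploit that the fibration is not just Seifert but \emph{flat}: the connection curvature of the Bismut superconnection for $p$ vanishes on every orbifold stratum, so only the holonomy of $\gamma^j$ around the fiber is seen. As the paper notes, in this local-product situation the orbifold $\eta$-form reduces to equivariant $\eta$-invariants. Along the stratum of $W_\pm$ coming from a fixed point of $\gamma^j$, the relevant $\eta$-form is just the $\eta$-invariant of the Dirac operator on the flat circle $S^1_{a\zeta_\pm}$ twisted by the rotation character $e^{2\pi i j/k_\pm}$, which by the classical Hirzebruch--Zagier calculation equals $\tfrac1{k_\pm}\cot\tfrac{\pi j}{k_\pm}$, independent of $a$. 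Consequently the integral decomposes as a sum over $j = 1,\dots,k_\pm-1$ of weighted fixed-point contributions.

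At each isolated fixed point $p \in V_\pm^{0,j}$ with rotation angles $\alpha_{j,1},\alpha_{j,2},\alpha_{j,3}$ on $T_pV_\pm$, I would apply the Atiyah--Bott--Segal--Singer fixed-point formula separately to the signature and the spin Dirac operators. The equivariant $\hat L$-contribution comes out as $\prod_\ell i\cot(\alpha_{j,\ell}/2)$ and the equivariant $\hat A$-contribution as $\prod_\ell \bigl(2i\sin(\alpha_{j,\ell}/2)\bigr)^{-1}$, where the Calabi--Yau condition $\alpha_{j,1}+\alpha_{j,2}+\alpha_{j,3}\in 4\pi\mathbb{Z}$ resolves the spinor sign ambiguity by making $\prod_\ell e^{i\alpha_{j,\ell}/2}=1$. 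Multiplying by the circle factor $\tfrac1{k_\pm}\cot\tfrac{\pi j}{k_\pm}$ and taking the difference $3\hat L - 24\hat A$, a direct algebraic simplification (writing $\cot = \cos/\sin$, clearing denominators, and using $\prod_\ell e^{i\alpha/2}=1$) produces the summand
\[
\frac{3}{k_\pm}\cot\frac{\pi j}{k_\pm}\cdot
\frac{\cos(\alpha_{j,1}/2)\cos(\alpha_{j,2}/2)\cos(\alpha_{j,3}/2)-1}{\sin(\alpha_{j,1}/2)\sin(\alpha_{j,2}/2)\sin(\alpha_{j,3}/2)},
\]
exactly matching Definition~\ref{Def2x.1}; the decisive "$-1$" in the numerator arises precisely from the relative weight between the signature ($3\prod\cos$) and Dirac ($24$) contributions after the half-angle manipulation.

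The main obstacle is ruling out extra contributions from \emph{non-isolated} components of $V_\pm^{\gamma^j}$, which must be complex curves with normal eigenvalues $\alpha,-\alpha$. Two complementary routes are available: either verify directly for the building blocks of \S\ref{sec:blocks} (Fano threefolds and weighted hypersurfaces of Picard rank $1$) that every non-trivial $\gamma^j$ has only isolated interior fixed points in $V_\pm = Z_\pm \setminus \Sigma_\pm$; or show that the $\gamma^j$-equivariant contribution of such a curve to $3\hat L - 24\hat A$ vanishes, by combining the conjugate-normal structure with the Calabi--Yau relation $c_1(TC)+c_1(L_\alpha)+c_1(L_{-\alpha})=0$ on $C$ to collapse the Atiyah--Singer--Segal integral. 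This vanishing, plus a small check that Theorem~\ref{Thm5.1} applies with the modified Dirac operator $D_{M_{\pm,a},\ell}$ from~\eqref{DmodEq} (rather than the geometric one), completes the proof.
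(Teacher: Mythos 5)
Your proposal follows the paper's own proof quite closely: both start from equation~\eqref{2.b.2} (Theorem~\ref{Thm5.1}), reduce the orbifold $\eta$-form to equivariant $\eta$-invariants of the flat circle (equation~\eqref{2.x.3} gives $\eta_{\gamma^j}(D_{S^1}) = -i\cot\frac{\pi j}{k}$; your $\tfrac1k\cot\frac{\pi j}{k}$ has absorbed the $1/k$ normalization but dropped an $-i$ that you then need to track against the $i$'s from the Chern-character denominators), and then apply the equivariant fixed-point formula separately for the signature and spin Dirac operators. Your isolated-fixpoint computation reproduces the paper's \eqref{2.x.8}--\eqref{2.x.9}, and the ``$-1$'' in Definition~\ref{Def2x.1} indeed comes from the $24\hat A$ piece exactly as you describe.

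The one place your sketch is under-specified is the vanishing of contributions from one-complex-dimensional components $C\subset V^{\gamma^j}$, and here your two proposed routes are not equally good. The first route --- a case-by-case verification that the specific blocks of \S\ref{sec:blocks} have only isolated interior fixed points --- would not prove Theorem~\ref{Thm2x.1} as stated, since the theorem is a general statement about any $\Z/k$-equivariant ACyl Calabi--Yau; it would only establish the theorem for the particular $V_\pm$ in Table~\ref{table:blocks}, and in fact several of those blocks (\eg Example~\ref{ex:quadric}) have fixed \emph{curves}, so you would still have to deal with them. The second route is the one the paper takes, but the mechanism is a bit cleaner than your sketch suggests. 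The Calabi--Yau relation $\beta_1+\beta_2+\beta_3=0$ and the decomposition \eqref{2.x.2} are used to organize the Chern roots, and then one finds after expanding the equivariant characteristic forms to first order in $\beta_1-\beta_2$ (the only degree-$2$ class surviving integration over $C$) that the signature fixed-curve contribution \eqref{2.x.6} is exactly $8$ times the untwisted one \eqref{2.x.5}, so in the combination $3\cdot 8 - 24 = 0$ they cancel identically. This is not a ``collapse'' of the fixed-point integral for either operator individually; each contribution is generically nonzero, and the cancellation happens only in the specific linear combination appearing in $\bar\nu$. Stating that factor-of-$8$ coincidence explicitly is what closes the gap in your argument.
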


In particular, non-isolated fixpoints do not contribute to
the adiabatic limit of the extended \nuinvt.
We have shown in~\cite{CGN} that~$\bar\nu(M_\pm)=0$ if~$k_\pm=1$ or~$k_\pm=2$.
This is consistent
because involutions of odd-dimensional Calabi-Yau manifolds cannot have isolated
fixpoints.

\begin{proof}
  Being a Calabi-Yau manifold, $V$ has a %
  spin structure with spinor bundle~$\Lambda^{0,\bullet}T^* V$.
The K\"ahler metric identifies~$\Lambda^{0,1}T^* V$ with~$T V$
with its natural complex structure.
Let~$\gamma\in\Gamma$.
Because~$ V^\gamma$ is at most one-dimensional,
we can split~$T_p V$ into one-dimensional eigenspaces
that are also invariant under the
curvature tensor~$F\in\Lambda^{1,1}\End(T_p V)$.
This allows us to decompose the action of~$\gamma e^{-\frac F{2\pi i}}$
on the spinor space~$\Lambda^{0,\bullet}T^* V |_{ V^\gamma}$ as
\begin{equation*}
  \gamma e^{-\frac F{2\pi i}}|_{\Lambda^{0,\bullet}T^*_p V}\cong
  \bigotimes_{j=1}^3
  \begin{pmatrix}
    1\\&e^{i\alpha_\ell}(1+\beta_\ell)
  \end{pmatrix}\;,
\end{equation*}
where~$\beta_\ell\in\Lambda^{1,1}T^* V^\gamma$
are real differential forms that represent the Chern roots
of the subbundle of~$T V|_{ V^\gamma}$
corresponding to the eigenvalue~$e^{i\alpha_\ell}$.
We assume that~$\alpha_1+\alpha_2+\alpha_3=0$.
Because~$ V$ is Ricci-flat,
we know that~$\beta_1+\beta_2+\beta_3=0$.
This allows us to twist each tensor factor above with a line~$L_\ell$
on which~$\gamma e^{-\frac F{2\pi i}}$
acts as~$e^{-i\frac{\alpha_\ell}2}\bigl(1-\frac{\beta_\ell}2\bigr)$,
and we get
\begin{equation}\label{2.x.2}
  \gamma e^{-\frac F{2\pi i}}|_{\Lambda^{0,\bullet}T^*_p V}\cong
  \bigotimes_{\ell=1}^3
  \begin{pmatrix}
    e^{-\frac{i\alpha_\ell}2}\Bigl(1-\frac{\beta_\ell}2\Bigr)\\
    &e^{\frac{i\alpha_\ell}2}\Bigl(1+\frac{\beta_\ell}2\Bigr)
  \end{pmatrix}\;.
\end{equation}

Finally, we note that the Seifert fibration~$M_\pm\to V_\pm/\Gamma_\pm$
is locally of product geometry.
Therefore, the equivariant \etaform~$\tilde\eta_{\gamma^j}(\Aa)$ reduces
to half the equivariant \etainvt.
If~$\gamma\in\Gamma$ denotes the preferred generator, then
\begin{equation}\label{2.x.3}
  \eta_{\gamma^j}(D_{S^1})=\eta_{\gamma^j}(B_{S^1})
  =-i\cot\frac{\pi j}k
  \quad\in\Omega^0\bigl( V\bigr)
\end{equation}
with respect to the preferred orientations.

\subsubsection*{Complex one-dimensional fixpoint sets}
Assume that~$C\subset V^{\gamma^j}$
is a connected component of the fixpoint set of~$\gamma^j$
with~$\dim_\C C=1$,
and with normal bundle~$\nu_C\to C$ in~$ V$.
Along~$C$, we have~$\alpha_2=-\alpha_1$ and~$\alpha_3=0$.
To compute the orbifold $\hat A$-class following~\cite[(1.6) \& (1.7)]{Gorbi}
and~\cite[sect.~6.4]{BGV},
we need
\begin{align*}
  \ch\bigl(\gamma^j,\Lambda^{0,\mathrm{ev}}\nu_C^*
  -\Lambda^{0,\mathrm{odd}}\nu_C^*\bigr)|_{(C,\gamma^j)}
  &=\prod_{\ell=1}^2\str
  \begin{pmatrix}
    e^{-\frac{i\alpha_\ell}2}\Bigl(1-\frac{\beta_\ell}2\Bigr)\\
    &e^{\frac{i\alpha_\ell}2}\Bigl(1+\frac{\beta_\ell}2\Bigr)
  \end{pmatrix}\\
  &=4\sin^2\frac{\alpha_1}2
  -2i(\beta_1-\beta_2)\sin\frac{\alpha_1}2\cos\frac{\alpha_1}2\;,
\end{align*}
which follows from~\eqref{2.x.2}.
For dimension reasons, $\hat A(TC)=1$, so~$\beta_3$
cannot contribute.
Then by~\eqref{2.x.3},
the whole contribution of~$(C,\gamma^j)\in\Lambda V$
to the untwisted \etainvt is
\begin{multline}\label{2.x.5}
  \frac{(-1)^{\rk_\C\nu_C}\hat A(TC)}
       {k\ch(\gamma^j,\Lambda^{0,\mathrm{ev}}\nu_C^*
         -\Lambda^{0,\mathrm{odd}}\nu_C^*)}[C]
       \cdot\eta_{\gamma^j}(D_{S^1})\\
       \begin{aligned}
    &=\frac1
    {4\sin^2\frac{\alpha_1}2
      -2i(\beta_1-\beta_2)\sin\frac{\alpha_1}2\cos\frac{\alpha_1}2}[C]
       \cdot\frac{\eta_{\gamma^j}(D_{S^1})}k\\
    &=\frac{i\cos\frac{\alpha_1}2}
         {8k\sin^3\frac{\alpha_1}2}\cdot(\beta_1-\beta_2)[C]
      \cdot\eta_{\gamma^j}(D_{S^1})\;.
       \end{aligned}
\end{multline}

For the signature \etainvt,
we have to compute the equivariant twist Chern character
following~\cite[Def~6.15]{BGV}.
The spinor bundle~$\Lambda^{0,\bullet}T_p^*C$
of~$T_pC\subset T_p V$ contributes only by its rank.
By~\eqref{2.x.2}, we have
\begin{align*}
  \ch\bigl(\gamma^j,\Lambda^{0,\bullet}T^*_p V\bigr)|_{(C,\gamma^j)}
  &=2\prod_{\ell=1}^2\tr
  \begin{pmatrix}
    e^{-\frac{i\alpha_\ell}2}\Bigl(1-\frac{\beta_\ell}2\Bigr)\\
    &e^{\frac{i\alpha_\ell}2}\Bigl(1+\frac{\beta_\ell}2\Bigr)
  \end{pmatrix}\\
  &=8\cos^2\frac{\alpha_1}2
  +4i(\beta_1-\beta_2)\cos\frac{\alpha_1}2\sin\frac{\alpha_1}2\;.
\end{align*}
By~\eqref{2.x.3} and the above,
the whole contribution of~$ V^{\gamma^j}$
to the signature \etainvt is
\begin{multline}\label{2.x.6}
  \frac{(-1)^{\rk_\C\nu_C}\hat A(TC)\ch\bigl(\gamma^j,\Lambda^{0,\bullet}T^*_p V\bigr)}
       {k\ch(\gamma^j,\Lambda^{0,\mathrm{ev}}\nu_C^*
         -\Lambda^{0,\mathrm{odd}}\nu_C^*)}[C]
       \cdot\eta_{\gamma^j}(B_{S^1})\\
  \begin{aligned}
    &=
    \frac{8\cos^2\frac{\alpha_1}2+4i(\beta_1-\beta_2)\cos\frac{\alpha_1}2\sin\frac{\alpha_1}2}
         {4\sin^2\frac{\alpha_1}2-2i(\beta_1-\beta_2)\cos\frac{\alpha_1}2\sin\frac{\alpha_1}2}[C]
         \cdot\eta_{\gamma^j}(D_{S^1})\\
   &=\frac{i\cos\frac{\alpha_1}2}
         {k\sin^3\frac{\alpha_1}2}\cdot(\beta_1-\beta_2)[C]\cdot\eta_{\gamma^j}(D_{S^1})\;.
  \end{aligned}
\end{multline}
From~\eqref{2.b.2}, \eqref{2.x.5} and~\eqref{2.x.6},
we see that~$( V^{\gamma^j},\gamma^j)$ does not contribute
to~$\lim_{a\to0}\bar\nu(M_a)$.

\subsubsection*{Isolated fixpoints}
At an isolated fixpoint~$p$ of~$\gamma=\gamma^j$,
we have~$\nu_p=T_p V$.
The action of~$\gamma$ is determined by
three nonzero angles~$\alpha_\ell=\alpha_{j,\ell}(p)$
for~$\ell=1$, $2$, $3$ that add up to~$0$.
If necessary, we add a multiple of~$2\pi$ to one of the angles.

The contribution to the orbifold $\hat A$-form is the number
\begin{equation*}
  \ch\bigl(\gamma,\Lambda^{0,\mathrm{ev}}T^*_p V
  -\Lambda^{0,\mathrm{odd}}T^*_p V\bigr)|_{(p,\gamma)}
  =\prod_{\ell=1}^3\str
  \begin{pmatrix}
    e^{-\frac{i\alpha_\ell}2}\\&e^{\frac{i\alpha_\ell}2}
  \end{pmatrix}
  =8i\,\sin\frac{\alpha_1}2\sin\frac{\alpha_2}2\sin\frac{\alpha_3}2\;.
\end{equation*}
By~\eqref{2.x.3}, the contribution to the untwisted \etainvt is
\begin{equation}\label{2.x.8}
  \frac{(-1)^{\rk_\C T_p V}}
       {k\ch(\gamma,\Lambda^{0,\mathrm{ev}}T^*_p V
         -\Lambda^{0,\mathrm{odd}}T^*_p V)}[p]
       \cdot\eta_{\gamma^j}(D_{S^1})
  =\frac{\cot\frac{\pi j}k}
       {8k\sin\frac{\alpha_1}2\sin\frac{\alpha_2}2\sin\frac{\alpha_3}2}\;.
\end{equation}

For the signature \etainvt,
we multiply the above with the equivariant Chern character
\begin{equation*}
  \ch\bigl(\gamma,\Lambda^{0,\bullet}T^*_p V\bigr)
  =\prod_{\ell=1}^3\tr
  \begin{pmatrix}
    e^{-\frac{i\alpha_\ell}2}\\&e^{\frac{i\alpha_\ell}2}
  \end{pmatrix}
  =8\cos\frac{\alpha_1}2\cos\frac{\alpha_2}2\cos\frac{\alpha_3}2
\end{equation*}
and obtain the contribution to the signature \etainvt
\begin{equation}\label{2.x.9}
  \frac{(-1)^{\rk_\C T_p V}
       \ch\bigl(\gamma,\Lambda^{0,\bullet}T^*_p V\bigr)}
       {k\ch(\gamma,\Lambda^{0,\mathrm{ev}}T^*_p V
         -\Lambda^{0,\mathrm{odd}}T^*_p V)}[p]
       \cdot\eta_{\gamma^j}(B_{S^1})
  =\frac1k\,\cot\frac{\alpha_1}2\cot\frac{\alpha_2}2\cot\frac{\alpha_3}2
       \cot\frac{\pi j}k\;.
\end{equation}
From~\eqref{2.b.2}, \eqref{2.x.8} and~\eqref{2.x.9},
we obtain the Theorem.
\end{proof}

\begin{rmk}\label{Rem2x.1}
  When considering examples,
  it is more convenient to fix the generator~$\tau$
  that acts on the interior circle~$S^1_\zeta\cong\R/\zeta\Z$
  by~$u\mapsto u+\frac\zeta k$.
  If a unit~$\eps\in\Z/k$ is chosen as in Section~\ref{2.w}
  then the generator chosen above satisfies~$\gamma=\tau^\eps$,
  and the contribution of the isolated fixpoints to the extended
  \nuinvt is given by~$D_{\tau^\eps}(V)$.

  Now assume that~$Z$ is a building block with a $\Gamma$-action
  that fixes an anticanonical divisor~$\K$ of~$Z$ pointwise,
  and that~$V\cong Z\setminus\K$.
  The orientation convention of~\cite[equation~(3.2)]{CHNP2}
  says that the complex structure rotates the outward cylindrical
  direction into the positive direction of the interior circle.
  If we identify the asymptotic cylinder with the normal bundle to~$\K$ in~$Z$,
  then the outward cylindrical direction becomes the inward normal direction.
  This means that the interior circle through~$v\in\nu_\K$
  is oriented by~$-iv\in T_v\nu_\K$.
  Hence~$\tau\in\Gamma$ should act on~$\nu_\K$ by~$e^{-\frac{2\pi i}k}$.
\end{rmk}

\begin{ex}\label{ex:run-fixpoint}
  In Example~\ref{ex:five},
  we describe a building block with $\Z/5$-symmetry.
  It has one isolated fixpoint~$p$.
  Let~$\zeta_5=e^{\frac{2\pi i}5}$,
  then there is a generator~$\tau$ of~$\Z/5$
  that acts on~$T_pZ$
  as~$\operatorname{diag}(\zeta_5,\zeta_5,\zeta_5^{-2})$
  and on~$\nu_\K$ by~$\zeta_5^{-1}=e^{-\frac{2\pi i}5}$.
  Hence, the generator~$\gamma$ in Theorem~\ref{Thm2x.1}
  corresponds to~$\tau^\eps$,
  for~$\eps\not\equiv 0$ mod~$5$.
  Then~$\gamma$ acts on~$T_pZ$
  as~$\operatorname{diag}(\zeta^\eps_5,\zeta^\eps_5,\zeta_5^{-2\eps})$, so
  \begin{equation*}
    \alpha_{j,1}(p)=\alpha_{j,2}(p)=\frac{2\eps\pi}5\qquad\text{and}\qquad
    \alpha_{j,3}(p)=-\frac{4\eps\pi}5\;.
  \end{equation*}
  If we represent~$\eps\in\Z/5\setminus\{0\}$
  by an element of~$\{-2,-1,1,2\}$, we get
  \begin{equation*}
    D_\gamma(V)=D_{\tau^\eps}(V)=
    \lim_{r\to0}\bar\nu\bigl((V\times S^1_r)/\Gamma\bigr)
    =\frac{24}{5\eps}\;.
  \end{equation*}
  We can use this block as~$Z_-$ in Example~\ref{ex:run-gluing},
  with~$\eps_-=-1$ and hence~$D_{\gamma_-}(V^-)=-\frac{24}5$.
\end{ex}

\subsection{The variation of \etainvts}
\label{2.c}
In this section,
we apply a variation formula for \etainvts on manifolds with boundary
by Dai and Freed~\cite[Theorem~1.9]{DF}.
Similar formulas in the case where the boundary operator is invertible
have been established
by Cheeger \cite[Section~8]{Ch} and
Bismut and Cheeger~\cite[Theorem~6.36]{BCh4}.
Dai and Freed actually interpret the reduced \etainvt in~$\R/\Z$
with respect to a certain class of possible boundary conditions as a
section of the dual of the determinant line bundle
of the fibrewise boundary operators,
equipped with the Quillen metric and the Bismut-Freed connection.
Because we have fixed the boundary condition in Section~\ref{2.g},
we recover the reduced \etainvt as an $\R/\Z$-valued function.

We now consider the family~$\mathcal M_\pm=M_\pm\times(0,\infty)\to(0,\infty)$
with fibre~$M_{\pm,a}$ over~$a\in(0,\infty)$.
Recall that the $M_{\pm, a}$ implicitly depends on a choice of
parameter $\ell$; this parameter is fixed throughout this subsection.
We choose the trivial connection~$T^H\mathcal M_\pm\subset T\mathcal M_\pm$,
and the fibrewise metric is induced
from the metric~$g^{V_\pm}_\ell\oplus a^2\lnn_\pm^2g^{S^1}$ on~$\widetilde M_\pm$.
Using these data, Bismut and Freed~\cite[(1.7)]{BF} construct
a connection~$\tilde\nabla^u$
on the infinite-dimensional vector
bundle~$\Omega^\bullet(\mathcal M_\pm/(0,\infty))\to(0,\infty)$
of fibrewise exterior differential forms
that is unitary with respect to the fibrewise $L^2$-metrics.

We similarly obtain a unitary connection
on~$\Omega^\bullet(\del\mathcal M_\pm/(0,\infty))\to(0,\infty)$,
where~$\del\mathcal M_\pm$ denotes the collection of fibrewise boundaries.
In our situation, it is not hard to see that the subbundle
of fibrewise harmonic forms and its subbundle representing
the subspaces~$L_{B_\pm}$ of~\eqref{LBeq}
in each fibre are parallel with respect to~$\tilde\nabla^u$.
Dai and Freed regard~$L_{B_\pm}$ as graphs of isometries~$H^+(X)\to H^-(X)$
whose determinants define a section of unit length of the determinant line
bundle~$\det H^\bullet(X)=\Lambda^{\max}H^+(X)^*\otimes\Lambda^{\max}H^-(X)$.
This section is again parallel with respect to the connection
induced by~$\tilde\nabla^u$ on~$\det H^\bullet(X)$.

The variational formula is typically phrased in terms of the Quillen metric and
the Bismut-Freed connection on the determinant line bundle
over~$(0,\infty)$ (which preserves the Quillen metric).
However, if the kernels of the boundary operators form a bundle
over the base, as in the case at hand,
it is easier to work with the $L^2$-metric above.
A simple fibrewise rescaling of the determinant line bundle transforms
one metric into the other, as in~\cite[Prop.~2.15]{DF}.
It is shown in~\cite[(3.8)]{DF} that the Bismut-Freed connection becomes
a unitary connection with respect to the $L^2$-metric.
It is given by
\begin{equation}\label{eq:EtaForm}
  \tilde\nabla^u-2\pi i\,\tilde\eta(\mathbb B)\;,\qquad\text{where}\qquad
  \tilde\eta(\mathbb B)
  =-\frac1{4\pi i}\int_0^\infty\str\Bigl(A_{X}\,\bigl[\tilde\nabla^u,A_{X}\bigr]
  \,e^{-tA_{X}^2}\Bigr)\,dt
\end{equation}
is the \etaform of the family of boundary operators~$A_X$,
and~$\mathbb B_t=\sqrt t\,A_X+\tilde\nabla^u$ is the corresponding
Bismut superconnection.
Note that we do not need to specify the degree~$1$ component
of~$\tilde\eta(\mathbb B)$ explicitly because our base space
is one-dimensional here and the degree~$0$ component vanishes.

The situation for the spin Dirac operators is completely analogous.
However, we need to check that there is no spectral flow for~$a\in(0,\ar_\pm)$
under our boundary conditions.

\begin{prop}\label{Prop:SpecFlow}
  If~$\ell$ is sufficiently large and~$a>0$, then
  the operator~$D_{M_{\pm,a}}$ has trivial kernel
  under APS-boundary conditions modified by the Lagrangian subspace
  $L_{D_\pm}\subset\ker A_{\pm,a}$.
\end{prop}

\begin{proof}
  Let~$s$ denote the $D_{M_{\pm,a}}$-harmonic spinor
  constructed in~\ref{ssec:spinor}.
  Recall that~$\ker(A_{\pm,a})$ is spanned by the restrictions
  of~$s$, $c_{v_\pm}s$, $c_ts$, and~$c_tc_{v_\pm}s$.
  If~$u\in\ker D_{M_{\pm,a}}$,
  the divergence theorem implies that
  \begin{equation*}
    \<u,c_ts\>_{L^2(\del M_{\pm,a})}
    =\<u,D'_{M_{\pm,a}}s\>_{L^2(M_{\pm,a})}
    -\<D'_{M_{\pm,a}}u,s\>_{L^2(M_{\pm,a})}\;,
  \end{equation*}
  where~$D'_{M_{\pm,a}}$ denotes the geometric spin Dirac
  operator on~$M_{\pm,a}$.
  Because~$D'_{M_{\pm,a}}-D_{M_{\pm,a}}$ is self-adjoint
  by~\eqref{DmodEq}, the right hand side vanishes,
  and the restriction of~$u$ to the boundary is $L^2$-perpendicular to~$c_ts$.
  For the same reason, using~\ref{ssec:dirac}~\ref{2.g.10},
  it is also $L^2$-perpendicular to~$c_tc_{v_\pm}s$.
  
  If~$u$ moreover satisfies the modified APS boundary of Section~\ref{2.g},
  then~$u$ is also $L^2$-perpendicular to~$s$ and~$c_{v_\pm}s$ at the boundary,
  and hence to the whole~$\ker(A_{\pm,a})$.
  The claim now follows
  from~\cite[Proposition~\ref{eta-prop:nonres}]{CGN}.
\end{proof}

Hence, fixing APS boundary conditions modified by
the Lagrangian of~\eqref{LBeq}, \eqref{LDeq} as before,
the variational formulas in the version of~\cite[Theorem~3.3]{DF}
in our situation read
\begin{equation}\label{2.c.1}
  \begin{aligned}
    d\eta(B_{M_{\pm,a}})
    &=\int_{\mathcal M_\pm/(0,\infty)}2\hat L\bigl(\nabla^{T(\mathcal M_\pm/(0,\infty))}\bigr)
    -2\tilde\eta(\mathbb B)&&\in\Omega^1((0,\infty))\;,\\
    d\eta(D_{M_{\pm,a}})
    &=\int_{\mathcal M_\pm/(0,\infty)}2\hat A\bigl(\nabla^{T(\mathcal M_\pm/(0,\infty))}\bigr)
    -2\tilde\eta(\mathbb D)&&\in\Omega^1((0,\infty))\;,\\
  \end{aligned}
\end{equation}
where~$\int_{\mathcal M_\pm/(0,\infty)}$ denotes integration along the fibres.
The first term is the usual local variation formula for \etainvts
on closed manifolds.
The second term is the boundary contribution.
In the second line,
$\mathbb D$ is the superconnection on the bundle of fibrewise spinors
associated to the
boundary operators~$C_{X_\pm,a}$ corresponding to~$D_{M_{\pm,a}}$.

\begin{prop}\label{Prop2c.1}
  The local variation terms vanish, that is
  \begin{equation*}
    \int_{\mathcal M_\pm/(0,\infty)}\hat L\bigl(\nabla^{T(\mathcal M_\pm/(0,\infty))}\bigr)
    =\int_{\mathcal M_\pm/(0,\infty)}\hat A\bigl(\nabla^{T(\mathcal M_\pm/(0,\infty))}\bigr)
    =0\;.
  \end{equation*}
\end{prop}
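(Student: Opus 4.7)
The plan is to exhibit a direct-sum decomposition of the vertical tangent bundle under which the Bismut connection is block-diagonal with one flat factor. This will force the relevant degree-$8$ components of $\hat L$ and $\hat A$ to be pullbacks from the $6$-dimensional manifold $V_\pm$ and therefore to vanish on dimensional grounds.

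I would first pass to the $\Gamma_\pm$-cover $\tilde{\mathcal M}_\pm = V_\pm \times S^1_{\zeta_\pm} \times (0,\infty)$, using a coordinate $v$ on the fixed circle $S^1_{\zeta_\pm}$ so that the fibrewise metric reads $g^{TV_\pm} + a^2\,dv^2$. Extending this to the ambient metric $g^{TV_\pm} + a^2\,dv^2 + da^2$ using the horizontal lift $\partial_a$, one obtains a Riemannian product of $(V_\pm, g^{TV_\pm})$ with $(S^1_{\zeta_\pm}\times(0,\infty), a^2\,dv^2 + da^2)$; the second factor is a cone, hence flat away from the apex $a=0$, which lies outside the family.

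From this product structure, the Bismut connection $\nabla^{T(\tilde{\mathcal M}_\pm/(0,\infty))}$ should preserve the direct-sum decomposition of the vertical tangent bundle into the pullback of $TV_\pm$ and of the circle tangent bundle. On the first factor it reduces to the pullback of the Calabi--Yau connection on $V_\pm$. On the line-bundle factor, the unit vertical frame $e = \tfrac{1}{a}\partial_v$ satisfies $\nabla_{\partial_v} e = 0$ and, using the standard identity $\nabla_{\partial_a}\partial_v = \tfrac{1}{a}\partial_v$ on the cone, also $\nabla_{\partial_a} e = 0$. Hence the second summand will be flat, and the full Bismut curvature will be a $2$-form on $\tilde{\mathcal M}_\pm$ with values pulled back from $\End(TV_\pm)$.

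Both $\hat L$ and $\hat A$ of this curvature are then pullbacks from $V_\pm$. Since $\dim V_\pm = 6$, they have no non-zero component of degree $\ge 7$, so in particular their degree-$8$ parts vanish. Fibre integration over the $7$-dimensional fibre turns a degree-$k$ form into a degree-$(k-7)$ form on $(0,\infty)$, so only the degree-$8$ part of the integrand could contribute to a $1$-form on the base; this is zero, and the conclusion descends along the free isometric quotient by $\Gamma_\pm$. The main obstacle will be carefully verifying the block-diagonality of the Bismut connection, which is an instance of the general fact that for a Riemannian product fibration, the Bismut connection on the vertical tangent bundle is the direct sum of the pullback of the fixed-factor Levi--Civita connection and the Bismut connection of the variable factor.
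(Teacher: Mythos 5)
Your argument follows the same route as the paper's: split the vertical tangent bundle into $p_{V_\pm}^*TV_\pm\oplus\underline{\R}$, observe the splitting is parallel for the Bismut connection with the first factor pulled back from $V_\pm$ and the second flat, then conclude by a dimension count that fibre integration over the odd-dimensional fibre kills the pulled-back form. The only cosmetic differences are that the paper dispenses with the explicit cone computation for the line factor by noting that any Euclidean connection on a real rank-one bundle is automatically flat, and phrases the final step as ``horizontal degree zero plus odd-dimensional fibres'' rather than ``pullback from a six-manifold,'' but both yield the same conclusion.
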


\begin{proof}
  We split the vertical tangent bundle
  \begin{equation*}
    T(\mathcal M_\pm/(0,\infty))\cong p_{V_\pm}^*TV_\pm\oplus\underline{\R}\;,
  \end{equation*}
  where~$p_{V_\pm}\colon \mathcal M_\pm\to V_\pm$ denotes obvious projection.
  This splitting is parallel with respect to the Bismut connection
  on the vertical tangent bundle.
  Because the metric on~$V_\pm$ is unchanged,
  the Bismut connection on~$p_{V_\pm}^*TV_\pm$ is pulled back from~$V_\pm$.
  The connection on~$\underline{\R}$ is Euclidean and therefore flat.
  We conclude that
  \begin{equation*}
    \hat A\bigl(\nabla^{T\mathcal M_\pm/(0,\infty)}\bigr)
    =\hat A\bigl(\nabla^{p_{V_\pm}^*TV_\pm}\bigr)
    \cdot\hat A\bigl(\nabla^{\underline{\R}}\bigr)
    =p_{V_\pm}^*\hat A\bigl(\nabla^{TV_\pm}\bigr)\;.
  \end{equation*}
  Because this expression is of horizontal degree~$0$ and the fibres are
  odd-dimensional, the integral in the proposition vanishes.
  The same holds for the $\hat L$-form integral above.
\end{proof}

We now consider the \etaforms~$\eta(\mathbb B)$ and~$\eta(\mathbb D)$
in~\eqref{2.c.1}.
We write the family~$\del\mathcal M_\pm$ as a product~$\Sigma_\pm\times E_\pm$,
where~$E_\pm\to(0,\infty)$ denotes the family of
tori~$E_{\pm,a}=(S^1_{\lnn_\pm}\times S^1_{a\lnn_\pm})/\Gamma_\pm$
for~$a\in(0,\infty)$.
Let~$\Aa$ denote the superconnection associated to the fibrewise spin
Dirac operator for this family,
equipped with the trivial spin structure.

\begin{prop}\label{Prop2c.2}
  The variation of~$\bar\nu(M_{\pm,a})$ is given by
  \begin{equation}\label{2.c.3}
    d\bar\nu(\mathcal M_\pm/(0,\infty))
    =288\tilde\eta(\Aa)\;.
  \end{equation}
\end{prop}

\begin{proof}
  By the definition of~$\bar\nu(M_{\pm,a})$ in Theorem~\ref{Thm2a.1},
  equation~\eqref{2.c.1} and Proposition~\ref{Prop2c.1},
  we are left with
  \begin{equation*}
    d\bar\nu(\mathcal M_\pm/(0,\infty))
    =48\tilde\eta(\mathbb D)-6\tilde\eta(\mathbb B)\;.
  \end{equation*}
  Let~$B_\Sigma$ and~$D_\Sigma$ denote the signature operator and the untwisted
  Dirac operator on the K3 surface~$\Sigma$.
  Then~$\ind(B_\Sigma)=-16$ and~$\ind(D_\Sigma)=2$.
  The proposition follows because
  \begin{subequations}
    \label{eq:EtaProd}
    \begin{align}
      \tilde\eta(\mathbb B)&=2\ind(B_\Sigma)\,\tilde\eta(\Aa)\;,
      \label{eq:EtaProdB}\\
      \tilde\eta(\mathbb D)&=\ind(D_\Sigma)\,\tilde\eta(\Aa)\;.
      \label{eq:EtaProdD}
    \end{align}
  \end{subequations}
  These equations will be proved in Section~\ref{A3}.
\end{proof}

\subsection{A direct computation of the \etaform
  integral}\label{2h}
We rewrite the \etaform integral directly in terms of the eigenvalues
of the Dirac operator on the family of flat tori over~$\Hh$.
Bismut and Cheeger did similar computations in~\cite{BChTorus}.
In Section~\ref{A2v}, we exhibit another way to compute the
contribution from the variational formula to the \nuinvt
in terms of logarithms of Dedekind \etafuncs.

Because \etaforms are invariant under rescaling,
we may assume that~$\lnn=1$ and
consider a family of tori~$(S^1\times S^1_a)/\Gamma$ for~$a\in(0,\infty)$
as in Proposition~\ref{Prop2c.2}.
With~$\eps$ relatively prime to~$k$ as in Section~\ref{2.w},
we have~$(S^1\times S^1_a)/\Gamma\cong\R^2/\Lambda_a$,
where
\begin{equation*}
  \Lambda_a=\binom10\Z\oplus\binom\eps a\,\frac1k\Z\quad\subset\quad\R^2\;.
\end{equation*}
Let us denote the total space of this family by~$\ft$ and the fibres by~$Z$.
Consider a flat connection on~$\underline\R^2\to(0,\infty)$ given by
\begin{equation*}
  \nabla=d-\begin{pmatrix}0&0\\0&1\end{pmatrix}\,\frac{da}a\;,
\end{equation*}
then~$\Lambda$ is parallel with respect to~$\nabla$.
This connection induces a splitting~$T\ft=TZ\oplus T^H\ft$.
A horizontal lift of~$V=\frac\del{\del a}$
at a point~$(x,y,a)\in\R^2\times(0,\infty)$ is given as
\begin{equation*}
  \bar V_{(x,y,a)}=\frac ya\,\frac\del{\del y}+\frac\del{\del a}\;.
\end{equation*}

We equip~$\underline\R^2\to(0,\infty)$ and~$\ft$
with the fibrewise metric~$g^{TZ}$ induced from the standard metric on~$\R^2$.
The Levi-Civita connection on~$\ft$ induces a Euclidean connection~$\nabla^{TZ}$
on~$TZ\cong\pi^*\underline{\R^2}\to \ft$ that coincides with the pullback
of the trivial connection~$d$.
The mean curvature of the fibres is given as
\begin{equation*}
  h=-\frac12\,\tr\bigl((g^{TZ})^{-1}\,\logeta_{\bar V}g^{TZ}\bigr)\,da
  =-\frac{da}a\;.
\end{equation*}

We consider the fibrewise product spin structure,
so~$S^+\cong S^-\cong\underline{\C}\to \ft$.
Let~$c_1$, $c_2$ denote
Clifford multiplication with the standard orthonormal basis vectors~$e_1$,
$e_2$ on~$S^+\oplus S^-$,
then the complex Clifford volume element~$ic_1c_2$ acts by~$\pm 1$
on~$S^\pm$.
The Levi-Civita connection induces the trivial connection on~$S^\pm$.
For the Bismut superconnection,
we have to consider a connection on~$\pi_*S^\pm$ of the form
\begin{equation*}
  \tilde\nabla^u_{\frac\del{\del a}}f
  =\biggl(\nabla^{S^\pm}-\frac h2\biggr)_{\bar V}f
  =\biggl(\frac ya\,\frac\del{\del y}+\frac\del{\del a}+\frac1{2a}\biggr)f
\end{equation*}
under the natural
identification~$\Gamma(\pi_*S^\pm)\cong\Gamma(S^\pm)$.
Then the Bismut superconnection is given by
\begin{equation}\label{eq:bismutsc}
  \Aa_t
  =\sqrt t\,A_{T^2}+\tilde\nabla^u
  =\sqrt t\,\biggl(c_1\,\frac\del{\del x}+c_2\,\frac\del{\del y}\biggr)
  +da\,\biggl(\frac ya\,\frac\del{\del y}+\frac\del{\del a}+\frac1{2a}\biggr)\;.
\end{equation}
Starting from~\eqref{eq:EtaForm},
we compute
\begin{align}
  \begin{split}\label{2h.2}
    \tilde\eta(\Aa)
    &=-\frac1{4\pi}\,\frac{da}a\,\int_0^\infty
    \tr_{\pi_*S}\biggl(\frac{\del^2}{\del x\,\del y}
    \,e^{t\bigl(\frac{\del^2}{\del x^2}+\frac{\del^2}{\del y^2}\bigr)}\biggr)\,dt\;.
  \end{split}
\end{align}
We have used the definition~$\str(\punkt)=\tr(ic_1c_2\punkt)$
of the supertrace.
Also $\str(1)=\tr(ic_1c_2)=0$.

With respect to the standard Euclidean metric, the
lattice dual to~$\Lambda_a$ is given by
\begin{equation*}
  \Lambda_a^*=\bigl\{\,\mu\in\C\bigm|\<\lambda,\mu\>\in\Z\text{ for all }
  \lambda\in\Lambda\,\bigr\}
  =\biggl\{\,\binom n{m/a}\biggm|\eps n+m\equiv 0\Pmod k\,\Bigr\}\;.
\end{equation*}
For~$m$, $n$ as above,
we consider sections
\begin{equation*}
  \phy^\pm_{m,n}(x,y,a)=\frac1{\sqrt a}\,e^{2\pi i(nx+my/a)}\in\Gamma(S^\pm)
  \cong C^\infty(\ft;\C)
\end{equation*}
of $L^2$-norm~$1$.
They are parallel under~$\tilde\nabla^u$, and they are
eigensections of the fibrewise Laplacian
for the eigenvalue~$4\pi^2\,\bigl(n^2+\frac{m^2}{a^2}\bigr)$.
Note that each admissible pair~$(m,n)$ appears twice (once for~$S^+$
and once for~$S^-$), hence~\eqref{2h.2} becomes
\begin{align*}
  \tilde\eta(\Aa)
  &=-\frac1{2\pi}\,\frac{da}a\,\int_0^\infty\sum_{m+\eps n\equiv 0\Pmod k}
  \biggl(-4\pi^2\,\frac{mn}a\biggr)\,
  e^{-4\pi^2 t\,\bigl(n^2+\tfrac{m^2}{a^2}\bigr)}\,dt\\
  &=\frac{da}{2\pi}\,\int_0^\infty\sum_{m+\eps n\equiv 0\Pmod k}
  mn\,
  e^{-t\,(m^2+a^2n^2)}\,dt\;.
\end{align*}

In the definition below, we substitute~$-m$ for~$m$.

\begin{dfn}\label{Def2h.0}
  For each~$\eps$ relatively prime to~$k$,
  we define a function~$F_{k,\eps}\colon(0,\infty)\to\R$ by
  \begin{equation}\label{2v.1}
    F_{k,\eps}(s)
    =\int_0^\infty\int_0^s\sum_{m\equiv\eps n \Pmod k}mn\,e^{-t(m^2+n^2a^2)}\,da\,dt\;.
  \end{equation}
\end{dfn}
  
\begin{prop}\label{Prop2v.2}
  Consider the family~$\ft\to(0,\infty)$ above.
  Then
  \begin{equation*}
    \int_{[0,s]}\tilde\eta(\Aa)
    =-\frac1{2\pi}\,F_{k,\eps}(s)\;.
    \hfill \qed
  \end{equation*}
\end{prop}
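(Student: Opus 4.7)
The plan is essentially to observe that the claim follows by integrating the explicit expression for $\tilde\eta(\mathbb A)$ obtained just before Definition~\ref{Def2h.0} over $a\in[0,s]$, after a cosmetic sign change. The main work has already been done: the formula
\[
  \tilde\eta(\mathbb A)
  =\frac{da}{2\pi}\,\int_0^\infty\sum_{m+\eps n\equiv 0\mod k}
  mn\,e^{-t\,(m^2+a^2n^2)}\,dt
\]
was derived from \eqref{eq:bismutsc} and \eqref{eq:EtaForm} via the spectral decomposition of the fibrewise Laplacian into the eigensections $\phi^\pm_{m,n}$, using that these sections are $\tilde\nabla^u$-parallel so that only the Clifford term $\sqrt t\,(c_1\partial_x+c_2\partial_y)$ of the superconnection contributes through its commutator with the horizontal part $da\,\bigl(\tfrac ya\partial_y+\tfrac1{2a}\bigr)$.

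The first step is to replace $m$ by $-m$ in the summation. The lattice condition $m+\eps n\equiv 0\pmod k$ becomes $m\equiv\eps n\pmod k$, matching the range of summation in~\eqref{2v.1}, while the factor $mn$ picks up a sign. This turns the above into
\[
  \tilde\eta(\mathbb A)
  =-\frac{da}{2\pi}\,\int_0^\infty\sum_{m\equiv\eps n\mod k}
  mn\,e^{-t\,(m^2+a^2n^2)}\,dt .
\]
Integrating the $1$-form over $a\in[0,s]$ and interchanging the $a$- and $t$-integrals then gives exactly $-\tfrac1{2\pi}F_{k,\eps}(s)$, as desired.

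The only real issue is justifying Fubini and the formal manipulations of the (only conditionally convergent) lattice sum. For each fixed $t>0$, $a>0$ the sum $\sum_{m\equiv\eps n\mod k} mn\,e^{-t(m^2+a^2n^2)}$ is absolutely convergent by Gaussian decay in $(m,n)$, so pointwise everything makes sense. One way to make the swap of $\int_0^s da$ and $\int_0^\infty dt$ rigorous is to insert a heat-kernel regulator $e^{-\eps t}$ (equivalently, to work with $\tilde\eta(\mathbb A)$ in the form $-\tfrac1{2\sqrt\pi}\int_0^\infty t^{-1/2}\str(\ldots)\,dt$ and use the standard large-$t$ estimates for superconnection heat kernels of~\cite{BF}), swap the integrals for the regularised expression where all sums and integrals converge absolutely, and then pass to the limit $\eps\downarrow0$. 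Alternatively, one can group terms $(m,n)$ and $(-m,-n)$ and use that the partial sums over $\{\,(m,n):|m|,|n|\le N\,\}$ are dominated uniformly in $N$ by an $L^1_{\mathrm{loc}}$ function of $a$ on $(0,s]$, which suffices. Either route is routine; the substantive content of the proposition is the bookkeeping of signs and of the lattice condition.
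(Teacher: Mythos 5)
Your proof is correct and matches the paper's approach exactly: the paper derives the displayed formula for $\tilde\eta(\mathbb A)$ just before Definition~\ref{Def2h.0}, remarks that the definition of $F_{k,\eps}$ substitutes $-m$ for $m$, and then states Proposition~\ref{Prop2v.2} with an inline $\qed$ — precisely the integration-plus-sign-flip argument you give. Your additional remarks on justifying Fubini via Gaussian decay and a regulator are a reasonable elaboration of a step the paper leaves implicit.
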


\begin{thm}\label{Thm2h.0}
  The variation of~$\bar\nu(M_{\pm,a})$ is given by
  \begin{equation*}
    \bar\nu\bigl(M_{\pm,\ar_\pm}\bigr)-\lim_{a\to 0}\bar\nu\bigl(M_{\pm,a}\bigr)
    =-\frac{144}\pi\,F_{k_\pm,\eps_\pm}(\ar_\pm)\;.
  \end{equation*}
\end{thm}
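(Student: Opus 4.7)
The plan is to read off Theorem~\ref{Thm2h.0} by combining two results already established, namely Proposition~\ref{Prop2c.2} (the variational formula for $\bar\nu(M_{\pm,a})$ with respect to the parameter $a$) and Proposition~\ref{Prop2v.2} (the explicit evaluation of the primitive of $\tilde\eta(\Aa)$ along the family). Essentially, the whole content of Theorem~\ref{Thm2h.0} is an integration and a limit passage; the analytic input is packaged into the two cited propositions.

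First, I would fix $s_\pm>0$ and integrate the $1$-form identity
\begin{equation*}
  d\bar\nu(M_{\pm,a})=288\,\tilde\eta(\Aa)\in\Omega^1((0,\infty))
\end{equation*}
from Proposition~\ref{Prop2c.2} over the interval $(0,s_\pm]\subset(0,\infty)$. For any $a_0\in(0,s_\pm)$ this gives
\begin{equation*}
  \bar\nu(M_{\pm,s_\pm})-\bar\nu(M_{\pm,a_0})
  =288\int_{[a_0,s_\pm]}\tilde\eta(\Aa)\;.
\end{equation*}
Next, I would take the limit $a_0\to 0^+$. On the left hand side the limit exists and equals $D_{\gamma_\pm}(V_\pm)$ by Theorem~\ref{Thm2x.1}. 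On the right hand side, Proposition~\ref{Prop2v.2} identifies $\int_{[0,s_\pm]}\tilde\eta(\Aa)$ with $-\frac1{2\pi}F_{k_\pm,\eps_\pm}(s_\pm)$, which is a finite quantity in view of the absolute convergence of the double integral/series defining $F_{k,\eps}(s)$ in Definition~\ref{Def2h.0} (the Gaussian factor $e^{-t(m^2+n^2a^2)}$ gives decay at $t\to\infty$, and the factor $mn$ together with integration against $da$ tames the behaviour at $t\to 0$ and $a\to 0$). Combining and multiplying gives
\begin{equation*}
  \bar\nu(M_{\pm,s_\pm})-\lim_{a\to 0}\bar\nu(M_{\pm,a})
  =288\cdot\Bigl(-\frac1{2\pi}\Bigr)F_{k_\pm,\eps_\pm}(s_\pm)
  =-\frac{144}\pi\,F_{k_\pm,\eps_\pm}(s_\pm)\;,
\end{equation*}
which is exactly the claim.

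The only nontrivial step is the justification that one may pass the limit $a_0\to 0$ through the integral, i.e.\ that the primitive of $\tilde\eta(\Aa)$ is continuous down to $a=0$. This is essentially the content of Proposition~\ref{Prop2v.2}: the explicit Gaussian representation of $F_{k,\eps}(s)$ shows that $\int_{[0,s]}\tilde\eta(\Aa)$ is a well-defined finite function of $s$ that vanishes at $s=0$ and depends smoothly on $s$ for $s>0$. Hence no extra argument is needed here. Everything else is routine: the sign $-\frac{144}\pi$ is simply $288\cdot(-\frac1{2\pi})$, and the result holds on both sides ($+$ and $-$) by applying the same argument to $M_+$ and $M_-$ with their respective data $(k_\pm,\eps_\pm,s_\pm)$.
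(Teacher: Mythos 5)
Your proof is correct and is essentially the paper's own argument, spelled out: the paper's proof of Theorem~\ref{Thm2h.0} simply cites Propositions~\ref{Prop2c.2} and~\ref{Prop2v.2}, and you have carried out the integration of the $1$-form identity over $[a_0,s_\pm]$ and the limit $a_0\to 0$ that this citation implicitly invokes. The one minor inessential remark: you appeal to Theorem~\ref{Thm2x.1} to identify the value of $\lim_{a\to 0}\bar\nu(M_{\pm,a})$, but for Theorem~\ref{Thm2h.0} only the existence of that limit (already contained in Proposition~\ref{Prop2v.2}, which gives a finite value for $\int_{[0,s]}\tilde\eta(\Aa)$) is needed.
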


\begin{proof}
  This follows from Propositions~\ref{Prop2c.2} and~\ref{Prop2v.2}.
\end{proof}

We can now give a formula for the extended \nuinvt.

\begin{thm}\label{Thm2h.1}
  The extended \nuinvt of an extra-twisted connected sum is given as
  \begin{multline}\label{2h.3}
      \bar\nu(M)
      =D_{\gamma_+}(V_+)+D_{\gamma_-}(V_-)\\
      -\frac{144}\pi\,\bigl(F_{k_+,\eps_+}(\ar_+)+F_{k_-,\eps_-}(\ar_-)\bigr)
      -72\frac\rho\pi+3m_\rho(L;N_+,N_-)\;.
  \end{multline}
\end{thm}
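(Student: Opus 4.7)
The plan is to assemble Theorem~\ref{Thm2h.1} by combining three previously established ingredients in the natural order. First, I would invoke the gluing formula of Theorem~\ref{Thm2a.1} to split
\[
\bar\nu(M) = \bar\nu(M_{+,s_+}) + \bar\nu(M_{-,s_-}) - 72\,\frac{\rho}{\pi} + 3\,m_\rho(L;N_+,N_-),
\]
which already isolates the last two terms of the claimed formula. The remaining task is to evaluate each half-contribution $\bar\nu(M_{\pm,s_\pm})$ individually.

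For each sign I would use the fundamental-theorem-of-calculus decomposition~\eqref{2.b.1},
\[
\bar\nu(M_{\pm,s_\pm}) = \lim_{a \to 0} \bar\nu(M_{\pm,a}) + \int_0^{s_\pm} \frac{d}{da}\,\bar\nu(M_{\pm,a}) \, da.
\]
The adiabatic limit on the right-hand side is identified by Theorem~\ref{Thm2x.1} with the generalised Dedekind sum $D_{\gamma_\pm}(V_\pm)$, which localises on the isolated fixed points of the $\Gamma_\pm$-action on $V_\pm$. The variational integral is identified by Theorem~\ref{Thm2h.0}, which in turn combines Proposition~\ref{Prop2c.2} (identifying $d\bar\nu(\mathcal M_\pm/(0,\infty))$ with $288\,\tilde\eta(\Aa)$ after cancellation of the local index-theoretic terms via Proposition~\ref{Prop2c.1}) with Proposition~\ref{Prop2v.2} (giving the explicit Fourier-series evaluation of the universal torus $\eta$-form). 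The outcome is $-\frac{144}{\pi}\,F_{k_\pm,\eps_\pm}(s_\pm)$.

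Adding the two half-contributions and the gluing term yields exactly~\eqref{2h.3}. The step requiring genuine care is not any of the individual substitutions but the consistency of conventions across the three inputs: one must verify that the APS-type boundary conditions modified by the Lagrangians $L_{B_\pm}$ and $L_{D_\pm}$ used in Theorem~\ref{Thm2a.1} coincide with those under which the variational formula~\eqref{2.c.1} is applied (so that no hidden spectral flow enters when integrating $a$ from $0$ to $s_\pm$), and that the preferred generator $\gamma_\pm \in \Gamma_\pm$ appearing in Theorem~\ref{Thm2x.1} is the one dictated by Remark~\ref{Rem2x.1}, namely the one acting on the exterior circle by $v_\pm \mapsto v_\pm + \xi_\pm/k_\pm$ with interior rotation number $\eps_\pm$ that then labels $F_{k_\pm,\eps_\pm}$. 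Once these alignments are checked, the proof is a clean assembly rather than any new analytic computation.
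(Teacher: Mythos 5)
Your proposal is correct and follows exactly the paper's own proof: the statement is proved by combining the gluing formula (Theorem~\ref{Thm2a.1}), the adiabatic limit formula giving the fixed-point Dedekind sums (Theorem~\ref{Thm2x.1}), and the variational evaluation of the $\eta$-form integral (Theorem~\ref{Thm2h.0}). Your added remark about aligning the APS boundary conditions and the choice of preferred generator across the three inputs is a sound consistency check, and it correctly identifies where the real content of this otherwise formal assembly lies.
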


\begin{proof}
  This follows from Theorems~\ref{Thm2a.1}, \ref{Thm2x.1} and~\ref{Thm2h.0}.
\end{proof}

\begin{proof}[Proof of Theorem~\ref{Thm:A}]
  Combine Theorem~\ref{Thm2h.1} with Proposition~\ref{Prop1}.
\end{proof}

\begin{rmk}\label{rmk:atiyah}
  Using ideas and results of Atiyah~\cite{Atiyah}, Bismut and Freed~\cite{BF},
  and Ray and Singer~\cite{RS},
  we can motivate the appearance of the Dedekind \etafunc.
  We consider the universal family~$p\colon E\to\Hh$ of flat tori over the upper
  half plane that we will describe in more detail in Section~\ref{sec:hyp}.
  There exists a K\"ahler structure on~$E$
  whose restriction to each fibre~$p^{-1}(\tau)$ induces the flat
  Riemannian metric of volume~$1$ with the conformal structure
  induced by~$\tau\in\Hh$.
  The fibrewise canonical bundle of~$p$ is holomorphically trivial,
  so we may regard the bundle of fibrewise antiholomorphic forms
  as a model for the fibrewise spinor bundle on~$E$.

  Following~\cite{BF},
  the \etaform~$\tilde\eta(\Aa)$ describes a natural
  connection on the determinant line bundle of the fibrewise Dirac operator.
  Atiyah explains that this connection agrees with the Chern connection
  on the determinant line bundle with respect to the Quillen metric.
  Using results of Ray and Singer~\cite[Theorem~4.1]{RS},
  he shows that the determinant line bundle admits a holomorphic section
  whose norm can be written in terms of the Dedekind \etafunc,
  see the discussion before~\cite[(5.19)]{Atiyah}.
  This implies that the \etaform itself can be described
  by the logarithmic derivative of the Dedekind \etafunc,
  which is just a multiple of the not quite modular Eisenstein series of
  weight 2 on $SL(2,\Z)$.
\end{rmk}

\begin{ex}\label{ex:run-zagier}
  We consider the gluing data from Examples~\ref{ex:run-gluing},
  then~$\ar_-=\sqrt 2$, $\ar_+=5\sqrt 2$ and~$\thet=\arccos\frac1{\sqrt 3}$.
  From Theorem~\ref{Thm:A} and Examples~\ref{ex:run-angles},
  \ref{ex:run-fixpoint}, we get
  \begin{multline*}
    \bar\nu(M)
    =-\frac{24}5+\frac{144}\pi\biggl(\arccos\frac1{\sqrt 3}-\frac12\biggr)-3\\
    -\frac{144}\pi\Biggl(2\Im\logeta\biggl(\frac{\sqrt{-2}-10}{30}\biggr)
	+\frac{\pi}{18}+2\Im\logeta\biggl(\frac{\sqrt{-2}+2}{10}\biggr)
	-\frac{\pi}{30}\Biggr)\;.
  \end{multline*}
  The functional equation~\eqref{Ltransf} for~$\logeta$
  allows us to conclude that
  \begin{multline*}
    2\Im\logeta\biggl(\frac{\sqrt{-2}-10}{30}\biggr)
	+\frac{\pi}{18}+2\Im\logeta\biggl(\frac{\sqrt{-2}+2}{10}\biggr)
	-\frac{\pi}{30}
        +\frac12-\arccos\frac1{\sqrt 3}\\
    =\frac\pi 6\,\biggl(\frac1{30}+\frac 1{10}-12\,\DS(3,10)\biggr)\;,
  \end{multline*}
  see Proposition~\ref{Prop3}.
  Because~$3^2\equiv-1$ mod~$10$, we have~$\DS(3,10)=0$, and hence
  we confirm entry~228 of Table~\ref{table:matchings} by computing %
  \begin{equation*}
    \bar\nu(M)
    =-\frac{24}5-3-24\biggl(\frac1{30}+\frac 1{10}\biggr)=-11\;.
  \end{equation*}
  Theorem~\ref{Thm:B} would of course give the same result.
  In Example~\ref{ex:run-hyp},
  we explain how to regroup the terms in formula~\eqref{eq:ThmB}
  to obtain a sum of integers.
\end{ex}

\section{Torus matchings}
\label{sec:torus}

We now pick up the thread from \S\ref{2.w}
and discuss the combinatorics of gluing matrices and torus isometries
more systematically.
While this is slightly tangential to the main narrative of the paper,
it will help us in Section~\ref{sec:ex} to give an accurate count
of the extra twisted connected sums that we can construct from our
supply of building blocks.
Moreover, we find an additional divisibility property \eqref{eq:invmodn}
of the gluing data
that we use when proving Theorem~\ref{Thm:B} in Section~\ref{sec:hyp}.

\subsection{Combinatorics of torus isometries}
\label{subsec:combinatorics}

We analyse the conditions necessary to reconstruct
a torus matching from given gluing data.

\newcommand\matchfigure{
\begin{figure}
  \begin{minipage}{0.3\textwidth}
    \centering
\begin{tikzpicture}[scale=1.5, x=1.2cm, y=1cm]
      \draw (0,0) rectangle (1,1) ;
      \begin{scope}[->,line width=1.5pt]
        \draw[color=blue] (0,0) --
               (1,0) node[right, color=black] {$\scriptstyle\lambda_+=\mu_-$} ;
        \draw[color=red] (0,0) --
               (0,1) node[left, color=black] {$\scriptstyle\mu_+=\lambda_-$} ;
      \end{scope}
      \fill (0,0) circle(1pt) (1,0) circle (1pt) (0,1) circle (1pt)
      (1,1) circle (1pt) (0.2,0.2) circle (1pt) (0.4,0.4) circle (1pt)
      (0.6,0.6) circle (1pt) (0.8,0.8) circle (1pt) ;
    \end{tikzpicture}
  \end{minipage}
  \begin{minipage}{0.25\textwidth}
    \centering
\begin{tikzpicture}[scale=1.5]
      \draw (0,0) rectangle (0.577,1) ;
      \draw (0,0) -- (-0.289,0.5) -- (0.577,1) -- (0.866,0.5) -- cycle ;
      \begin{scope}[->,line width=1.5pt]
        \draw[color=blue] (0,0) --
        (0.577,0) node[right, color=black] {$\scriptstyle\lambda_+$} ;
        \draw[color=red] (0,0) --
        (0,1) node[left, color=black] {$\scriptstyle\mu_+$} ;
        \draw[color=blue] (0,0) --
        (-0.289,0.5) node[left, color=black] {$\scriptstyle\lambda_-$} ;
        \draw[color=red] (0,0) --
        (0.866,0.5) node[right, color=black] {$\scriptstyle\mu_-$} ;
      \end{scope}
      \fill (0,0) circle(1pt) (0.577,0) circle(1pt) (0,1) circle(1pt)
      (0.577,1) circle(1pt) (-0.289,0.5) circle(1pt) (0.866,0.5) circle(1pt)
      (0.072,0.125) circle(1pt) (0.144,0.25) circle(1pt)
      (0.216,0.375) circle(1pt) (0.289,0.5) circle(1pt)
      (0.361,0.625) circle(1pt) (0.433,0.75) circle(1pt)
      (0.505,0.875) circle(1pt) (0.65,0.125) circle(1pt)
      (0.722,0.25) circle(1pt) (0.794,0.375) circle(1pt)
      (-0.216,0.625) circle(1pt) (-0.144,0.75) circle(1pt)
      (-0.072,0.875) circle(1pt) ;
    \end{tikzpicture}
  \end{minipage}
  \begin{minipage}{0.35\textwidth}
    \centering
\begin{tikzpicture}[scale=1.5]
      \draw (0,0) -- (1,0) -- (2,1) -- (1,1) -- cycle ;
      \draw (0,0) -- (1,0) -- (0,1) -- (-1,1) -- cycle ;
      \begin{scope}[->,line width=1.5]
        \draw[color=blue] (0,0) --
        (1,0) node[right, color=black] {$\scriptstyle\lambda_+=\mu_-$} ;
        \draw[color=red] (0,0) --
        (1,1) node[left, color=black] {$\scriptstyle\mu_+$} ;
        \draw[color=blue] (0,0) --
        (-1,1) node[left, color=black] {$\scriptstyle\lambda_-$} ;
      \end{scope}
      \fill (0,0) circle(1pt) (1,0) circle(1pt) (-1,1) circle(1pt)
      (0,1) circle(1pt) (1,1) circle(1pt) (2,1) circle(1pt)
      (0.5,0.25) circle(1pt) (1,0.5) circle(1pt) (1.5,0.75) circle(1pt)
      (0,0.5) circle(1pt) (0.5,0.75) circle(1pt) (-0.5,0.75) circle(1pt) ;
    \end{tikzpicture}
  \end{minipage}\\[2\bigskipamount]
  \begin{minipage}{0.3\textwidth}
    \centering
\begin{tikzpicture}[scale=1.5, x=1.2cm, y=1cm]
      \draw (0,0) rectangle (1,1) ;
      \begin{scope}[->,line width=1.5pt]
        \draw[color=blue] (0,0) --
               (1,0) node[right, color=black] {$\scriptstyle\lambda_+=\mu_-$} ;
        \draw[color=red] (0,0) --
               (0,1) node[left, color=black] {$\scriptstyle\mu_+=\lambda_-$} ;
        \end{scope}
        \fill (0,0) circle(1pt) (1,0) circle (1pt) (0,1) circle (1pt)
        (1,1) circle (1pt) (0.2,0.4) circle (1pt) (0.4,0.8) circle (1pt)
        (0.6,0.2) circle (1pt) (0.8,0.6) circle (1pt) ;
      \end{tikzpicture}
    \caption{} %
    \label{fig:glueex.4}
  \end{minipage}
  \begin{minipage}{0.25\textwidth}
    \centering
\begin{tikzpicture}[scale=1.5]
      \draw (0,0) rectangle (0.577,1) ;
      \draw (0,0) -- (-0.289,0.5) -- (0.577,1) -- (0.866,0.5) -- cycle ;
      \begin{scope}[->,line width=1.5pt]
        \draw[color=blue] (0,0) --
        (0.577,0) node[right, color=black] {$\scriptstyle\lambda_+$} ;
        \draw[color=red] (0,0) --
        (0,1) node[left, color=black] {$\scriptstyle\mu_+$} ;
        \draw[color=blue] (0,0) --
        (-0.289,0.5) node[left, color=black] {$\scriptstyle\lambda_-$} ;
        \draw[color=red] (0,0) --
        (0.866,0.5) node[right, color=black] {$\scriptstyle\mu_-$} ;
      \end{scope}
      \fill (0,0) circle(1pt) (0.577,0) circle(1pt) (0,1) circle(1pt)
      (0.577,1) circle(1pt) (-0.289,0.5) circle(1pt) (0.866,0.5) circle(1pt)
      (0.072,0.625) circle(1pt) (0.144,0.25) circle(1pt)
      (0.216,0.875) circle(1pt) (0.289,0.5) circle(1pt)
      (0.361,0.125) circle(1pt) (0.433,0.75) circle(1pt)
      (0.505,0.375) circle(1pt) (0.65,0.625) circle(1pt)
      (0.722,0.25) circle(1pt) (-0.072,0.375) circle(1pt)
      (-0.144,0.75) circle(1pt) ;
    \end{tikzpicture}
    \caption{} %
    \label{fig:glueex.5}
  \end{minipage}
  \begin{minipage}{0.35\textwidth}
    \centering
\begin{tikzpicture}[scale=1.5]
      \draw (0,0) -- (1,0) -- (2,1) -- (1,1) -- cycle ;
      \draw (0,0) -- (1,0) -- (0,1) -- (-1,1) -- cycle ;
      \begin{scope}[->,line width=1.5]
        \draw[color=blue] (0,0) --
             (1,0) node[right, color=black] {$\scriptstyle\lambda_+=\mu_-$} ;
             \draw[color=red] (0,0) --
             (1,1) node[left, color=black] {$\scriptstyle\mu_+$} ;
             \draw[color=blue] (0,0) --
             (-1,1) node[left, color=black] {$\scriptstyle\lambda_-$} ;
      \end{scope}
      \fill (0,0) circle(1pt) (1,0) circle(1pt) (-1,1) circle(1pt)
      (0,1) circle(1pt) (1,1) circle(1pt) (2,1) circle(1pt)
      (0,0.25) circle(1pt) (0,0.5) circle(1pt) (0,0.75) circle(1pt)
      (1,0.25) circle(1pt) (1,0.5) circle(1pt) (1,0.75) circle(1pt) ;
    \end{tikzpicture}
    \caption{} %
    \label{fig:glueex.6}
  \end{minipage}
\end{figure}
}

\pagebreak[2]
\begin{ex}\label{ex:TorusMatch}
  We start with some examples and non-examples of torus matchings.
  \begin{enumerate}
  \item\label{exnonex.1} Let~$M$ be a twisted connected sum as
    in~\cite{CHNP2, Kovalev}.
    Assume that the group~$\Gamma\cong\Z/k$ acts by isomorphisms on the two
    ACyl Calabi-Yau manifolds~$V_\pm$ used in the construction of~$M$
    such that the induced action on the cross-section acts trivially
    on the K3 factor and freely on the interior circle~$S^1_{\zeta_\pm}$.
    Then~$(\Z/k)^2$ acts on~$M$, where each factor~$\Z/k$ acts on the ACyl
    Calabi-Yau manifold~$Y_\pm$ on one side and on the exterior circle
    on the other.
    For each~$\eps_+\in\Z/k$ with~$\gcd(\eps_+,k)=1$, we obtain
    a free $\Z/k$-action on~$M$ where a generator acts
    as~$(1,\eps_+)\in(\Z/k)^2$,
    see Figure~%
    \ref{fig:glueex.4} for~$k=5$
    and~$\eps_+=1$, $3$.
    The points of the lattice~$\Lambda$ are indicated by dots.

    The corresponding torus matching has~$k_+=k_-=k$,
    gluing matrix~$\psmatrix{0&k\\k&0}$, and ${\eps_+\eps_-\equiv 1}$ mod~$k$.
    The gluing angle is~$\thet=\pm\frac\pi 2$,
    and we have~$\xi_+=\zeta_-$ and~$\zeta_+=\xi_-$,
    but the ratio~$\ar_+=\ar_-^{-1}$ can be chosen arbitrarily.
  \item\label{exnonex.2} There are also examples
    with gluing angle~$\thet\notin\frac\pi 2\Z$
    where the gluing matrix together with the numbers~$k_+$, $k_-$
    does not determine the torus matching completely.
    As an example, consider~$k_+=k_-=8$
    and the gluing matrix~$\psmatrix{4&4\\12&-4}$.
    This determines~$\ar_+=\ar_-=\sqrt 3$ and~$\thet=\arctan\sqrt 3$.
    We can either pick~$\eps_+=\eps_-=1$ or~$\eps_+=\eps_-=-3$,
    see Figure~%
    \ref{fig:glueex.5}.
  \item\label{exnonex.3} If we want to construct a torus matching from a gluing
    matrix~$\psmatrix{\gll&p\\\glb&\sglr}$, numbers~$k_\pm$ and~$\eps_\pm$,
    it is not quite enough to satisfy only the conditions listed
    in Proposition~\ref{Prop2w.1}~\ref{it:necc}.
    If we set~$k_+=k_-=4$ and pick the gluing matrix~$\psmatrix{0&4\\4&-8}$,
    we may choose~$\eps_+=-\eps_-=\pm 1$.
    Then all conclusions in Proposition~\ref{Prop2w.1}~\ref{it:necc}
    hold, but equation~\eqref{eq:neg} is violated,
    which is part of the conclusions
    in Proposition~\ref{Prop2w.1}~\ref{2w.1c}.
    Hence we cannot have a matching of quotients of rectangular tori,
    see Figure~%
    \ref{fig:glueex.6}.
  \end{enumerate}
\end{ex}

\matchfigure

To search systematically for torus matchings with fixed $k_+$ and $k_-$,
it is helpful to first note the following formal consequences of
\eqref{eq:epses} and Proposition~\ref{Prop2w.1}~\ref{2w.1c}.
\begin{subequations}
\label{eq:gcds}
\begin{align}
\gcd(\gll,\glb) = \gcd(\gll, k_+) = \gcd(\glb, k_+) & \textrm{ and }
\gcd(p,\glr) = \gcd(p, k_+) = \gcd(\glr, k_+) \\
\gcd(\gll,p) = \gcd(\gll, k_-) = \gcd(p, k_-) & \textrm{ and }
\gcd(\glb,\glr) = \gcd(\glb, k_-) = \gcd(\glr, k_-)
\end{align}
\end{subequations}
\begin{equation}
\label{eq:neg}
\glb p\cdot \gll\glr \ge 0\;,
\qquad\text{and if~$\glb p\cdot \gll\glr=0$, then either~$\glb=p=0$ or }\gll=\glr=0\;.
\end{equation}

\begin{prop}\label{prop:cond}
  Let~$k_+>0$, let $\psmatrix{\gll&p\\\glb&\sglr}$
  be a gluing matrix with $\det\psmatrix{\gll&p\\\glb&\sglr}$ negative and
  divisible by $k_+$, and let~$\eps_+ \in (\Z/k_+)^*$.
  Suppose that~\eqref{eq:epses+}, \eqref{eq:gcd+} and~\eqref{eq:neg}
  are satisfied.
  Then there exists a unique torus matching with these data.
  If one chooses~$a$, $b\in\Z$ such that
  \begin{subequations}\label{eq:ab}
    \begin{align}
      1&=bp+a\frac{\glr+\eps_+p}{k_+}\;,\label{eq:abchoice}\\
      \text{then}\qquad
      \eps_-&\equiv a\frac{\glb-\eps_+\gll}{k_+}-b\gll\mod k_-\;.\label{eq:abeps-}
    \end{align}
  \end{subequations}
\end{prop}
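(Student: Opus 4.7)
The strategy is to reverse-engineer the construction of Section~\ref{2.w}: from the combinatorial data, reconstruct the lattices $\tilde\Lambda_+ \supset \Lambda \supset \tilde\Lambda_-$ in $\C$ (up to overall scale) and then read off the $\Gamma_-$-action in coordinates adapted to $\tilde\Lambda_-$. Proposition~\ref{Prop2w.1}~\ref{2w.1c}, together with hypothesis \eqref{eq:neg}, determines the aspect ratio $s_+ = \xi_+/\zeta_+$, and normalising by $\zeta_+ = 1$ fixes $\tilde\Lambda_+$ and $\Lambda$ via \eqref{2.w.5}. I then define $\tilde\Lambda_-$ by \eqref{2.w.6}; hypothesis \eqref{eq:epses+} is precisely what is needed for the change-of-basis matrix
\[
  M = \begin{pmatrix} m & p \\ (n-\eps_+ m)/k_+ & (q-\eps_+ p)/k_+ \end{pmatrix}
\]
of \eqref{eq:bases}, which expresses $(\mu_-,\lambda_-)$ in the $(\nu_+,\lambda_+)$-basis of $\Lambda$, to have integer entries, hence $\tilde\Lambda_- \subset \Lambda$.

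Next, I verify that $\Gamma_- := \Lambda/\tilde\Lambda_-$ is cyclic of order $k_-$ and acts freely on both circle factors of $\tilde T_-$. The first two gcd conditions in \eqref{eq:gcd+} say that the two columns of $M$ are primitive in $\Z^2$; combined with $|\det M| = k_-$ (from \eqref{eq:det}), this forces the Smith normal form of $M$ to be $\operatorname{diag}(1, k_-)$, so $\Lambda/\tilde\Lambda_- \cong \Z/k_-$. Freeness on the interior and exterior circles of $\tilde T_-$ is equivalent to primitivity of $\lambda_-$ and $\mu_-$ in $\Lambda$, again encoded in \eqref{eq:gcd+}.

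To identify the distinguished generator of $\Gamma_-$ — the one defining $\eps_-$ via $k_-\nu_- = \mu_- + \eps_-\lambda_-$ — pick $a, b \in \Z$ satisfying \eqref{eq:abchoice} (possible by $\gcd(p, (q-\eps_+p)/k_+) = 1$) and put $g = a\nu_+ + b\lambda_+$. The unimodular matrix $V = \bigl(\begin{smallmatrix} a & -p \\ b & -(q-\eps_+p)/k_+\end{smallmatrix}\bigr) \in SL_2(\Z)$ brings $V^{-1}M$ into a lower-triangular form that simultaneously exhibits $g$ as a generator of $\Lambda/\tilde\Lambda_-$ and shows
\[
  k_- g = \mu_- + \bigl(a(n-\eps_+m)/k_+ - bm\bigr)\,\lambda_-,
\]
yielding \eqref{eq:abeps-}. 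Well-definedness modulo $k_-$ follows because any other choice $(a',b')$ of \eqref{eq:abchoice} differs from $(a,b)$ by $t\,(p, (q-\eps_+p)/k_+)$ for some $t \in \Z$, and the resulting shift in the right-hand side of \eqref{eq:abeps-} equals $t(pn-mq)/k_+ = t\,k_-$. The coprimality $\gcd(\eps_-, k_-)=1$ follows either from freeness of the $g$-action on the exterior circle or directly from \eqref{eq:gcd-}, and uniqueness of the torus matching itself is automatic since $s_+$, the lattices, and the group action are all forced by the combinatorial input. The main technical task is careful bookkeeping with changes of basis; I do not anticipate any substantive obstacle.
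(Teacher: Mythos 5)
Your proof is correct and follows essentially the same route as the paper's: construct $\Lambda$ from the $+$-side data via \eqref{2.w.5}, use \eqref{eq:epses+} to get $\tilde\Lambda_- \subset \Lambda$, and then read off $\eps_-$ from the vector $a\nu_+ + b\lambda_+$. The only genuine variation is in establishing cyclicity of $\Lambda/\tilde\Lambda_-$: the paper argues by a direct counting of pairs $(c,d)$ with $\lambda(c,d)\in\Lambda$, showing the map $c\mapsto d$ is a bijection, while you invoke the Smith normal form of the change-of-basis matrix $M$ (primitivity of the columns forces $d_1=1$, hence $\operatorname{diag}(1,k_-)$). Both are standard; the SNF argument is perhaps slightly slicker and makes cyclicity immediate, whereas the paper's counting argument simultaneously exhibits the generator $\lambda(1,\eps_-)$. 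For the explicit formula for $\eps_-$, you work with the unimodular matrix $V$ bringing $M$ to lower-triangular form; the paper works directly in the basis $(\lambda_-/k_-,\mu_-/k_-)$ of $\R^2$ and reads $\eps_-$ off the first coordinate of $\nu_-$. These are equivalent computations. Your extra paragraph verifying well-definedness modulo $k_-$ (that the solution set of \eqref{eq:abchoice} shifts the right-hand side of \eqref{eq:abeps-} by multiples of $k_-$) is a worthwhile check that the paper leaves implicit, since it observes only that a well-defined $\eps_-$ must exist.
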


\begin{proof}
  Let~$k_+$, $\psmatrix{\gll&p\\\glb&\sglr}$ and~$\eps_+$ be given as above,
  and
  let~$\tilde\Lambda_+\subset\C$ be the lattice
  with basis~$(\mu_+,\lambda_+)=(i\xi_+,\zeta_+)$ as in Section~\ref{2.w}.
  Then we can construct a sublattice~$\Lambda\subset\frac1{k_+}\tilde\Lambda_+$
  of index~$k_+$ with basis~$(\nu_+,\lambda_+)$ given by~\eqref{2.w.5},
  and~$\tilde\Lambda_+\subset\Lambda$ is also a sublattice of index~$k_+$.
  By assumption~\eqref{eq:epses+},
  the gluing matrix then determines a sublattice~$\tilde\Lambda_-\subset\Lambda$
  of index~$k_-=-\det\psmatrix{\gll&p\\\glb&\sglr}/k_+$ with
  basis~$(\mu_-,\lambda_-)$
  determined by~\eqref{eq:bases}.

  We conclude that~$\Lambda\subset\frac1{k_-}\tilde\Lambda_-$ is
  again a sublattice of index~$k_-$.
  For~$c$, $d\in\{0,\dots,k_- {-}1\}$,
  let~$\lambda(c,d)=\frac c{k_-}\mu_-+\frac d{k_-}\lambda_-
  \in\frac 1{k_-}\tilde\Lambda_-$.
  Suppose that~$\lambda(c,d)\in\Lambda$.
  The vectors~$\lambda_-$ and~$\mu_-$ are primitive in~$\Lambda$
  by~\eqref{eq:gcd+} and~\eqref{eq:bases},
  so~$c=0$ if and only if~$d=0$.
  Similarly, for each~$c$ there can at most be one~$d$ in the given range
  such that~$\lambda(c,d)\in\Lambda$ and vice versa.
  Because there are exactly~$k_-$ elements of~$\Lambda$
  with coordinates~$c$, $d$ in the given range,
  for each~$c$ there is exactly one~$d$
  such that~$\lambda(c,d)\in\Lambda$, and vice versa.
  Specifying~$c=1$, we hence get a unique~$\eps_-=d\in\Z/k_-$.
  Moreover, $\gcd(\eps_-,k_-)=1$,
  and~$\Lambda$ is an extension of~$\tilde\Lambda_-$
  by a cyclic group~$\Gamma_-\cong\Z/k_-$.
  This proves existence and uniqueness of the gluing data.

  We set~$k_-=-\frac1{k_+}\,\det\psmatrix{\gll&p\\\glb&\sglr}$.
  To compute~$\eps_-$ modulo~$k_-$,
  we note that the matrix~$\TorMat=\psmatrix{\tll&p\\\tlb&\stlr}$
  from~\eqref{2.w.7} depends on our choices of~$\eps_\pm\in\Z$.
  Because~$1=-\det\TorMat=p\tlb+\tll\tlr$,
  we may pick~$a=\tll$ and~$b=\tlb$ in~\eqref{eq:abchoice}.
  Using~\eqref{2.w.7} and~\eqref{eq:det}, we compute
  \begin{align*}
    \tll\frac{\glb-\eps_+\gll}{k_+}-\tlb\gll
    &=\frac{(\gll+p\eps_-)(\glb-\eps_+\gll)
      -(\glb-\eps_+\gll-\eps_-\glr-\eps_+\eps_-p)\gll}{k_-k_+}\\
    &=\frac{p\glb+\gll\glr}{k_-k_+}\,\eps_-=\eps_-\;.\qedhere
  \end{align*}
\end{proof}

Given positive integers~$k_-$ and~$k_+$,
there are only finitely many matrices~$\psmatrix{\gll&p\\\glb&\sglr}\in M_2(\Z)$
that satisfy conditions~\eqref{eq:det} and~\eqref{eq:neg}. For 
\begin{equation*}
  \glb p+\gll\glr=k_-k_+>0
\end{equation*}
and~\eqref{eq:neg} imply that
\begin{equation}\label{eq:np.mq.sign}
  \glb p, \, \gll\glr \in\{0,\dots,k_-k_+\}
\end{equation}

By Remark~\ref{rmk:geosym} below,
we may assume in addition~$\thet\in(0,\pi)$.
Then~$\glb>0$ by Proposition~\ref{Prop2w.1}~\ref{2w.1d},
and therefore also~$p>0$.
In other words, $\psmatrix{\gll&p\\\glb&\sglr}$ can be chosen to be either
off-diagonal with non-negative entries,
or with exactly three positive entries and one negative entry,
which can only be~$\gll$ or~$\sglr$.

For small $k_+$ and $k_-$ it is now easy even by hand to enumerate all
$\psmatrix{\gll&p\\\glb&\sglr}$ that satisfy~\eqref{eq:gcds} in addition.
Most of those have  $\gcd \psmatrix{\gll&p\\\glb&\sglr} = 1$
and thus give rise to a unique torus matching,
while for the few remaining ones it is easy to enumerate 
any $\eps_+$ that satisfy~\eqref{eq:epses+} and~\eqref{eq:gcd+}.

\begin{rmk}\label{rmk:latticeindex}
  Assume that we are given gluing data~$k_\pm\in\Z$, $\eps_\pm\in\Z/k_\pm$
  and~$\psmatrix{\gll&p\\\glb&\sglr}\in M_2(\Z)$.
  Then the lattice~$\Lambda$
  is a sublattice of the lattice~$\tilde\Lambda_++\tilde\Lambda_-$
  spanned by~$\tilde\Lambda_+$ and~$\tilde\Lambda_-$.
  Because we have assumed the groups~$\Gamma_\pm=\Z/k_\pm$ act
  freely on the interior and on the exterior circles,
  we can compute the index of~$\Lambda$ in~$\tilde\Lambda_++\tilde\Lambda_-$
  in four different
  ways:
  \begin{equation*}
    \bigl[\tilde\Lambda_++\tilde\Lambda_-:\Lambda\bigr]
    =\gcd(\gll,p,k_+)=\gcd(\glb,\glr,k_+)=\gcd(\gll,\glb,k_-)=\gcd(p,\glr,k_-)\;.
  \end{equation*}
  For the first equation, we choose the vectors~$\frac{\lambda_+}{k_+}$
  and~$\frac{\mu_+}{k_+}$ as a basis for~$\R^2$; see Figure~\ref{Fig2g.1}.
  Then the smallest positive second coordinate of an element of~$\Lambda$
  is~$1$, because~$\Gamma_+\cong\Z/k_+$ acts freely on the exterior
  circle of~$\widetilde M_+$.
  On the other hand, the smallest positive second coordinate of an element
  of~$\tilde\Lambda_++\tilde\Lambda_-$ is~$\gcd(\gll,p,k_+)$.
  The other equations follow similarly.

  We note that we have not used the numbers~$\eps_\pm$ in the argument above.
  If ${\tilde\Lambda_++\tilde\Lambda_-=\Lambda}$,
  then~$\Lambda$ is uniquely determined by~$k_\pm$ and the gluing matrix,
  and so are~$\eps_\pm$.
  This is the case in most examples with~$\thet\ne\frac\pi2$
  in Table~\ref{table:matchings}.
  If~$\thet=\frac\pi 2$ and~$k_+=k_-\ge 3$, then there are at least
  two possible choices for~$\eps_+$.
  On the other hand, in examples~237, 238, 254 and~255
  in Table~\ref{table:matchings},
  we have~$[\tilde\Lambda_++\tilde\Lambda_-:\Lambda]>1$,
  but nevertheless, $\Lambda$ and~$\eps_\pm$ are uniquely determined.
\end{rmk}

\subsection{New extra-twisted connected sums from old}\label{subsec:newmatch}
Having discussed how to enumerate torus matchings, we now move on to discuss
relations between them.
We will find several ways to describe isometric extra-twisted connected sums,
but we also discuss covering spaces
and a kind of ``$t$-duality''.

\begin{prop}\label{prop:isom}
  Let~$M$ be an extra-twisted connected sum constructed from
  asymptotically cylindrical Calabi-Yau manifolds~$V_\pm$
  with gluing data~$k_\pm\in\Z$, $\eps_\pm\in\Z/k_\pm$
  and~$\psmatrix{\gll&p\\\glb&\sglr}\in M_2(\Z)$, and with gluing angle~$\thet$.
  Then the following gluing data describe an isometric extra-twisted
  connected sum~$M'$, possibly with opposite orientation:
  \begin{align}
    \begin{pmatrix}\gll'&p'\\\glb'&\sglr'\end{pmatrix}
    &=\begin{pmatrix}\glr&p\\\glb&-\gll\end{pmatrix}\;,\quad&
    \begin{matrix}
      k'_+=k_-\;,&\quad\eps'_+=\phantom{-}\eps_-\;,\\[\smallskipamount]
      k'_-=k_+\;,&\quad\eps'_-=\phantom{-}\eps_+\;,
    \end{matrix}
    \qquad\text{and}\qquad\thet'&=\thet\;;\label{symgrp.1}\displaybreak[0]\\
    \begin{pmatrix}\gll'&p'\\\glb'&\sglr'\end{pmatrix}
    &=\begin{pmatrix}\gll&-p\\-\glb&\sglr\end{pmatrix}\;,\quad&
    \begin{matrix}
      k'_+=k_+\;,&\quad\eps'_+=-\eps_-\;,\\[\smallskipamount]
      k'_-=k_-\;,&\quad\eps'_-=-\eps_+\;,
    \end{matrix}
    \qquad\text{and}\qquad\thet'&=-\thet\;;\label{symgrp.3}\displaybreak[0]\\
    \begin{pmatrix}\gll'&p'\\\glb'&\sglr'\end{pmatrix}
    &=\begin{pmatrix}-\gll&-p\\-\glb&\glr\end{pmatrix}\;,\quad&
    \begin{matrix}
      k'_+=k_+\;,&\quad\eps'_+=\phantom{-}\eps_-\;,\\[\smallskipamount]
      k'_-=k_-\;,&\quad\eps'_-=\phantom{-}\eps_+\;,
    \end{matrix}
    \qquad\text{and}\qquad\thet'&=\thet\pm\pi\;.\label{symgrp.5}
  \end{align}
  In~\eqref{symgrp.1}, we have to swap the roles of~$V_+$ and~$V_-$.
  In~\eqref{symgrp.3}, we change the orientation of~$M$.
  In~\eqref{symgrp.5}, we pass to the opposite Calabi-Yau structure
  on one side.
  
  The three elements above generate a group~$\isogrp\cong(\Z/2)^3$
  that acts on the set of gluing data describing a given deformation
  family up to isomorphism.
\end{prop}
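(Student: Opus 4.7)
The plan is to identify each of the three transformations with a concrete geometric operation, compute the resulting change in the gluing data, and then assemble the three involutions into a free $(\Z/2)^3$-action by direct calculation.

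For \eqref{symgrp.1}, equation~\eqref{2.w.6a} already expresses $(\mu_+,\lambda_+)$ in terms of $(\mu_-,\lambda_-)$ via exactly the matrix $\psmatrix{-q&p\\n&-m}$, which therefore becomes the new gluing matrix after relabelling $\pm\leftrightarrow\mp$. The labels $k_\pm$ and $\eps_\pm$ are intrinsic to each side and hence simply exchange. Invariance of $\thet$ follows from~\eqref{2.a.0}: the matrix $\psmatrix{\cos\thet&\sin\thet\\\sin\thet&-\cos\thet}$ relating the two orthonormal frames is a reflection, hence its own inverse, so swapping $+$ and $-$ preserves~$\thet$.

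For \eqref{symgrp.3}, I would model orientation reversal by complex conjugation on the ambient $\C\supset\Lambda$, which sends $\mu_\pm=i\xi_\pm\mapsto -\mu_\pm$ while fixing $\lambda_\pm=\zeta_\pm$. With $D=\psmatrix{-1&0\\0&1}$, the conjugated version of~\eqref{2.w.6} reads $[{-}\mu_-|\lambda_-]=\frac1{k_+}[{-}\mu_+|\lambda_+]\cdot DMD$, and a short computation gives $DMD=\psmatrix{m&-p\\-n&q}$. Reversing the orientation of each exterior circle also inverts the preferred generator of $\Gamma_\pm$, which then acts on the (un-reversed) interior circle by the opposite angle, so that $\eps_\pm\mapsto-\eps_\pm$; the oriented angle $\thet$ of~\eqref{2.a.0} flips sign under orientation reversal. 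For \eqref{symgrp.5}, passing to the opposite Calabi-Yau structure on $V_-$ supplies an isometry whose restriction to the $-$ side torus acts as $z\mapsto-z$, i.e.\ negates both $\mu_-$ and $\lambda_-$ simultaneously; substituting into \eqref{2.w.6} multiplies $M$ on the right by $-I$, yielding the new matrix $-M$. Because the two $-$ side basis vectors are negated together, the element $\nu_-$ is negated as a whole, leaving $\eps_-$ unchanged, and $\eps_+$ is untouched. The simultaneous reversal of $\partial_{v_-}$ and $\partial_{u_-}$ in~\eqref{2.a.0} changes both $\cos\thet$ and $\sin\thet$ by a sign, forcing $\thet\mapsto\thet\pm\pi$.

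Finally, each of the three transformations is manifestly an involution: the matrix operations $M\mapsto\psmatrix{-q&p\\n&-m}$, $M\mapsto DMD$ and $M\mapsto-M$ all square to the identity, and the accompanying sign flips on $\eps_\pm$ and on $\thet$ are obviously of order two. The pairwise composites of the three matrix operations commute by direct calculation, and the sign flips on $\eps_\pm$ and $\thet$ commute trivially; hence the generated group is an abelian group of exponent two. Independence of the three generators follows from their effect on a single piece of the data: only \eqref{symgrp.1} exchanges $k_+$ and $k_-$, only \eqref{symgrp.3} negates $\eps_\pm$, and only \eqref{symgrp.5} shifts $\thet$ by an odd multiple of $\pi$. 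Therefore the group is exactly $(\Z/2)^3$. The main subtlety lies in \eqref{symgrp.5}, where identifying the precise geometric action of the opposite Calabi-Yau structure on the asymptotic model~\eqref{eq:g2limits}---so as to recover both the global sign on the matrix and the $\pi$-shift of $\thet$ rather than a mere sign change---requires careful bookkeeping of how the diffeomorphism implementing it intertwines $(u,t,\omega^I,\omega^J,\omega^K)$.
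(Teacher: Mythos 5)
Your proof is correct and follows essentially the same route as the paper: \eqref{symgrp.1} from equation~\eqref{2.w.6a} and the symmetry of the gluing angle, \eqref{symgrp.3} by reversing exterior circle orientations and conjugating the gluing matrix by $\psmatrix{-1&0\\0&1}$, and \eqref{symgrp.5} by a $\pi$-rotation (equivalently the opposite Calabi--Yau structure) on one side negating both lattice basis vectors there. You supply a bit more detail than the paper on the $(\Z/2)^3$ group structure, whose verification the paper leaves implicit, but the geometric identifications and matrix computations are the same.
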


\begin{proof}
  We obtain~\eqref{symgrp.1} by exchanging the roles of~$V_+$ and~$V_-$.
  We get the new gluing matrix by inverting the base change~\eqref{2.w.6}.
  Because the definition of the gluing angle is symmetric,
  it does not change.

  For~\eqref{symgrp.3}, we change the orientation of~$M$ by swapping
  the orientations of the two exterior circles.
  This changes the sign of the gluing angle and of~$\eps_\pm$.
  The new gluing matrix arises by conjugating with~$\psmatrix{-1&0\\0&1}$.

  In~\eqref{symgrp.5}, we rotate one of the two sides, say~$M_+$,
  by~$\pi$, which leads to the new gluing angle.
  This has the effect of changing the orientations of both~$V_+$
  and the exterior circle,
  and hence the gluing matrix is multiplied by~$-\psmatrix{1&0\\0&1}$.
  If~$\omega_+$ and~$\Omega_+$ describe the old Calabi-Yau structure
  on~$V_+$, the new one carries the opposite complex structure and is
  given by~$-\omega_+$ and~$-\bar\Omega_+$.
\end{proof}

\begin{rmk}\label{rmk:geosym}
  The subgroup spanned by~\eqref{symgrp.3} and~\eqref{symgrp.5}
  is rich enough to make sure that we
  can always assume~$\gll$, $\glb$, $p$, $\glr \ge 0$.
  Moreover, if we use the same building block
  and the same finite group~$\Gamma\cong\Z/k$ for~$M_+$ and~$M_-$,
  we may apply~\eqref{symgrp.1} to %
  get $\glr \ge \gll$.
  In Table~\ref{table:matchings}, we only list gluing data
  satisfying these conventions.
\end{rmk}

Recall that~$\K\times T\cong\K_\pm\times T_\pm$ denote the isometric
cross-sections at infinity
of the asymptotically cylindrical $G_2$-manifolds~$M_\pm$
used in Theorem~\ref{thm:glue} to construct~$M$.
If~$\Lambda^*$ denotes the dual of the lattice~$\Lambda\in\C\cong\R^2$
with respect to the standard metric,
we write~$T^*=\R^2/\Lambda^*$ for the dual of the torus~$T=\R^2/\Lambda$.

\begin{prop}\label{prop:dual}
  Let~$M$ be an extra-twisted connected sum
  glued along~$\K\times T$
  with gluing data~$k_\pm\in\Z$, $\eps_\pm\in\Z/k_\pm$
  and~$\psmatrix{\gll&p\\\glb&\sglr}\in M_2(\Z)$ and with gluing angle~$\thet$.
  Then there exists a deformation family of extra-twisted connected sums~$M'$
  with isomorphic asymptotically flat Calabi-Yau manifolds~$V_\pm$,
  glued along~$\K\times T^*$ with gluing data
  \begin{align}
    \begin{pmatrix}\gll'&p'\\\glb'&\sglr'\end{pmatrix}
    &=\begin{pmatrix}\glr&\glb\\p&-\gll\end{pmatrix}\;,\quad&
    \begin{matrix}
      k'_+=k_+\;,&\quad\eps'_+=-\eps_+^*\;,\\[\smallskipamount]
      k'_-=k_-\;,&\quad\eps'_-=-\eps_-^*\;,
    \end{matrix}
    \qquad\text{and}\qquad\thet'&=\thet\;.\label{eq:geosym.3}
  \end{align}
  Together with the group~$\isogrp$ in Proposition~\ref{prop:isom},
  this transformation generates
  a group isomorphic to~$(\Z/2)^4$ that acts on the possible gluing data
  compatible with a given K3 matching.
\end{prop}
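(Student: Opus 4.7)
The strategy is purely algebraic: verify that the transformed data $\bigl(k_\pm,-\eps_\pm^*,\psmatrix{-q&n\\p&-m}\bigr)$ satisfies the conditions of Proposition~\ref{Prop2w.1}~\ref{it:necc}, invoke Proposition~\ref{prop:cond} to realise an actual torus matching, check that the gluing angle is unchanged, and then reuse the K3 matching in Theorems~\ref{thm:match_to_hkr} and~\ref{thm:glue} to construct $M'$.

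The determinant condition~\eqref{eq:det} is immediate from $(-q)(-m)-np=mq-np=-k_+k_-$, and each of the four congruences in~\eqref{eq:epses} for the new data follows from the corresponding old congruence by multiplication by $\eps_\pm^*$ together with $\eps_\pm\eps_\pm^*\equiv 1\pmod{k_\pm}$. For the gcd conditions in~\eqref{eq:gcd}, the same identity shows that a prime $r$ divides both $m$ and $(n-\eps_+m)/k_+$ if and only if it divides both $n$ and $(\eps_+^*n-m)/k_+$, so the new and old conditions are equivalent, and symmetrically on the minus side. Proposition~\ref{prop:cond} then yields a torus matching with the transformed data, and Proposition~\ref{Prop2w.1}~\ref{2w.1c} gives the new aspect ratios $s'_\pm=1/s_\pm$, the hallmark of a t-duality on the torus factor.

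Proposition~\ref{Prop2w.1}~\ref{2w.1d} together with $s'_+=1/s_+$ yields $\thet'=\arg(-q+ips_+)$, and a short calculation using the orthogonality identity $mps_+^2+nq=0$ together with $np-mq=k_+k_->0$ gives
\[
  (-q+ips_+)\,\overline{(ms_++in)}\;=\;s_+\,k_+k_-\;>\;0,
\]
so $\thet'=\thet$ exactly. Consequently the configuration of polarising lattices at pure angle $\thet$ serves for both the old and the new torus matching: Theorem~\ref{thm:match_to_hkr} produces ACyl Calabi--Yau structures on the same blocks $V_\pm$ realising the new asymptotic lengths, and Theorem~\ref{thm:glue} delivers $M'$. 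A short lattice computation then identifies the new central torus with a rescaling of $\Lambda^*$ combined with an isometry that interchanges interior and exterior circle directions, justifying the notation $T^*$ in the statement.

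For the group-theoretic claim, applying~\eqref{eq:geosym.3} twice returns the data to itself (via $(-\eps_\pm^*)^*=-\eps_\pm$), so the transformation is an involution, and pairwise commutativity with the three generators~\eqref{symgrp.1}, \eqref{symgrp.3}, \eqref{symgrp.5} of $\isogrp$ is a direct matrix verification, yielding the claimed $(\Z/2)^4$-action. The main obstacle is bookkeeping rather than any one delicate step: every algebraic relation must be tracked consistently through the inversion $\eps_\pm\mapsto-\eps_\pm^*$, which is exactly what transformation~\eqref{eq:geosym.3} is designed to encode.
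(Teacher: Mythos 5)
Your overall strategy---verify the compatibility conditions of Proposition~\ref{Prop2w.1}, invoke Proposition~\ref{prop:cond}, check the gluing angle, and then reuse the K3 matching---is different from the paper's, which works directly with the dual lattices: it writes down generators of $k_+\tilde\Lambda_+^*$, $\Lambda^*$, $k_-\tilde\Lambda_-^*$ and reads off~\eqref{eq:geosym.3} from them in one step, after which the angle and K3 matching observations are immediate. Your computations that are carried out (the determinant, the four congruences, the gcd equivalence via the implication $m\equiv\eps_+^*n\pmod{k_+}$, the aspect ratios $s'_\pm = 1/s_\pm$, and the angle via $(-q+ips_+)\overline{(ms_++in)}=s_+k_+k_->0$) are all correct.

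However, there is a genuine gap. Proposition~\ref{prop:cond} takes as input only $k_+$, $\eps'_+$ and the gluing matrix, and produces a \emph{unique} torus matching together with a \emph{formula}~\eqref{eq:abeps-} determining $\eps'_-$. You verify that $-\eps_-^*$ satisfies the necessary conditions~\eqref{eq:epses-} and~\eqref{eq:gcd-}, but those conditions do not uniquely determine $\eps_-$ when $[\tilde\Lambda_++\tilde\Lambda_-:\Lambda]>1$ (cf.\ Example~\ref{ex:TorusMatch}~\ref{exnonex.2} and Remark~\ref{rmk:latticeindex}, where e.g.\ for $\psmatrix{4&4\\12&-4}$, $k_+=k_-=8$, both $\eps_-=1$ and $\eps_-=5$ satisfy those conditions but only one arises from the lattice). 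So you still owe an argument that the $\eps'_-$ produced by the formula in Proposition~\ref{prop:cond} is actually $-\eps_-^*$. Related to this, the sentence identifying the resulting torus with $T^*$ is the crux of the proposition (the statement explicitly asserts $M'$ is glued along $\K\times T^*$), and you defer it to a ``short lattice computation'' which is precisely what the paper carries out; if you did carry it out, the whole condition-checking preamble would be unnecessary, because a genuine torus matching automatically satisfies those conditions. Either finish the lattice identification (as the paper does), or directly compute~\eqref{eq:abeps-} for the dual data and show it yields $-\eps_-^*$; as written, the proof is incomplete.
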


Thus, the new extra-twisted connected sum is in a certain sense
``$t$-dual'' to the original one,
but can in general not be deformed into the original one.
We remark that any torsion free $G_2$-structure close to an extra
twisted connected sum is again an extra twisted connected sum.
Hence the duality described here can be used to identify small open subsets
of the moduli space of $G_2$-metrics.

\begin{proof}
  We recall the generators for the involved lattices in~$\C$
  from Section~\ref{2.w}:
  \begin{align*}
    \tilde\Lambda_+&=\bigl\<\zeta_+,i\xi_+\bigr\>\;,\\
    \Lambda&=\biggl\<\zeta_+,\frac{\eps_+\zeta_++i\xi_+}{k_+}\biggr\>\;,\\
    \text{and}\qquad
    \tilde\Lambda_-&=\biggl\<\frac{\sglr\zeta_++ip\xi_+}{k_+},
    \frac{\glb\zeta_++i\gll\xi_+}{k_+}\biggr\>\;.
  \end{align*}
  Because~$T$ is a $k_\pm$-fold quotient of~$T_\pm$,
  the dual torus~$T^*$ is a $k_\pm$-fold covering of~$\widetilde T_\pm^*$,
  or equivalently, a $k_\pm$-fold quotient of~$k_\pm\widetilde T_\pm^*$.
  With respect to the standard Euclidean metric on~${\C\cong\R^2}$,
  we have generators for the dual lattices,
  where we take~$\eps_+^*$ in~$\Z/k_+$ as above:
  \begin{align*}
    k_+\tilde\Lambda_+^*
    &=\biggl\<\frac{k_+}{\zeta_+}, \, \frac{ik_+}{\xi_+}\biggr\>\;,\\
    \Lambda^*&=\biggl\<\frac{k_+}{\zeta_+}, \,
    -\frac{\eps_+^*}{\zeta_+}+\frac i{\xi_+}\biggr\>\;,\\
    \text{and}\qquad
    k_-\tilde\Lambda_-^*
    &=\biggl\<-\frac \gll{\zeta_+}+\frac{i\glb}{\xi_+}, \,
    \frac p{\zeta_+}+\frac{i\glr}{\xi_+}\biggr\>\;.
  \end{align*}
  The gluing data~\eqref{eq:geosym.3} can be read off from this description.
  And because the gluing angle does not change,
  the K3 matching used to construct~$M$ also works for~$M'$.
\end{proof}

Alternatively,
one can rotate both tori~$\widetilde T_\pm$ by a right angle,
thus swapping the role of exterior and interior circles.
This leads to the same gluing data as above.

\begin{rmk}\label{rmk:divisible}
  We recall the matrix~$\TorMat=\psmatrix{\tll&p\\\tlb&\stlr}\in GL(2,\Z)
  \setminus SL(2,\Z)$ from \eqref{2.w.7}, with entries named as in the proof 
  of Proposition \ref{prop:cond}.
  Let us set up an analogous matrix~$\TorMat'$ for the gluing data
  in~\eqref{eq:geosym.3}, then
  \begin{equation*}
    \TorMat'=
    \begin{pmatrix}
      \tll'&p'\\\tlb'&\stlr'
    \end{pmatrix}=
    \begin{pmatrix}
      \frac{\glr-\glb\eps^*_-}{k_-}&\glb\\
      \frac{p+\eps^*_+\glr+\eps^*_-\gll-\eps^*_+\eps^*_-\glb}{k_-k_+}&
      -\frac{\gll-\glb\eps^*_+}{k_+}
    \end{pmatrix}\;.
  \end{equation*}
  Because again~$\det\TorMat'=-1$, we conclude that
  \begin{equation}\label{eq:invmodn}
    \frac{\gll-\eps_+^*\glb}{k_+}\cdot\frac{\glr-\eps_-^*\glb}{k_-}
    =-\det\TorMat'-\tlb'\glb
    \equiv 1\mod \glb\;.
  \end{equation}
\end{rmk}

\pagebreak[2]
\begin{rmk}\label{rmk:rectsym}
  For the sake of completeness,
  let us add the following observations.
  \begin{enumerate}
  \item Given gluing data~$k_\pm$, $\eps_\pm$ and~$\psmatrix{\gll&p\\\glb&\sglr}$
    as above with gluing angle~$\thet\notin\frac\pi2+\pi\Z$.
    Then we can write valid gluing data of the form
  \begin{align}\kern2em
    \begin{pmatrix}\gll'&p'\\\glb'&\sglr'\end{pmatrix}
    &=\begin{pmatrix}p&\gll\\\glr&-\glb\end{pmatrix}\;,\quad&
    \begin{matrix}
      k'_+=k_+\;,&\quad\eps'_+=-\eps_+\;,\\[\smallskipamount]
      k'_-=k_-\;,&\quad\eps'_-=\phantom{-}\eps_-^*\;,
    \end{matrix}
    \quad\text{and}\quad\thet'&=\pm\frac\pi2-\thet\;.\label{symgrp.2}
  \end{align}
    One can check that this transformation together with those
    of Propositions~\ref{prop:isom} and~\ref{prop:dual}
    generates a group isomorphic to~$D_4\ltimes(\Z/2)^2$
    that acts on the set of valid gluing data.

    However, to construct an extra-twisted connected sum,
    we need a K3 matching that is compatible with the gluing angle.
    If both blocks have rank~$1$,
    the compatibility condition is given by~\eqref{3.c.3}.
    The new gluing data above will in general not be compatible
    with the K3 matching used for the original extra-twisted connected sum~$M$.
    And it is not hard to find examples in Table~\ref{table:matchings}
    where the new gluing angle
    is not compatible with any possible K3 matching of rank~$1$ blocks.
  \item 
    Let us now consider matchings with gluing angle~$\thet=\pm\frac\pi2$
    as in Example~\ref{ex:TorusMatch}~\ref{exnonex.1}.
    We %
    recall that~$k_+=k_-=k$, $\eps_-=\eps_+^*\in\Z/k$,
    and the gluing matrix takes the form~$\psmatrix{0&k\\k&0}$
    by Example~\ref{ex:TorusMatch}~\ref{exnonex.1}.
    This implies that the transformation~\eqref{eq:geosym.3}
    of Proposition~\ref{prop:dual}
    acts exactly as the composition of the three
    elements~\eqref{symgrp.1}--\eqref{symgrp.5} of~$\isogrp $.
    Hence, we are reduced to an action of the group~$\isogrp $ in this special case.
    This is not surprising because~\eqref{eq:geosym.3} mainly affects
    the ratio of circle lengths, which is not specified by the gluing
    data if~$\thet=\frac\pi2$,
    see the discussion before Proposition~\ref{Prop2w.1}.
    Exploiting the action of~$\isogrp $, we may assume in Table~\ref{table:matchings}
    that~$p=\glb>0$ and that~$\eps_+>0$.

    One may note that in all our examples of this type
    in Table~\ref{table:matchings}
    we have~$\eps_+^*=\pm\eps_+$, so in none of these examples the group~$\isogrp $
    acts effectively.
  \end{enumerate}
\end{rmk}

\subsection{Covering spaces}
By Proposition~\ref{prop:pi1},
an extra-twisted connected sum with gluing matrix~$\psmatrix{\gll&p\\\glb&\sglr}$
is simply-connected if and only if~$p=1$.
Let us enumerate its connected covering spaces if~$p>1$.

\begin{prop}\label{prop:cover}
  Assume that~$M$ is an extra-twisted connected sum with gluing
  data given by~$k_\pm$, $\eps_\pm\in\Z/k_\pm$ and gluing
  matrix~$\psmatrix{\gll&p\\\glb&\sglr}$.
  Assume that~$p>1$ and that~$\ell\mid p$.
  Then there exists a unique connected $\ell$-fold covering space~$\widetilde M$.
  It is an extra-twisted connected sum constructed from the same building
  blocks as~$M$ with the same gluing angle~$\thet$, and with gluing data
  \begin{subequations}\label{eq:covers}
    \begin{align}
      \tilde k_\pm&=\frac{k_\pm}{\gcd(\ell,k_\pm)}\;,
      \qquad\tilde\eps_\pm=\frac{\ell}{\gcd(\ell,k_\pm)}\eps_\pm
      \;,\label{eq:cover-k-eps}\\
      \begin{pmatrix}
        \tilde r_+ &\tilde p\\\tilde \glb&-\tilde r_-
      \end{pmatrix}
      &=
      \begin{pmatrix}
        \frac \gll{\gcd(\ell,k_-)}&\frac p\ell\\[\medskipamount]
        \frac{\glb\ell}{\gcd(\ell,k_+)\gcd(\ell,k_-)}&\frac \sglr{\gcd(\ell,k_+)}
      \end{pmatrix}\in M_2(\Z)\;.\label{eq:cover-mat}
    \end{align}
  \end{subequations}
\end{prop}

If~$\tilde k_\pm=k_\pm$,
then the covering is constructed using the same finite groups~$\Gamma_\pm$.
Otherwise, %
one has to pass to a proper subgroup of one or both of these groups.

\begin{proof}
  Because~$\pi_1(M)$ is cyclic by Proposition~\ref{prop:pi1},
  there is a unique connected $\ell$-fold covering~$\pi\colon\widetilde M\to M$.
  Let~$\widetilde M_\pm\to M_\pm$ denote its restriction
  to the two halves of~$M$.
  Because~$\pi_*\colon\pi_1(M_\pm)\to\pi_1(M)$ is surjective,
  we see that~$\widetilde M_\pm\to M_\pm$ are also connected $\ell$-fold
  coverings,
  which are uniquely determined by~$\ell$ up to isomorphism
  since~$\pi_1(M_\pm)\cong\Z$.
  It follows that~$\widetilde M$ is an extra-twisted sum,
  glued from~$\widetilde M_+$ and~$\widetilde M_-$.

  Let~$\tilde X\to X$ denote the restriction of~$\pi$ to~$X=M_+\cap M_-$.
  The corresponding sublattice $\tilde\Lambda\subset\Lambda=\pi_1(X)$
  is spanned by the vectors~$\lambda_\pm$ corresponding to the
  interior circles, and by
  \begin{equation*}
    \tilde\nu_\pm=\ell\nu_\pm
    =\frac \ell{k_\pm}(\mu_\pm+\eps_\pm\lambda_\pm)\;.
  \end{equation*}
  The smallest multiples of~$\mu_\pm$ inside~$\tilde\Lambda$ are
  \begin{equation*}
    \tilde\mu_\pm
    =\frac{k_+}{\gcd(\ell,k_\pm)}\cdot\tilde\nu_\pm
    -\frac{\ell\eps_\pm}{\gcd(\ell,k_\pm)}\,\lambda_\pm
    =\frac\ell{\gcd(\ell,k_\pm)}\mu_\pm\;,
  \end{equation*}
  and~$\tilde\Lambda$ is an extension of the lattice spanned
  by~$\lambda_\pm$ and~$\tilde\mu_\pm$ by a finite cyclic
  group~$\tilde\Gamma_\pm\cong\Z/\tilde k_\pm$.
  With respect to the new bases~$(\tilde\mu_\pm,\lambda_\pm)$,
  $\tilde k_\pm$ and~$\tilde\eps_\pm$
  are given by~\eqref{eq:cover-k-eps},
  and the new gluing matrix takes the form of~\eqref{eq:cover-mat},
  which has determinant~$-\tilde k_+\tilde k_-$.
\end{proof}

Let us reverse the construction above.
In fact, the next result is dual to the one above
in the sense of Proposition~\ref{prop:dual}.

\begin{prop}\label{prop:quotients}
  Assume that~$M$ is an extra-twisted connected sum with gluing
  data given by~$k_\pm$, $\eps_\pm\in\Z/k_\pm$ and gluing
  matrix~$\psmatrix{\gll&p\\\glb&\sglr}$.
  Consider the lattice~$\Lambda'\subset\C$ of elements that act
  on the internal circle~$S^1_{\zeta_+}$ by an element of~$\Z/k_+$
  and on~$S^1_{\zeta_-}$ by an element of~$\Z/k_-$
  Then the group~$\Lambda'/\Lambda$ is isomorphic to~$\Z/\glb$
  and acts effectively on~$M$.
  If~$\ell\mid \glb$ then the subgroup of order~$\ell$
  acts freely on~$M$ if and only if~$\gcd(\ell,k_+)=\gcd(\ell,k_-)=1$.
\end{prop}

Note that there can be more automorphisms of~$M$,
for example some that do not project to the identity
of the K3 surface in the neck.

\begin{proof}
  From its description above,
  it is clear that elements of~$\Lambda'$ act on~$V_\pm$
  by elements of~$\Gamma_\pm$,
  and on the external circles~$S^1_{\xi_\pm}$ by rotation.
  Elements of~$\Lambda$ act trivially.
  Hence we get an effective action of~$\Lambda'/\Lambda$ on~$M$ that preserves
  the closed $G_2$-structure one obtains by gluing,
  see Section~\ref{subsec:glue}.
  From the proof of~\cite[Theorem~G2]{Joyce},
  it is clear that it also preserves the resulting torsion free $G_2$-structure
  on~$M$.

  Now let~$\alpha\in\Lambda'$.
  Because~$\alpha$ projects to an element of~$\Gamma_-$
  and the projection map~$\Lambda\to\Gamma_-$ is surjective,
  we can add an element of~$\Lambda$ to make sure that~$\alpha$
  projects to the identity of the internal circle~$\zeta_-\R\subset\C$.
  Hence, $\alpha$ is a multiple of~$\xi_-$.
  The projection to~$\zeta_+\R\subset\C$ will be a multiple
  of~$\frac1{k_+}\zeta_+$.
  From the definition of the gluing matrix in~\eqref{2.w.6},
  we infer that~$\alpha=\frac a\glb\xi_-$,
  and~$\frac a\glb\xi_-\in\Lambda$ if and only if~$\glb\mid a$
  because~$\xi_-$ is a primitive element of~$\Lambda$,
  see also Figure~\ref{Fig2g.1}.
  Hence we conclude that~$\Lambda'/\Lambda$ is a cyclic group of order~$\glb$.

  An element~$\alpha\in\Lambda'$ will act with fixpoints on~$M_-$
  if and only if it projects to the identity
  of the external circle~$S^1_{\xi_-}$,
  that is, if and only if it fixes~$V_-$ as a set.
  If we represent~$\alpha$ by~$\frac a\glb\xi_-$,
  we can add an element~$\lambda\in\Lambda$
  such that~$\alpha+\lambda\in\zeta_-\R$
  if and only if~$\frac a\glb$ can be written as~$\frac b{k_-}$.
  Such~$a\not\equiv 0$ mod~$\glb$ exist if and only if~$\gcd(\glb,k_-)>1$.
  If we restrict to a subgroup of order~$\ell$,
  this can happen if and only if~$\gcd(\ell,k_-)>1$.
  Similarly, there will be fixpoints in~$M_+$ if and only if~$\gcd(\ell,k_+)>1$.
\end{proof}

\begin{rmk}\label{rmk:quotients}
  If the group~$\Lambda'/\Lambda$ acts freely on~$M$,
  the quotient will again be an extra twisted connected sum,
  and we have reversed the covering space construction
  of Proposition~\ref{prop:cover}
  in the special case where~$\tilde k_\pm=k_\pm$.
  If there are fixpoints,
  we obtain an orbifold as a quotient.
  The fixpoint sets will be a union of products of compact subsets of~$V_\pm$
  and~$S^1_{\xi_\pm}$.
  Suppose that we find a crepant resolution of the corresponding
    orbifold quotients of~$V_\pm$.
  Then we can form an extra twisted connected sum using the resolved
  asymptotically cylindrical manifolds~$\tilde V_\pm$.
  The gluing data will now be given by
  \begin{subequations}\label{eq:quotients}
    \begin{align}
      \tilde k_\pm&=\frac{k_\pm}{\gcd(\ell,k_\pm)}\;,
      \qquad\tilde\eps_\pm=\frac{\gcd(\ell,k_\pm)}\ell\eps_\pm
      \;,\label{eq:quot-k-eps}\\
      \begin{pmatrix}
        \tilde r_+ &\tilde p\\\tilde \glb&-\tilde r_-
      \end{pmatrix}
      &=
      \begin{pmatrix}
        \frac \gll{\gcd(\ell,k_+)}&
        \frac{p\ell}{\gcd(\ell,k_+)\gcd(\ell,k_-)}\\[\medskipamount]
        \frac \glb\ell&\frac \sglr{\gcd(\ell,k_-)}
      \end{pmatrix}\in M_2(\Z)\;.\label{eq:quot-mat}
    \end{align}
  \end{subequations}
  This construction is dual to~\eqref{eq:covers} in the sense
  of Proposition~\ref{prop:dual}.

  To invert the covering space construction of Proposition~\ref{prop:cover},
  we need to allow multiples~$k'_\pm$ of~$k_\pm$
  such that the $\Z/k_\pm$-actions on~$V_\pm$ can be extended
  to~$\Z/k'_\pm$-actions.
  Then similar constructions as above are possible.
  We leave the details to the reader.
\end{rmk}

  Table~\ref{table:matchings} describes~255 deformation families of
  extra-twisted connected sums, 125 of which are simply connected.
  Among the remaining examples, there are~64 where taking the universal
  cover implies passing to subgroups of~$\Gamma_+$ or~$\Gamma_-$.

  Among the examples in Table~\ref{table:matchings},
  the one with largest fundamental group~$\pi_1(M)\cong\Z/21$
  is entry~250,
  which has~$k_+=4$ and~$k_-=6$.
  The universal cover has~$\tilde k_+=4$ and~$\tilde k_-=2$.
  It can be found as entry~174 with~$M_+$ and~$M_-$ swapped.
  Entries~175 and~248 are the two intermediate covering spaces.

\begin{ex}\label{ex:run-symm}
  Consider the  Example~\ref{ex:run-gluing} (228),
  where numbers in parentheses refer to Table~\ref{table:matchings},
  possibly up to the isometry~\eqref{symgrp.1}.
  Applying the duality~\eqref{eq:geosym.3}, we get the gluing matrix
  \begin{align*}
    \begin{pmatrix}
      5&10\\1&-1
    \end{pmatrix}
    \qquad&\text{with}\qquad
      \begin{matrix}k_+=3\;,&\quad&\eps_+=-1\;,\\
        k_-=5\;,&\quad&\eps_-=\phantom{-}1\;,%
      \end{matrix}
    &\bar\nu&=-43&(231)\;.
  \end{align*}
  By Proposition~\ref{prop:pi1},
  the corresponding extra-twisted connected sum~$M'$ is not simply connected.
  By Proposition~\ref{prop:cover},
  its universal covering and the intermediate covering spaces have
  gluing data and \xnuinvt
  \begin{align*}
    \begin{pmatrix}1&1\\2&-1\end{pmatrix}
      \qquad&\text{with}\qquad
      \begin{matrix}k_+=3\;,&\quad&\eps_-=-1\;,\\
        k_-=1\;,&
      \end{matrix}
      &\bar\nu&=-19&(21)\;,\displaybreak[0]\\
    \begin{pmatrix}1&2\\1&-1\end{pmatrix}
      \qquad&\text{with}\qquad
      \begin{matrix}k_+=3\;,&\quad&\eps_-=\phantom{-}1\;,\\
        k_-=1\;,
      \end{matrix}
      &\bar\nu&=-35&(23)\;,\displaybreak[0]\\
    \begin{pmatrix}5&5\\2&-1\end{pmatrix}
      \qquad&\text{with}\qquad
      \begin{matrix}k_+=3\;,&\quad&\eps_+=\phantom{-}1\;,\\
        k_-=5\;,&\quad&\eps_-=\phantom{-}2\;,
      \end{matrix}
      &\bar\nu&=-23&(230)\;.
  \end{align*}
  Note that the universal covering is different from
  the original manifold~$M$ from Example~\ref{ex:run-gluing} (228).
  In particular, we have forgotten the $\Z/5$-action on block~12
  of Table~\ref{table:blocks}, thus obtaining block~9.
  The first two lines above are again related by~\eqref{eq:geosym.3}.
  Applying~\eqref{eq:geosym.3} to the last line
  gives an extra-twisted connected sum with fundamental group~$\Z/2$,
  whose universal cover is  the original manifold~$M$,
  and which is described by
  \begin{align*}
    \begin{pmatrix}
      1&2\\5&-5
    \end{pmatrix}
    \qquad&\text{with}\qquad
      \begin{matrix}k_+=3\;,&\quad&\eps_+=-1\;,\\
        k_-=5\;,&\quad&\eps_-=\phantom{-}2\;,%
      \end{matrix}
    &\bar\nu&=-7&(229)\;.
  \end{align*}
\end{ex}

\subsection{Kovalev-Lefschetz fibrations}\label{sec:kl}

It is interesting to study fibrations $M \to B$ of \gtmfd s by coassociative
K3 surfaces with at worst ordinary double point singularities.
Donaldson \cite{Donaldson} initiated a programme to study adiabatic limits
of such fibrations where the size of the fibres goes to zero.
Closed \gtmfd s do not admit smooth fibrations by 4-manifolds by
Baraglia \cite{Baraglia}, so there
is always a non-empty 1-dimensional submanifold $L \subset B$ of singular
fibres, which plays an important role.

Extra-twisted connected sums have obvious approximately coassociative K3
fibrations.  Recent work of Englebert \cite{Englebert} can be applied to
perturb them to a genuine coassociative fibration \emph{provided} that the
singular fibres have only an ordinary double point singularity each.
We shall leave aside the interesting question of how to verify that
(Englebert did so in a particular example of a rectangular twisted
connected sum), and instead focus on describing the link of
singular fibres.

A building block $(Z,\K)$ according to Definition \ref{def:block} comes with
a holomorphic fibration $f : Z \to \PP^1$. Hence the ACyl Calabi-Yau 3-folds
$V := Z \setminus \K$ that result by application of \cite[Theorem D]{hhn}
(as used in Theorem \ref{thm:match_to_hkr}) fibre over
$\PP^1 \setminus \{\infty\} \cong \C$, and
the generic fibres are smooth K3 surfaces.

An automorphism group $\Gamma$ as in Definition \ref{def:block} must preserve
the fibration, descending to an action on $\PP^1$ of rotations with fixed
points $\infty$ and (without loss of generality) 0.
Thus there is also a well-defined induced fibration
$(V \times S^1)/\Gamma \to (\C \times S^1)/\Gamma$, where the fibres are
now coassociative (possibly singular) submanifolds of the ACyl \gtmfd{}
$(V \times S^1)/\Gamma$.
In terms of the conventions from \S\ref{2.w}, a generator of
$\Gamma \cong \Z/k$ acts on
$\C = \PP^1 \setminus \{\infty\}$ by $e^\frac{2\pi \eps i}{k}$
and on the external $S^1$ factor by $e^\frac{2\pi i}{k}$, for an $\eps \in \Z$
coprime to $k$ (and only $\eps \mmod k$ affects for the construction itself).

In an extra-twisted connected sum $M$, the truncations $M_{\pm, \ell}$ of
$(V_\pm \times S^1)/\Gamma_\pm$ that are glued together can
thus be viewed as a coassociative fibration over a solid torus
$(\Delta \times S^1)/\Gamma_\pm$ (where $\Delta \subset \C$ is a closed disc).
Along the cylindrical end $\K \times (T^2 \times $ interval), both fibrations
are simply the projection to the second factor,
so they can be patched together to a fibration $f : M \to B$.
Each singular fibre in the fibrations of the building blocks gives rise to
a corresponding $S^1 \subset B$ of singular fibres of $f$.

First we identify the base $B$. It is the result of gluing the two solid tori
$(\Delta \times S^1)/\Gamma_\pm$ by exactly the torus matching $\tormat$ used
in the construction.
After identifying $(\Delta \times S^1)/\Gamma_\pm \cong \Delta \times S^1$ by
$(w,z) \mapsto (wz^{\reps_\pm}, z^k)$,
that amounts to gluing two copies of $\Delta \times S^1$ by the automorphism of
$S^1 \times S^1$ defined by the matrix~$\TorMat$,
which we introduced in~\eqref{2.w.7} as the
change-of-basis matrix from
$(\nu_+. \lambda_+)$ to $(\nu_-, \lambda_-)$.
As per Saveliev \cite[\S1.5]{Saveliev}, $B$~is therefore the lens
space $L(p, \frac{\eps_+p+\glr}{k_+})$.

The fact that $\pi_1 B \cong \Z/p$ could of course also be 
computed with the argument from Proposition \ref{prop:pi1}.
That shows also that $f_* : \pi_1 M \to \pi_1 B$ is an isomorphism.
By passing to the universal cover of $M$ (another extra-twisted connected sum as
described above), we can thus assume that $B \cong S^3$.

When $B \cong S^3$ let us also consider the links corresponding to selected
fibres in $V_+$ and $V_-$ (in particular, the conclusions apply to the
link $L \subset B$ of singular fibres). Because $p = 1$ we may, as described in
Remark~\ref{rmk:eps=r}, choose %
$\reps_\pm=-r_\mp$
so that $\TorMat=\psmatrix{0&1\\1&0}$; then
the gluing of the solid tori corresponds to the Hopf link.

Each $V_\pm$ has a distinguished fibre (above $0 \in \Delta$) that is fixed by
the action of $\Gamma_\pm$. For any non-zero $w \in \Delta$,
the image in $S^3$ of the corresponding circle
$\{w\} \times S^1 \subset (\Delta \times S^1)/\Gamma_\pm$ is (since we took
$\reps_\pm = - r_\mp$) by definition an~$(-r_\mp, k_\pm)$ torus knot.
A set of non-zero $w_1, \ldots, w_m \in \Delta$ yields an
$(-mr_\mp, mk_\pm)$ torus link; in particular, any two of the strands
have linking number $-r_\mp k_\pm$.
Meanwhile, a strand from one side has linking number $k_+k_-$ with each strand
from the other side.

\section{Hyperbolic geometry and \etaforms}
\label{sec:hyp}
In Section~\ref{sec:nu},
we have proved Theorem~\ref{Thm:A}.
The special values of the function~$\logeta$ appearing there
are hard to describe, see Subsection~\ref{app:2},
however, the linear combinations needed in Theorem~\ref{Thm:A}
have a much easier description by Proposition~\ref{Prop3},
leading to the closed formula for~$\bar\nu(M)$ in Theorem~\ref{Thm:B}.
Here, we will pursue a more geometric approach to derive this formula.
We will consider the \etaform appearing in Proposition~\ref{Prop2c.2}
as the \etaform of the tautological family of flat tori
over the upper half plane~$\Hh\subset\C$.
It is a primitive of the hyperbolic area form.
We will then use elementary hyperbolic geometry
and an adiabatic limit formula for \etaforms as in~\cite{BuMa,Liu}
to complete the computation.

\subsection{A universal family of flat tori}\label{2.d}
We extend the \etaform~$\tilde\eta(\Aa)$ that we
introduced in Section~\ref{2.c} to \mbox{$\Hh\times(0,\infty)$},
which we regard as the moduli space of flat tori.
We then apply the so-called transgression formula
\begin{equation}\label{eq:dEta}
  d\tilde\eta(\Aa)=\int_{E/\Hh}\hat A(T(E/\Hh))-\ch(\ind(A_{T^2}))\;,
\end{equation}
see~\cite[Theorem~10.32]{BGV},
where~$\Aa$ extends the Bismut superconnection~\eqref{eq:bismutsc}.
We will see that the integral of the fibrewise $\hat A$-form over the fibres
vanishes.
The index bundle of the fibrewise operators~$A_{T^2}$
consists of fibrewise parallel sections of the spinor bundle.
This will allow us to give a simple formula for its Chern character form,
and hence for~$d\tilde\eta(\Aa)$.
We follow Bismut and Cheeger~\cite{BChTorus}, but consider tori of varying
area.

We consider a universal family~$p\colon\ft\to\Hh\times(0,\infty)$
of flat tori by setting %
\begin{equation*}
  \ft_{(\tau,r)}=\C\bigm/\sqrt{r/y}\Span_\Z\{\tau,1\}
  \qquad\text{for~$\tau=x+iy\in\Hh$ and }r\in(0,\infty)\;,
\end{equation*}
such that~$\ft_{(\tau,r)}$ has area~$r$ with respect to the standard
Euclidean metric on~$\C\cong\R^2$.
The group~$SL(2,\Z)$ acts on~$\Hh$ by M\"obius transformations,
and we lift this action to an action on~$\ft$ by fibrewise isometries
for~$\psmatrix{a&b\\c&d}\in SL(2,\Z)$ by
\begin{equation*}
  \ft_{(\tau,r)}\longrightarrow\ft_{\left(\frac{a\tau+b}{c\tau+d},r\right)}
  \qquad\text{with}\qquad
  [z]\longmapsto\biggl[\frac{\abs{c\tau+d}}{c\tau+d}\cdot z\biggr]
\end{equation*}
We can extend this action to~$GL(2,\Z)$
by~$\tau\mapsto\frac{a\bar\tau+b}{c\bar\tau+d}$ and
\begin{equation*}
  \ft_{(\tau,r)}\longrightarrow\ft_{\left(\frac{a\bar\tau+b}{c\bar\tau+d},r\right)}
  \qquad\text{with}\qquad
  [z]\longmapsto\biggl[\frac{\abs{c\bar\tau+d}}{c\bar\tau+d}\cdot\bar z\biggr]
\end{equation*}
if~$\det\psmatrix{a&b\\c&d}=-1$.
This action corresponds to multiplication
with the inverse transpose
of~$\psmatrix{a&b\\c&d}$ on~$\R^2/\Z^2$
if we trivialise~$\ft$ by
identifying~$\Hh\times(0,\infty)\times\R^2/\Z^2$ with~$\ft_{(\tau,r)}$
via
\begin{equation*}
  \bigl(\tau,r;[(u,v)]\bigr)\longmapsto\bigl[\sqrt{r/y}(u\tau+v)\bigr]\;.
\end{equation*}

Let~$W=\Hh\times(0,\infty)\times\R^2\to\Hh\times(0,\infty)$ be the fibrewise
universal covering of~$\ft$,
regarded as a trivial oriented Euclidean vector bundle with the standard metric.
There is a unique connection~$\nabla^W$ on~$W$
such that the sections~$\sqrt{r/y}\,\psmatrix{1\\0}$
and~$\sqrt{r/y}\,\psmatrix{x\\y}$ are parallel,
and~$\nabla^W$ is clearly $GL(2,\Z)$-invariant.
Let~$T^HW\subset TW$ denote the induced horizontal subbundle of~$TW$,
and let~$T^H\ft$ denote the induced horizontal subbundle of~$T\ft$.
It induces a metric connection
\begin{equation*}
  {}^0\nabla
  =\frac12\,\bigl(\nabla^W+(\nabla^W)^*\bigr)
\end{equation*}
on~$W$,
where~$(\nabla^W)^*$ denotes the adjoint of~$\nabla^W$
with respect to the standard metric.
These connections are given by
\begin{subequations}\label{eq:NablasW}
  \begin{align}
    \nabla^W
    &=d-
    \begin{pmatrix}
      0&1\\0&0
    \end{pmatrix}\,\frac{dx}y+
    \begin{pmatrix}
      1&0\\0&-1
    \end{pmatrix}\,\frac{dy}{2y}-
    \begin{pmatrix}
      1&0\\0&1
    \end{pmatrix}\,\frac{dr}{2r}\;,\label{eq:NablaW}\\
    (\nabla^W)^*
    &=d+
    \begin{pmatrix}
      0&0\\1&0
    \end{pmatrix}\,\frac{dx}y-
    \begin{pmatrix}
      1&0\\0&-1
    \end{pmatrix}\,\frac{dy}{2y}+
    \begin{pmatrix}
      1&0\\0&1
    \end{pmatrix}\,\frac{dr}{2r}\;,\label{eq:NablaWx}\\
    \text{and}\qquad{}^0\nabla
    &=d+
    \begin{pmatrix}
      0&-1\\1&0
    \end{pmatrix}\,\frac{dx}{2y}\;.\label{eq:Nabla0}
  \end{align}
\end{subequations}
As in~\cite[Prop~2.1]{BChTorus},
the vertical tangent bundle~$T(E/\Hh)$ of~$E\to\Hh$ together
with its natural connection is isomorphic to the pullback of~$(W,{}^0\nabla)$.
In particular, the integral of the form~$\hat A(T(E/\Hh))$ over the fibres
in~\eqref{eq:dEta} vanishes.

Because~$W$ is trivial,
there is an associated spinor bundle~$S=S^+\oplus S^-\to\Hh\times(0,\infty)$.
Let~$\widetilde{GL}(2,\R)$ denote the double cover of~$GL(2,\R)$
that is nontrivial over both connected components,
and let~$\widetilde{GL}(2,\Z)\subset\widetilde{GL}(2,\R)$
denote the preimage of~$GL(2,\Z)$.
Then the induced action on~$\ft$ and~$W$ lifts to~$S$ in a way that
is compatible with Clifford multiplication.
Let~$\widetilde{SL}(2,\Z)$ denote the preimage of~$SL(2,\Z)$
in~$\widetilde{GL}(2,\Z)$, then elements
of~$\widetilde{GL}(2,\Z)\setminus\widetilde{SL}(2,\Z)$ swap the bundles~$S^+$
and~$S^-$.
Because the vertical tangent bundle is isomorphic to~$p^*W$,
the bundle~$p^*S$ becomes a fibrewise spinor bundle on~$\ft$.
Moreover,
the kernel of the fibrewise Dirac operator consists of fibrewise parallel
spinors.
Therefore, the index bundle~$\ind(A_{T^2})$ in~\eqref{eq:dEta}
is isomorphic to~$S$,
and the $L^2$-unitary connection~$\tilde\nabla^u$
and the connection~${}^0\nabla$ induce the same connection on the index bundle.

From equation~\eqref{eq:Nabla0} for~${}^0\nabla$,
we can now compute the curvature~${}^0\nabla^2$ and the Euler class
\begin{equation}
  \Pf\biggl(\frac{{}^0\nabla^2}{2\pi}\biggr)
    =\frac{dx\,dy}{4\pi y^2}=\frac1{4\pi}\,dA_{\mathrm{hyp}}\;,\label{eq:Nabla02}
\end{equation}
where~$A_{\mathrm{hyp}}$ denotes the hyperbolic area form.
Here, we have used that the hyperbolic metric on~$\Hh$
is given by~$g^{\mathrm{hyp}}=\frac1{y^2}\,g^{\mathrm{Eucl}}$.

\subsection{The \etaform} %
\label{2.u}
We collect some properties of
the \etaform~$\eta(\Aa)\in\Omega^\bullet(\Hh\times(0,\infty))$.

The data considered above suffice to define the Bismut superconnection~$\Aa$
for the spinor bundle~$S$ on~$E\to\Hh\times(0,\infty)$,
which extends the superconnection~$\Aa$ introduced
in Proposition~\ref{Prop2c.2}; see also~\eqref{eq:bismutsc}.
It follows that~$\Aa$ is $\widetilde{GL}(2,\Z)$-invariant.

\begin{prop}\label{prop:EtaNoR}
  The \etaform~$\tilde\eta(\Aa)\in\Omega^\bullet\bigl(\Hh\times(0,\infty)\bigr)$
  equals the pullback of its restriction to~$\Hh\times\{1\}$ along the
  product projection.
\end{prop}

\begin{proof}
  We identify sections of~$p_*S\to\Hh\times(0,\infty)$
  with sections of~$\ft$.
  Then the unitary connection~$\tilde\nabla^u$ on~$p_*S$
  equals~${}^0\nabla+\frac{dr}{2r}$.
  Let~$D_{(\tau,r)}$ denote the fibrewise Dirac operator acting
  on sections of~$S|_{E_{(\tau,r)}}$.
  Then the Bismut superconnection~$\Aa$ is given by %
  \begin{align*}
    \Aa_t&=\sqrt t\,D_{(\tau,r)}+{}^0\nabla+\frac{dr}{2r}
    =\sqrt{t/r}\,D_{(\tau,1)}+{}^0\nabla+\frac{dr}{2r}\;.%
  \end{align*}
  For~$r=1$, this is explained in~\cite{BChTorus} after Definition~2.14,
  in particular, there is no term of horizontal degree~$2$.
  For the general definition of~$\Aa_t$, see~\cite[Proposition~10.15]{BGV}.
  
  Introducing~$t\in(0,\infty)$ as an additional parameter,
  we may define a superconnection~$\overline\Aa$ on the pullback
  of~$p_*S$ to~$\Hh\times(0,\infty)^2$ by
  \begin{equation*}
    \overline\Aa
    =\sqrt{t/r}\,D_{(\tau,1)}+{}^0\overline\nabla+\frac{dr}{2r}-\frac{dt}{2t}\;,
  \end{equation*}
  where~${}^0\overline\nabla={}^0\nabla+\frac\del{\del t}\,dt$.
  Let~$(2\pi i)^{\frac{1-N}2}$ denote the operator that multiplies a $k$-form
  by~$(2\pi i)^{\frac{1-k}2}$.
  Then the \etaform on~$\Hh\times(0,\infty)$ can be defined
  as
  \begin{equation}
    \begin{aligned}
      \tilde\eta(\Aa)
      &=-(2\pi i)^{\frac{1-N}2}\int_0^\infty
      \str\biggl(\frac{\del\Aa_t}{\del t}\,e^{-\Aa_t^2}\biggr)\,dt\\
      &=-\int_{\Hh\times(0,\infty)^2/\Hh\times(0,\infty)}(2\pi i)^{-\frac N2}
      \str\Bigl(e^{-\overline\Aa^2}\Bigr)\;.\label{eq:TotalEta}
    \end{aligned}
  \end{equation}
  The component of degree~$1$ on~$\Hh\times(0,\infty)$
  is described by~\eqref{eq:EtaForm}.
  Note that~$\frac{dr}{2r}-\frac{dt}{2t}$ squares to~$0$
  and supercommutes with the rest of~$\overline\Aa$, and hence 
  contributes neither to~$\overline\Aa^2$ nor to~$\tilde\eta(\Aa)$.

  We observe that~$\overline\Aa$
  can be pulled back from~$\Hh\times(0,\infty)$
  by the map~$(\tau,r,t)\mapsto(\tau,u)$ with~$u=t/r$.
  It follows from~\eqref{eq:TotalEta} that~$\tilde\eta(\Aa)$ does not
  involve the exterior variable~$dr$.
  By substituting~$u$ for~$\frac tr$ in the integral,
  one obtains a formula that no longer depends on~$r$.
\end{proof}

In other words, $\tilde\eta(\Aa)$ is independent of~$r\in(0,\infty)$
and does not contain the exterior variable~$dr$.
From now on, we regard~$\tilde\eta(\Aa)$ as a form on~$\Hh$.
Following Bismut and Cheeger, we get an explicit expression for the right
hand side of~\eqref{eq:dEta} even if the area of the fibres is not constant.

\begin{thm}[{\cite[Theorem~2.22]{BChTorus}}]\label{Thm2d.1}
  The \etaform~$\tilde\eta(\Aa)$ on~$\Hh$
  has the exterior derivative
  \begin{equation*}
    d\tilde\eta(\Aa)
    =-(-1)^{\tfrac{\rk W}2}\,\Pf\biggl(\frac{{}^0\nabla^2}{2\pi}\biggr)
    \,\hat A^{-1}\biggl(\frac{{}^0\nabla^2}{2\pi}\biggr)\;.
  \end{equation*}
\end{thm}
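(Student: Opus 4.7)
The plan is to apply the transgression formula for the Bismut superconnection cited just above the theorem statement,
$$d\tilde\eta(\Aa) = \int_{\ft/(\Hh\times(0,\infty))} \hat A\bigl(T(\ft/(\Hh\times(0,\infty)))\bigr) - \ch\bigl(\ind(A_{T^2})\bigr),$$
and then simplify each of the two terms using the very explicit description of the family already set up in Subsection~\ref{2.d}.

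For the first term I would observe that, as remarked in the paragraph preceding the theorem, the vertical tangent bundle satisfies $T(\ft/(\Hh\times(0,\infty))) \cong p^*W$, with its natural Bismut connection pulled back from $({}^0\nabla,W)$ on the base. Hence $\hat A\bigl(T(\ft/(\Hh\times(0,\infty)))\bigr) = p^*\hat A(W,{}^0\nabla)$ is horizontal, i.e.\ it has no component of positive vertical degree. Because the fibres are $2$-dimensional, integration along the fibres of a purely horizontal form yields zero, so this term drops out completely.

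For the second term I would identify the index bundle explicitly. Since each fibre is a flat torus carrying the trivial (product) spin structure, the kernel of the fibrewise Dirac operator $A_{T^2}$ consists precisely of the fibrewise parallel spinors, and under the isomorphism $T(\ft/B)\cong p^*W$ these are canonically identified with the global sections of the spinor bundle $S=S^+\oplus S^-$ of $W$. Since there are no small eigenvalues, $\ind(A_{T^2})=[S^+]-[S^-]$ as a $\Z/2$-graded virtual bundle, with connection induced by ${}^0\nabla$. The classical identity between the Chern character of the spinor difference and the ratio of Euler class to $\hat A$-class then gives
\begin{equation*}
  \ch(S^+-S^-)
  =(-1)^{\frac{\rk W}2}\,\Pf\!\left(\frac{{}^0\nabla^2}{2\pi}\right)
  \hat A^{-1}\!\left(\frac{{}^0\nabla^2}{2\pi}\right);
\end{equation*}
in our rank~$2$ setting this is just the elementary computation $e^{x/2}-e^{-x/2}=2\sinh(x/2)=x/\hat A(x)$ applied to the single Chern root $x$ of $W$. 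Combining with the vanishing of the $\hat A$-integral gives $d\tilde\eta(\Aa)=-\ch(\ind A_{T^2})$, which is exactly the claimed formula.

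The only real subtlety is bookkeeping of signs and normalisation conventions (the $(-1)^{\rk W/2}$ factor, and the sign in the transgression formula), since these have to be aligned with the conventions for $\tilde\eta$ used in~\eqref{eq:EtaForm} and for the Bismut superconnection in~\eqref{eq:bismutsc}. Once these are matched with those of~\cite{BChTorus}, no further analysis is needed; in particular, the variable $r\in(0,\infty)$ plays no role because ${}^0\nabla^2$ is independent of $r$, so the theorem applies uniformly on all of $\Hh\times(0,\infty)$.
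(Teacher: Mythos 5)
Your argument reconstructs exactly the proof that the paper sketches in the paragraphs immediately preceding the theorem statement, where the result is quoted from~\cite[Thm~2.22]{BChTorus} after noting (a) that $T(\ft/(\Hh\times(0,\infty)))\cong p^*W$ is pulled back, so integrating $\hat A$ over the $2$-dimensional fibres gives zero, and (b) that $\ind(A_{T^2})\cong S^+\ominus S^-$ with connection induced by ${}^0\nabla$. The one subtlety is the one you flag yourself: your shortcut $e^{x/2}-e^{-x/2}=x\,\hat A^{-1}(x)$ yields $+\Pf\,\hat A^{-1}$ rather than $(-1)^{\rk W/2}\Pf\,\hat A^{-1}$ unless the Chern root $x$ is taken to be $-\Pf$ (equivalently, the weights of $S^\pm$ are swapped), so the $(-1)^{\rk W/2}$ must indeed be imported from the grading and orientation conventions of~\cite{BChTorus} rather than read off the elementary identity.
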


Hence, in our setting, by~\eqref{eq:Nabla02}, we have
\begin{equation}\label{2.d.5}
    d\tilde\eta(\Aa)=\frac1{4\pi}\,dA_{\mathrm{hyp}}\;.
\end{equation}

\begin{rmk}
  The \etaform is not exact on~$\Hh$.
  This does not contradict Proposition~\ref{Prop2c.2}.
  If we were to leave the path in~$\Hh$ given by the adiabatic limit
  construction in section~\ref{2.c},
  the vertical tangent bundle of the family~$\ft_\pm$ would no longer
  split as in the proof of Proposition~\ref{Prop2c.1},
  so the local variation terms in~\eqref{2.c.1} would no longer vanish
  and contribute to~$d\bar\nu$ as well.
\end{rmk}

\begin{lem}\label{Lem2d.1}
  The spinorial \etaform~$\tilde\eta(\Aa)$ is $PGL(2,\Z)$-equivariant,
  more precisely, for~$g\in PGL(2,\Z)$ acting on~$\Hh$ by M\"obius
  transformations, we have
  \begin{equation*}
    g^*\tilde\eta(\Aa)=\det g\cdot\tilde\eta(\Aa)\;.
  \end{equation*}
\end{lem}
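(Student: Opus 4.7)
The plan is to derive the equivariance directly from the $GL(2,\Z)$-equivariance of all the geometric data entering the Bismut superconnection, combined with the behavior of the supertrace under the spin lift. By the explicit construction in \S\ref{2.d}, the tautological family $\ft\to\Hh$, the fibrewise metric $g_{z,r}$, the horizontal distribution $T^H\ft$, and the Euclidean connection~${}^0\nabla$ are all $GL(2,\Z)$-equivariant. Consequently, for each $g\in GL(2,\Z)$, any lift $\tilde g\in\widetilde{GL}(2,\Z)$ to the spin structure intertwines the Bismut superconnection: $g^*\Aa_t=\tilde g^{-1}\Aa_t\tilde g$, and hence also $g^*(e^{-\Aa_t^2})=\tilde g^{-1}e^{-\Aa_t^2}\tilde g$.

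Next, I would recall from~\eqref{eq:EtaForm} that the integrand defining $\tilde\eta(\Aa)$ is the fibrewise supertrace of an operator~$Y_t$ built polynomially from $\Aa_t$, $[\tilde\nabla^u,A_{T^2}]$ and $e^{-\Aa_t^2}$. Writing $\str=\tr(\tau\cdot-)$ with $\tau=ic_1c_2\in\End(S)$ the grading operator, the pullback integrand becomes
\begin{equation*}
  \str(\tilde g^{-1}Y_t\tilde g)=\tr(\tau\,\tilde g^{-1}Y_t\tilde g)=\tr(\tilde g\tau\tilde g^{-1}\,Y_t)
\end{equation*}
by cyclicity of the ordinary trace.

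The decisive point is then the observation already made in~\S\ref{2.d}: the lift $\tilde g$ preserves the $\Z/2$-grading on $S=S^+\oplus S^-$ when $g\in SL(2,\Z)$ and swaps $S^+$ with $S^-$ when $g\in GL(2,\Z)\setminus SL(2,\Z)$. Equivalently, $\tilde g\tau\tilde g^{-1}=(\det g)\,\tau$, so the integrand is multiplied by $\det g$. Integrating over $t$ yields $g^*\tilde\eta(\Aa)=\det g\cdot\tilde\eta(\Aa)$, and since $-I\in SL(2,\Z)$ the formula is invariant under the centre and hence descends from $GL(2,\Z)$ to $PGL(2,\Z)$.

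There is no serious obstacle; the only care needed is bookkeeping. The apparent sign coming from orientation reversal of $\Hh$ under anti-holomorphic M\"obius transformations (visible via $g^*dA_{\mathrm{hyp}}=\det g\cdot dA_{\mathrm{hyp}}$ as a 2-form) is automatically consistent with the formula through~\eqref{2.d.5}, providing a sanity check that the supertrace sign $\det g$ is the correct one.
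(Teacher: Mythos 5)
Your proof is correct and takes essentially the same route as the paper: both proceed by noting that any lift of $g\in GL(2,\Z)$ to $\widetilde{GL}(2,\Z)$ preserves the Bismut superconnection, and that orientation-reversing elements swap $S^+$ and $S^-$ (equivalently, conjugate the grading operator $\tau$ to $(\det g)\,\tau$), which flips the sign of the supertrace. Your explicit formulation via $\tilde g\tau\tilde g^{-1}=(\det g)\,\tau$ and the closing consistency check against $d\tilde\eta(\Aa)=\frac{1}{4\pi}\,dA_{\mathrm{hyp}}$ are nice touches, but the underlying argument is the same.
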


\begin{proof}
  The \etaform is invariant under orientation preserving
  spin isometries.
  We know that each~$g\in PGL(2,\Z)$ has two possible lifts to~$GL(2,\Z)$
  that act fibrewise on~$\ft$ over the given action on~$\Hh$.
  Each of these lifts has two lifts to~$\widetilde{GL}(2,\Z)$ that also
  act on the spinor bundle~$S\to\Hh$ and therefore also on the fibrewise
  spinor bundle~$p^*S$ over~$\ft$.

  If~$g\in PSL(2,\Z)$, all four lifts preserve the superconnection~$\Aa$
  and the subbundles~$S^\pm\subset S\to\Hh$.
  Therefore~$g^*\tilde\eta(\Aa)=\tilde\eta(\Aa)$ in this case.

  If~$g\in PGL(2,\Z)\setminus PSL(2,\Z)$,
  then all four lifts of~$g$ to~$\widetilde{GL}(2,\Z)$
  preserve the superconnection~$\Aa$,
  but swap the bundles~$S^+$ and~$S^-$.
  This reverses the sign of the supertrace in the definition of
  the \etaform, so we now have~$g^*\tilde\eta(\Aa)=-\tilde\eta(\Aa)$.
\end{proof}

\subsection{Adiabatic limits and hyperbolic geodesics}\label{2.v}
We represent the two isometric tori \mbox{$T_\pm=\widetilde T_\pm/\Gamma_\pm$}
by points in the upper halfplane~$\Hh$.
By Proposition~\ref{prop:EtaNoR},
we may rescale all tori
to area~$1$ without changing the contribution to the \nuinvt.
When we consider adiabatic limits of~$M_\pm$,
the points corresponding to~$X_{\pm,a}$ trace out geodesic arcs in~$\Hh$.
These arcs will be used to compute the sum of integrals
of the \etaform~$\tilde\eta(\Aa)$ of Proposition~\ref{Prop2c.2}.

We represent~$T=\widetilde T_+/\Gamma_+$ by the basis~\eqref{2.w.5}.
In equation~\eqref{2.b.0} we have considered families of metrics on~$M_\pm$.
These induce two families of metrics on~$T_+$.
We write~$X_{+,a}=\del M_{+,a}=\Sigma_+\times T_{+,a}$.
For the second one, we consider the isomorphism~$T\cong\widetilde T_-/\Gamma_-$
and write~$X_{-,a}=\del M_{-,a}=\Sigma_-\times T_{-,a}$.

We let~$\overline{\Hh}=\Hh\cup\R\cup\{\infty\}$
denote the closure of~$\Hh$ in~$\C P^1$
and write~$\del_\infty\Hh=\R\cup\infty$.
We extend the action of~$GL(2,\Z)$ by M\"obius transformations
to~$\overline\Hh$.
For~$(z,w)\in\C^2$
we put
\begin{equation*}
  \tau=
  \begin{cases}
    \infty	&\text{if~$w=0$,}\\
    z/w		&\text{if~$\Im(z/w)\ge 0$, and}\\
    \bar z/\bar w&\text{if~$\Im(z/w)<0$.}
  \end{cases}
\end{equation*}
If~$(z,w)$ span an integral lattice~$\Lambda$ in~$\C$,
then~$\C/\Lambda$ is isometric to~$\ft_{(\tau,r)}$
for~$r=\abs{\Im(\bar zw)}$.

We recall the gluing matrix~$G=\psmatrix{\gll&p\\\glb&\sglr}$
from~\eqref{2.w.6}.
For the rest of this section,
we will assume that~$p$, $\glb>0$ and~$r_\pm\ge 0$.
This is no loss of generality by Remark~\ref{rmk:geosym}.
As a consequence, we always have a gluing angle~$\thet\in(0,\frac\pi 2]$
and~$\rho=\pi-2\thet\ge 0$.

\begin{lem}\label{Lem2v.1}
  Consider the families of flat tori~$T_{+,a}$ and~$T_{-,a}$ as above,
  assuming~$\gll$, $\glb\ge 0$.
  \begin{enumerate}
  \item\label{2y.1a} The family~$T_{+,a}$ is represented in~$\Hh$
    by a vertical line~$\gamma_+$ with real part~$\frac{\eps_+}{k_+}$.
    The adiabatic limit~$a\to 0$ corresponds
    to~$\frac{\eps_+}{k_+}\in\del_\infty\Hh$.
  \item\label{2y.1b} The family~$T_{-,a}$ is represented in~$\Hh$
    by a hyperbolic geodesic~$\gamma_-$
    between~$\frac{\eps_+}{k_+}-\frac \glb{k_+\gll}$
    and~$\frac{\eps_+}{k_+}+\frac \glr{k_+p}\in\del_\infty\Hh$.
    The adiabatic limit~$a\to 0$ corresponds
    to~$\frac{\eps_+}{k_+}-\frac \glb{k_+\gll}$.
  \item\label{2y.1c} The geodesics~$\gamma_+$ and~$\gamma_-$
    intersect at~$\frac{\eps_++i\ar_+}{k_+}$
    with unoriented angle
    \begin{equation*}
      \abs{\measuredangle_{\frac{\eps_++i\ar_+}{k_+}}\biggl(\frac{\eps_+}{k_+}\;,\;
      \frac{\eps_+}{k_+}-\frac \glb{k_+\gll}\biggr)}=
      2\thet\;.
    \end{equation*}
  \end{enumerate}
\end{lem}

For classical twisted connected sums
and in the case of Example~\ref{ex:TorusMatch}~\ref{exnonex.1},
we have~$2\thet=\pi$,
so both~$\gamma_-$ and~$\gamma_+$ agree with the vertical line
with real part~$\frac{\eps_+}{k_+}$.

\begin{proof}
  We write~$T_{+,a}=\C/\Lambda_{+,a}$, where the lattice~$\Lambda_{+,a}$
  is generated by~$\frac{\eps_++ia}{k_+}\,\zeta_+$ and~$\zeta_+\in\C$,
  so~$T_{+,a}$ %
  is represented by the point~$\frac{\eps_++ia}{k_+}\in\Hh$
  on the hyperbolic geodesic from~$\frac{\eps_+}{k_+}$ to~$\infty$.

  Analogously, the torus~$T_{-,a}=\C/\Lambda_{-,a}$ can be represented
  by~$\frac{\eps_-+ia}{k_-}$
  on the hyperbolic geodesic from~$\frac{\eps_-}{k_-}$ to~$\infty$.
  We now consider the inverse transpose of the matrix~$\TorMat$
  introduced in~\eqref{2.w.7}. It is given by
  \begin{equation*}
    \begin{pmatrix}
      e&f\\g&h
    \end{pmatrix}=
    \begin{pmatrix}
      \frac{\reps_+p+\glr}{k_+} &\frac{\glb-\reps_+\gll-\reps_-\glr-\reps_-\reps_+p}{k_-k_+}\\
      p&-\frac{\reps_-p+\gll}{k_-}
    \end{pmatrix}
  \end{equation*}
  and has determinant~$-1$.
  Assertion~\ref{2y.1b} follows because the antiholomorphic M\"obius
  transformation~$\tau\mapsto\frac{e\bar\tau+f}{g\bar\tau+h}$ maps
  \begin{equation*}
    \frac{\eps_-}{k_-}\longmapsto\frac{\eps_+}{k_+}-\frac \glb{k_+\gll}\;,\qquad
    \infty\longmapsto\frac{\eps_+}{k_+}+\frac \glr{k_+p}\;,\qquad\text{and}\qquad
    \frac{\eps_-+i\ar_-}{k_-}\longmapsto\frac{\eps_++i\ar_+}{k_+}\;.
  \end{equation*}
  In the last step, we have used the formulas for~$\ar_\pm$
  in Proposition~\ref{Prop2w.1}~\ref{2w.1c}.

  We compute the angle in~\ref{2y.1c} using Figure~\ref{Fig2w.1}.
  We note that the hyperbolic upper half plane is conformal
  to the Euclidean half plane, so we may compute the angle
  using Euclidean geometry.
  Let~$c$ denote the Euclidean center of the circle through
  the points~$\frac{\eps_+}{k_+}-\frac \glb{k_+\gll}$,
  $\frac{\eps_+}{k_+}+\frac \glr{k_+p}$ and~$\frac{\eps_++is}{k_+}$.
  The angle between the two hyperbolic geodesic arcs from~$\frac{\eps_+}{k_+}$
  and~$\frac{\eps_+}{k_+}-\frac \glb{k_+\gll}$ to~$\frac{\eps_++is}{k_+}$
  equals the central angle subtending the arc
  from~$\frac{\eps_+}{k_+}-\frac \glb{k_+\gll}$ to~$\frac{\eps_++is}{k_+}$.
  It is therefore twice the inscribed angle
  at~$\frac{\eps_+}{k_+}+\frac \glr{k_+p}$,
  which we recognise as the gluing angle~$\thet\in\bigl(0,\frac\pi2\bigr]$
  from Proposition~\ref{Prop2w.1}~\ref{2w.1d}.
  This is easiest to see by considering the line from~$\frac{\eps_+\gll-\glb}{k_+\gll}$
  to~$\frac{\eps_++i\ar_+}{k_+}$ with direction~$\glb+i\gll\ar_+$,
  see also Figure~\ref{Fig2g.1}.
  Here we have used our assumption that~$\gll$, $\glb\ge 0$.
\end{proof}

  \begin{figure}
\begin{equation*}
  \begin{tikzpicture}
    \draw (2,0) node[below] {$\frac{\eps_+p+\glr}{k_+p}$} arc (0:60:2) ;
    \begin{scope}[color=green, line width=1.5pt]
      \draw (-2,0)
      arc (180:60:2) node[pos=0.5,above,color=black] {$\gamma_-$} -- (1,0) ;
    \end{scope}
    \node at (1.3,0.7) {$\gamma_+$} ;
    \begin{scope}[line width=2pt]
      \draw (-2.5,0) -- (2.5,0) ;
      \draw (2,0) -- ++(120:2) -- (-2,0) ;
      \begin{scope}[->,color=red]
        \draw (-2,0) -- ++(30:1) node[above, color=black] {$\del_{v_-}$} ;
        \draw (-2,0) node[below, color=black] {$\frac{\eps_+\gll-\glb}{k_+\gll}$}
        -- (-2,1) node[above, color=black] {$\del_{v_+}$} ;
      \end{scope}
      \begin{scope}[->,color=blue]
        \draw (-2,0) -- (-1,0) node[below, color=black] {$\quad\del_{u_+}$} ;
        \draw (-2,0) -- ++(120:1) node[above left, color=black] {$\del_{u_-}$} ;
      \end{scope}
    \end{scope}
    \draw (0,0) node[below] {$c$} -- ++(60:2)
    ++(150:0.85) -- ++(-30:1.7) ++(150:0.15) arc(-30:90:0.7)
    ++(0,0.15) -- ++(0,-0.85) ;
    \draw (0,0) ++(60:2) ++(30:0.35) node {$2\thet$} ;
    \draw (0,0) ++(60:1.6) arc (240:150:0.4) ;
    \fill (0,0) ++(60:2) ++(188:0.22) circle (1pt) ;
    \draw (-0.7,0) arc (180:60:0.7) ;
    \draw (1.3,0) arc (180:120:0.7) ;
    \draw (-2,0.7) arc (90:30:0.7) ;
    \draw (0,0) ++(120:0.35) node {$2\thet$} ;
    \draw (-2,0) ++(60:0.45) node {$\thet$} ;
    \draw (2,0) ++(150:0.45) node {$\thet$} ;
    \draw (0.6,0) arc (180:90:0.4) ;
    \fill (1,0) node[below] {$\frac{\eps_+}{k_+}$} ++(135:0.22) circle (1pt) ;
  \end{tikzpicture}
\end{equation*}
\caption{The hyperbolic angle between~$\gamma_+$ and~$\gamma_-$.}
\label{Fig2w.1}
  \end{figure}

\subsection{Axes of hyperbolic reflections}\label{2.z}
The isometry group of the upper half plane with its hyperbolic metric
is isomorphic to~$PGL(2,\R)$, acting by M\"obius transformations.
Each orientation reversing isometry of~$\Hh$
is a hyperbolic glide reflection along a hyperbolic geodesic.
By Lemma~\ref{Lem2d.1},
the \etaform~$\tilde\eta(\Aa)$ changes sign under pullback by elements~$g$
of~$GL(2,\Z)\setminus SL(2,\Z)$.
If~$g$ is a reflection, then the restriction of~$\tilde\eta(\Aa)$
to its axis vanishes.
If~$g$ describes a reflection about~$\gamma_g$,
then~$hgh^{-1}$ is a reflection about~$h(\gamma_g)$.

\begin{rmk}\label{Rem2y.1}
  Let~$\bigl(\begin{smallmatrix}a&b\\c&d\end{smallmatrix}\bigr)\in
  GL(2,\Z)\setminus SL(2,\Z)$
  represent the map~$\tau\mapsto\frac{a\bar\tau+b}{c\bar\tau+d}$.
  It describes a reflection if and only if the trace~$a+d$ vanishes.
  The line of reflection is vertical if and only if ${c=0}$.
  In this case,
  the corresponding reflections in~$PSL(2,\Z)$ are of the
  form~$\psmatrix{-1&k\\0&1}$,
  and the fixed line has real part~$\frac k2$.
  This is implies that all possible axes of reflections
  in~$GL(2,\Z)\setminus SL(2,\Z)$
  are of the form~$h(\gamma)$ for~$h\in SL(2,\Z)$ and~$\gamma$
  a vertical geodesic with real part in~$\frac12\Z$.
\end{rmk}

\begin{lem}\label{Lem:fixedgeo}
  Let~$\frac ab$ and~$\frac cd\in\del_\infty\Hh=\Q\cup\{\infty\}$
  be represented by reduced fractions.
  If \nolinebreak[3] ${ad-bc=\pm 1}$, then there exists a hyperbolic reflection,
  represented by an element~$g\in GL(2,\Z)\setminus SL(2,\Z)$,
  that fixes the hyperbolic geodesic~$\gamma$ from~$\frac ab$ to~$\frac cd$.
\end{lem}

\begin{proof}
  Assume~$ad-bc=1$.
  The element~$h=\psmatrix{a&c\\b&d}\in SL(2,\Z)$ maps~$\infty=\frac 10$
  to~$\frac ab$ and~$0=\frac 01$ to~$\frac cd$.
  Hence, it maps the $y$-axis to~$\gamma$.
  This implies
  that~$g=h\,\psmatrix{-1&0\\0&1}\,h^{-1}\in GL(2,\Z)\setminus SL(2,\Z)$
  is a reflection that fixes~$\gamma$.
\end{proof}

\subsection{Continued Fractions and Hyperbolic Polygons}\label{2.ex}
Let~$M_\pm=\widetilde M_\pm/\Gamma_\pm$ be $\Z/k_\pm$-blocks with
boundary~$X_\pm\cong\Sigma_\pm\times T_\pm$ and~$T_\pm=\widetilde T_\pm/\Gamma_\pm$
as before.
Define~$\eps_+$, $\ar_\pm$ as in Section~\ref{2.w},
and let~$\gamma_\pm$ be the hyperbolic geodesics of Lemma~\ref{Lem2v.1}.
Recall that we assumed~${\glb>0}$ and~$\gll\ge 0$.
To compute the difference of~$\bar\nu(M_{+,\ar_+})$ and the adiabatic
limit~$\lim_{a\to 0}\bar\nu(M_{+,a})$ by formula~\eqref{2.c.3} for~$M_+$,
we integrate~$\tilde\eta(\Aa)$ along the vertical line~$\gamma_+$
from~$\frac{\eps_+}{k_+}$ to~$\frac{\eps_++i\ar_+}{k_+}$.
For~$M_-$, we note that the orientation of the boundary was reversed during
gluing.
Hence, we integrate~$\tilde \eta(\Aa)$ along the geodesic ray~$\gamma_-$
from~$\frac{\eps_++is}{k_+}$ to~$\frac{\eps_+}{k_+}-\frac \glb{k_+\gll}\in\del_\infty\Hh$.
Note that this last point is~$\infty$ in case~$\gll=0$ and~$\thet=\frac\pi2$.
It follows from Theorem~\ref{Thm2x.1} and Proposition~\ref{Prop2c.2}
that the integrals exist.

We will now complete the two geodesic rays above to an ideal
hyperbolic polygon~$P$ with one finite corner~$\frac{\eps_++i\ar_+}{k_+}$
and further corners represented by reduced
fractions~$\frac{\eps_+}{k_+}-\frac \glb{k_+\gll}=\frac{a_0}{b_0}$, \dots,
$\frac{a_\ell}{b_\ell}=\frac{\eps_+}{k_+}\in\Q\cup\{\infty\}\subset\del_\infty\Hh$
such that~$a_jb_{j-1}-b_ja_{j-1}=1$ for~$j=1$, \dots, $\ell$.
Then~$\tilde\eta(\Aa)$ vanishes along the geodesics
joining~$\frac{a_{j-1}}{b_{j-1}}$ and~$\frac{a_j}{b_j}$
by Lemma~\ref{Lem:fixedgeo}.

It turns out to be easier
to construct the image~$P'$ of~$P$ under~$C$,
where~$C=\psmatrix{\eps_+^*&-n\\-k_+&\eps_+}\in SL(2,\Z)$
act as a M\"obius transformation,
where the integer~$n$ is defined by~$\eps_+\eps_+^*=k_+n+1$.
We note that
\begin{equation}\label{eq:ab0abl}
  C\,\Bigl(\frac{\eps_+}{k_+}\Bigr)=\infty\;,\quad
  C\,\Bigl(\frac{\eps_+\gll-\glb}{k_+\gll}\Bigr)=\frac{\gll-\eps_+^*\glb}{k_+\glb}\;,
  \quad\text{and}\quad
  C\,\Bigl(\frac{\eps_+p+\glr}{k_+p}\Bigr)=-\frac{p+\eps_+^*\glr}{k_+\glr}\;.
\end{equation}

We let~$a'_0=\frac{\gll-\eps_+^*\glb}{k_+}$, $b'_0=\glb\in\Z$
(see also Remark~\ref{rmk:divisible}) and
represent~$\frac{a'_0}{b'_0}$ as a continued fraction with minus signs,
\begin{equation*}
  \frac{a'_0}{b'_0}=\frac{\gll-\eps_+^*\glb}{k_+\glb}
  =c_1
  -\frac1{c_2-\punkt_{\textstyle\punkt_{\textstyle\punkt_{\tfrac 1{c_\ell}}}}}\;.
\end{equation*}
This way, we obtain a sequence of integers~$c_1$, \dots, $c_\ell$
with~$c_2$, \dots, $c_\ell\ge 2$ and reduced fractions
\begin{equation*}
  \frac{a'_0}{b'_0}\quad<\quad
  \frac{a'_1}{b'_1}
  =c_1-\frac1{c_2-\punkt_{\textstyle\punkt_{\textstyle\punkt_{\tfrac 1{c_{\ell-1}}}}}}
  \quad<\quad\cdots\quad<\quad
  \frac{a'_{\ell-1}}{b'_{\ell-1}}=\frac{c_1}1\quad\text{and}\quad
  \frac{a'_\ell}{b'_\ell}=\frac10\;.
\end{equation*}
As explained in~\cite[\S V]{Zagier},
the numbers~$a'_j$, $b'_j$ and~$c_j$ are related by the formula
\begin{equation}\label{eq:ElMat}
  \begin{pmatrix}
    a'_j&-a'_{j+1}\\
    b'_j&-b'_{j+1}
  \end{pmatrix}=
  \begin{pmatrix}
    c_1&-1\\1&0
  \end{pmatrix}\cdots
  \begin{pmatrix}
    c_{\ell-j}&-1\\1&0
  \end{pmatrix}\in SL(2,\Z)
\end{equation}
for~$j=0$, \dots, $\ell-1$,
which also shows that~$a'_{j+1}b'_j-a'_jb'_{j+1}=1$.

\begin{figure}
  \begin{tikzpicture}[scale=0.6]
    \draw (-11.5,0) -- (-4.5,0) (-0.5,0) -- (10.5,0) ;
    \draw (-5,0) node[below] {$\tfrac{\eps_+p+\glr}{k_+p}$}
	arc(0:70.5:3) node[above right] {$\tfrac{\eps_++i\ar_+}{k_+}$}
	(-7,2.828) -- (-7,4) ;
    \draw[blue] (-7,0) arc(0:180:2) ;
    \draw[line width=1.5pt, join=round] (-11,0)
	node[below] {$\tfrac{a_0}{b_0}$}
	arc(180:0:0.5) arc(180:0:0.833) arc(180:0:0.417)
        arc(180:0:0.25) node[below] {$\tfrac{a_\ell}{b_\ell}$};
    \draw[red,line width=1.5pt] (-7,0) -- (-7,2.828) ;
    \draw[green,line width=1.5pt] (-11,0) arc(180:70.5:3) ;
    \node at (-8.3,1.9) {$P$} ;
    \draw[line width=1pt,->] (-3.5,1.5) -- node[above]{$C$} (-1.5,1.5) ;
    \draw (0,0) node[below] {$-\tfrac{p+\eps_+^*\glr}{k_+\glr}$}
	arc(180:70.5:1.5)
	node[above left] {$\tfrac{is_+^{-1}-\eps_+^*}{k_+}$} ;
    \draw (2,1.414) -- (2,0) node[below] {$\scriptstyle-\frac{\eps_+^*}{k_+}$} ;
    \draw[blue] (3,0) -- (3,4) ;
    \draw[line width=1.5pt, join=round] (3,0)
	node[below] {$\tfrac{a'_0}{b'_0}$}
        arc(180:0:0.167) arc(180:0:0.833)
        arc(180:0:2.5) node[below] {$\tfrac{a'_{\ell-1}}{b'_{\ell-1}}$}
        -- (10,4) ;
    \draw[red,line width=1.5pt]  (2,1.414) -- (2,4) ;
    \draw[green,line width=1.5pt] (3,0) arc(0:70.5:1.5) ;
    \node at (5,2.5) {$P'$} ;
  \end{tikzpicture}
  \caption{The hyperbolic polygons~$P$ and~$P'$}\label{fig:HypPoly}
\end{figure}

\begin{rmk}\label{rem:b1x}
  For later use, we note that
  \begin{enumerate}
  \item\label{b1x.1} because~$a'_1b'_0-a'_0b'_1=1$,
    the number~$-b'_1$ is inverse to~$a'_0=\frac{\gll-\eps_+^*\glb}{k_+}$
    modulo~$b'_0=\glb$,
  \item\label{b1x.2} because~$\frac{\glr-\eps_-^*\glb}{k_-}$ is also inverse
    to~$a'_0$ modulo $\glb$ by equation~\eqref{eq:invmodn}, we have
    \begin{equation*}
      b'_1\equiv\frac{\sglr+\eps_-^*\glb}{k_-}\mod \glb\;.
    \end{equation*}
  \end{enumerate}
\end{rmk}

Let~$\frac{a_j}{b_j}$ denote the preimage of~$\frac{a'_j}{b'_j}$
under the Möbius transformation~$C$
for~$0<j<\ell$, the cases~$j=0$, $\ell$ being settled by~\eqref{eq:ab0abl}.
Then~$a_jb_{j-1}-a_{j-1}b_j=1$.
The finite corner at~$\frac{\eps_++i\ar_+}{k_+}$
gets mapped to~$\frac{i\ar_+^{-1}-\eps_+^*}{k_+}$.
Thus we have completed the construction of~$P$ and~$P'$;
see Figure~\ref{fig:HypPoly}.

\subsection{The contribution from the cusps}\label{2.y}
Thanks to Theorem~\ref{Thm2d.1},
we can in principle evaluate the integral of~$\eta(\Aa)$ over
the arcs~$\gamma_+$ and~$\gamma_-$ from Lemma~\ref{Lem2v.1}
by computing the area of the polygon~$P$ above.
However, the polygon~$P$ has cusps,
that is,
rational points in the boundary~$\del_\infty\Hh=\R\cup\{\infty\}$ at infinity.
In this section, we compute the contribution from a cusp as a limit
of the integral of the \etaform
over certain horocyclic arcs that escape to infinity.

Adiabatic limit formulas for \etaforms have been proved
by Bunke, Ma~\cite{BuMa} and Liu~\cite{Liu},
but only %
modulo exact forms.
Here, we have to integrate the \etaform over an interval,
so we need an adiabatic limit formula that holds ``on the nose''.
We will prove such a formula in Section~\ref{A2},
but only for the simple special case at hand.

We want to define a distance between two hyperbolic geodesics
ending in a cusp point~$\frac ef$.
To move~$\frac ef$ to~$\infty=\frac10$, assume that the fraction~$\frac ef$
is reduced and find~$a$ and~$b\in\Z$ such that
\begin{equation}\label{2.y.2}
  ae+bf=1\;.
\end{equation}
Then~$\bigl(\begin{smallmatrix}a&b\\-f&e\end{smallmatrix}\bigr)\in SL(2,\Z)$,
and the M\"obius transformation~$z\mapsto\frac{az+b}{-fz+e}$
rotates the cusp~$\frac ef$ into~$\infty$.
Now consider hyperbolic geodesics starting at~$x$, $y\in\R\cup\{\infty\}$
and ending in~$\frac ef$.
They get rotated by the matrix above to vertical lines
with real parts~$\frac{ax+b}{-fx+e}$ and~$\frac{ay+b}{-fy+e}$.
Because of~\eqref{2.y.2}, the difference is
\begin{equation*}
  \frac{ax+b}{-fx+e}-\frac{ay+b}{-fy+e}
  =\frac{x-y}{(fy-e)(fx-e)}\;.
\end{equation*}

\begin{dfn}\label{Def2y.1}
  The {\em cusp angle\/} between two hyperbolic geodesics
  going from~$x$, $y\in\R\cup\{\infty\}=\del_\infty\Hh$ to
  a cusp point represented by a reduced fraction~$\frac ef\in\Q\cup\{\infty\}$,
  with~$x\ne\frac ef\ne y$,
  is defined as
  \begin{equation*}
    \measuredangle_{\frac ef}(x,y)=\frac{x-y}{(fx-e)(fy-e)}\in\R
  \end{equation*}
  if~$x$, $y\in\R$, and by the obvious extension of this formula
  if one of the points is~$\infty$.
\end{dfn}

Note that~$\measuredangle_{\frac ef}(x,y)$ is a geometric
notion for the covering map~$\Hh\to\Hh/SL(2,\Z)$.
If measures how often a line joining the two geodesics above in the universal
covering space~$\Hh$ winds around the cusp in~$\Hh/SL(2,\Z)$.
The sign is chosen such that oriented ideal triangles
have positive cusp angles.
In particular, cusp angles are $SL(2,\Z)$-invariant.

From~\eqref{eq:ElMat},
we see that~$\psmatrix{a'_j\\b'_j}=C_j\psmatrix{c_{\ell-j}\\1}$,
$\psmatrix{a'_{j+1}\\b'_{j+1}}=C_j\psmatrix{1\\0}$
and~$\psmatrix{a'_{j+2}\\b'_{j+2}}=C_j\psmatrix{0\\-1}$,
with~$C_j=\psmatrix{a'_{j+1}&-a'_{j+2}\\b'_{j+1}&-b'_{j+2}}\in SL(2,\Z)\;.$
Because cusp angles are $SL(2,\Z)$-invariant,
we can now compute the cusp angles of~$P$.
For~$j=0$, \dots, $\ell-2$, we obtain
\begin{subequations}\label{eq:CuspAngles}
  \begin{align}
    \measuredangle_{\tfrac{a_{j+1}}{b_{j+1}}}
    \biggl(\frac{a_j}{b_j},\frac{a_{j+2}}{b_{j+2}}\biggr)
    &=\measuredangle_{\tfrac{a'_{j+1}}{b'_{j+1}}}
    \biggl(\frac{a'_j}{b'_j},\frac{a'_{j+2}}{b'_{j+2}}\biggr)
    =\measuredangle_\infty(c_{\ell-j},0)=c_{\ell-j}\;,\label{eq:CAj}\\
    \measuredangle_{\tfrac{a_0}{b_0}}
    \biggl(\frac{\eps_++i\ar_+}{k_+},\frac{a_1}{b_1}\biggr)
    &=\measuredangle_{\tfrac{a'_0}{b'_0}}
    \biggl(\frac{p+\eps_+^*\glr}{k_+p},\infty\biggr)
    +\measuredangle_{\tfrac{a'_0}{b'_0}}\biggl(\infty,\frac{a'_1}{b'_1}\biggr)
    =\frac{\glr}{k_-\glb}+\frac{b'_1}{b'_0}\;,\label{eq:CA0}\\
    \measuredangle_{\tfrac{a_\ell}{b_\ell}}
    \biggl(\frac{a_{\ell-1}}{b_{\ell-1}},\frac{\eps_++i\ar_+}{k_+}\biggr)
    &=\measuredangle_\infty\biggl(c_1,-\frac{\eps_+^*}{k_+}\biggr)
    =c_1+\frac{\eps_+^*}{k_+}\;.\label{eq:CAl}
  \end{align}
\end{subequations}
For~\eqref{eq:CA0},
we have used Lemma~\ref{Lem2v.1}~\ref{2y.1b} and equation~\eqref{eq:ab0abl}.

\begin{prop}\label{Prop2y.1}
  Let~$\frac ef\in\Q\cup\{\infty\}\subset\del_\infty\Hh$
  be a cusp point, and let~$x$, $y\in\del_\infty\Hh$.
  Assume that~$\alpha_r$ is a family of horocyclic arcs centered at~$\frac ef$
  from the geodesic from~$\frac ef$ to~$x$ to the geodesic from~$\frac ef$
  to~$y$ that converges to~$\frac ef$ as~$r\to\infty$.
  Then
  \begin{equation*}
    \lim_{r\to\infty}\int_{\alpha_r}\tilde\eta(\Aa)
    =\frac{\measuredangle_{e/f}(x,y)}{12}\;.
  \end{equation*}
\end{prop}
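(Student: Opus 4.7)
The plan is to reduce to the case of the cusp at $\infty$ via $PGL(2,\Z)$-equivariance, and then apply an exact (i.e.\ not merely modulo exact forms) adiabatic-limit formula for $\tilde\eta(\Aa)$ as the relevant short circle in the torus fibres shrinks. By the first part of Lemma~\ref{Lem2y.2}, choose $\sigma\in PSL(2,\Z)$ with $\sigma\bigl(\frac ef\bigr)=\infty$. Since $\sigma$ is orientation preserving, Lemma~\ref{Lem2d.1} gives $\sigma^*\tilde\eta(\Aa)=\tilde\eta(\Aa)$, so $\int_{\alpha_r}\tilde\eta(\Aa)=\int_{\sigma(\alpha_r)}\tilde\eta(\Aa)$. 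A direct check from Definition~\ref{Def2y.1} shows that the cusp angle is $PSL(2,\Z)$-invariant, so it suffices to treat the case $\frac ef=\infty$. In that case, $\sigma(\alpha_r)$ is a horizontal segment at some height $\tilde r\to\infty$ between $\sigma(x)+i\tilde r$ and $\sigma(y)+i\tilde r$, and the cusp angle simplifies, via the obvious extension of Definition~\ref{Def2y.1}, to $\measuredangle_\infty(\sigma(x),\sigma(y))=\sigma(x)-\sigma(y)$.

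Next, I would view the restriction of $\ft\to\Hh$ to a strip $\{\Im\tau>Y\}$ as an iterated fibration. At $\tau=x+iy$ the area-$1$ fibre $\R^2/\Z^2$ carrying the metric $g_\tau$ has a short circle (generated by the basis vector $(1,0)$, of length $y^{-1/2}$) transverse to a long orthogonal circle, so $\ft$ factors there as a circle bundle over a circle bundle over the strip. As $y\to\infty$ the short circle collapses, which is precisely the adiabatic-limit regime for the fibrewise Dirac operator. Applying the exact adiabatic-limit formula for $\tilde\eta(\Aa)$ that will be established in Section~\ref{sec:proofs}, the leading asymptotic of $\tilde\eta(\Aa)|_{\Im\tau=\tilde r}$ is a pulled-back $1$-form on the base of the iterated fibration. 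A direct computation identifies this limit form with $\frac{1}{12}\,dx$; integrating from $\sigma(x)$ to $\sigma(y)$ then yields $(\sigma(x)-\sigma(y))/12$, matching the reduced cusp angle.

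The constant $1/12$ can be recognised from the classical asymptotic $\log\eta(\tau)\sim\frac{\pi i\tau}{12}$ of the Dedekind $\eta$-function as $\Im\tau\to\infty$, in view of the interpretation of $\tilde\eta(\Aa)$ via the Chern connection on a determinant line bundle in Remark~\ref{rmk:atiyah} and the explicit identity of Proposition~\ref{Prop1} in the appendix. Equivalently, applying Poisson summation to the lattice sum~\eqref{2v.1} defining the $\eta$-form isolates the zero-mode contribution and produces the same constant.

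The main obstacle lies in the word "exact" above: the published adiabatic-limit theorems of Bunke--Ma and Liu determine the limit of $\tilde\eta(\Aa)$ only modulo exact forms, which is insufficient in our setting where the contour $\alpha_r$ is not closed and one of its endpoints escapes to infinity. What is required is an on-the-nose version of the adiabatic-limit identity for this concrete flat-torus family, and this is exactly what Section~\ref{sec:proofs} supplies. Once that sharper formula is available, the remaining steps—the equivariant reduction to $\infty$ and the computation of the leading constant—are straightforward.
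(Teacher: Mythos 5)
Your proposal follows essentially the same route as the paper: reduce to the cusp at~$\infty$ via $PSL(2,\Z)$-equivariance of both $\tilde\eta(\Aa)$ (Lemma~\ref{Lem2d.1}) and the cusp angle, then identify $\lim_{y\to\infty}\tilde\eta(\Aa)|_{\R+iy}$ as a constant multiple of $dx$ via the exact adiabatic-limit formula that Proposition~\ref{Prop2y.2} supplies, and finally compute the constant. Where the paper pins down the constant by applying Zhang's circle-bundle $\eta$-form formula to the effective fibration $\ft\to F$, you instead gesture at the Dedekind $\eta$-function asymptotic or Poisson summation on~\eqref{2v.1}—both are valid alternatives, though you have two minor slips: the short circle that collapses is generated by the basis vector $(0,1)$, not $(1,0)$, and the limit form is $-\frac{1}{12}dx$, so your sign in the intermediate step is off (though the two sign errors cancel and your final answer is correct).
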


\begin{proof}
  By the considerations above, we can rotate the cusp point~$\frac ef$
  to~$\infty$.
  Hence we consider the horocycle~$x\mapsto x+iy$ for large fixed~$y$
  and prove that
  \begin{equation}\label{2.d.12}
    \lim_{y\to\infty}\tilde\eta(\Aa)|_{\R+iy}
    =-\frac{dx}{12}\;.
  \end{equation}

  We pull the universal family~$\ft\to\Hh\times(0,\infty)$
  from Section~\ref{2.d} back to~$\R$ by~$x\mapsto(x+iy,y)$.
  Let us write~$\ft_{x+iy}$ for~$\ft_{(x+iy,y)}=\C/\Span_\Z\{x+iy,1\}$
  and~$F_x=\R/y\Z$,
  and consider the map~$\ft_{x+iy}\to F_x$ given by the imaginary part.
  Then we obtain a family of fibred tori,
\begin{equation}\label{2.d.13}
  \begin{CD}
    S^1_1@>>>\ft_{x+iy}@>>>\ft\\
    &&@V\Im VV@VVV\\
    &&F_x@>>>F\\
    &&&&@VVV\\
    &&&&\R\rlap{$\mathord{}\owns x\;.$}&\qquad
  \end{CD}
\end{equation}
Here, the interior circles form the fibres of~$\ft_{x+iy}$ of length~$1$,
and the base~$F_y$ can be identified with the exterior circles
in~$\widetilde T_+$.
In particular, the orientation of~$\ft_{x+iy}$ agrees with the one
in~\cite{BuMa,Liu}.
Let~$T^H\ft$ denote the horizontal subbundle
for the fibration~$\ft\to\R$ induced
by the pullback of the connection~$\nabla^W$ from~\eqref{eq:NablaW}.
The Euclidean metric on~$\ft_{x+iy}$ defines a connection
for the fibration~$\ft\to F$ with holonomy~$-x\in\R/\Z$.
In Figure~\ref{Fig2y.1}, 
the dashed line is horizontal with respect to this connection.

\begin{figure}
  \begin{equation*}
    \begin{tikzpicture}
      \begin{scope}[line width=1.5pt]
        \draw (0,0) node[below] {$0$} -- (1,0) node[below] {$1$}
        -- node[right]{$\ft_{x+iy}$} (1.3,3)
        -- (0.3,3) node[above] {$x+iy$} -- cycle ;
        \draw (-3,0) node[left] {$0$}
        -- node[left] {$F_x$} (-3,3) node[left] {$y$} ;
        \draw[->] (-0.9,1.5) -- (-2,1.5) ;
      \end{scope}
      \draw[dash pattern=on 3pt off 4pt] (0.5,0) -- (0.5,3) ;
    \end{tikzpicture}
  \end{equation*}
  \caption{The torus $\ft_{x+iy}$ as a bundle.}\label{Fig2y.1}
\end{figure}

Let~$L\to F$ denote a Hermitian line bundle with a fibrewise flat
connection that contains~$\ft\to F$ as a circle bundle.
Then over~$F_x$, the bundle~$L$ has holonomy~$e^{-2\pi ix}$.
Let~$\phy$ be a coordinate on~$F_x=\R/y\Z$.
The bundle~$L$ carries a Hermitian connection~$\nabla^L$
that we can describe as
\begin{equation*}
  \nabla^L=d+\frac{2\pi ix}y\,d\phy
\end{equation*}
in a trivialisation over a neighbourhood of~$F_x$ in~$F$.
We compute curvature and first Chern form of~$(L,\nabla^L)$ on~$F$ as
\begin{equation*}
  (\nabla^L)^2=\frac{2\pi i}y\,dx\,d\phy\qquad\text{and}\qquad
  c_1(\nabla^L)=-\frac1y\,dx\,d\phy\;.
\end{equation*}

By Proposition~\ref{Prop2y.2} below, we have
\begin{equation}\label{2.c.2}
  \lim_{y\to\infty}\tilde\eta(\Aa)|_{\R+iy}
  =\int_{F/\R}\tilde\eta(\Aa')\;,
\end{equation}
where now~$\Aa'$ is the superconnection of the fibrewise spinor bundle
over the circle bundle ${\ft\to F}$.
The corresponding \etaform of~$\ft\to F$ was computed by Zhang
in \cite[Theorem 1.7]{ZhRokhlin}. %
Here, it only has a component of degree~$2$, given by
\begin{equation*}
  \tilde\eta(\Aa')=\frac1{12}\,c_1\bigl(\nabla^L\bigr)
  =-\frac1{12y}\,dx\,d\phy\;.
\end{equation*}
Integration over the fibre of~$F\to\R$ of length~$y$ proves~\eqref{2.d.12}.
\end{proof}

\begin{figure}
  \begin{equation*}
    \begin{tikzpicture}
      \begin{scope}[dash pattern=on 3pt off 4pt]
        \draw (-1,3) -- (3,3) ;
        \draw (0,1.155) arc (90:390:0.577) (2,1.155) arc (450:150:0.577) ;
      \end{scope}
      \begin{scope}[line width=1.5pt]
        \draw (-1,0) -- (3,0) ;
        \draw[fill=lightgray] (0,1.155) arc(90:30:0.577) arc(120:60:1)
        arc(150:90:0.577) -- (2,3) -- (0,3) -- cycle ;
        \draw[->] (2,3) -- (0.9,3) ;
        \draw[->] (0.5,0.866) arc (120:86:1) ;
      \end{scope}
      \draw (0,3.5) -- (0,0) node[below] {$0$}
      arc(180:0:1) node[below] {$1$} -- (2,3.5) ;
      \node at (1,2) {$\Delta_r$} ;
    \end{tikzpicture}
  \end{equation*}
  \caption{An ideal triangle, truncated by horocyclic arcs.}\label{Fig2d.1}
\end{figure}

\begin{ex}\label{Exp2d.1}
  Consider the ideal triangle~$\Delta$ of area~$\pi$ with corners~$0$, $1$
  and~$\infty\in\del_\infty\Hh$.
  By Remark~\ref{Rem2y.1} and Lemma~\ref{Lem:fixedgeo},
  the form~$\tilde\eta(\Aa)$ vanishes on its sides.
  All cusp angels equal~$1$.
  Let~$\Delta_r$ denote truncations of~$\Delta$ by horocyclic arcs centered
  at the corners that converge
  to the full triangle~$\Delta$ as~$r\to\infty$; see Figure~\ref{Fig2d.1}.
  Then by Theorem~\ref{Thm2d.1} and Proposition~\ref{Prop2y.1},
  we check that
  \begin{equation*}
    \lim_{r\to\infty}\int_{\del\Delta_r}\tilde\eta(\Aa)
    =\frac{\measuredangle_0(\infty,1)+\measuredangle_1(0,\infty)
      +\measuredangle_\infty(1,0)}{12}
    =\frac14=\frac{A_{\mathrm{hyp}}(\Delta)}{4\pi}
    =\int_\Delta d\tilde\eta(\Aa)\;.
  \end{equation*}
\end{ex}

The following Proposition is inspired by
a result~\cite[Theorem~5.11]{BuMa} of Bunke and Ma;
see also Liu in~\cite[Theorem~1.3]{Liu},
where the following equality is proved up to exact forms.
Moreover, Liu assumes that the fibrewise Dirac operator of the
fibration~$F\to\R$ is invertible~\cite[Ass~3.1]{Liu},
which is not the case here.

\begin{prop}\label{Prop2y.2}
  In the situation above,
  let~$\Aa'$ denote the superconnection associated with the fibrewise spin Dirac
  operator on the bundle~$\ft\to F$.
  Then
  \begin{equation*}
    \lim_{y\to\infty}\tilde\eta(\Aa)|_{\R+iy}
    =\int_{F/\R}\tilde\eta(\Aa')\;.
  \end{equation*}
\end{prop}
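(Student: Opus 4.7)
The plan is to prove the statement by a direct analysis of the Bismut superconnection as $y\to\infty$, rather than by appeal to general adiabatic limit theorems. The general formulas of Bunke--Ma~\cite{BuMa} and Liu~\cite{Liu} give the equality only modulo exact forms, and this is insufficient here: on the one-dimensional base $\R$ every $1$-form is exact, so modulo-exact-forms statements carry no information, and we need a genuine pointwise equality of $1$-forms in~$x$.

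First, I would expand the fibrewise spinors in a Fourier basis on each torus. In the paper's rescaled (area-one) convention, the eigenvalues of $A_{T^2}$ at $(x,y)\in\Hh$ are $\pm 2\pi\sqrt{n_2^2 y + (n_1 - n_2 x)^2/y}$ for $(n_1,n_2)\in\Z^2\setminus\{0\}$. These split naturally into \emph{small} eigenvalues ($n_2=0$, of order $1/\sqrt{y}$), coming from modes constant along the shrinking short-circle direction, and \emph{large} eigenvalues ($n_2\ne 0$, of order at least $\sqrt{y}$).

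Second, I would substitute this Fourier decomposition into the explicit formula obtained by extending~\eqref{2h.2} from the one-dimensional family over $(0,\infty)$ to the two-dimensional base $\Hh$; this gives a splitting $\tilde\eta(\Aa) = \tilde\eta_{\mathrm{small}} + \tilde\eta_{\mathrm{large}}$. A standard dominated-convergence argument, using the Gaussian decay $e^{-cty}$ in the heat-kernel factor of the large-eigenvalue modes, then shows that $\tilde\eta_{\mathrm{large}}|_{\R+iy}\to 0$ as $y\to\infty$, uniformly in~$x$ on compact sets.

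Third, I would identify $\lim_{y\to\infty}\tilde\eta_{\mathrm{small}}|_{\R+iy}$ with $\int_{F/\R}\tilde\eta(\Aa')$. The small-eigenvalue Fourier modes are precisely the spinors on $\ft$ that are constant along the short-circle fibre of $\ft\to F$ described in the proof of Proposition~\ref{Prop2y.1}; they form a pushforward bundle over $F$, and the restriction of the Bismut superconnection to them agrees with the superconnection $\Aa'$ associated to $\ft\to F$. Integrating out the remaining $S^1_y$-factor of $F\to\R$ of length $y$ then produces exactly $\int_{F/\R}\tilde\eta(\Aa')$.

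The hard part will be keeping track of the various connection forms, spinor identifications and normalisations in step three precisely enough to extract pointwise equality rather than only equality modulo exact forms. As a consistency check, both sides can be evaluated explicitly to $-\frac{dx}{12}$ via Zhang's formula~\cite[Thm~1.7]{ZhRokhlin} (as already used in the proof of Proposition~\ref{Prop2y.1}), so the qualitative statement is correct; the substance of the argument is in converting this match into a proof valid for the family~$\Aa$ on the full half-plane.
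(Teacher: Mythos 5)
Your opening observation --- that the Bunke--Ma and Liu formulas carry no information on a one-dimensional base because every $1$-form on $\R$ is exact --- is exactly the paper's motivation for giving a direct proof, and the decision to argue via the Fourier spectrum on the fibres is also the paper's. The gap is in your steps two and three. Splitting the mode sum by whether $n_2=0$ or $n_2\neq 0$ (in the paper's notation~\eqref{EtaER.2}, $m=0$ vs.\ $m\neq 0$) does not produce two convergent $\eta$-form integrals: the $t$-integral of each piece diverges at $t=0$. The $m=0$ part of the integrand behaves like $+c\,t^{-3/2}\,dx$ as $t\to 0$ and the $m\neq 0$ part like $-c\,t^{-3/2}\,dx$, and only their sum is bounded near $t=0$; the local heat-kernel cancellation that makes the full integrand integrable happens \emph{across} the two classes of modes. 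So $\tilde\eta_{\mathrm{large}}$ is not a finite quantity to which dominated convergence could apply. Moreover, even after a hypothetical regularisation, your step three assigns the wrong role to the small eigenvalues: the $m=0$ modes form $\ker D^{\ft/F}$, and in the paper's proof they are exactly what dominates for $t\geq y^{2-\alpha}$, where their total contribution is shown to be $O(y^{\alpha-1})\to 0$; they cannot also be what produces the nonzero limit $\int_{F/\R}\tilde\eta(\Aa')=-\frac{dx}{12}$.

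What the paper actually does (Section~\ref{A2}) is split the \emph{time} integral at $t=y^{2-\alpha}$ rather than splitting by eigenvalue. For $t\leq y^{2-\alpha}$ it applies, simultaneously for every $m$, the Poisson-summation estimate $\sum_n e^{-4\pi^2 t(n-mx)^2/y^2}=\tfrac{y}{\sqrt{4\pi t}}+O\bigl(e^{-(y^2-c)/4t}\bigr)$ on the index $n$ dual to the length-$y$ circle $F_x$; it is this estimate, not the restriction to small eigenvalues, that ``integrates out'' the $S^1_y$-factor. After the replacement, the sum over $m$ yields the explicit expression~\eqref{EtaER.4}, which is then matched with the independent computation~\eqref{EtaEF.2} of $\int_{F/\R}\tilde\eta(\Aa')$. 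For $t\geq y^{2-\alpha}$ only $m=0$ matters (the $m\neq 0$ modes are suppressed by $e^{-4\pi^2 m^2 y^{2-\alpha}}$), and the paper bounds that tail directly. The time-scale split is the device your proposal is missing.
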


We postpone the proof to Section~\ref{A2}.

\subsection{Evaluation of the Eta Form
Integrals}
\label{2.ey}
With all preliminaries understood, we can now prove Theorem~\ref{Thm:B}.
We start by integrating the \etaform from Proposition~\ref{Prop2c.2}
along the geodesic rays~$\gamma_+$ and~$\gamma_-$.
These integrals exist by Theorem~\ref{Thm2x.1} and Proposition~\ref{Prop2c.2}.

\begin{thm}\label{Thm2e.1}
  Assume that~$\gll\ge 0$, $\glb>0$. Then we have
  \begin{multline}\label{2.e.8}
    \bar\nu(M_{+,\ar_+})+\bar\nu(M_{-,\ar_-})
    -\lim_{t\to 0}\bigl(\bar\nu(M_{+,t})+\bar\nu(M_{-,t})\bigr)\\
    =72\,\frac\rho\pi+24\,\biggl(-\frac \glr{k_-\glb}-\frac \gll{k_+\glb}
    +12\,\DS\Bigl(\frac{\gll-\eps_+^*\glb}{k_+},\glb\Bigr)\biggr)\;.
  \end{multline}
\end{thm}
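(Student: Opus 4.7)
By Proposition~\ref{Prop2c.2}, the left-hand side of \eqref{2.e.8} equals $288(I_+ + I_-)$, where $I_\pm := \int_{\gamma_\pm}\tilde\eta(\Aa)$ with $\gamma_\pm$ oriented as described at the start of this subsection. The plan is to close $\gamma_+\cup\gamma_-$ into the boundary of the ideal hyperbolic polygon $P$ constructed in Section~\ref{2.ex}, apply Stokes' theorem, and solve for $I_+ + I_-$.

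First, I verify that $\tilde\eta(\Aa)$ vanishes on each of the ``bottom'' geodesic sides of $P$ connecting consecutive vertices $\frac{a_j}{b_j}$. These sides satisfy $a_{j+1}b_j - a_j b_{j+1} = 1$ by the continued fraction relation \eqref{eq:ElMat}, so Corollary~\ref{cor:fixedgeo} exhibits a reflection in $PGL(2,\Z)$ fixing each such side, and Remark~\ref{Rem2y.1} then forces $\tilde\eta(\Aa)$ to restrict to zero there.

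Next I apply Stokes' theorem to $P$ truncated by horocyclic arcs at each ideal vertex and pass to the limit. The transgression identity $d\tilde\eta(\Aa) = \frac{1}{4\pi}\,dA_{\mathrm{hyp}}$ of Theorem~\ref{Thm2d.1}, together with Gauss--Bonnet for $P$ (one finite vertex with interior angle $2\thet$ by Lemma~\ref{Lem2v.1}~\ref{2y.1c} and $\ell+1$ ideal vertices), gives $\mathrm{Area}(P) = \ell\pi - 2\thet = (\ell-1)\pi + \rho$. The limit of the integrals over the horocyclic arcs contributes $\frac{1}{12}\sum_{\text{cusps}}\measuredangle$ by Proposition~\ref{Prop2y.1}. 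After checking that $\gamma_+$ and $\gamma_-$ as oriented each lie in the counterclockwise boundary of $P$, the resulting Stokes identity together with \eqref{eq:CuspAngles} rearranges to
\[
288(I_+ + I_-) \;=\; 72(\ell-1) + 72\,\tfrac{\rho}{\pi} \;-\; 24\,\Big(\textstyle\sum_{i=1}^\ell c_i + \tfrac{b'_1}{n} - \tfrac{q}{k_- n} + \tfrac{\eps_+^*}{k_+}\Big).
\]

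The main obstacle is to reorganise the right-hand side so that the combinatorial sum becomes $12\,S(A,n)$. Writing $\tfrac{m}{k_+ n} - \tfrac{\eps_+^*}{k_+} = \tfrac{A}{n}$ and using Remark~\ref{rem:b1x}~\ref{b1x.1} to identify $-b'_1$ with the inverse $A^*$ of $A$ modulo $n$, the identity to verify reduces to the classical formula (cf.~\cite{hz}) expressing $12\,S(A,n)$ in terms of the partial quotients $c_i$ of the negative continued fraction of $A/n$ together with $(A + A^*)/n$; the integer-valued discrepancies between $-b'_1$ and $A^*$ cancel against the $72(\ell-1)$ term, yielding exactly the right-hand side of \eqref{2.e.8}.
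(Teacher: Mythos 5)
Your proof follows the paper's argument for Theorem~\ref{Thm2e.1} step-for-step: Stokes' theorem on the ideal hyperbolic polygon $P$, vanishing of $\tilde\eta(\Aa)$ on the continued-fraction sides via Corollary~\ref{cor:fixedgeo} and Remark~\ref{Rem2y.1}, the Gauss--Bonnet area count, cusp contributions from Proposition~\ref{Prop2y.1}, and Rademacher's formula (cited in the paper via Zagier's equations (6), (25), (26)) to collapse $\sum c_i$ into a Dedekind sum, exactly as in~\eqref{eq:EtaHyp}--\eqref{eq:HypPolyResult}. Your final sentence about ``integer-valued discrepancies between $-b'_1$ and $A^*$ cancelling against the $72(\ell-1)$ term'' misdescribes the bookkeeping: in fact $72(\ell-1)$ cancels against $-24\cdot 3(\ell-1)$ coming from the continued-fraction identity, the $\frac{b'_1}{n}$ terms cancel exactly within that identity, and the replacement $S(-b'_1,n)=S(A,n)$ is done afterwards purely by the $n$-periodicity and inversion-invariance of $S(\cdot,n)$ (Remark~\ref{rem:b1x}~\ref{b1x.1}), with no integer discrepancy ever reaching the $(\ell-1)$ term.
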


It will follow from the proof below that the first three terms on the right
hand side stem from the area of the triangle spanned by~$\gamma_+$
and~$\gamma_-$.
The Dedekind sum comes from the polygon we get by omitting the finite corner.
The two regions are separated by the blue geodesic in Figure~\ref{fig:HypPoly}.

\begin{proof}
  By Propositions~\ref{Prop2c.2}, \ref{Prop2y.1}
  and the discussion at the beginning of subsection~\ref{2.ex},
  we have
  \begin{multline}\label{eq:EtaHyp}
    \bar\nu(M_{+,\ar_+})+\bar\nu(M_{-,\ar_-})
    -\lim_{t\to 0}\bigl(\bar\nu(M_{+,t})+\bar\nu(M_{-,t})\bigr)
    =288\int_{\gamma_+\cup\gamma_-}\tilde\eta(\Aa)
    =288\int_Pd\tilde\eta(\Aa)\\
    -24\measuredangle_{\frac{a_0}{b_0}}
    \biggl(\frac{\eps_++i\ar_+}{k_+},\frac{a_1}{b_1}\biggr)
    -24\measuredangle_{\frac{a_\ell}{b_\ell}}
    \biggl(\frac{a_{\ell-1}}{b_{\ell-1}},\frac{\eps_++i\ar_+}{k_+}\biggr)
    -24\sum_{j=0}^{\ell-2}\measuredangle_{\tfrac{a_{j+1}}{b_{j+1}}}
    \biggl(\frac{a_j}{b_j},\frac{a_{j+2}}{b_{j+2}}\biggr)\;.
  \end{multline}
  From~\eqref{2.d.5}, Lemma~\ref{Lem2v.1}~\ref{2y.1c}
  and the hyperbolic area formula, we get
  \begin{equation}\label{eq:HypArea}
    288\int_Pd\tilde\eta(\Aa)
    =\frac{72}\pi\,A_{\mathrm{hyp}}(P)
    =72\,\ell-144\frac\thet\pi
    =72\,(\ell-1)+72\,\frac\rho\pi\;.
  \end{equation}

  From Definition~\ref{Def2y.1} and equations~\eqref{eq:CuspAngles}, we get
  \begin{multline}\label{eq:CuspSum}
    \measuredangle_{\frac{a_0}{b_0}}
    \biggl(\frac{\eps_++i\ar_+}{k_+},\frac{a_1}{b_1}\biggr)
    +\measuredangle_{\frac{a_\ell}{b_\ell}}
    \biggl(\frac{a_{\ell-1}}{b_{\ell-1}},\frac{\eps_++i\ar_+}{k_+}\biggr)
    +\sum_{j=0}^{\ell-2}\measuredangle_{\tfrac{a_{j+1}}{b_{j+1}}}
    \biggl(\frac{a_j}{b_j},\frac{a_{j+2}}{b_{j+2}}\biggr)\\
    =\biggl(\frac{\glr}{k_-\glb}+\frac{b'_1}{b'_0}\biggr)
    +\biggl(\frac{\eps_+^*}{k_+}+c_1\biggr)
    +\sum_{j=2}^\ell c_j\;.
  \end{multline}
  This number can be interpreted along the lines of~\cite[\S V]{Zagier}.
  The product of the matrices on the right hand side of~\eqref{eq:ElMat}
  for~$j=0$ is given by
  \begin{equation*}
    A=
    \begin{pmatrix}
      c_1&-1\\1&0
    \end{pmatrix}\cdots
    \begin{pmatrix}
      c_\ell&-1\\1&0
    \end{pmatrix}=
    \begin{pmatrix}
      a'_0&-a'_1\\
      b'_0&-b'_1
    \end{pmatrix}\;.
  \end{equation*}
  Now it follows from~$a'_0=\frac{\gll-\eps_+^*\glb}{k_+}$, $b'_0=\glb$
  and~\cite[equations (6), (25) \& (26)]{Zagier} that
  \begin{multline}\label{eq:HypPolyResult}
    \biggl(\frac{\glr}{k_-\glb}+\frac{b'_1}{b'_0}\biggr)
    +\biggl(\frac{\eps_+^*}{k_+}+c_1\biggr)
    +\sum_{j=2}^\ell c_j\\
    \begin{aligned}
      &=\frac{\glr}{k_-\glb}+\frac{b'_1}{b'_0}+\frac{\eps_+^*}{k_+}
      +3(\ell-1)+N(A)\\
      &=\frac{\glr}{k_-\glb}+\frac{b'_1}{b'_0}+\frac{\eps_+^*}{k_+}
      +3(\ell-1)+\frac{\gll-\eps_+^*\glb}{k_+\glb}-\frac{b'_1}{b'_0}-12\,\DS(-b'_1,b'_0)\\
      &=3(\ell-1)+\frac \gll{k_+\glb}+\frac \glr{k_-\glb}-12\,\DS(-b'_1,\glb)\;,
    \end{aligned}
  \end{multline}
  where the Dedekind sum~$\DS(-b'_1,\glb)$ is defined in~\eqref{eq:DedekindSum},
  and~$N\colon SL(2,\Z)\to\Z$ is introduced in~\eqref{Ltransf},
  see also~\cite[(3)]{Zagier}.

  The Dedekind sum~$\DS(k,\glb)$ is odd and $\glb$-periodic in~$k$,
  and it does not change if~$k$ is replaced by its inverse modulo~$\glb$.
  Our claim~\eqref{2.e.8} follows from Remark~\ref{rem:b1x}~\ref{b1x.1}
  and~\eqref{eq:EtaHyp}--\eqref{eq:HypPolyResult}.
\end{proof}

\begin{proof}[Proof of Theorem~\ref{Thm:B}]
  We may assume that~$\glb>0$.
  If~$\gll\ge 0$,
  the theorem follows from Theorems~\ref{Thm2a.1}, \ref{Thm2x.1}
  and~\ref{Thm2e.1}.
  If~$\gll<0$,
  we additionally use Proposition~\ref{prop:isom}, combining~\eqref{symgrp.3}
  and~\eqref{symgrp.5} to reduce to the case~$\gll>0$, $\glb>0$.
\end{proof}

\begin{rmk}\label{rmk:2p1symm}
  The formula in Theorem~\ref{Thm:B} is indeed symmetric in the two
  halves of the twisted connected sum.
  Swapping the two halves amounts to
  exchanging $r_\pm$, $\eps_\pm$ and $k_\pm$; see Proposition~\ref{prop:isom},
  in particular~\eqref{symgrp.1}.
  By equation~\eqref{eq:invmodn},
  the number~$\frac{\gll-\eps_+^*\glb}{k_+}$
  is inverse to~$\frac{\glr-\eps_-^*\glb}{k_-}$ modulo~$\glb$,
  so the Dedekind sum above is the same in both cases.
\end{rmk}

\begin{rmk}\label{rmk:nu24}
  We can evaluate~$3n(P)-\ell(P)$ mod~$\Z$ in a slightly different way.
  By Remark~\ref{rem:b1x}~\ref{b1x.2},
  we have~$\frac{\glr}{k_-\glb}+\frac{b'_1}{b'_0}\equiv\frac{\eps_-^*}{k_-}$
  mod~$\Z$, so
  instead of~\eqref{eq:HypPolyResult}, we get
  \begin{equation*}
    \biggl(\frac{\glr}{k_-\glb}+\frac{b'_1}{b'_0}\biggr)
    +\biggl(\frac{\eps_+^*}{k_+}+c_1\biggr)
    +\sum_{j=2}^\ell c_j
    \equiv\frac{\eps_+^*}{k_+}+\frac{\eps_-^*}{k_-}\mod\Z\;.
  \end{equation*}
  Following the proofs of Theorems~\ref{Thm2e.1} and~\ref{Thm:B} above,
  we see that
  \begin{equation}\label{eq:nu24}
    \nu(M,g)\equiv
    D_{\gamma_+}(V_+)+D_{\gamma_-}(V_-)+3\,m_\rho(L;N_+,N_-)
      -24\biggl(\frac{\eps_+^*}{k_+}+\frac{\eps_-^*}{k_-}\biggr)
      \mod 24\Z\;.
  \end{equation}
  
  The terms~$\frac{\eps_\pm}{k_\pm}$ and~$D_{\gamma_\pm}(V_\pm)$
  depend on the $\Gamma_\pm$-action on~$\widetilde M_\pm$ only,
  and one can check that in all our examples
  \begin{equation}\label{eq:Depsrel}
    D_{\gamma_\pm}(V_\pm)-24\frac{\eps_\pm^*}{k_\pm}\in\Z\;,
  \end{equation}
  as one would expect from the formula above.
  In particular for~$k_\pm=5$,
  we only found examples where the $\Gamma_\pm$-action
  on~$V_\pm$ has isolated fixpoints,
  and~\eqref{eq:Depsrel} holds for all choices of~$\eps_\pm$.
\end{rmk}

\begin{ex}\label{ex:run-hyp}
  Let us illustrate the remark above using our standard example.
  We start with the $\Z/5$-block from Example~\ref{ex:five},
  whose fixpoint contribution was computed in Example~\ref{ex:run-fixpoint}.
  We check that modulo~24,
  \begin{equation*}
    \frac{24}{5\eps}-24\frac{\eps^*}5\equiv
    \begin{cases}
      0		&\text{if~$\eps\equiv\pm 1$, and}\\
      12	&\text{if~$\eps\equiv\pm 2$.}
    \end{cases}
  \end{equation*}

  The $\Z/3$-block from Example~\ref{ex:quadric} has no isolated fixpoints,
  so the contribution to~$\nu$ mod~$24$ is simply~$-24\frac{\eps^*}3=-8\eps$.
  Together with~$m_\rho(L;N_+,N_-)=-1$ from Example~\ref{ex:run-angles},
  we find that~$\nu(M)\equiv-11$ mod~$24$,
  which confirms our computations in Example~\ref{ex:run-zagier}.
\end{ex}

\begin{rmk}\label{Rem2f.1}
  One can check that we recover the formula for~$\bar\nu(M)$
  in~\cite{CGN} in the case where $k_+$,~$k_-\in\{1,2\}$.
  Involutive isomorphisms of Calabi-Yau manifolds cannot have isolated
  fixpoints; see Section~\ref{2.b},
  so the first two terms on the right side of~\eqref{eq:ThmB} vanish.

  We start with a rectangular twisted connected sum as in~\cite{Kovalev,CHNP2},
  so~$k_+=k_-=1$.
  Because~$\gll=\glr=0=\rho=A$, we have~$\bar\nu(M)=0$ by~\eqref{eq:ThmB}.

  Next, consider~\cite[Example~\ref{eta-ex:pi4rk1}]{CGN} with~$k_-=1$, $k_+=2$
  and gluing matrix~$\psmatrix{1&1\\1&-1}$, so~$\thet=\frac\pi4$.
  By~\eqref{eq:ThmB}, we get
  \begin{equation*}
    \bar\nu(M)=-3+24\,\biggl(-1-\frac12+12\,\DS(0,1)\biggr)=-39\;.
  \end{equation*}
  Then, swapping the roles of $V_+$ and $V_-$, we get entry~5
  in Table~\ref{table:matchings}. (Note that we obtain the same extended
  \nuinvt for all entries 1--18 in the table.)

  In~\cite[Example~\ref{eta-ex:pi6rk1}]{CGN},
  we considered a simply connected example with~$k_-=k_+=2$,
  gluing matrix $\psmatrix{1&1\\1&-3}$ and~$\thet=\frac\pi6$.
  We found
  \begin{equation*}
    \bar\nu(M)=-3+24\,\biggl(-\frac32-\frac12+12\,\DS(0,1)\biggr)=-51\;,
  \end{equation*}
  see entry~110 in Table~\ref{table:matchings}.
  The other examples in~\cite{CGN} involve building blocks of rank~$\ge2$.
\end{rmk}

\section{Examples}\label{sec:ex}
In this section, we generate examples of extra-twisted connected sums
and compute their \xnuinvts.
To do this we will define some building blocks whose polarising lattice
has rank~1. We will also describe the topology of those blocks, and compute
some parts of the topology of the resulting extra-twisted connected sums.

\subsection{The cohomology of an extra-twisted connected sum}\label{sec:coh}

We have previously explained how to compute the fundamental group of
an extra-twisted connected sum $M$ from the gluing matrix, in Proposition
\ref{prop:pi1}. We now compute some other basic topological features.
In particular we show that all our examples have $H_2(M) = 0$ (so those that
have $\pi_1 M = 0$ are in fact 2-connected), and give a formula~\eqref{eq:b3M}
for $b_3(M)$.

\begin{rmk}
The most important topological properties that we do not compute are the
torsion in $H^4(M)$, and the Pontryagin class $p_1(M) \in H^4(M)$.
In general, the torsion in $H^4(M)$ can have contributions both from
the action of $\Gamma_\pm$ on the two halves, as well as from the gluing.
In~\cite{xtcs}, attention was focussed on blocks with involution such that
the former contribution vanishes, and on certain matchings (namely ones
with gluing matrix $\gmatrix111{-1}$ or $\gmatrix111{-3}$) where the torsion in
$H^4(M)$ can then be determined in a simple way from the
configuration $N_+ + N_-$.
We do not attempt here to generalise those arguments.
\end{rmk}

\subsubsection*{Topology of \texorpdfstring{$V_\pm$}{V}}
Let us first recall from~\cite[\S5]{CHNP} some relevant facts about the
topology of the ACyl Calabi-Yau manifold $V := Z \setminus \K$ constructed from 
a building block $(Z, \K)$, and make some observations about the action on cohomology of an automorphism group $\Gamma$.
Let us assume that the kernel of the restriction map $H^2(Z) \to H^2(\K)$
is generated by $[\K]$.
Note that this implies that the action of $\Gamma$
on $H^2(Z)$ is trivial.

We identify~$\Sigma\subset Z$ with a standard K3 surface and denote
the image of~$H^2(Z)$ in~$L=H^2(\Sigma)$ by~$N$.
Then~$N\subset L$ is primitive, but typically not unimodular.
Let~$T=N^\perp\subset L$, and let~$N^*$ be the dual of~$N$,
such that we have a short exact sequence
\begin{equation*}
  0\longrightarrow T\longrightarrow L\longrightarrow N^*\longrightarrow 0\;.
\end{equation*}

By~\cite[Lemma~5.2]{CHNP}, $Z$ and~$V$ are simply connected.
Using excision and suspension,
we have
\begin{equation*}
  H^k(Z,V)\cong H^k(\Sigma\times D^2,\Sigma\times S^1)\cong H^{k-2}(\Sigma)\;.
\end{equation*}
The long exact sequence of the pair~$(Z,V)$ becomes
\begin{equation}\label{eq:ZVseq}
  \cdots\longrightarrow H^{k-2}(\Sigma)
  \stackrel{\iota_!}\longrightarrow H^k(Z)
  \stackrel{j^*}\longrightarrow H^k(V)
  \stackrel\delta\longrightarrow H^{k-1}(\Sigma)\longrightarrow\cdots\;.
\end{equation}

According to~\cite[Lemma~5.4]{CHNP}, we have~$H^1(V)=0= H^5(V)$,
and the exact sequence~\eqref{eq:ZVseq} gives rise
to split short exact sequences
\begin{align*}
  0\longrightarrow\Z\longrightarrow H^2(Z)
  &\longrightarrow H^2(V)\longrightarrow 0\;,\\
  0\longrightarrow H^3(Z)
  &\longrightarrow H^3(V)\longrightarrow T\longrightarrow 0\;,\\
  0\longrightarrow N^*\longrightarrow H^4(Z)
  &\longrightarrow H^4(V)\longrightarrow 0\;.
\end{align*}
It follows that~$H^\bullet(V)$ is torsion free
if~$H^\bullet(Z)$ is torsion free.
The inclusion~$\Z\into H^2(Z)$
maps~$1$ to the cohomology class~$\iota_!1$
induced by~$\iota\colon\Sigma\to Z$.
It is easy to see that the sequences above are $\Gamma$-equivariant.
In particular, $\Gamma$ acts trivially on $H^2(V)$, while the
$\Gamma$-invariant part of $H^3(V)$ is the direct sum of $T$ and
$H^3(Z)^\Gamma$.

The map~$\delta$ in~\eqref{eq:ZVseq} involves restriction
to~$\Sigma\times S^1_\lnn$ followed by integration over~$S^1_\lnn$.
Write
\begin{equation*}
  H^k(\Sigma\times S^1_\lnn)=H^k(\Sigma)\oplus H^{k-1}(\Sigma)\drn\;,
\end{equation*}
where $\drn \in H^1(S^1_\lnn)$ is the generator.
The restriction map~$\iota^*\colon H^\bullet(V)\to H^\bullet(\Sigma\times S^1_\lnn)$
is described in~\cite[Cor~5.5]{CHNP}.
We have in particular that $H^2(V)$ maps isomorphically to $N \subset H^2(\K)$,
while the image of $H^3(V)$ is $T\drn \subset H^2(\K)\drn$.

\subsubsection*{Topology of \texorpdfstring{$M_\pm$}{M+ / M-}}

We may regard~$M_\pm=(V_\pm\times S^1_{\lnx_\pm})/\Gamma_\pm$
as the mapping torus of a generator~$\gamma_0\in\Gamma_\pm\cong\Z/k_\pm$.
Generalising the discussion of blocks with involution from~\cite[\S2.2]{xtcs},
we can use excision and the Thom isomorphism to see that
\begin{equation*}
  H^\bullet(M_\pm,V_\pm)\cong H^{\bullet-1}(V_\pm)\;.
\end{equation*}
From the long exact sequence of the pair~$(M_\pm,V_\pm)$, we get
\begin{equation*}
  \cdots\longrightarrow H^{\ell-1}(V_\pm)\longrightarrow H^\ell(M_\pm)
  \stackrel{\iota_V^*}\longrightarrow H^\ell(V_\pm)
  \stackrel{\gamma_0^*-\id}\longrightarrow H^\ell(V_\pm)\longrightarrow\cdots\;,
\end{equation*}
where~$\iota_V\colon V_\pm\to M_\pm$ is the inclusion of~$V$ as fibre
of the obvious projection~$M_\pm\to S^1_{\lnx_\pm/k_\pm}$.

Since $H^1(V) = 0$ while $H^2(V)$ is $\Gamma$-invariant, it is immediate that
$H^2(M_\pm) \cong H^2(V_\pm)$. Since $H^3(V_\pm)^\Gamma$ is torsion-free,
we also have a splitting
$H^3(M_\pm) \cong H^2(V_\pm) \oplus H^3(V_\pm)^\Gamma$, and $H^3(M_\pm)$
is torsion-free too. While the splitting is not natural with $\Z$ coefficients,
it is natural with $\Q$ coefficients.

We can treat the cross-section~$\Sigma\times T^2$ similarly.
Now $H^2(\K \times T^2) = H^2(\K) \oplus H^2(T^2)$, and $H^2(M_\pm) \to H^2(\K \times T^2)$ maps isomorphically to $N_\pm$.

Meanwhile, the pull-back $H^1(T^2) \to H^1(S^1_\lnx \times S^1_\lnn)$ of the
quotient map is injective (with image of index~$k_\pm$).
We abuse notation slightly to use $\drn_\pm, \drx_\pm$ to denote not only
the generators of $H^1(S^1_{\lnn_\pm} \times S^1_{\lnx_\pm})$ obtained by
pulling back the generators of the two factors, but also their unique
pre-images in $H^1(T^2; \Q)$.
(In terms of de Rham cohomology, these classes are represented by the 1-forms
$\frac{1}{\lnn_+}du_+$ and $\frac{1}{\lnx_\pm}dv_\pm$.)
Switching to rational coefficients, we then have the splitting
$H^3(\K \times T^2; \Q) = H^2(\K; \Q) \drn_\pm \oplus H^2(\K; \Q)\drx_\pm$.
In the splitting
\[ H^3(M_\pm; \Q) = H^2(V_\pm)\drx_\pm \oplus H^3(V_\pm; \Q)^\Gamma , \]
the first term has image exactly
$N_\pm \drx_\pm$, the second term has image exactly $T_\pm \drn_\pm$,
and the kernel of $H^3(M_\pm; \Q) \to H^3(\K \times T^2; \Q)$ is the
$H^3(Z_\pm; \Q)^\Gamma$ component in $H^3(V_\pm; \Q)^\Gamma$.

\subsubsection*{The topology of \texorpdfstring{$M$}{M}}

Generalising the discussion from~\cite[\S7.1]{xtcs} of extra-twisted connected
sums that involve only involutions, we can now apply the Mayer-Vietoris
sequence to compute some basic features of the topology of an extra-twisted
connected sum.

\begin{prop}
Let $M$ be an extra-twisted connected sum of building blocks $(Z,\K)$ such
that $H^2(Z) \to H^2(\K)$ is generated by $[\K]$, with
configuration of
polarising lattices $N_+, N_- \into L$.
Let $\rho_\pm$ be the ranks of the polarising lattices
(so $\rho_\pm = b_2(Z_\pm) - 1$). If $\cos \thet \not= 0$ let $d_\thet$ be the
rank of $N_+^\thet \cong N_-^\thet$ defined in \S\ref{3.c}, otherwise
let $d_\thet = \rk N_+^\frac{\pi}{2} + \rk N_-^\frac{\pi}{2}$.
\begin{enumerate}
\item The free part of $H^2(M)$ is isomorphic to $N_+ \cap N_- \subset L$,
\item The torsion in $H^3(M)$ is isomorphic to the cotorsion of $N_+ + N_-$
in $L$.
\item $ b_3(M) = b_2(M) + 23 - \rho_+ - \rho_- + d_\thet +
b_3(Z_+)^{\Gamma_+} + b_3(Z_-)^{\Gamma_-}$.
\end{enumerate}
\end{prop}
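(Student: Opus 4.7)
The plan is to apply the Mayer--Vietoris sequence for $M = M_+ \cup M_-$, with intersection a collar neighbourhood of the cross-section $X = \Sigma \times T^2$, feeding in the computation of $H^\bullet(M_\pm)$ and its restriction to $H^\bullet(X)$ assembled earlier. In particular, $H^2(M_\pm) \cong H^2(V_\pm)$ restricts to $N_\pm \subset L \subset L \oplus \Z = H^2(X)$, and rationally the image of $H^3(M_\pm; \Q)$ in $H^3(X;\Q) = L_\Q \otimes H^1(T^2;\Q)$ is the $22$-dimensional subspace
\begin{equation*}
  J_\pm := N_\pm\,\drx_\pm \oplus T_\pm\,\drn_\pm\;,
\end{equation*}
with $T_\pm := N_\pm^\perp \subset L$, the kernel of this restriction being the torsion-free space $H^3(Z_\pm;\Q)^{\Gamma_\pm}$.

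For (i) and (ii), consider the relevant segment
\begin{equation*}
  H^1(X) \xrightarrow{g} H^2(M) \to H^2(M_+) \oplus H^2(M_-) \xrightarrow{f} H^2(X) \to H^3(M) \to H^3(M_+) \oplus H^3(M_-)\;.
\end{equation*}
The map $f$ has kernel $N_+ \cap N_-$ and image the sublattice $N_+ + N_- \subset L \oplus 0 \subset H^2(X)$, so $\coker f \cong (L/(N_+ + N_-)) \oplus \Z$; since $H^3(M_\pm)$ is torsion-free, the torsion of $H^3(M)$ equals the torsion of $\coker f$, i.e.\ the cotorsion of $N_+ + N_-$ in $L$, which proves (ii). For (i), the short exact sequence $0 \to \im g \to H^2(M) \to N_+ \cap N_- \to 0$ splits since $N_+ \cap N_-$ is a free abelian group, and $\im g$ is finite torsion because the generators of $H^1(M_\pm;\Z)\cong\Z$ restrict to classes $\nu_\pm^\ast \in H^1(T^2;\Z)$ that are $\Q$-linearly independent precisely when $\thet \in (0,\pi)$.

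For (iii), a Leray--Serre spectral sequence for the mapping torus $M_\pm \to S^1_{\xi_\pm/k_\pm}$ gives $b_3(M_\pm) = 22 + b_3(Z_\pm)^{\Gamma_\pm}$, using that $\Gamma_\pm$ acts trivially on $L$ (hence on $T_\pm$) and on $H^2(V_\pm)$. Plugging this into the rational Mayer--Vietoris sequence and using $\rk(N_+ + N_-) = \rho_+ + \rho_- - \rk(N_+ \cap N_-)$ and $\rk(J_+ + J_-) = 44 - \rk(J_+ \cap J_-)$, I obtain
\begin{equation*}
  b_3(M) = 23 - \rho_+ - \rho_- + \rk(N_+ \cap N_-) + b_3(Z_+)^{\Gamma_+} + b_3(Z_-)^{\Gamma_-} + \rk(J_+ \cap J_-)\;,
\end{equation*}
so in view of part (i) the formula in (iii) reduces to the key identity $\rk(J_+ \cap J_-) = d_\thet$. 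To establish this, I would use the change of basis $\drx_- = \cos\thet\,\drx_+ + \sin\thet\,\drn_+$, $\drn_- = \sin\thet\,\drx_+ - \cos\thet\,\drn_+$ to rewrite the condition that $v\,\drx_+ + w\,\drn_+$ with $(v,w) \in N_+ \oplus T_+$ lies in $J_-$ as the pair of linear equations
\begin{equation*}
  \cos\thet\,\pi_{T_-}(v) + \sin\thet\,\pi_{T_-}(w) = 0 \qquad \text{and} \qquad \sin\thet\,\pi_{N_-}(v) - \cos\thet\,\pi_{N_-}(w) = 0\;,
\end{equation*}
where $\pi_{N_\pm}, \pi_{T_\pm}$ are the orthogonal projections of $L$ onto $N_\pm, T_\pm$.

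The main technical obstacle is extracting the rank of the solution space from these equations. For $\thet \neq \pi/2$, one eliminates $w$ via $\pi_{N_-}(w) = \tan\thet\,\pi_{N_-}(v)$ and $\pi_{T_-}(w) = -\cot\thet\,\pi_{T_-}(v)$, and then imposes $\pi_{N_+}(w) = 0$; this collapses to the single condition $\pi_{N_+}\pi_{N_-}(v) = \cos^2\thet\,v$, identifying the solution space with the eigenspace $N_+^\thet$ of rank $d_\thet$. In the degenerate case $\cos\thet = 0$ the two conditions decouple into $w \in N_- \cap T_+$ and $v \in N_+ \cap T_-$, so $\rk(J_+\cap J_-) = \rk(N_+ \cap T_-) + \rk(N_- \cap T_+) = \rk N_+^{\pi/2} + \rk N_-^{\pi/2}$, matching the two-term definition of $d_\thet$ at $\thet = \pi/2$.
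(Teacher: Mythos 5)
Your proof is correct and follows essentially the same Mayer--Vietoris computation as the paper, resting on the same description of $H^\bullet(M_\pm)$ and of its restriction map to $H^\bullet(\K\times T^2)$ established in the preceding subsection. The only notable difference is that your explicit elimination of $w$ to arrive at the eigenvalue condition $\pi_{N_+}\pi_{N_-}(v)=\cos^2\thet\,v$ makes the identification $\dim(J_+\cap J_-)=d_\thet$ somewhat more transparent than the paper's terse remark that $n_+$ and $n_-$ determine one another via orthogonal projection; the underlying argument and all intermediate rank bookkeeping are otherwise the same.
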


The examples considered in this paper use configurations where $N_+$ is transverse to $N_-$, and $N_+ + N_-$ is embedded primitively in $L$.
Thus the proposition implies that our examples have $H_2(M) = 0$, and those
examples that are simply-connected are in fact 2-connected.
Moreover, all our examples have $\rho_+ = \rho_- = 1$, and hence
$\rk N_+^\thet = \rk N_-^\thet = 1$.
Thus if $\thet\notin\frac{\pi}{2}\,\Z$ we have
$d_\thet = 1$ and
\begin{equation}\label{eq:b3M}
  b_3(M)=22+b_3^\Gamma(Z_+)+b_3^\Gamma(Z_-)\;,
\end{equation}
while if $\thet\in\frac{\pi}{2}\,\Z\setminus\pi\Z$ then~$d_\thet=2$ and
\begin{equation}
  b_3(M)=23+b_3^\Gamma(Z_+)+b_3^\Gamma(Z_-)\; .
\end{equation}

\begin{proof}
We have a Mayer-Vietoris sequence
\begin{equation*}
  \cdots\longrightarrow H^k(M)
  \longrightarrow H^k(M_+)\oplus H^k(M_-)
  \longrightarrow H^k(\Sigma\times T^2)
  \longrightarrow H^{k+1}(M)\longrightarrow\cdots\;.
\end{equation*}
The image of $H^1(M_+) \oplus H^1(M_-)$ in $H^1(\K \times T^2)$ has finite
index, and indeed it is dual to the fundamental group computed in
Proposition~\ref{prop:pi1}. Since $H^2(M_\pm)$ maps isomorphically to $N_\pm \subset H^2(\K \times T^2)$, the image of $H^2(M)$ in $H^2(M_+) \oplus H^2(M_-)$
is isomorphic to $N_+ \cap N_- \subset L$ (as determined by the configuration).

For $H^3(M)$ we get a short exact sequence
\[ 0 \longrightarrow L/(N_+ + N_-) \oplus \Z \longrightarrow H^3(M) \longrightarrow
\ker\bigl(H^3(M_+) \oplus H^3(M_-) \to H^3(\K \times T^2)\bigr) \longrightarrow 0 \]
Since the last term is torsion-free, the sequence splits,
and the torsion of $H^3(M)$ equals the torsion of~$L/(N_+ + N_-)$.

Finally we want to determine $b_3(M)$.
The contribution from $L/(N_+ + N_-) \oplus \Z$
equals $23-\rho_+ - \rho_- + b_2(M)$.
The other term we describe as the sum of the kernels of
$H^3(M_\pm) \to H^3(\K \times T^2)$, which by the above have
rank $b_3(Z_\pm)^\Gamma$, and the intersection of the images in
$H^3(\K \times T^2)$.
Points in the intersection of the images are those that can be written as both
$n_+ \drx_+ + t_+ \drn_+$ and $n_- \drx_- + t_- \drn_-$,
with $n_\pm \in N_\pm$ and $t_\pm \in T_\pm$.
Now, the gluing identifies the tori in such a way that
\[ \lnx_+ \drx_+ = \cos \thet \, \lnx_- \drx_- + \sin \thet \, \lnn_- \drn_-, \qquad
 \lnn_+ \drn_+ = \sin \thet \, \lnx_- \drx_- - \cos \thet \, \lnn_- \drn_- \; . \]
If $\cos \thet \not= 0$ then $n_+$ and $n_-$ determine each other,
because the orthogonal projection of $\frac{1}{\lnn_-}n_-$ to $N_+$ must be
$\frac{\cos \thet}{\lnn_+} n_+$ and vice versa.
Thus, in the notation of \S\ref{3.c}, in fact $n_\pm \in N_\pm^\thet$, so the
intersection of the images is isomorphic to $N_+^\thet \cong N_-^\thet$.
On the other hand, if $\cos \thet = 0$ then we can simply take each $n_\pm$
freely in $N_\pm^\frac{\pi}{2}$ (\ie in the orthogonal complement to $N_\mp$ in
$N_\pm$).

Either way, the contribution to $b_3(M)$ from the intersection of the images
in $H^3(\K \times T^2)$ is what we denoted as $d_\thet$. Adding that to
the other contributions proves (iii).
\end{proof}

\subsection{Examples of building blocks}\label{sec:blocks}

We now give examples of building blocks. For simplicity, we restrict attention
to blocks whose polarising lattice $N$ has rank 1. 
We list the relevant data for the examples in Table~\ref{table:blocks}.

Each family of blocks~$Z$ is obtained by blowing up Fano 3-folds $Y$ of
Picard rank 1. We list the index $r$ of~$Y$, the anticanonical degree $-K_Y^3$,
the norm-square of the generator of the Picard lattice $N$ of $Y$
(which is isometric to the polarising lattice of~$Z$),
the third Betti number $b_3(Y)$, and the result of evaluating $c_2(Z)$
on the pull-back $H \in H^2(Z)$
of the generator $-\frac{1}{r}K_Y \in H^2(Y)$ (the latter number is needed
to compute the Pontryagin class of the extra-twisted connected sums built from
the block, although we do not do that in this paper).

Recall that the Picard lattice of a Fano 3-fold $Y$ is $H^2(Y)$ equipped with
the non-degenerate symmetric bilinear form $(A,B) \mapsto A.B.(-K_Y)$.
For rank 1 Fanos, the
norm-square of the generator $-\frac{1}{r}K_Y$ is thus simply computed as
$\frac{1}{r^2}(-K_Y)^3$.

\newcommand{\allblocks}{
\begin{table}[tb]
\vspace{-4mm}
\[
\begin{array}[t]{r l c  c c c c  c c c c } \toprule
& Y & r & -K_Y^3 & N & b_3(Y) & c_2(Z)H & Ex & k & b_3^\Gamma(Z) & \#\mathrm{fix} \\ \midrule
1&\PP^3 &  4 & 64 &  4 &   0 &  22 & \ref{ex:nonsymblocks} & 1 & 66  \\ 
2&    Q &  3 & 54 &  6 &   0 &  26 & \ref{ex:nonsymblocks} & 1 & 56 \\ 
3&  V_1 &  2 &  8 &  2 &  42 &  16 & \ref{ex:nonsymblocks} & 1 & 52\\ 
4&  V_2 &  2 & 16 &  4 &  20 &  20 & \ref{ex:nonsymblocks} & 1 & 38 \\
5&      &    &    &    &     &     & \ref{ex:invblocks} & 2 & 18 \\ 
6&  V_3 &  2 & 24 &  6 &  10 &  24 & \ref{ex:nonsymblocks} & 1 & 36 \\ 
7&  V_4 &  2 & 32 &  8 &   4 &  28 & \ref{ex:nonsymblocks} & 1 & 38 \\ 
8&  V_5 &  2 & 40 & 10 &   0 &  32 & \ref{ex:nonsymblocks} & 1 & 42 \\ 
9&      &  1 &  2 &  2 & 104 &  26 & \ref{ex:nonsymblocks} & 1 & 108 \\ 
10&     &    &    &    &     &     & \ref{ex:invblocks} & 2 & 46 \\
11&     &    &    &    &     &     & \ref{ex:six} & 3 &  24 & 2 \\
12&     &    &    &    &     &     & \ref{ex:five} & 5 &  8 & 1 \\
13&     &    &    &    &     &     & \ref{ex:six} & 6 &  4 & 2 \\
14&     &  1 &  4 &  4 &  60 &  28 & \ref{ex:nonsymblocks} & 1 & 66 \\ 
15&     &    &    &    &     &     & \ref{ex:invblocks} & 2 & 26 \\  
16&     &    &    &    &     &     & \ref{ex:three} & 3 & 12 & 1 \\ 
17&     &    &    &    &     &     & \ref{ex:p3} & 4 &  6 \\ 
18&     &  1 &  6 &  6 &  40 &  30 & \ref{ex:nonsymblocks} & 1 & 48 \\ 
19&     &    &    &    &     &     & \ref{ex:invblocks} & 2 & 18 \\ 
20&     &    &    &    &     &     & \ref{ex:quadric} & 3 &  8 \\ 
21&     &  1 &  8 &  8 &  28 &  32 & \ref{ex:nonsymblocks} & 1 & 38 \\ 
22&     &    &    &    &     &     & \ref{ex:invblocks} & 2 & 14 \\ 
23&     &  1 & 10 & 10 &  20 &  34 & \ref{ex:nonsymblocks} & 1 & 32 \\ 
24&     &    &    &    &     &     & \ref{ex:invblocks} & 2 & 12 \\ 
25&     &  1 & 12 & 12 &  14 &  36 & \ref{ex:nonsymblocks} & 1 & 28 \\ 
26&     &  1 & 14 & 14 &  10 &  38 & \ref{ex:nonsymblocks} & 1 & 26 \\ 
27&     &  1 & 16 & 16 &   6 &  40 & \ref{ex:nonsymblocks} & 1 & 24 \\ 
28&     &  1 & 18 & 18 &   4 &  42 & \ref{ex:nonsymblocks} & 1 & 24 \\ 
29&     &  1 & 22 & 22 &   0 &  46 & \ref{ex:nonsymblocks} & 1 & 24 \\ 
\bottomrule
\end{array} 
\]
\smallskip
\caption{Rank 1 building blocks}
\label{table:blocks}
\vspace{-4mm plus 20mm}
\end{table}}

\allblocks

\begin{ex}
\label{ex:nonsymblocks}
If we ignore the desire for automorphisms,
then we can simply take the list of rank 1 blocks from~\cite[Table 1]{CHNP}.
These are obtained by blowing up a Fano 3-fold $Y$ of Picard rank 1 along the
transverse intersection $C$ of two smooth anticanonical divisors.
As explained in~\cite[\S5.2]{CHNP}, the resulting building block $Z$ has
$b_3(Z) = b_3(Y) + b_1(C) = b_3(Y) + (-K_Y)^3 + 2$.
For the final piece of data we wish to include in Table~\ref{table:blocks},
\cite[(4.4)]{ex} gives $c_2(Z)H = \frac{24-K_Y^3}{r}$.
\end{ex}

All other examples we consider will in fact be subfamilies of the families
from Example~\ref{ex:nonsymblocks} that admit automorphisms.
For each suitable automorphism
that we have found on some elements of the family,
we list in Table~\ref{table:blocks} its order~$k$
and the rank~$b_3^\Gamma(Z)$ of the invariant part of $H^3(Z)$ (so the number
against $k = 1$ is $b_3(Z)$), and the number of
isolated fixed points (among all elements of $\Gamma$).
The formula~\eqref{eq:b3M} for the third Betti number of
an extra-twisted connected sum involves $b_3^\Gamma(Z)$, while
the computation of the \xnuinvt in Theorem~\ref{Thm2x.1}
relies on some further details about the fixed points that is not included in
the table, but only in the descriptions of the individual examples.

The pattern is that we consider special elements $Y$ of the given family of
Fano 3-folds that admit a group of automorphisms $\Gamma$, whose fixed set is
a union of a K3 divisor $\K$ and (possibly) some isolated fixed points.
After blowing up a curve $C \subset \K$ like in Example~\ref{ex:nonsymblocks},
one obtains a building block $Z$ with automorphisms whose fixed sets are the
proper transforms of the union of the fixed set of the corresponding
automorphism on $Y$ and a copy~$\wt C\subset V\subset Z$ of $C$;
it is a section of the exceptional set, which
is a trivial $\PP^1$-bundle over $C$.

In all but one of our examples (Example \ref{ex:six}),
the fixed set $Z^\gamma$ is the same for all
non-identity elements~$\gamma\in\Gamma$. Because the cohomology of $Z$ is
$\Gamma$-invariant except in degree 3 (so that
\eg $b_2(Z/\Gamma) = b_2(Z) = 2$) we can in those cases easily
compute $b_3^\Gamma(Z)$ from
\[ \chi(Z/\Gamma) = \frac{1}{k}\chi(Z) + \frac{k-1}{k}\chi(Z^\gamma) . \]

\begin{ex}
\label{ex:invblocks}
Blocks with involutions were already considered in~\cite{xtcs}.
In some sense, the simplest way to obtain examples is to start from a Fano
3-fold $X$ with even anti-canonical class $-K_X$ (\ie $\PP^3$ or a del Pezzo
3-fold), and consider a double cover $Y$ of $X$ branched over an anticanonical
divisor; see~\cite[Examples 3.24 \& 3.25]{xtcs}.
Blowing up the double cover $Y$ along a transverse intersection $C$ of the
ramification divisor and another anticanonical divisor yields a block $Z$
with involution.
\end{ex}

\pagebreak[3]
Similarly to Example~\ref{ex:invblocks}, we can take $X$ to be one of the two
Fano 3-folds with index $r > 2$.
Then the $r$-fold branched cover $Y$ of $X$ branched over an anticanonical
divisor can be blown up along the intersection of the ramification locus with
another anticanonical divisor of $Y$ to give a building block $Z$ with an
automorphism of order $r$.

\begin{ex}
\label{ex:quadric}
For $X = Q \subset \PP^4$ the quadric 3-fold (which has $r = 3$), $Y$ is
isomorphic to a complete intersection of a quadric and a cubic in $\PP^5$ of
the forms $X_1^2 + \cdots + X_5^2$ and $X_0^3 + F(X_1, \ldots, X_5)$,
and the branch switching automorphisms correspond to multiplying the homogeneous
coordinate $X_0$ by cube roots of unity.
The fixed set is the anticanonical divisor
$\{X_0 = 0\}$, which is smooth for a generic $F$.
Blowing up a transverse intersection $C$ with another anticanonical
divisor yields a block $Z$ with an automorphism group of order 3
(number~20 in Table~\ref{table:blocks});
these building blocks are then special elements of the family~18 in the table,
which was obtained in Example~\ref{ex:nonsymblocks}.

If we let $\tau \in \Gamma$ be the generator that multiplies
$X_0$ by $\zeta^{-1} = e^{-\frac{2\pi i}{3}}$,
then the fixed set~$Z^\tau\subset Z$ consists of the proper transform $\K$ of
the ramification locus and a section $\wt C$ of the exceptional set.
Clearly $\tau$ acts
on the normal bundle of $\K$ as multiplication by $\zeta^{-1}$.
\end{ex}

\begin{ex}
\label{ex:p3}
For $X = \PP^3$ (which has $r = 4$), $Y$ is isomorphic to a quartic in $\PP^4$
with defining equation of the form $X_0^4 + F(X_1, X_2, X_3, X_4)$,
giving entry~17 in Table~\ref{table:blocks}.
These are special elements of the family~14 of blocks obtained in Example
\ref{ex:nonsymblocks}.

Further, we can of course also consider this as a family of blocks with
involution; then we recover a subfamily of family~15,
which already came up in Example~\ref{ex:invblocks}.
\end{ex}

Before producing some examples with isolated fixpoints,
let us recall that we need to find a generator~$\tau$ of~$\Gamma$
that acts on the normal bundle~$\nu_\K$ by~$\zeta^{-1} = e^{-\frac{2\pi i}k}$.
Then by Remark~\ref{Rem2x.1},
the contribution of the fixpoint set to the extended \nuinvt
is given by~$D_{\tau^\eps}(Z)$ with~$\eps$ as in Section~\ref{2.w},
where for brevity,
we write~$D_\gamma(Z)$ instead of~$D_\gamma(V)$ as in Definition~\ref{Def2x.1}.

\begin{rmk}
While the action of $\Gamma$ on the normal bundle of the fixed curve $\wt C$ does
not affect the~\xnuinvt by Theorem~\ref{Thm2x.1}, %
let us mention that it is easy to describe in a uniform way in all our examples.

The exceptional divisor $E$ in $Z$ is biholomorphic to $C \times \PP^1$.
We can choose the identification so that $C \times \{(1:0)\}$ is the
intersection $E \cap \K$, while $C \times \{ (0:1) \}$ is the 1-dimensional
component $\wt C$ of the fixed set of $\Gamma$ in $Z$.
The action of $\tau \in \Gamma$ on $E$ is trivial on the $C$ factor, and can be
identified with $(Y_0 : Y_1) \mapsto (\zeta Y_0 : Y_1)$ on the $\PP^1$ factor,
for $\zeta$ such that $\tau$ acts on~$\nu_\K$ by $\zeta^{-1}$.

Then $\tau$ acts on the normal bundle of $\tilde C$ in $E$ (which is trivial)
as multiplication by $\zeta$.
If we write the normal bundle of $\wt C$ in $Z$ as a direct sum of this trivial
bundle and another line bundle, then (because $\wt C$ is contained in $V$ which
has a Calabi-Yau structure preserved by~$\Gamma$) the second summand must be
isomorphic to $T^*\wt C$, and $\tau$ must act on it as multiplication
by~$\zeta^{-1}$.
\end{rmk}

\begin{ex}
\label{ex:three}
Consider a smooth quartic $Y \subset \PP^4$ of the
form~$X_0^3 X_1 + F(X_1,X_2,X_3, X_4)$.
Multiplying $X_0$ by a primitive third root of unity,
say $\zeta^{-1} = e^{-\frac{2\pi i}3}$, defines an automorphism
$\tau$ of order~3. Its fixed set is the union of the K3 surface
$\K = \{ X_0 = 0\}$ and the isolated point \mbox{$(1:0:0:0:0)$}.

Blowing up $Y$ along the intersection $C$ (a quartic plane curve) of $\K$
with another anticanonical
divisor yields a building block $Z$ with automorphism group 
$\Gamma \cong \Z/3$, number~16 in Table~\ref{table:blocks}.
It is a different subfamily of family~14
than the one considered in Example~\ref{ex:p3}.

The block $Z$ has $\chi(Z) = -60$. The fixed set $Z^\tau$ of $\tau$ is the
union of the proper transform of~$\K$, a section $\wt C$ of the exceptional
divisor and the isolated fixed point, so its Euler characteristic is~21.
Thus $\chi(Z/\Gamma) = -\third 60 + \frac{2}{3}21 = -6$,
and hence $b_3^\Gamma(Z) = 12$.

Clearly, $\tau$ acts on the normal bundle of $\K$ as multiplication
by $\zeta^{-1}$.
Meanwhile, in the affine chart
$(z_1, \ldots, z_4) \mapsto (1: z_1 : \cdots : z_4)$, the action of $\tau$
is represented by multiplication with~$\zeta$.
Hence, the action of $\tau$ on the tangent space at~$(1:0:0:0:0)$
is diagonal with eigenvalue~$\zeta$.
We can now compute~$D_{\tau^\eps}(Z)=2\eps$ for~$\eps=\pm 1$.
\end{ex}

\begin{ex}
\label{ex:six}
Consider a smooth sextic $Y$ in the weighted projective space $\PP^4(1^4, 3)$
of the form $X_0^6 + F(X_1, X_2, X_3) + X_4^2$.
Multiplying $X_0$ by a primitive 6th root of unity,
say $\zeta^{-1} = e^{-\frac{\pi i}{3}}$, defines an automorphism
$\tau$ of order 6.
Its fixed set is the K3 surface $\K = \{ X_0 = 0\}$.
In addition, $\tau^2$ has two isolated fixed points,
at $x_\pm=(1 : 0 : 0: 0: \pm i)$, which are swapped by~$\tau$.

Blowing up $Y$ along the intersection $C$ of $\K$ with another anticanonical
divisor that is stable under~$\tau$ as a set
yields a building block $Z$ with automorphism group~$\Gamma \cong \Z/6$,
line~13 in Table~\ref{table:blocks}.
It can be considered as a more special subfamily of family~9
appearing in Example~\ref{ex:nonsymblocks}
or of family~10 of involution blocks from Example~\ref{ex:invblocks}.
But if we consider it as a block with automorphism group of order 3
(number~11 in the table),
then that is distinct from the previous examples.

Clearly, $\tau$ acts on the normal bundle of $\K$ as multiplication
by $\zeta^{-1}$.
Meanwhile, in the affine chart
$(z_1, \ldots, z_4) \mapsto (1: z_1 : \cdots : z_4)$, the action of $\tau$
is represented by
$(z_1, \ldots, z_4) \mapsto (\zeta z_1,  \zeta z_2, \zeta z_3, \zeta^3  z_4)$.
The isolated fixed points correspond to $(0,0,0,\pm i)$, and
have tangent space $z_4 = 0$.
Thus the action of $\tau^2$ on the tangent spaces is diagonal with
eigenvalue $\zeta^2$.
Again, we find~$D_{\tau^\eps}(Z)=2\eps$ for the automorphism group~$\Z/6$,
and~$D_{\tau^\eps}=4\eps$ if we restrict to the automorphism group~$\Z/3$.

We have $\chi(Z) = -102$, while the fixed set $Z^{\tau^2}$ of $\tau^2$ is the
union of the proper transform of~$\K$, a copy $\wt C$ of $C$ and the two
isolated fixed points, so has Euler characteristic 24.
In the case where we consider the automorphism group $\Gamma' \cong \Z/3$
generated by $\tau^2$, we thus find
$\chi(Z/\Gamma') = -\third 102 + \frac{2}{3}24 = -18$,
so $b_3^{\Gamma'}(Z) = 24$. 

In turn, we can consider $Z/\Gamma$ as a $\Z/2$ quotient of $Z/\Gamma'$
with fixed set of Euler characteristic $\chi(\K) + \chi(\wt C) = 22$.
Thus $\chi(Z/\Gamma) = -\half 18 + \half 22 = 2$, and $b_3^\Gamma(Z) = 4$.
\end{ex}

\begin{ex}
\label{ex:five}
Consider again a smooth sextic $Y$ in the weighted projective
space $\PP^4(1^4, 3)$, but now of the
form~$X_0^5 X_1 + F(X_1, \ldots, X_3) + X_4^2$.
Multiplying $X_0$ by a primitive 5th root of unity,
say $\zeta^{-1} = e^{-\frac{2\pi i}{5}}$, defines an automorphism
$\tau$ of order 5. Its fixed set is the union of the K3 surface
$\K = \{ X_0 = 0\}$ and the isolated point $(1:0:0:0:0)$.

Blowing up $Y$ along the intersection $C$ of $\K$ with another anticanonical
divisor yields a building block $Z$ with automorphism group 
$\Gamma \cong \Z/5$. It can be considered as another more special subfamily
(entry~12 in Table~\ref{table:blocks}) of
the family~9 that we already considered in Example~\ref{ex:six}.

Clearly, $\tau$ acts on the normal bundle of $\K$ as multiplication
by $\zeta^{-1}$.
Meanwhile, in the affine chart
$(z_1, \ldots, z_4) \mapsto (1: z_1 : \cdots : z_4)$, the action of $\tau$
is represented by
$(z_1, \ldots, z_4) \mapsto (\zeta z_1,  \zeta z_2, \zeta z_3, \zeta^3  z_4)$.
The tangent space at the fixed point is $z_1 = 0$,
so the eigenvalues of $\tau$ on the tangent space are $\zeta, \zeta$ and
$\zeta^3$.
In Example~\ref{ex:run-fixpoint},
we have computed~$D_{\tau^\eps}(Z)=-\frac{24}{5\eps}$ for~$\eps\in\{\pm1,\pm2\}$.

We have $\chi(Z) = -102$, while the fixed set $Z^\tau$ of $\tau$ is the union
of the proper transform of~$\K$, a copy $\wt C$ of $C$, and the 
isolated fixed point. Thus $Z^\tau$ has Euler characteristic 23, so
we find $\chi(Z/\Gamma) = -\frac{1}{5} 102 + \frac{4}{5}23 = -2$,
and $b_3^\Gamma(Z) = 8$. 
\end{ex}

In order to construct extra-twisted connected sums from our examples of blocks,
we need to note that they have a genericity property described in Definition
\ref{def:generic}.

\begin{prop}
\label{prop:generic}
Each family $\fbb$ of blocks above is $(N, \Amp)$-polarised, where $N$ is
the polarising lattice of the family, and $\Amp \subset N\Rlat$ is one of the
two open half-lines.
\end{prop}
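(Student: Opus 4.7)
The plan is to verify Definition~\ref{def:generic} for each family in Table~\ref{table:blocks}, taking $\Lambda=N$ in each case and $\Amp_\fbb\subset N_\R$ the open half-line in the direction of the pull-back of $-\frac{1}{r}K_Y$ from the underlying Fano 3-fold~$Y$. For the non-symmetric families of Example~\ref{ex:nonsymblocks}, this is essentially already established in~\cite[\S6]{CHNP2} and~\cite[Theorem~6.8]{xtcs}: the moduli of pairs~$(Z,\K)$ obtained by blowing up~$Y$ along a generic transverse intersection of two anticanonical divisors has the expected dimension $19=\dim_\C D_{N}$, local Torelli yields a local isomorphism with~$D_N$ at generic points, and surjectivity of the global K3 period map upgrades this to density of the image in the required sense.

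For the symmetric families, the key observation is that~$\Gamma$ acts trivially on~$\K$ by construction, so the K3 divisors of blocks in each symmetric subfamily are $N$-polarised K3 surfaces with no extra constraint coming from~$\Gamma$. I would argue that every generic $N$-polarised K3 arises as the K3 divisor of some $\Gamma$-invariant Fano 3-fold in the list, by an explicit realisation. For instance, in Example~\ref{ex:six} an arbitrary double cover $\{Y^2=F(X_1,X_2,X_3)\}$ of $\PP^2$ is recovered as $\{X_0=0\}$ inside the $\Z/6$-symmetric sextic $\{X_0^6+F+X_4^2=0\}\subset\PP^4(1,1,1,1,3)$ after identifying~$Y$ with~$X_4$; in Example~\ref{ex:quadric} a general anticanonical K3 on the quadric~$Q\subset\PP^4$ appears as $\{X_0=0\}$ in a $\Z/3$-symmetric $(2,3)$-complete intersection in~$\PP^5$; Examples~\ref{ex:p3}, \ref{ex:three} and~\ref{ex:five} are strictly analogous. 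Thus in each case the parameter space of $\Gamma$-invariant Fanos maps onto a Zariski-open subset of the moduli of $N$-polarised K3 surfaces, and density of the period map in~$D_N$ follows as in the non-symmetric case.

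The ampleness condition is immediate: the pull-back of $-\frac{1}{r}K_Y$ to~$Z$ restricts to a positive multiple of the generator of~$N$ on~$\K$, and lies in the Kähler cone of~$Z$ since~$Z$ is a smooth blow-up of a Kähler manifold along a smooth curve. This identifies $\Amp_\fbb$ with the corresponding open half-line in~$N_\R$. For the blow-up locus~$C$ one chooses~$C$ as the intersection of~$\K$ with a second $\Gamma$-invariant anticanonical divisor; smoothness and transversality are open conditions on such a choice and non-empty for each entry of Table~\ref{table:blocks}, so the resulting block with automorphism exists for generic periods in~$U_\fbb$.

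The main technical step to carry out will be the verification that the $\Gamma$-invariant Fanos actually cover every generic $N$-polarised K3. In each of the symmetric constructions one observes that the constraints imposed by the $\Gamma$-action involve only variables transverse to~$\K$ (such as~$X_0$ or the weight-$3$ variable in the weighted projective examples) and do not restrict the defining equation of~$\K$ itself; together with a dimension count this implies that the period map from the parameter space of $\Gamma$-invariant Fanos to $D_N$ is dominant, hence has image containing an analytic open dense subset. This is a direct but case-by-case check across the six symmetric families in Table~\ref{table:blocks}.
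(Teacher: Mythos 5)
Your proposal is correct and follows essentially the same approach as the paper's proof: the crucial observation in both is that since $\Gamma$ fixes $\K$ pointwise, the $\Gamma$-constraints affect only the transverse data, so the K3 divisor can be an arbitrary (generic) $N$-polarised K3, and the argument then reduces to citing genericity/embedding results for anticanonical divisors in rank-1 Fano 3-folds. The paper makes the symmetric cases precise via Saint-Donat's and Reid's projective-model theorems rather than your abstract dominance-of-period-map argument, and for the non-symmetric families cites \cite[Prop.~6.9]{CHNP} (following Beauville) rather than \cite{CHNP2} and \cite{xtcs}, but these are cosmetic differences in exposition and citation rather than a different proof.
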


\begin{proof}
For the families of blocks without automorphism in
Example~\ref{ex:nonsymblocks}, this is just an instance of
\cite[Proposition 6.9]{CHNP}, which is a consequence of the results of
Beauville~\cite{beauville04} on anticanonical divisors in Fano 3-folds.
The same argument applies to the families of blocks with involution that are
obtained in Examples~\ref{ex:invblocks}, \ref{ex:quadric} or~\ref{ex:p3}
from a cover of a Fano 3-fold $X$ branched over an anticanonical divisor $\K$,
since $\K$ can be any smooth anticanonical divisor in $X$.

In Example~\ref{ex:three}, the K3 divisor $\K$ is a hypersurface in $\PP^3$
defined by the quartic poly\-nomial~$F$. Clearly $F$ can be chosen to be any
smooth quartic, so a generic K3 surface with Picard lattice containing an ample
primitive class of norm-square 4 will appear this way. Indeed, in particular
any K3 surface with Picard lattice exactly $\gen{4}$ can embedded as a quartic
in~$\PP^3$ (see Saint-Donat~\cite[Theorem 6.1]{saintdonat74}).

Similarly, we see directly that any K3 that is a double cover of $\PP^2$
branched over a smooth sextic curve can appear as the K3 divisor in blocks of
the classes from Examples~\ref{ex:six} and~\ref{ex:five}, and a generic K3
surfaces whose Picard lattice contains an ample class of norm-square 2 can be
presented that way (see Reid~\cite[Theorem 3.8(d)]{reid97}).
\end{proof}

\subsection{Examples of matchings}\label{sec:match-ex}

We now study the matchings that can be produced from the blocks in the previous
section. Table~\ref{table:matchings} lists all extra-twisted connected sums
that can be made from the blocks in Table~\ref{table:blocks},
except those where both blocks have trivial automorphism group,
which were studied in~\cite{CHNP2,Kovalev}.
Note that some examples with~$k_\pm \leq 2$
were already considered in~\cite{CHNP2, CGN, xtcs},
in particular tables~4 and~5 in~\cite{xtcs} contain some of the
examples with~$k_+\le 2$, $k_-=2$ with additional information
on~$p_1(TM)$ and the torsion in~$H^4(M)$.
Table~\ref{table:matchings} contains~192 examples where~$k_-\ge 3$;
these are genuinely new.

We explained in \S\ref{subsec:combinatorics} one way to find all gluing
matrices for a given pair of orders $(k_+, k_-)$ of automorphism groups.
The gluing matrix determines the gluing angle $\thet$, and for each pair of
rank 1 blocks one can then determine whether there is a corresponding
configuration as explained in \S\ref{subsec:match}. However, we find it
convenient here to do these steps in the opposite order, and first enumerate
all possible configurations involving the blocks from Table~\ref{table:blocks}.

Given $k_+$ and $k_-$ and a configuration,
there may be several different gluing matrices $G$ that have the right gluing
angle and $\det G = -k_+k_-$, and several different choices of blocks
$Z_+, Z_-$ with the right polarising lattices and automorphism groups of
order $k_\pm$. Each such choice yields a family of extra-twisted connected sums
$M$ by application of Proposition~\ref{prop:matching}. The fundamental group
$\pi_1(M)$ depends only on the gluing matrix, while
$b_3(M)$ depends on the choices of $Z_+$ and $Z_-$.
The invariant $\bar \nu(M)$ depends on the gluing matrix together with data
for the isolated fixed points of the automorphisms on $Z_\pm$.
It turns out that for those pairs of configuration and gluing matrix where
there is more than one choice of $(Z_+, Z_-)$, there is never any ambiguity in
the isolated fixed point data, so in practice, $\bar \nu(M)$ only depends on the
gluing matrix.

We therefore organise Table~\ref{table:matchings} with the data about the
extra-twisted connected sums from blocks with polarising lattices of rank 1
as follows.
For each example %
we first list
the orders~$k_\pm$ of~$\Gamma_\pm$, the even lattice describing the
configuration of the K3 matching and $(\cos \thet)^2$ %
of the gluing angle $\thet$ as determined by~\eqref{3.c.3}.
Then follow the building blocks $Z_+$ and $Z_-$ (the
numbers referring to the entries in Table~\ref{table:blocks})
and the third Betti number of the extra-twisted connected sum, the gluing
matrix~$G=\psmatrix{\gll&p\\\glb&\sglr}$, and the parameters~$\eps_\pm$ (see Proposition~\ref{Prop2w.1}).
By Remark~\ref{rmk:geosym},
we always assume that~$\gll$, $\glb$, $p$, $\glr > 0$.
Moreover, if~$k_+=k_-$ we may swap the blocks if necessary
to make sure that~$\glr \geq \gll$. %
Finally, we list the value of the \xnuinvt.
Where there are several different choices of $Z_+$ with the same $k_+, k_-$
and configuration they are separated by commas, as are the corresponding values
of~$b_3(M)$, while the different choices of the gluing matrix $G$ 
(and the corresponding $\eps_+, \eps_-$ and~$\bar\nu(M)$) are listed on
separate rows.
The number at the very left is the running number of the first example in
the line, for example, the third line contains examples~5, 6 and~7.

\enlargethispage{0.4\baselineskip}

\begin{rmk}\label{rmk:table-cover}
  If a non simply-connected extra-twisted connected sum
  has a nontrivial covering constructed
  with the same groups~$\Gamma_+$, $\Gamma_-$,
  see Proposition~\ref{prop:cover},
  then it is listed a few lines above, \eg entry 20 is the
  universal cover of entry 22.
  If one needs to pass to a
  subgroup of at least one of the groups~$\Gamma_+$, $\Gamma_-$,
  then one should determine~$\tilde k_+$, $\tilde k_-$ and the gluing matrix
  by~\eqref{eq:cover-mat}
  and find the covering in a different section of the table,
  possibly with the roles of~$Z_+$ and~$Z_-$ swapped,
  except if~$\thet=\frac\pi 2$.
  In the latter case,
  the universal cover is an ordinary twisted sum of the type
  discussed in~\cite{CHNP2,Kovalev}, and therefore not listed here.
\end{rmk}

\newpage

\newcommand
  {%
    % [inline block 0: 3 envs, 65608 chars -> data_tex | \begin{longtable}[c]{r c c c c c c c c r r r}       \hline\noalign{\smallskip}...]

  }

\section{Proofs of some intermediate results from local index theory}\label{sec:proofs}
For completeness,
we give short proofs of the claims~\eqref{2.b.2}, \eqref{eq:EtaProd}
and~\eqref{2.c.2}.
This section does not attempt to be self-contained.
Instead, we will state the analogue of statements in existing proofs,
and add explanations only where we deviate from those.

\subsection{Adiabatic limits of twisted products}

Let~$\Gamma\cong\Z/k$ be a finite group that acts effectively and isometrically
on an even-dimensional manifold~$V$ with boundary~$\del V$.
We assume that~$V$ has product geometry near~$\del V$.
We consider~$\of=V/\Gamma$ as an orbifold
with inertia orbifold~$\Lambda \of$.

Consider~$S^1\cong\R/\Z$ and let a generator~$\gamma_0\in\Gamma$
act by sending~$[v]\in\R/\xi\Z$ to~$\bigl[v+\frac\xi k\bigr]$.
Then we will consider the Seifert fibration
\begin{equation}\label{A1.1}
  p\colon M=\bigl(V\times S^1\bigr)/\Gamma\longrightarrow \of\;,
\end{equation}
where~$\Gamma$ acts diagonally on~$V\times S^1$.
We split~$TM=TW\oplus TS^1$ by abuse of notation
and consider a family of metrics
\begin{equation*}
  g^{TM}_\eps=\eps^{-2}\,g^{T\ofh}\oplus g^{TS^1}
\end{equation*}
for~$\eps>0$.
The example we have in mind is of course~$M_\pm$
with metric~$g^{TM}_\eps=\frac1{\eps\zeta_\pm}\,g_{\pm,\eps}$;
see paragraph~\ref{ssec:gell}.

By a {\em Dirac bundle\/} we mean a Hermitian vector bundle
with Hermitian connection and a compatible Clifford multiplication;
see~\cite[def~II.5.2]{LM}.
We assume that~$V$ is equipped with a fixed Dirac bundle~$E_V\to V$,
on which~$\Gamma$ acts, preserving its structure.
On~$M$, we consider the Dirac bundle
\begin{equation}\label{ass:spinbdl}
  E=p^*E_V/\Gamma\longrightarrow M\;.
\end{equation}
We let~$(e_1,\dots,e_m)$ denote a local orthonormal frame of~$TM$ for~$g^{TM}_1$
such that~$e_1$ is vertical and~$e_2$, \dots, $e_m$ are horizontal.
Clifford multiplication with~$e_i$ will be denoted~$c_i$.
Here, we assume that~$c_1$ acts as~$-i^{\frac{m+1}2}c_2\cdots c_m$ on~$p^*E_V$,
so that the Clifford volume element~$i^{\frac{m+1}2}c_1\cdots c_m$ acts as~$1$.
The examples we have in mind are the spinor bundle~$SV$
and the bundle of exterior forms~$\Lambda^\bullet T^*V$ on~$V$,
leading to the spinor bundle and the bundle of even forms on~$M$.

We consider a Dirac-type operator on~$E$
of the form
\begin{equation}\label{ass:dirac}
  D_{M,\eps}=D_{S^1}+\eps D_\ofh
\end{equation}
as in~\cite[(2.3)]{Gorbi},
where~$D_{S^1}=c_1\nabla^E_{e_1}$ is the fibrewise Dirac operator.
In the case of the odd signature operator on~$M$,
$D_\ofh=B_\ofh$ is the signature operator on the orbifold~$\of$.

In the case of the modified spin Dirac operator on~$M$,
we assume that~$SV$ admits a $\Gamma$-invariant spinor~$s$
such that~$\nabla^{SV}$ is supported away from~$\del V$.
If~$D'_{M,\eps}$ and~$D'_\ofh$ denote the geometric Dirac operators on~$M$
and on the orbifold~$\of$
with respect to the metrics above, equation~\eqref{Seq} becomes
\begin{equation*}
  D'_{M,\eps}(p^*s)=p^*(\eps D'_Ws)=\eps\,p^*(f\,s+h\,c_1s+r)\;,
\end{equation*}
with~$f$, $h$ and~$r$ as before.
We now consider the operator
\begin{equation}\label{DVmodEq}
  \begin{aligned}
    D_\ofh=D'_\ofh
    &-\<\punkt,s\>\,(f\,s+h\,c_1s+r)-\<\punkt,r\>\,s\\
    &-\<\punkt,c_1s\>\,(h\,s-f\,c_1s-c_1r)+\<\punkt,c_1r\>\,c_1s\;.
  \end{aligned}
\end{equation}
Then~\eqref{ass:dirac} and~\eqref{DVmodEq} are equivalent to~\eqref{DmodEq}.
We also conclude that~$D_\ofh-D'_\ofh$ is supported away from~$\del \of$
by Property~\ref{ssec:dirac}~\ref{2.g.9} and self-adjoint.

The situation here is simpler than in~\cite{Gorbi}
because~$D_W$ is independent of~$\eps$
and
\begin{equation}\label{ass:anticomm}
  D_{S^1}D_W+D_WD_{S^1}=0\;.
\end{equation}
In the case of the modified spin Dirac operator,
this follows from~\eqref{DVmodEq} because~$f$, $h$, $s$ and~$r$ all
have vanishing vertical derivative.

Because we are in a local product situation,
the space~$L^2(E)$ splits into eigenspaces of~$D_{S^1}^2$
which we may regard as spaces of $L^2$-sections of orbibundles over~$\of$.
These spaces are invariant under~$D_W$ by~\eqref{ass:anticomm}.
In particular, $H=\ker D_{S^1}\subset p_*E$ is isomorphic to the original~$E_V$,
and the connection~$\nabla^E$ induces a unitary
connection~$\nabla^{p_*E}=\nabla^H\oplus\nabla^{H^\perp}$.

To avoid a clash of notation later,
we write~$u$ for the inward normal coordinate on~$M$ and~$\of$ near their
respective boundary.
Then let~$e_2=\frac\del{\del u}$ be the inward normal unit vector
to~$\del M$ with respect to~$g_1$,
extended parallelly over the cylindrical neighbourhood~$u\in[0,1]$
of~$\del M$.
The boundary operator~$D_{\del M,\eps}$ splits in the same manner
as~$D_{M,\eps}$, and in that cylindrical neighbourhood of~$\del M$, we have
\begin{equation*}
  D_{M,\eps}=c_2\,\biggl(\eps\,\frac\del{\del u}+D^\del_{S^1}
  +\eps D_{\del \ofh}\biggr)\;,
\end{equation*}
where~$D^\del_{S^1}=-c_2D_{S^1}$ denotes the fibrewise boundary operator.
By~\eqref{ass:anticomm},
the operators~$D^\del_{S^1}$ and~$D_{\del\ofh}$ anticommute as well.
Both respect the splitting of~$(p|_{\del M})_*E$
into~$H|_{\del\of}$ and~$H^\perp|_{\del\of}$.

Let~$\Pi_{+,\eps}$ denote the spectral projection onto the subspace
of~$L^2(\del M;E)$ spanned by the eigenspinors
of~$D^\del_{S^1}+\eps D_{\del \ofh}$ with positive eigenvalues.
Then~$\Pi_{+,\eps}$ respects the splitting into~$H|_{\del \of}$
and its orthogonal complement,
so
\begin{equation*}
  \Pi_{+,\eps}=\Pi^H_+\oplus\Pi^\perp_{+,\eps}\;.
\end{equation*}
Moreover, $\ker(D^\del_{S^1}+\eps D_{\del \ofh})$
in the sections of~$H|_{\del\of}$ and equals the kernel %
of the restriction of~$D_{\del\ofh}$ to~$H|_{\del\of}$.
The relevant symplectic structure on~$\ker(D^\del_{S^1}+\eps D_{\del \ofh})$
is induced by~$c_2$.

We denote the restriction
of~$D_\ofh$ to~$H$ by~$D_{\ofh,1}$ in analogy with~\cite{Gorbi}.
It is a Dirac operator in the case of the odd signature operator,
and a Dirac operator modified by~\eqref{DVmodEq}
in the case of the modified spin Dirac operator.
Clifford multiplication with the global vertical tangent vector field~$e_1$
still acts on~$H$ and anticommutes with~$D_{\ofh,1}$.
Because~$c_1$ and~$c_2$ anticommute,
the Clifford multiplication~$c_1$ commutes with the boundary operator~$D_{\del \ofh}$,
so the projection~$\Pi^H_+$ commutes with~$c_1$ as well.
The Lagrangian subspaces~$L_D$ and~$L_B$ of~\eqref{LBeq} and~\eqref{LDeq}
are also invariant under~$c_1$.
We immediately conclude that
\begin{equation}\label{EtaDeff}
  \eta_{\mathrm{APS}}(D_{\ofh,1};L_D)=0\;.
\end{equation}

Recall that~$\eta(\Aa)\in\Omega^\bullet(\Lambda \of)$ denotes the orbifold
\etaform of the Bismut superconnection of the fibrewise spin Dirac
operator with respect to the fibrewise trivial spin structure.
We may regard the signature operator as a Dirac operator twisted
by the pullback of the spinor bundle on the base;
for this reason, the \etaform~$\eta(\Aa)$ occurs
in both formulas in the theorem below.

\begin{thm}[{Compare Dai~\cite[Theorem~1.1]{Dai},
  see also~\cite[Theorem~0.1]{Gorbi}}]\label{Thm5.1}
  With the assumptions and notations above, 
  \begin{align*}
    \lim_{\eps\to0}\eta\bigl(D_{M,\eps};L_D\bigr)
    &=\int_{\Lambda \of\setminus \of}
    \hat A_{\Lambda \of}\bigl(T\ofh,\nabla^{T\ofh}\bigr)
    \,2\eta_{\Lambda \of}(\Aa)\;,
    \\
    \lim_{\eps\to0}\eta\bigl(B_{M,\eps};L_B\bigr)
    &=\int_{\Lambda \of\setminus \of}
    \hat L_{\Lambda \of}\bigl(T\ofh,\nabla^{T\ofh}\bigr)
    \,2\eta_{\Lambda \of}(\Aa)\;.
  \end{align*}
\end{thm}

This result is not covered by~\cite{Dai} because the fibrewise operator
is allowed to have a kernel,
and because~$p\colon M\to V$ is a Seifert fibration.
It is not covered by~\cite{Gorbi} because
the base orbifold is allowed to have a boundary.
But of course, the Seifert fibration is locally a twisted product,
and hence the situation here is more specialised than in the two references
above.
A little extra complication comes from the construction~\eqref{DmodEq}
of the modified spin Dirac operator.
Our proof below relies crucially on the fact that the operators~$D_{M,\eps}$
and~$D_{\del M,\eps}$ both respect the splitting of the bundle~$p_*E\to B$
of fibrewise sections into the fibrewise harmonic spinors,
which form a bundle~$H\to B$ by assumption,
and its orthogonal complement~$H^\perp$.
We believe that with a little extra work, this proof extends
to totally geodesic Seifert fibrations.
Probably Dai's proof also extends to totally geodesic fibre bundles
because~\cite[Prop~5.2]{Dai} then holds for sections of~$H^\perp$.

\begin{proof}
  We follow the proof in~\cite{Gorbi} as far as possible.
  We will view~$D_\ofh$ as a differential operator on~$p_*E$
  and~$D_{S^1}$ as an endomorphism of~$p_*E$.
  We consider the restriction of~$D_{M,\eps}$ to~$H^\perp$.
  In accordance with~\cite[sect~2.c]{Gorbi},
  we denote it by~$D_{M,\eps,4}=D_{S^1}+\eps D_{\ofh,4}$,
  where~$D_{\ofh,4}$ describes the action of~$D_\ofh$ on sections of~$p_*E$.
  Note that in our setting, $D_{\ofh,2}=D_{\ofh,3}=0$.
  Let~$\<\punkt,\punkt\>_{M/\ofh}$ denote the fibrewise $L^2$-product,
  let~$\Div_\ofh$ denote the divergence of a vector field or a one-form on~$\of$,
  let~$\Delta^{H^\perp}$ denote the horizontal Laplacian on~$H^\perp\to \of$,
  and let~$c^\ofh$ denote Clifford multiplication by horizontal vectors.
  Then 
  \begin{align*}
    \Div_\ofh\bigl\<\nabla^{H^\perp}\sigma,\tau\bigr>_{M/\ofh}
    &=-\bigl\<\Delta^{H^\perp}\sigma,\tau\bigr>_{M/\ofh}
    +\bigl\<\nabla^{H^\perp}\sigma,\nabla^{H^\perp}\tau\bigr\>_{M/\ofh}\;,\\
    \Div_\ofh\bigl\<c^\ofh\sigma,\tau\bigr\>_{M/\ofh}
    &=\bigl\<D_\ofh\sigma,\tau\bigr\>_{M/\ofh}
    -\bigl\<\sigma,D_\ofh\tau\bigr\>_{M/\ofh}\;.
  \end{align*}
  Because~$D'_\ofh$ has been modified to~$D_\ofh$ by a self-adjoint operator
  of order~$0$ supported away from the boundary,
  and because~$\frac\del{\del r}$ is the inward normal direction,
  we conclude that
  \begin{multline}\label{Prop2.7Formel}
    \norm{\bigl(i-\eps^{-1}D_{M,\eps,4}\bigr)\sigma}_{L^2(\of;H^\perp)}^2
    -\bigl\|\nabla^{H^\perp}\sigma\bigr\|_{L^2(\of;H^\perp)}^2\\
    =\bigl\<\bigl(1+\eps^{-2}D_{S^1}^2+D_{\ofh,4}^2
      -\Delta^{H^\perp}\bigr)\sigma,
      \sigma\bigr\>_M
      -\<ic_2\sigma,\sigma\>_{\del M}
      -\eps^{-1}\<D_{\del M,\eps,4}\sigma,\sigma\>_{\del M}\;.
  \end{multline}

  Let~$H^1(\of,H^\perp;\Pi_{+,\eps}^\perp)$ denote the subspace of the first
  Sobolev space generated by sections that satisfy the APS boundary condition.
  If~$\sigma\in H^1(\of,H^\perp;\Pi^\perp_{+,\eps})$,
  then~$\<c_2\sigma,\sigma\>=0$ because~$c_2$ anticommutes
  with~$D_{\del M,\eps,4}$,
  and~$\<D_{\del M,\eps,4}\sigma,\sigma\>\le 0$, so
  \begin{equation}\label{Prop2.7Ineq}
    \norm{\bigl(i-\eps^{-1}D_{M,\eps,4}\bigr)\sigma}_{L^2(\of;H^\perp)}^2
    \ge\bigl\|\nabla^{H^\perp}\sigma\bigr\|_{L^2(\of;H^\perp)}^2
    +\bigl\<\bigl(1+\eps^{-2}D_{S^1}^2+D_{\ofh,4}^2-\Delta^{H^\perp}\bigr)
      \sigma,\sigma\bigr\>_M\;.
  \end{equation}

Let~$\lambda_B$ denote the smallest absolute value of
a nonzero eigenvalue of the effective horizontal operator~$D_{\ofh,1}$
with respect to the given boundary conditions,
and let~$0<c<\frac{\lambda_B}2$.
Let~$\Gamma=\Gamma_+\dotcup\Gamma_0\dotcup\Gamma_-$ denote a contour in~$\C$,
where~$\Gamma_\pm$ goes around~$\pm[\lambda_B,+\infty]$ with distance~$c$,
and~$\Gamma_0$ is a circle around~$0$ with radius~$c$.
\begin{equation*}
\begin{tikzpicture}
  \draw[->] (-5,0) -- (5,0) ; %
  \draw[->] (0,-1.5) -- (0,1.5) ; %
  \draw (-4,0.1) -- (-4,-0.1) node[below] {$\scriptstyle-\lambda_B$} ;
  \draw (4,0.1) -- (4,-0.1) node[below] {$\scriptstyle\lambda_B$} ;
  \node[below right] at (1,0) {$\scriptstyle c$} ;
  \begin{scope}[line width=1.5pt]
    \draw (0,1) arc (90:-270:1cm) node[above right] {$\Gamma_0$} ;
    \draw (-5,1) -- (-4,1) node[above] {$\Gamma_-$}
	arc (90:-90:1cm) -- (-5,-1) ;
    \draw (5,1) -- (4,1) node[above] {$\Gamma_+$}
	arc (90:270:1cm) -- (5,-1) ;
    \begin{scope}[dashed]
      \draw (-5,1) -- (-5.3,1) ;
      \draw (-5,-1) -- (-5.3,-1) ;
      \draw (5,1) -- (5.3,1) ;
      \draw (5,-1) -- (5.3,-1) ;
    \end{scope}
  \end{scope}
\end{tikzpicture}
\end{equation*}

  Assume that~$\lambda$ is not in the spectrum of~$D_{M,\eps,4}$
  with APS boundary conditions given by~$\Pi^\perp_{+,\eps}$.
  Using parametrices on~$\del \of\times[0,\infty)$ and on the double of~$\of$,
  one can construct a resolvent
  \begin{equation*}
    R_\eps(\lambda)\colon L^2(\of;H^\perp)
  \to H^1(\of,H^\perp;\Pi^\perp_{+,\eps})
  \end{equation*}
  of~$D_{M,\eps,4}$.
  We define the family of Schatten norms of operators~$A$ acting
  on~$L^2(M;E)\cong L^2(\of;p_*E)$ by
  \begin{equation*}
    \norm A_p=\tr\Bigl((A^*A)^{\frac p2}\Bigr)
  \end{equation*}
  for~$1\le p<\infty$, and let~$A_\infty$ denote the operator norm.
  Because~$D_\ofh^{\prime 2}-\Delta^{p_*E}$ is a bundle endomorphism on~$E\to M$,
  we can use the inequality~\eqref{Prop2.7Ineq} above to prove the
  analogue of~\cite[Prop~2.7]{Gorbi}.
  In particular,
  there exists a constant~$\eps_0>0$ such that for all~$p>\dim M$,
  all~$\eps\in(0,\eps_0)$ and all~$\lambda\in\Gamma$, one has
  \begin{equation}\label{Prop2.7}
    \norm{R_\eps(\lambda)}=O(1,\eps\abs\lambda)\qquad\text{and}\qquad
    \norm{R_\eps(\lambda)}=O(\abs\lambda)\;.
  \end{equation}

  Let~$H^1(\of,H;\Pi^H_{+,\Lambda})$ denote the subspace of the first Sobolev
  space spanned by sections satisfying the Lagrangian APS boundary
  condition fixed above.
  Then we consider the resolvent
  \begin{equation*}
    (\lambda-D_{\ofh,1})^{-1}\colon L^2(\of;H)
    \longrightarrow H^1(\of,H;\Pi^H_{+,\Lambda})\;.
  \end{equation*}
  Obviously~$(\lambda-D_{M,\eps})^{-1}
  =(\lambda-D_{\ofh,1})^{-1}\oplus R_\eps(\lambda)$.
  Because~$D_{\ofh,1}$ is the effective horizontal operator,
  Proposition~2.8 in~\cite{Gorbi} reduces to
  \begin{equation}\label{Prop2.8}
    \norm{(\lambda-D_{\ofh,1})^{-1}}_\infty=O(1)\qquad\text{and}\qquad
    \norm{(\lambda-D_{\ofh,1})^{-1}}_p=O(\abs\lambda)
  \end{equation}
  for all~$\lambda\in\Gamma$,
  which can be proved in the same way as~\eqref{Prop2.7Formel}.
  As an analogue of~\cite[Prop~2.9]{Gorbi}, we get
  \begin{equation}\label{Prop2.9}
    \norm{(\lambda-\eps^{-1}D_{M,\eps})^{-1}-(\lambda-D_{\ofh,1})^{-1}}_\infty
    =O(\eps\abs\lambda)
  \end{equation}
  for all~$\lambda\in\Gamma$.

  Because~$D_{M,\eps}=\eps\,D_{\ofh,1}\oplus D_{M,\eps,4}$,
  the spectral projection~$P_\eps$ in~\cite[sect~2.f]{Gorbi}
  coincides with the spectral projection onto~$\ker(D_{\ofh,1})=\ker(D_{M,\eps})$
  independent of~$\eps$.
  Using~\eqref{EtaDeff}, (\ref{Prop2.7}--\ref{Prop2.9}),
  we can adapt the proof of~\cite[Prop~2.10]{Gorbi} to show that
  there exists a small~$\alpha>0$ such that
  \begin{multline}\label{Prop2.10}
    \lim_{\eps\to 0}\int_{\eps^{\alpha-2}}^\infty\frac1{\sqrt{\pi t}}
    \tr\Bigl(D_{M,\eps}e^{-tD_{M,\eps}^2}\Bigr)\,dt\\
    =\lim_{\eps\to 0}\int_{\eps^{\alpha-2}}^\infty\frac1{\sqrt{\pi t}}
    \tr\biggl((1-P_\eps)\Bigl(D_{M,\eps}e^{-tD_{M,\eps}^2}\Bigr)(1-P_\eps)\biggr)\,dt
    =\eta(D_{\ofh,1})=0\;.
  \end{multline}

  Note that the orbifold \etaform~$\eta_{\Lambda \of}(\Aa)$
  vanishes on the principal stratum~$\of\subset\Lambda \of$
  because the Seifert fibration~$M\to V$ is a twisted product
  and the fibrewise operator~$D_{S^1}$ has symmetric spectrum.
  The additional divergent terms in the heat asymptotics
  of the supertrace of~$e^{-tD_\ofh}$ caused by the non-geometric
  terms introduced in~\eqref{DmodEq} and~\eqref{DVmodEq}
  do not cause extra complications here because they are
  supported on the regular stratum (and away from the boundary).
  Because the singular stratum does not extend to the boundary~$\del \of$,
  the right hand side of the expression in the theorem vanishes
  near the boundary.

  We can now use finite propagation speed to obtain the analogue
  of~\cite[Prop~2.12]{Gorbi}, which says that
  \begin{equation*}
    \lim_{\eps\to 0}\int_0^{\eps^{\alpha-2}}\frac1{\sqrt{\pi t}}
    \tr\Bigl(D_{M,\eps}e^{-tD_{M,\eps}^2}\Bigr)\,dt
  \end{equation*}
  equals the right hand side of the expression in the theorem.
  Together with~\eqref{Prop2.10}, this finishes the proof.
\end{proof}

\subsection{A product formula for \etaforms}
\label{A3}
Recall the family~$\del\mathcal M_\pm=\Sigma_\pm\times E_\pm\to(0,\infty)$,
where~$\Sigma_\pm$ denotes a fixed K3 surface and~$E_\pm\to(0,\infty)$
denotes the family of
tori~$E_{\pm,a}=(S^1_{\xi_\pm}\times S^1_{a\zeta_\pm})/\Gamma_\pm$
for~$a\in(0,\infty)$.
We now consider the \etaforms~$\eta(\mathbb B)$ and~$\eta(\mathbb D)$
associated with the fibrewise signature and Dirac operators
on~$\del\mathcal M_\pm\to(0,\infty)$.

We start with the signature operator.
Because~$X_{\pm,a}=\Sigma\times E_{\pm,a}$, we may split
\begin{equation*}
  \Omega^\bullet(\mathcal M_\pm,(0,\infty))
  \cong\Omega^\bullet(\Sigma)\mathbin{\hat\otimes}
  \Omega^\bullet(E_\pm/(0,\infty))\;.
\end{equation*}
Let~$\mathbb B_E$ denote the superconnection associated to the fibrewise
signature operator for the family~$E_\pm\to(0,\infty)$,
equipped with the trivial spin structure.
Let~$B_\Sigma$ denote the signature operator on~$\Sigma$.
With respect to the splitting above, we have
\begin{equation*}
  \mathbb B_t
  =\sqrt t B_\Sigma\mathbin{\hat\otimes}\id
  +\id\mathbin{\hat\otimes}\mathbb B_{E,t}
  =\sqrt t B_\Sigma\mathbin{\hat\otimes}\id+\id\mathbin{\hat\otimes}\sqrt tB_E
  +\tilde\nabla^u\;.
\end{equation*}
Because~$B_\Sigma$ is independent of~$a$,
we have~$[\tilde\nabla^u,B_\Sigma]=0$.
Also~$\Sigma$ is even-dimensional, so the signature operator~$B_\Sigma$
has symmetric spectrum.
We conclude
\begin{equation}\label{eq:BProd}
  \begin{aligned}
    \str_{\Omega^\bullet(\del\mathcal M/(0,\infty))}
    \biggl(\frac{\del\mathbb B_t}{\del t}\,e^{-\mathbb B_t^2}\biggr)
    &=\str_{\Omega^\bullet(\Sigma)}
    \biggl(\frac1{\sqrt{4t}}\,B_\Sigma\,e^{-tB_\Sigma^2}\biggr)
    \cdot\str_{\Omega^\bullet(E/(0,\infty))}\bigl(e^{-\mathbb B_{E,t}^2}\bigr)\\
    &\qquad+\str_{\Omega^\bullet(\Sigma)}\Bigl(e^{-tB_\Sigma^2}\Bigr)
    \cdot\str_{\Omega^\bullet(E/(0,\infty))}
    \biggl(\frac{\del\mathbb B_{E,t}}{\del t}\,e^{-\mathbb B_{E,t}^2}\biggr)\\
    &=\ind(B_\Sigma)\cdot\frac12\str_{\Omega^\bullet(E/(0,\infty))}
    \Bigl(B_E\,\bigl[\tilde\nabla^u,B_E\bigr]
    \,e^{-tB_E^2}\Bigr)\;,
  \end{aligned}
\end{equation}
where we have used the McKean-Singer
formula for~$\ind(B_\Sigma)$
in the last step.

We now want to relate~$\mathbb B_{E,t}=\sqrt t\,B_E+\tilde\nabla^u$
to the superconnection~$\Aa$ of the fibrewise Dirac operator
on~$E_\pm\to(0,\infty)$ with respect to the trivial spin structure.
We regard the fibrewise spinor bundle as the pullback of
a bundle~$S=S^+\oplus S^-$ on the base~$(0,\infty)$.
The signature operator acts on the fibrewise spinor bundle twisted
by itself.
We therefore have
\begin{equation*}
  \Omega^\bullet(E_\pm/(0,\infty))\cong p_*p^*(S\otimes S)\;,
\end{equation*}
and the corresponding superconnection is now given
by~$\mathbb B_{E,t}=\Aa_t\otimes\id+\id\otimes\nabla^S$.
The two terms above supercommute, so we have
\begin{equation*}
  \str_{\Omega^\bullet(E/(0,\infty))}
  \biggl(\frac{\del\mathbb B_{E,t}}{\del t}
  \,e^{-\mathbb B_{E,t}^2}\biggr)
  =\str_{p_*p^*S}\biggl(\frac{\del\Aa_t}{\del t}\,e^{-\Aa_t^2}\biggr)
  \cdot\tr_S\Bigl(e^{-(\nabla^S)^2}\Bigr)\;.
\end{equation*}
Integrating over~$t$, we get
\begin{equation*}
  \tilde\eta(\mathbb B_E)=\tilde\eta(\Aa)\,\ch(\nabla^S)
\end{equation*}
In degree~$1$, this equals twice the spinorial \etaform
because~$\rk S=2$.
Equivalently, the reader is invited to compare Bismut and Cheeger's
results for the universal spinorial \etaform and the signature \etaform
of bundles of flat tori in~\cite[Thms~2.22, 2.25]{BChTorus}.
Together with~\eqref{eq:BProd},
we see that~$\tilde\eta(\mathbb B)=2\ind(B_\Sigma)\,\tilde\eta(\Aa)$,
which is exactly~\eqref{eq:EtaProdB}.

Similarly,
the fibrewise spinor bundle of~$X_{\pm,a}$
is the exterior tensor product of the spinor bundles of~$\Sigma$ and~$E_{\pm,a}$.
Proceeding as in~\eqref{eq:BProd},
we prove~\eqref{eq:EtaProdD}.

\subsection{Adiabatic limits of families of flat tori}\label{A2}
We consider a family of fibred manifolds~$E\to F\to\R$ as in Section~\ref{2.y},
diagram~\eqref{2.d.13}.
We will prove Proposition~\ref{Prop2y.2},
which is a special case of the adiabatic limit
formula for \etaforms of Bunke, Ma~\cite{BuMa} and Liu~\cite{Liu},
but as a strict equation of forms, not as an equation modulo exact forms.
To this end, we will simply compute both sides of the equation.
We believe that under suitable conditions, the adiabatic limit formula
for \etaforms holds in this strict sense for more general
iterated fibre bundles.

We fix~$y\in\R$; later we will consider the limit~$y\to\infty$.
For~$x\in\R$, we identify
\begin{equation*}
  E_x=\C/(\Z+(x+iy)\Z)\qquad\text{and}\qquad F_x=\R/y\Z\;.
\end{equation*}
The fibration~$E\to F$ is formed by taking the imaginary part.
The standard Euclidean metric on~$\C$ induces a fibrewise metric on~$E\to\R$.
The group~$S^1$ acts isometrically by translation in the real direction
in~$\C$.

On the total space of~$E$, we consider the fibrewise orthonormal
base induced by~$e_1=1$ and~$e_2=i\in\C$.
The connection~$\nabla^W$ in~\eqref{eq:NablaW} induces
a horizontal subspace~$T^HE\subset TE$ for the fibration~$E\to\R$.
It is spanned by the vector field~$e_3$ induced from the vector field
\begin{equation*}
  \C\times\R\longrightarrow\C\times\R\qquad\text{with}\qquad
  (u+iv,x)\longmapsto(v/y;1)\;,
\end{equation*}
which is invariant under the $x$-dependent action of~$\Z^2$ on~$\C$.
Obviously,
\begin{equation}\label{Eqn:Lie}
  [e_1,e_2]=[e_1,e_3]=0\qquad\text{and}\qquad[e_2,e_3]=\frac1y\,e_1\;.
\end{equation}
The connection~$\nabla^{T(E/\R)}$ on the vertical tangent bundle
is given by
\begin{equation*}
  \nabla^{T(E/\R)} e_1=\frac1{2y}e_2\,dx\qquad\text{and}\qquad
  \nabla^{T(E/\R)} e_2=-\frac1{2y}e_1\,dx\;,
\end{equation*}
see~\eqref{eq:Nabla0}.
Because~$y\in\R$ is constant, this connection is flat.

We identify the fibrewise spinor
bundles~$S(E/\R)=S^+(E/\R)\oplus S^-(E/\R)\to E$
with~$\underline\C\oplus\underline\C$.
If~$d$ denotes the trivial connection on the spinor bundle,
then~$\nabla^{T(E/\R)}$ induces the connection
\begin{equation*}
  \nabla^{S(E/\R)}=d+\frac1{4y}\,c_1c_2\,dx\;.
\end{equation*}
Let~$W=p_*S(E/\R)$ denote the infinite-dimensional vector bundle over~$\R$
whose fibres are the spaces of sections of~$S(E/\R)|_{E_x}$.
We identify sections of~${p_*S(E/\R)}$ with sections of~$S(E/\R)$.
Because the fibres of~$p$ have vanishing mean curvature,
the induced connection takes the form
\begin{equation*}
  \nabla^{p_*S(E/\R)}s=\nabla^{S(E/\R)}_{e_3}s\,dx\;.
\end{equation*}

Let~$D_x$ denote the fibrewise Dirac operator over~$x\in\R$.
Then the Bismut superconnection for the fibration~$E\to\R$ takes the form
\begin{equation*}
  \mathbb B_t
  =\sqrt t\,D_x+\nabla^{p_*S(E/\R)}\;.
\end{equation*}
Because
\begin{align*}
  [\nabla^{p_*S(E/\R)},D_x]
  &=-c_1[\nabla^0_{e_3},\nabla^0_{e_1}]\,dx-c_2[\nabla^0_{e_3},\nabla^0_{e_2}]\,dx
  -[c_1c_2,c_1]\nabla^0_{e_1}\,\frac{dx}{4y}
  -[c_1c_2,c_2]\nabla^0_{e_2}\,\frac{dx}{4y}\\
  &=c_2\nabla^0_{e_1}\,\frac{dx}y-c_2\nabla^0_{e_1}\,\frac{dx}{2y}
  +c_1\nabla^0_{e_2}\,\frac{dx}{2y}
  =c_2\nabla^S_{e_1}\,\frac{dx}{2y}
  +c_1\nabla^S_{e_2}\,\frac{dx}{2y}\;,
\end{align*}
by~\eqref{eq:EtaForm},
the \etaform for bundles with even-dimensional fibres is given by
\begin{align}
  \begin{split}\label{EtaER.1}
  \tilde\eta(\mathbb B)
  &=-\frac1{8y\pi i}
  \int_0^\infty\tr\Bigl(ic_1c_2
  \,\bigl(c_1\nabla^S_{e_1}+c_2\nabla^S_{e_2}\bigr)
  \bigl(c_1\nabla^S_{e_2}+c_2\nabla^S_{e_1}\bigr)\,dx\,e^{-tD_x^2}\Bigr)\,dt\\
  &=\frac1{8y\pi}
  \int_0^\infty\tr\Bigl(\bigl((\nabla^S_{e_1})^2-(\nabla^S_{e_2})^2\bigr)
  \,e^{-tD_x^2}\Bigr)\,dt\,dx\;.
  \end{split}
\end{align}

The space of vertical sections is spanned by sections
of the form~$\phy_{m,n}s_\pm$, where
\begin{equation*}
  \phy_{m,n}(u,v)=e^{2\pi i\bigl(m(u-\frac xy\,v)+n\frac vy\bigr)}
\end{equation*}
for~$m$, $n\in\Z$ and~$s_\pm$ is a fibrewise parallel section of~$S^\pm(E/\R)$.
The vertical Laplacian takes the form~$-\del_u^2$,
and its kernel is spanned by the functions~$\phy_{0,n}s^\pm$.
Because~$S(E/\R)$ has rank~$2$,
we can therefore rewrite the \etaform as
\begin{equation}\label{EtaER.2}
  \tilde\eta(\mathbb B)
  =\frac{dx}{4\pi y}\int_0^\infty\sum_{m,n\in\Z}4\pi^2
  \biggl(\Bigl(\frac{n-mx}y\Bigr)^2-m^2\biggr)
  e^{-4\pi^2t\bigl(\bigl(\frac{n-mx}y\bigr)^2+m^2\bigr)}\,dt\;.
\end{equation}
For fixed~$m$, the sum over~$n$ describes the spectrum of
a Dirac operator on a circle~$S^1_y$ of length~$y$ with coefficients
in a flat vector bundle.
Approximating the heat kernel on~$S^1_y$ by the Euclidean heat kernel gives
\begin{align*}
  \sum_n4\pi^2
  \biggl(\Bigl(\frac{n-mx}y\Bigr)^2-m^2\biggr)
  e^{-4\pi^2t\bigl(\frac{n-mx}y\bigr)^2}
  &=-\biggl(4\pi^2m^2+\frac\del{\del t}\biggr)
  \sum_{n\in\Z}e^{-4\pi^2t\bigl(\frac{n-mx}y\bigr)^2}\\
  &=-\biggl(4\pi^2m^2-\frac1{2t}\biggr)\,\frac y{\sqrt{4\pi t}}
  +O\Bigl((1+m^2)e^{-\frac{y^2-c}{4t}}\Bigr)
\end{align*}
for each small~$c>0$, uniformly in~$m$.
For~$\alpha>0$ small, we compute
\begin{multline}\label{EtaER.3}
  \frac{dx}{4\pi y}\int_0^{y^{2-\alpha}}\sum_{m,n\in\Z}4\pi^2
  \biggl(\Bigl(\frac{n-mx}y\Bigr)^2-m^2\biggr)
  e^{-4\pi^2t\bigl(\bigl(\frac{n-mx}y\bigr)^2+m^2\bigr)}\,dt\\
  =-\frac{dx}{4\pi}\int_0^{y^{2-\alpha}}\sum_{m\in\Z}
  \biggl(4\pi^2m^2-\frac1{2t}\biggr)\,\frac 1{\sqrt{4\pi t}}\,e^{-4\pi^2m^2t}
  \,dt
\end{multline}

For~$t\ge y^{2-\alpha}$,
we only need to study the contribution from~$\ker D^{E/F}$,
which can be written as
\begin{align*}
  \frac1{2\pi i}\int_{y^{2-a}}^\infty\str\biggl(P^{\ker D^{E/F}}
  \,\frac{\del\mathbb B_t}{\del t}\,e^{-\mathbb B_t^2}\biggr)\,dt
  &=\frac{dx}{2y\pi}\,\int_{y^{2-\alpha}}^\infty
  \sum_n\frac{4\pi^2n^2}{y^2}\,e^{-\frac{4\pi^2n^2t}{y^2}}\,dt\\
  &=\frac{dx}{4y\pi}\,\sum_ne^{-4\pi^2y^{-\alpha}n^2}\,dx\;.
\end{align*}
This sum converges to~$0$ as~$y\to\infty$ for~$\alpha<1$ because
\begin{equation*}
  \frac 1y\sum_ne^{-4\pi^2y^{-\alpha}n^2}
  \le\frac2y\sum_{n=0}^\infty e^{-4\pi^2y^{-\alpha}n}
  =\frac 2y\cdot\frac1{1-e^{-4\pi^2y^{-\alpha}}}
  =\frac 2{4\pi^2y^{1-\alpha}}+o\bigl(4\pi^2y^{-\alpha}\bigr)\;.
\end{equation*}
In general, one would expect here the \etaform of the effective
fibrewise operator on~$F\to\R$, acting on sections of~$\ker D^{E/F}$,
and some extra terms in the case that there are very small eigenvalues.
Because the kernel bundle is trivial here, it is not surprising that
this form vanishes in our situation.
Combining this with the computations above, we finally see that
\begin{equation}\label{EtaER.4}
  \lim_{y\to\infty}\tilde\eta(\mathbb B)
  =\frac{dx}{4\pi}\int_0^\infty\sum_{m\in\Z}
  \biggl(\frac1{2t}-4\pi^2m^2\biggr)\,e^{-4\pi^2m^2t}
  \,\frac{dt}{\sqrt{4\pi t}}
\end{equation}

We now consider the fibration~$E\to F$.
We choose the horizontal bundle spanned by the vectors~$e_2$ and~$e_3$
above.
We identify the spinor bundle~$S(E/F)\to E$ with~$\underline\C$.
From equation~\eqref{Eqn:Lie}, we get the superconnection~$\Aa_t$
for the family~$E\to F$ as
\begin{equation*}
  \Aa_t=\sqrt t\,c_1\nabla^S_{e_1}+\nabla^{p_*S(E/\R)}+\frac1{4y\sqrt t}\,c_1\,dv\,dx\;.
\end{equation*}
\enlargethispage{0.4\baselineskip}
Its curvature is given by
\begin{equation*}
  \Aa_t^2=-t(\nabla_{e_1}^0)^2+\frac1{2y}\,dv\,dx\;.
\end{equation*}
Assuming that the Clifford volume element~$ic_1$ acts as~$1$,
the \etaform of the bundle~$E\to F$ with odd-dimensional fibres
takes the form
\begin{align}
  \begin{split}\label{EtaEF.1}
  \tilde\eta(\Aa)
  &=(2\pi i)^{-\tfrac{N^F}2}
  \int_0^\infty\tr\biggl(\frac{\del\Aa_t}{\del t}\,e^{-\Aa_t^2}\biggr)\,
  \frac{dt}{\sqrt\pi}\\
  &=\int_0^\infty\tr\Biggl(c_1
  \biggl(\nabla_{e_1}^S-\frac1{8\pi iyt}\,dv\,dx\biggr)
  \biggl(1-\frac1{4\pi iy}\,dv\,dx\,\nabla^S_{e_1}\biggr)
  e^{t(\nabla_{e_1}^S)^2}\Biggr)
  \,\frac{dt}{\sqrt{4\pi t}}\\
  &=\int_0^\infty\tr\Biggl(\biggl(-i\nabla_{e_1}^S+\frac{dv\,dx}{8\pi yt}
  \bigl(1+2t(\nabla^S_{e_1})^2\bigr)\biggr)e^{t(\nabla_{e_1}^S)^2}\Biggr)
  \,\frac{dt}{\sqrt{4\pi t}}\\
  &=\int_0^\infty\tr\Biggl(\biggl(-i\nabla_{e_1}^S+\frac{dv\,dx}{8\pi yt}
  \Bigl(1+2t\,\frac\del{\del t}\Bigr)\biggr)e^{t(\nabla_{e_1}^S)^2}\Biggr)
  \,\frac{dt}{\sqrt{4\pi t}}\;.
  \end{split}
\end{align}

The space of vertical sections is spanned by sections of the form~$\phy_m$
for~$m\in\Z$, where
\begin{equation*}
  \phy_m(u)=e^{2\pi imu}\;.
\end{equation*}
We can now compute the integral of~$\tilde\eta(\Aa)$ over the fibres
of~$F\to\R$ as
\begin{align}
  \begin{split}\label{EtaEF.2}
  \int_{F/\R}\tilde A(\Aa)
  &=\frac{dx}{4\pi}\int_0^\infty
  \sum_{m\in\Z}\biggl(\frac1{2t}+\frac\del{\del t}\biggr)e^{-4\pi^2m^2t}
  \,\frac{dt}{\sqrt{4\pi t}}\\
  &=\frac{dx}{4\pi}\int_0^\infty\sum_{m\in\Z}
  \biggl(\frac1{2t}-4\pi^2m^2\biggr)\,e^{-4\pi^2m^2t}
  \,\frac{dt}{\sqrt{4\pi t}}\;.
  \end{split}
\end{align}

\begin{proof}[Proof of Proposition~\ref{Prop2y.2}]
  The Proposition follows by comparing~\eqref{EtaER.4}
  and~\eqref{EtaEF.2}.
\end{proof}

\pdfoutput=1

\section{On the values of the function
  \texorpdfstring{$F_{k,\eps}(s)$}{Fke(s)}}\label{A2v}

\def\be{\begin{equation}} \def\ee{\end{equation}} \def\bes{\begin{equation*}} \def\ees{\end{equation*}}
\def\={\;=\;}   \def\+{\,+\,} \def\mi{\,-\,} \def\iv{^{-1}}
\def\T{\Theta}  \def\z{\zeta_k} \def\l{\lambda}  \def\g{\gamma}  \def\vth{\vartheta}
\def\SL{SL_2(\Z)}  \def\sm#1#2#3#4{\bigl(\begin{smallmatrix}#1&#2\\#3&#4\end{smallmatrix}\bigr)}

In Section \ref{sec:nu} of this paper (Proposition~\ref{Prop2v.2}, Theorem~\ref{Thm2h.0}, Theorem~\ref{Thm2h.1}) it was
shown that the \nuinvts of extra twisted connected sums can be computed in terms of values of the analytic function
$F_{k,\eps}\colon(0,\infty)\to\R$ defined for each~$k\in\mathbb N$ and integer $\eps$ prime to~$k$ by
$$ F_{k,\eps}(s)\=\int_0^\infty\int_0^s\sum_{m\,\equiv\,\eps n\;(\text{mod}\,k)} mn\,e^{-t(m^2+n^2a^2)}\,da\,dt \qquad (s \in \R_{>0})$$
(Definition~\ref{Def2h.0}).
In this section we will give a closed formula for $F_{k,\eps}(s)$ in terms of the Dedekind 
\etafunc, show that it is equal to the arccosine (or arcsine, or arctangent) of a computable algebraic number 
whenever $s^2$ is rational, and show that the specific combinations of $F_{k,\eps}$-values occurring in 
Theorems~\ref{Thm:A} and~\ref{Thm:B} can be evaluated in terms of Dedekind sums.

\subsection{Evaluation of \texorpdfstring{$F_{k,\eps}(s)$}{F(k,epsilon)(s)} in terms of the Dedekind \etafunc}
For $\tau$ in the upper half-plane~$\Hh$ we denote by $\eta(\tau)$ and $\logeta(\tau)$ the Dedekind \etafunc and 
the principal branch (real on the positive imaginary axis) of its logarithm, given explicitly by
\begin{equation}
\label{eq:explicit_principal}
\eta(\tau)\=
e^{\pi i\tau/12}\,\prod_{n=1}^\infty\bigl(1- e^{2\pi in\tau}\bigr)\,, \qquad
    \logeta(\tau) \= \frac{\pi i\tau}{12}\,-\,\sum_{n=1}^\infty\frac{\sigma(n)}n\,e^{2\pi in\tau}\,,
\end{equation}
where $\sigma(n)$ denotes the sum of the positive divisors of~$n$. The fact that $\eta(\tau)^{24}$ is a modular form 
of weight~12 on $\,\SL$ implies that $\logeta$ satisfies the transformation equation
 \be\label{Ltransf}  \logeta\biggl(\frac{a\tau+b}{c\tau+d}\biggr) 
       \= \logeta(\tau) \+ \frac14\,\text{Log}\bigl(-(c\tau+d)^2\bigr)\+\frac{\pi i}{12}N(a,b,c,d)\,, \ee
for all~$\sm abcd\in\SL$, where $\,\text{Log}\,$ denotes the principal 
branch (real on the positive real axis) of the logarithm on~$\C\setminus(-\infty,0]$ and $N(a,b,c,d)$ is an integer
given by $N(a,b,c,d)=b/d$ ($=\pm b$) if~$c=0$ and by $N(a,b,c,d)=\frac{a+d}c-12\DS(d,c)$ if~$c\ne0$,
where the Dedekind sum~$\DS(d,c)$ is defined in~\eqref{eq:DedekindSum}.
Our first result is:

\begin{prop} \label{Prop1} 
The value of $F_{k,\eps}(s)$ for any $k\in\mathbb N$, integer $\eps$ prime to~$k$, and positive real number~$s$ is given by
  \be\label{New} F_{k,\eps}(s) \= 2\,\Im\logeta\Bigl(\frac{-\eps^*+is\iv}k\Bigr) \+ \frac{\pi\eps^*}{6k}\,, \ee
where $\eps^*\in\Z$ is any solution of $\eps\eps^*\equiv1\;\text{\rm(mod~$k$)}$. 
\end{prop} 
\begin{proof} 
We first rewrite the definition of $F_{k,\eps}$ as
 \be\label{Fint} s\,\frac d{ds}\,F_{k,\eps}(s) \= \frac\pi{k}\,\int_0^\infty\T_{k,\eps}(s,t)\,dt \qquad\text{and}\qquad F_{k,\eps}(0)=0\,, \ee
where $\T_{k,\eps}(s,t)$ is defined for $s,\,t>0$ by
 $$ \T_{k,\eps}(s,t) \= \sum_{m\,\equiv\,\eps n\;(\text{mod}\,k)}mn\,e^{-\pi t(m^2/s+n^2s)/k} \;. $$
This theta series satisfies the functional equations
 \be\label{FE}  \T_{k,\eps}(s,t) \= -\T_{k,-\eps}(s,t) \= \T_{k,\eps^*}(s\iv,t) \= t^{-3}\,\T_{k,\eps}(s,t\iv)\,, \ee
(where~$\eps^*\equiv\eps\iv\!\pmod k$ as above), as we see by changing the sign of~$m$, interchanging 
$m$ and~$n$, or applying the Poisson summation formula with respect to both $m$ and~$n$. If instead we
apply Poisson summation with respect to~$m$ only, we obtain the stronger identity
  \be\label{Th2} \T_{k,\eps}(s,t) \= \frac{(s/t)^{3/2}}{i\sqrt k}\,\sum_{m,n\in\Z} mn\,\z^{mn\eps}\,e^{-\pi s(m^2/t+n^2t)/k} 
      \qquad\bigl(\z:=e^{2\pi i/k}\bigr)\,,  \ee
which also makes it clear that the integral in~\eqref{Fint} converges, since it shows that
$\T_{k,\eps}(s,t)$ is exponentially small as $t$ tends to either~0 or~$\infty$.  Inserting~\eqref{Th2} into~\eqref{Fint}
and applying the elementary formula
  $$ \int_0^\infty e^{-c_1t-c_2/t}\,t^{-3/2}\,dt \=  %
          \= \sqrt{\frac\pi{c_2}}\;e^{-2\sqrt{c_1c_2}} \qquad(c_1,\,c_2>0)  $$
with $c_1=\pi sn^2/k$, $c_2=\pi sm^2/k$, we find
  $$ \frac{ik}{2\pi}\,F_{k,\eps}'(s) \= \sum_{m,n>0} n\,\bigl(\z^{\eps mn}-\z^{-\eps mn}\bigr)\,e^{-2\pi mns/k}
   \= \sum_{n=1}^\infty\sigma(n)\,\bigl(\z^{\eps n}-\z^{-\eps n}\bigr)\,e^{-2\pi ns/k} \,,$$
\enlargethispage{\baselineskip}
and this can be integrated immediately using the definition of~$\logeta$ to give the formula
\be\label{Old} F_{k,\eps}(s) \= 2\,\Im\logeta\Bigl(\frac{\eps+is}k\Bigr)  \+ c_{k,\eps} \ee
for some constant~$c_{k,\eps}$ depending only on $k$ and~$\eps$. We then use the modularity property~\eqref{Ltransf} 
and the fact that $\frac{\pm\eps+is}k=\g\bigl(\frac{\mp\eps^*+is^{-1}}k\bigr)$ with
$\g=\bigl(\begin{smallmatrix}\pm\eps&*\\k&\pm\eps^*\end{smallmatrix}\bigr)\in\SL$
to deduce~\eqref{New} from~\eqref{Old}, up to determining the value of the
constant $c_{k,\eps}$. That is determined immediately from the property
${F_{k,\eps}(0)=0}$,
because $\logeta(\tau)=\pi i\tau/12+\text o(1)$ for~$\Im(\tau)\to\infty\,$.
\end{proof}

Using the transformation law~\eqref{Ltransf} again,
we can evaluate the constant~$c_{k,\eps}$ in~\eqref{Old} to get
\begin{equation}
  F_{k,\eps}(s) \= 2\,\Im\logeta\Bigl(\frac{\eps+is}k\Bigr)
  \+2\pi \DS(\eps,k)-\frac{\pi\eps}{6k}\;.
\end{equation}
giving 
an alternative formula for the function $F_{k,\eps}(s)$.  
In some cases this same transformation law can be used to give a complete formula for~$F_{k,\eps}(s)$ in terms of
Dedekind sums.  This happens whenever one (and hence both) of the two $\SL$-equivalent numbers 
$\frac{\pm\eps+is}k$ and~$\frac{\mp\eps^*+is}k$ is~$\SL$-equivalent to its negative conjugate.
An easy calculation shows that the equation $\g\tau=-\overline\tau$ for $\g=\sm abcd\in\SL$ and $\tau$ in the upper 
half-plane holds if and only if~$a=d$ and~$|c\tau+a|=1$, which in our situation says that 
$s^2=\frac1{c^2}-\bigl(\frac\eps k+\frac ac\bigr)^2$ for some integers $a$~and~$c$ with $a^2\equiv1\;\text{(mod~$c$)}$.
In all such cases, the number $F_{k,\eps}(s)$ is the sum of a rational multiple of~$\pi$ and the arctangent of the 
square-root of a positive rational number. Concrete examples where this happens and where the Dedekind sum
occurring can be evaluated in closed form are the special values
  $$ F_{k,1}\biggl(\frac1{\sqrt{k^2-1}}\biggr) \= -\,F_{k,1}\bigl(\sqrt{k^2-1}\bigr) \mi \frac{(k-1)(k-2)}{6k}\,\pi\;
    \=  \arctan\sqrt{\frac{k+1}{k-1}} \mi \frac{3k+2}{12k}\,\pi $$
for integers $k>1$ and 
  $$ F_{k,1}\biggl(\sqrt{\frac mn}\biggr) \= \arctan\sqrt{\frac mn} \mi \frac{km+2}{12k}\,\pi $$
for positive integers $m$ and~$n$ with $m+n=2k$.  We omit the details.

\subsection{Algebraic values}\label{app:2} Except in the cases just mentioned, there is in general no simple closed formula
for the values of $F_{k,\eps}(s)$.  However, if the square of the argument~$s$ is a rational number,
as is the case for all of the special values needed in this paper, one has the following general result.

\newcommand\FkeTable{
\begin{table}[tbp]
  \def\over{/}
  \[
  \begin{array}[htbp]{r r c c c r c} \toprule
    k&\eps&s&\DS(\eps,k)&b&\sigma&c \\ \midrule
      3&1&1&{{1}\over{18}}&0&\pm1&1\\
      4&1&1&{{1}\over{8}}&0&\pm1&1\\
      4&1&\sqrt{3}&{{1}\over{8}}&{{1}\over{12}}&\pm1&1\\
      5&1&1&{{1}\over{5}}&0&\pm1&1\\
      6&1&1&{{5}\over{18}}&0&\pm1&1\\
      6&1&\sqrt{3}&{{5}\over{18}}&{{1}\over{6}}&\pm1&1\\
      \midrule
      3&1&\sqrt{2}&{{1}\over{18}}&-{{1}\over{6}}&1&{{1}\over{3}}\\
      3&1&\sqrt{5}&{{1}\over{18}}&-{{1}\over{12}}&1&{{2}\over{3}}\\
      3&1&2\sqrt 2&{{1}\over{18}}&{{1}\over{4}}&-1&{{1}\over{3}}\\
      4&1&\sqrt{7}&{{1}\over{8}}&0&1&{{3}\over{4}}\\
      4&1&\sqrt{15}&{{1}\over{8}}&-{{1}\over{6}}&1&-{{1}\over{4}}\\
      4&1&\sqrt{5/3}&{{1}\over{8}}&-{{1}\over{6}}&1&{{1}\over{4}}\\
      5&1&2&{{1}\over{5}}&0&1&{{3}\over{5}}\\
      5&2&1&0&{{1}\over{10}}&-1&{{3}\over{5}}\\
      5&2&4&0&{{1}\over{10}}&-1&{{4}\over{5}}\\
      6&1&\sqrt{2}&{{5}\over{18}}&-{{1}\over{12}}&1&{{1}\over{3}}\\
      6&1&\sqrt{5}&{{5}\over{18}}&{{1}\over{12}}&1&{{2}\over{3}}\\
      6&1&\sqrt{11}&{{5}\over{18}}&{{1}\over{6}}&1&{{5}\over{6}}\\
      \midrule
      3&1&2&{{1}\over{18}}&{{1}\over{6}}&-1&\sqrt{3}-1\\
      4&1&\sqrt{2}&{{1}\over{8}}&-{{1}\over{8}}&1&\sqrt{2}-1\\
      4&1&\sqrt{5}&{{1}\over{8}}&-{{1}\over{4}}&1&\frac12\,\bigl(1-\sqrt{5}\bigr)\\
      4&1&3&{{1}\over{8}}&0&1&\sqrt{3}-1\\
      4&1&5&{{1}\over{8}}&0&1&3\,\sqrt{5}-6\\
      5&2&2&0&{{1}\over{10}}&-1&3\,\sqrt{5}-6\\
      6&1&\sqrt{7}&{{5}\over{18}}&{{2}\over{3}}&-1&\frac14\,\bigl(1-\sqrt{21}\bigr)\\
      \midrule
      3&1&\sqrt{3}&{{1}\over{18}}&-{{1}\over{6}}&1&\root3\of2-1\\
      4&1&3\sqrt 3&{{1}\over{8}}&-{{1}\over{12}}&1&\root3\of2-1\\
      \midrule
      3&1&2\,\sqrt{5}&{{1}\over{18}}&-{{1}\over{6}}&1&
      \frac13\,\bigl(1-\sqrt{5}+\sqrt{5\smash{(}\sqrt 5-\smash{1)/2}}
                    \bigr)\\
      3&1&4\sqrt 2&{{1}\over{18}}&-{{1}\over{12}}&1&
      \frac16\,\bigl(6-5\,\sqrt{2}+(4\sqrt 2+2)\sqrt{\sqrt{2}-1}\bigr)\\
      3&1&{{\sqrt{5}}\over{2}}&{{1}\over{18}}&0&1&
      \frac13\,\bigl(\sqrt{5}-1+\sqrt{5\smash{(}\sqrt 5-\smash{1)/2}}
      \bigr)\\
      4&1&3\,\sqrt{7}&{{1}\over{8}}&0&1&
      \frac1{16}\,\bigr(9+\sqrt{21}-\sqrt{26\,\sqrt{21}-114}\bigr)\\
      4&1&{{3}\over{\sqrt{7}}}&{{1}\over{8}}&0&1&
      \frac1{16}\,\bigr(9+\sqrt{21}+\sqrt{26\,\sqrt{21}-114}\bigr)\\
      \midrule
      3&1&5\,\sqrt{2}&{{1}\over{18}}&{{1}\over{6}}&-1&
      c=0.766\cdots,\;\,P(3c)=0\\
      3&1&{{5}\over{\sqrt{2}}}&{{1}\over{18}}&0&1&
      c=0.940\cdots,\;\,P(-3c)=0\\
      5&1&\sqrt{2}&{{1}\over{5}}&0&1&
      c=0.861\cdots,\;\,Q(c)=0\\
      5&2&\sqrt{2}&0&{{1}\over{10}}&-1&
      c=0.634\cdots,\;\,Q(-c)=0\\
    \bottomrule
  \end{array} 
  \]
  \smallskip
  \caption{Data needed to compute~$F_{k,\eps}$}
  \label{table:FkeTable}
\end{table}}
\FkeTable

\begin{prop} \label{Prop2}  
If $s>0$ is the square-root of a rational number, then the value of $F_{k,\eps}(s)$ for any $k$~and~$\eps$ is $i$ 
times the logarithm of a computable algebraic number.
\end{prop} 
\begin{proof} 
It is known from the theory of complex multiplication that the ratio of the values of the Dedekind \etafunc 
at any two arguments belonging to the same imaginary quadratic field is a computable algebraic number.  (More 
precisely, the value of $\eta(\tau)$ for $\tau$ belonging to any imaginary quadratic field is an algebraic 
multiple of a certain product of gamma-values, the so-called Chowla-Selberg number, that depends only on the field.
See \cite[Part~1, Section~6]{123} for further details.)
Since both $\frac{\eps^*+i/s}k$ and $\frac{-\eps^*+i/s}k$ belong to the imaginary quadratic
field~$\Q(is)$ when~$s^2$ is rational, this proposition is an immediate corollary of Proposition~\ref{Prop1}.
\end{proof}

We do not describe here the algorithm for computing special eta-values at CM
points, since it is standard in principle but is quite complicated.
We limit ourselves instead to giving the values of~$F_{k,\eps}(s)$
for the specific triples $(k,\eps,s)$ that are used in this paper.
These values are given by
\begin{align*}
  F_{k,\eps}(s)&=\pi\,\bigl(\DS(\eps,k)+b\bigr)+\tfrac\sigma 2\,\arccos(c)\;,\\
  F_{k,\eps^*}(1/s)&=\pi\,\bigl(\DS(\eps,k)-b\bigr)-\tfrac\sigma 2\,\arccos(c)\;.
\end{align*}
with $b\in\Q$, 
$\sigma\in\{\pm1\}$ and $c\in\overline\Q$ as in Table~\ref{table:FkeTable}.
The functions $P$ and $Q$
appearing in the last four lines of the table are the two sextic polynomials
\begin{align*}
  P(X)&=16\,X^6-416\,X^5+2440\,X^4+4880\,X^3-12615\,X^2-1826\,X-32159\;,\\
  Q(X)&=16\,X^6-32\,X^5+200\,X^4+560\,X^3+105\,X^2-402\,X-191\;.
\end{align*}

\subsection{Evaluation of \texorpdfstring{$A(k_+,\eps_+;k_-,\eps_-;G)$}{A(k+,e+;k-e-;G)} in terms of the Dedekind sums}\label{sec:evDedekind}
In this final subsection we place ourselves in the situation
of Theorem~\ref{Thm:B}. Specifically,
this means that we have two pairs of coprime numbers $(k_\pm,\eps_\pm)$
with $k_\pm$ positive and a $2\times2$
``gluing matrix" \hbox{$G=\sm \gll p\glb \sglr\in\text M_2(\Z)$} satisfying
conditions~\eqref{eq:det}--\eqref{eq:gcd}.
Combining Propositions~\ref{Prop2w.1} and~\ref{prop:dual},
we have~$\det G=-k_+k_-$ and
\be\label{glue} \gll-\eps_+^*\glb=Ak_+,\quad p+\eps_+^*\glr=Bk_+,\quad
p+\eps_-^*\gll=Ck_-,\quad \glr-\eps_-^*\glb=Dk_- \ee
for some integers $A,\,B,\,C,\,D$ with $(A,\glb)=(B,\glr)=(C,\gll)=(D,\glb)=1$.
We further assume that~$\glb>0$, $\gll\glb p\glr>0$ and
set $s_+=\sqrt{\glb\glr/\gll p}$, $s_-=\sqrt{\gll\glb/p\glr}$,
and $\rho=\pi-2\arg(\gll s_++i\glb)$.  Then the invariant we want 
to compute is the combination of $F_{k,\eps}$-values defined by
$$\mathcal F(k_+,\eps_+;k_-,\eps_-;G) \;:=\; \frac1\pi\,\biggl(F_{k_+,\eps_+}(s_+) \+ F_{k_-,\eps_-}(s_-) \+\frac\rho2\biggr)\,. $$

\begin{prop} \label{Prop3}  
The number $\mathcal F(k_+,\eps_+;k_-,\eps_-;G)$ is always rational and is given by
$$\mathcal F(k_+,\eps_+;k_-,\eps_-;G) \=
\frac 16\,\Bigl(\frac \gll{k_+\glb}+\frac \glr{k_-\glb}-12\,\DS(A,\glb)\Bigr)\;,$$
where $\DS(A,\glb)$ is the Dedekind sum as defined in~\eqref{eq:DedekindSum}.
\end{prop} 
\begin{proof} 
Set $\l=\frac{\eps_-^*A+B}{k_-}=\frac{C+\eps_+^*D}{k_+} = \frac{1-AD}{q}$,
which is an integer by~\eqref{eq:invmodn}.
The equations~\eqref{glue} can be rewritten as
\begin{equation*}
  \sm \sglr p{-\glb}{-\gll} \= \sm{k_-}{\eps_-^*}01 \, \g \,\sm1{\eps_+^*}0{k_+}
  \qquad\text{with}\qquad\g=\sm {-D}\l{-n}{-A}\in\SL\;.
\end{equation*}
It is easily checked that~$\g$
maps~$\tau_+=\frac{\eps_+^*+is_+^{-1}}{k_+}$
to~$\tau_-=\frac{-\eps_-^*+is_-^{-1}}{k_-}$.
From the
transformation law~\eqref{Ltransf} of~$\logeta$, we get
\begin{equation*}
  \logeta(\tau_-)-\logeta(\tau_+)
  =\frac14\,\Log\biggl(-\Bigl(\frac{\gll+i\glb s_+^{-1}}{k_+}\Bigr)^2\biggr)
  +\frac{\pi i}{12}\,\biggl(\frac{A+D}\glb-12\DS(A,\glb)\biggr)\;.
\end{equation*}
Because~$\logeta(-\bar z)=\overline{\logeta(z)}$, Proposition~\ref{Prop1} gives
\begin{align*}
  \mathcal F(k_+,\eps_+;k_-,\eps_-;G)
  &=\frac 2\pi\,\bigl(\Im\logeta(\tau_-)-\Im\logeta(\tau_+)\bigr)+\frac\rho{2\pi}
  +\frac{\eps_+^*}{6k_+}+\frac{\eps_-^*}{6k_-}\\
  &=\frac 16\biggl(\frac \gll{k_+\glb}+\frac \glr{k_-\glb}-12\,\DS(A,\glb)\biggr)\;.
  \qedhere
\end{align*}
\end{proof}

We observe that~\eqref{eq:thmA} and Proposition~\ref{Prop3}
give an alternative proof of Theorem~\ref{Thm:B}.

\enlargethispage{\baselineskip}

\begin{spacing}{0.92}

\end{spacing}

\end{document}